\documentclass[12pt,a4paper]{amsart}

%===============================================================================
% General settings
%===============================================================================

% Allow foreign characters in the text
\usepackage[latin1]{inputenc}

% German style
\usepackage{amsfonts}
\usepackage{amsmath}
\usepackage{amssymb}
\usepackage{amsthm}
\usepackage{mathrsfs}
\usepackage{color}

\usepackage{float}
\restylefloat{table}

\usepackage{graphicx}
\usepackage[all]{xy}

\usepackage{enumerate}

\usepackage{tikz-cd}

\usepackage{hyperref}

%===============================================================================
% Mathematical settings
%===============================================================================

\usepackage{a4wide}

\usepackage{bbm}

%\usepackage{txfonts}

%----------------------------
% Theorems
%----------------------------

\newtheoremstyle{plain}
  {6pt}   % ABOVESPACE
  {6pt}   % BELOWSPACE
  {\itshape}  % BODYFONT
  {0pt}       % INDENT (empty value is the same as 0pt)
  {\bfseries} % HEADFONT
  {.}         % HEADPUNCT
  {5pt plus 1pt minus 1pt} % HEADSPACE
  {}          % CUSTOM-HEAD-SPEC

\newtheoremstyle{definition}
  {6pt}   % ABOVESPACE
  {6pt}   % BELOWSPACE
  {\normalfont}  % BODYFONT
  {0pt}       % INDENT (empty value is the same as 0pt)
  {\bfseries} % HEADFONT
  {.}         % HEADPUNCT
  {5pt plus 1pt minus 1pt} % HEADSPACE
  {}          % CUSTOM-HEAD-SPEC

\theoremstyle{plain}
\newtheorem*{thm*}{Theorem}
\newtheorem{thm}{Theorem}[section]
\newtheorem{prop}[thm]{Proposition}
\newtheorem{cor}[thm]{Corollary}
\newtheorem{lem}[thm]{Lemma}

\theoremstyle{definition}
\newtheorem{defn}[thm]{Definition}

\newtheorem{ex}[thm]{Example}
\newtheorem{rmk}[thm]{Remark}

% thm-counter followed by equation number
\numberwithin{equation}{thm}

\newcommand{\emphbf}[1]{\emph{\textbf{#1}}}
\newcommand{\bcdot}{{\mbox{\boldmath{$\cdot$}}}}

\DeclareMathAlphabet{\mathpzc}{OT1}{pzc}{m}{it}

\newcommand{\f}{\varphi}
\newcommand{\la}{\lambda}

\DeclareMathOperator{\id}{id}
\DeclareMathOperator{\Hom}{Hom}
\DeclareMathOperator{\Ext}{Ext}

\DeclareMathOperator{\modu}{mod}

\DeclareMathOperator{\End}{End}

\DeclareMathOperator{\image}{Im}

\DeclareMathOperator{\ind}{ind}

\DeclareMathOperator{\projdim}{projdim}
\DeclareMathOperator{\op}{op}

\DeclareMathOperator{\comp}{comp}
\DeclareMathOperator{\Mor}{Mor}
\DeclareMathOperator{\cone}{cone}
\DeclareMathOperator{\coind}{coind}

\DeclareMathOperator{\Br}{Br}
\DeclareMathOperator{\Coh}{Coh}

\newcommand{\wt}[1]{\widetilde{#1}}
\newcommand{\wh}[1]{\widehat{#1}}

%===============================================================================
% Beginning of the actual document
%===============================================================================

\begin{document}

\title{Ringel duality as an instance of Koszul duality}
\author{Agnieszka Bodzenta}
\author{Julian K\"ulshammer}
\date{\today}

\address{Agnieszka Bodzenta\\
Faculty of Mathematics, Informatics and Mechanics 
University of Warsaw \\ Banacha 2 \\ Warsaw 02-097,
Poland} \email{A.Bodzenta@mimuw.edu.pl}

\address{Julian K\"ulshammer\\
Institute of Algebra and Number Theory,
University of Stuttgart \\ Pfaffenwaldring 57 \\ 70569 Stuttgart,
Germany} \email{kuelsha@mathematik.uni-stuttgart.de}

\begin{abstract}
In \cite{KKO14}, S. Koenig, S. Ovsienko and the second author showed that every quasi-hereditary algebra is Morita equivalent to the right algebra, i.e. the opposite algebra of the left dual, of a coring. Let $A$ be an associative algebra and $V$ an $A$-coring whose right algebra $R$ is quasi-hereditary. In this paper, we give a combinatorial description of an associative algebra $B$ and a $B$-coring $W$ whose right algebra is the Ringel dual of $R$. We apply our results in small examples to obtain  restrictions on the $A_\infty$-structure of the $\Ext$-algebra of standard modules over a class of quasi-hereditary algebras related to birational morphisms of smooth surfaces.
\end{abstract}

\maketitle

\section{Introduction}

Exceptional collections appear frequently in algebraic and symplectic geometry as well as in representation theory. For example, they appear in the process of identifying certain Fukaya--Seidel categories, see e.g. \cite{Sei08}. In algebraic geometry, starting with the example of the projective space by A. Beilinson \cite{Bei78, Bei84}, (strong) exceptional collections are used to realise derived categories of coherent sheaves as derived categories of finite dimensional algebras \cite{Bon89}. In representation theory, the main source of examples for exceptional collections are quasi-hereditary algebras. In this situation, the exceptional collections consist of so-called standard modules. Examples of quasi-hereditary algebras are  Schur algebras and algebras of global dimension smaller than or equal to two. Also, blocks of BGG category $\mathcal{O}$ are equivalent to categories of modules over quasi-hereditary algebras. In work with S. Koenig and S. Ovsienko \cite{KKO14}, the second author showed that quasi-hereditary algebras  can equivalently be described as the right (or left) algebras of directed corings (also called bocses for `bimodule over category with coalgebra structure'). In this description, the category of modules filtered by standard modules is equivalent to the category of modules over the directed bocs, i.e. the Kleisli category of the corresponding comonad, see Definition \ref{def:directedbocs} for the definition of a directed bocs, Theorem \ref{maintheoremKKO} for the main theorem of \cite{KKO14} and  Proposition \ref{boxrepresentations} for a quiver theoretic perspective on the category of modules over a bocs. Let $\Lambda$ be a quasi-hereditary algebra and $\mathcal{D}$ the (graded) dual of the bar resolution of the $\Ext$-algebra of the standard modules over $\Lambda$, equipped with its canonical $A_\infty$-structure. The directed bocs $\mathfrak{A}$ of $\Lambda$ is obtained from the quotient of $\mathcal{D}$ by the differential ideal generated by the negative degree part. The inclusion $\modu \mathfrak{A}\to \modu \Lambda$ yields an equivalence $\mathcal{D}^b(\modu\mathfrak{A})\simeq \mathcal{D}^b(\modu\Lambda)$. 

An important concept in the theory of exceptional collections is the (left and right) mutation introduced by A. Bondal in \cite{Bon89}. For an exceptional collection $(\Delta_\mathtt{1},\dots,\Delta_\mathtt{n})$ in a triangulated category, its left mutation at $\mathtt{i}$ is the exceptional collection $(\Delta'_\mathtt{1},\dots,\Delta'_{\mathtt{n}})$ with $\Delta'_{\mathtt{l}}=\Delta_{\mathtt{l}}$ for all $\mathtt{l}\neq \mathtt{i},\mathtt{i}+1$ and $\Delta'_{\mathtt{i}+1}=\Delta_{\mathtt{i}}$. Mutations of exceptional collections induce an action of the braid group on the set of exceptional collections in a triangulated category. Given an exceptional collection, there are two distinguished other collections, the left and the right (Koszul) dual. They are obtained by mutating the exceptional collection along the `global half-twist' $\beta_{\mathtt{n}-1}(\beta_{\mathtt{n-2}}\beta_{\mathtt{n-1}})\cdots (\beta_{\mathtt{2}}\cdots \beta_{\mathtt{n-1}})(\beta_{\mathtt{1}}\cdots \beta_{\mathtt{n-1}})\in \Br_n$ or its inverse. 
For the collection of standard modules over a quasi-hereditary algebra $\Lambda$ elements of the right Koszul dual exceptional collection are again modules -- the costandard modules over $\Lambda$ denoted by $\nabla$. There is in fact another quasi-hereditary algebra, derived equivalent to $\Lambda$, having the exceptional collection of costandard modules as standard modules. It was introduced by C. Ringel in \cite{Rin91} and is therefore called the Ringel dual of the original quasi-hereditary algebra. In the language of directed bocses, the Ringel dual of the right algebra of a directed bocs is given by its left algebra. 

Let $(A,V)$ be a directed bocs. In \cite{Ovs06}, S. Ovsienko proposed a construction of a bocs $\mathfrak{B}=(B,W)$, whose right algebra is Morita equivalent to the left algebra of $(A,V)$ and vice versa. But his paper is lacking a lot of details in the proof of the construction. This paper provides full details of the proof of the following theorem. 

\begin{thm*}
Let $\Lambda$ be a quasi-hereditary algebra. Let $\mathfrak{A} = (A,V)$ be the corresponding directed bocs. Denote by $\overline{V}$ the kernel of the counit. Let $\mathcal{D}$ be the dual of the bar resolution of the differential graded algebra $T_A(\overline{V})$.
The quotient of $\mathcal{D}$ by the DG ideal generated by the negative degree part provides a combinatorial construction for a bocs $(B,W)$ having the Ringel dual of $\Lambda$ as the right algebra. 
\end{thm*}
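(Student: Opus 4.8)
The plan is to identify the three combinatorial ingredients---the algebra $B$, the $B$-coring $W$, and its differential---directly from the homological data of the original bocs $\mathfrak{A}=(A,V)$, and then to check that the resulting bocs $(B,W)$ is directed with right algebra isomorphic to the left algebra of $\mathfrak{A}$, which by the main theorem of \cite{KKO14} is the Ringel dual of $\Lambda$. First I would recall that, since $\mathfrak{A}$ is directed, $T_A(\overline V)$ is a nonnegatively graded DG algebra which is (after passing to a minimal model) quasi-isomorphic to the Yoneda/$\Ext$-algebra $E=\Ext^\bullet_\Lambda(\Delta,\Delta)$ of the standard modules, equipped with its canonical $A_\infty$-structure. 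Taking the bar construction $\mathrm{B}(T_A(\overline V))$ and then the $k$-linear dual $\mathcal D$ produces a DG algebra whose degree-zero part, after killing the negative-degree part of $\mathcal D$, will be the semisimple algebra $S$ spanned by the idempotents; I would set $B$ to be this degree-zero quotient together with the part of $\mathcal D$ forced to survive, and $\overline W$ to be the degree-one component $\mathcal D^{1}/(\text{negative part})$, reconstituting $W=\overline W\oplus S$ as a coring over $B$ via the comultiplication dual to the concatenation product on the bar complex.

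The key steps, in order, are: (1) verify that the truncation ``quotient by the DG ideal generated by the negative part'' is well defined, i.e. that this subspace is a DG ideal, and that the quotient $(B,W)$ genuinely satisfies the axioms of a bocs (coassociativity and counitality of the $W$-comultiplication, $B$ being the degree-zero part, and the grouplike element giving the counit); this is where the $A_\infty$-relations on $\mathcal D$ are used, since the differential on $W$ must square to zero only modulo the ideal. (2) Show that $(B,W)$ is \emph{directed}: the underlying algebra $B$ is directed because the $\Ext$-quiver of the $\nabla$'s is directed (standardly stratified structure of the Ringel dual), and the $W$-part respects the order because $\Ext^1$ between costandards only goes one way. (3) Identify the right algebra of $(B,W)$ with the left algebra of $(A,V)$: by the construction in \cite{KKO14}, the right algebra is $B/(\text{image of }\overline W)$-type quotient read off from the $0$- and $1$-cochains, and this matches the left algebra of $\mathfrak A$ because the bar-dual construction exchanges the ``source'' and ``target'' gradings---concretely, $\overline W$ in degree one is dual to $\overline V$ in degree one but with the two tensor-factor roles swapped, which is exactly the left/right algebra interchange. (4) Invoke Theorem~\ref{maintheoremKKO} and the discussion preceding the statement (left algebra of the directed bocs $=$ Ringel dual) to conclude.

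I expect the main obstacle to be step (1) together with the precise bookkeeping in step (3): one must control how much of the higher-degree (degree $\geq 2$) part of $\mathcal D$ survives the truncation and contributes to the differential on $W$, because a priori $\mathcal D$ is a large DG algebra and only a small finite-dimensional quotient is the bocs. The delicate point is that the $A_\infty$-operations $m_n$ on the $\Ext$-algebra of the standards, which are encoded in $\mathcal D$, must assemble into a \emph{square-zero} differential on $W$ \emph{after} truncation---i.e. the obstruction terms all live in the ideal generated by the negative part. Making this rigorous requires carefully tracking the bar differential's two summands (the one coming from the differential of $T_A(\overline V)$ and the one from its multiplication) under dualization and truncation, and checking that the curvature-like correction vanishes modulo the ideal; this is the technical heart that \cite{Ovs06} left incomplete. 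The remaining verifications---directedness, counit axioms, Morita-invariance of the right-algebra construction---are then comparatively routine, following the pattern already established in \cite{KKO14}.
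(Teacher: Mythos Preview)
Your proposal takes a genuinely different route from the paper, and the route has real gaps.

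\textbf{What the paper actually does.} The paper does \emph{not} compare $R_{\mathfrak B}$ with $L_{\mathfrak A}$ directly as algebras. Instead it builds a chain of explicit categorical equivalences
\[
\mathcal F(\nabla_R)\;\simeq\;\mathcal F(\Diamond)\;\simeq\;\mathcal F(\Box)^{\op}\;\simeq\;\mathcal N(\mathfrak A^{\op})^{\op}\;\simeq\;\mathcal R(\mathfrak A^{\op})^{\op}\;\simeq\;\modu\mathfrak B,
\]
where the first two come from the homotopically projective complexes $\Box_{\mathtt i}\in\mathcal D^b(\modu\mathfrak A)$ (Section~5), the middle one (Theorem~6.3) is a long explicit computation showing that every object of $\mathcal F(\Box)$ is encoded by a pair $(Y,c_Y)$ with $c_Y\colon Y\to\overline V\otimes_{\mathbb L}Y$ satisfying a coassociativity-type condition, the next one (Proposition~7.2) is linear duality, and the last one reads off the bocs description of $\modu\mathfrak B$ via Proposition~4.4. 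Only at the very end is Dlab--Ringel standardisation invoked to get Morita equivalence of $R_{\mathfrak B}$ with the Ringel dual. Step~(1) in your outline (that the quotient is a bocs) is indeed handled by Lemma~4.7, but the rest of your plan is not.

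\textbf{Where your step (3)/(4) breaks.} Your heuristic ``$\overline W$ in degree one is dual to $\overline V$ in degree one with the tensor roles swapped'' is incorrect: in the construction (Section~8) the algebra $B$ is generated by $\mathbb D\overline V$, while $\overline W$ is generated by $\mathbb D(\operatorname{rad}A)$. So the solid and dashed arrows of the dual bocs swap roles with those of $\mathfrak A$, but this does not by itself produce an isomorphism $R_{\mathfrak B}\cong L_{\mathfrak A}$; the right algebra $R_{\mathfrak B}=\Hom_B(W,B)$ is not a ``$B/(\text{image of }\overline W)$-type quotient'' as you write. Relatedly, your claim that the degree-zero part of the truncated $\mathcal D$ is the semisimple $S$ is wrong: it is $\mathbb L[\mathbb D\overline V]/J$, genuinely nonsemisimple in general. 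Your directedness argument in step~(2) is also circular as written (you invoke the $\Ext$-quiver of the $\nabla$'s before knowing $(B,W)$ has anything to do with them), though this particular point is repairable combinatorially from the directedness of $\overline V$.

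In short: the ``direct algebra identification'' you propose is not available, and the paper's substitute---the module-category equivalence through $\Box_{\mathtt i}$, $\mathcal N(\mathfrak A)$, $\mathcal R(\mathfrak A)$---is precisely the content that \cite{Ovs06} left as sketches and that this paper fills in.
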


Note that since the directed coring $(A,V)$ provides the same information as the $A_\infty$-structure on $\Ext^*_\Lambda(\Delta,\Delta)$, this gives a combinatorial method to obtain the $A_\infty$-structure on $\Ext^*_\Lambda(\nabla,\nabla)$ from the $A_\infty$-structure on $\Ext^*(\Delta,\Delta)$. This result should be compared with S. Oppermann's combinatorial construction in \cite{Opp17} of the silting mutation of a differential graded algebra. As noted in \cite[Appendix A.2]{KKO14}, the way to associate a directed bocs to a quasi-hereditary algebra is non-unique. As every algebra is isomorphic to the Ringel dual of its Ringel dual, the same holds true for Ringel duals. Uniqueness of directed bocses associated to quasi-hereditary algebras will be discussed in \cite{KM17}.

The proof of the above theorem is divided into several steps which we find of independent interest. First, we consider explicit complexes $\Box_{\mathtt{i}}^\bcdot$ of $\mathfrak{A}$-modules and prove that they are homotopically projective. 
Since in the bocs description, standard modules correspond to simple modules over the bocs, we conclude that the complexes $\Box_{\mathtt{i}}^\bcdot$ form an exceptional collection left dual to the exceptional collection of standard modules. (Recall that since $\Lambda$ is of finite global dimension, $\mathcal{D}^b(
\modu \Lambda)$ admits a Serre functor.) Therefore, $\Box_{\mathtt{i}}^\bcdot$ is the Serre dual of the costandard module $ \nabla_{\mathtt{i}}$.
If we denote by $\Diamond_{\mathtt{i}}$ the $\mathbbm{k}$-duals of the analogously defined $\mathfrak{A}^\textrm{op}$-modules then $\mathcal{F}(\Diamond) \simeq \mathcal{F}(\nabla)$. As Ringel duality yields an equivalence $\mathcal{F}(\nabla_{\Lambda})\simeq \mathcal{F}(\Delta_{R(\Lambda)})$, in order to find the dual bocs we present the category $\mathcal{F}(\Diamond)$ as the category of right modules over a bocs. To this end we prove (see Theorem \ref{FofBox} for the precise statement):
\begin{thm*}
	The category $\mathcal{F}(\Box)$ is equivalent to the category of complexes $N^\bcdot$ of $\mathfrak{A}$-modules such that $N^j \cong \overline{V}^{\otimes_A j} \otimes_{\mathbb{L}}Y$, for an $\mathbb{L}$-module $Y$. The differential $N^j \to N^{j+1}$ is encoded in a map $c_Y \colon Y \to \overline{V}\otimes_{\mathbb{L}}Y$. 
\end{thm*}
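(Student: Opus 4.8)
The plan is to identify the category $\mathcal{F}(\Box)$ with a category of twisted complexes over the bocs $\mathfrak{A}$, and then recognise such twisted complexes as right modules over a second bocs whose underlying data are the tensor powers of $\overline{V}$. First I would recall the explicit shape of the complexes $\Box_{\mathtt{i}}^\bcdot$: each is built degreewise from $\overline{V}^{\otimes_A j}\otimes_A P_{\mathtt{i}}$ (or, after passing to the semisimple subalgebra $\mathbb{L}=A/\rad A$, from $\overline{V}^{\otimes_A j}\otimes_{\mathbb{L}}\mathbb{L}e_{\mathtt{i}}$), with differential induced by the comultiplication of $V$. Since $\mathcal{F}(\Box)$ is by definition the subcategory of $\mathcal{D}^b(\modu\mathfrak{A})$ generated under extensions by the $\Box_{\mathtt{i}}^\bcdot$, the first step is to show that every object of $\mathcal{F}(\Box)$ can be represented, up to isomorphism in the derived category, by a complex $N^\bcdot$ with $N^j\cong \overline{V}^{\otimes_A j}\otimes_{\mathbb{L}}Y$ for a finite-dimensional $\mathbb{L}$-module $Y$, with $Y$ recording the multiplicities of the various $\Box_{\mathtt{i}}$. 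The natural device here is the standard filtration argument: an extension of two such complexes is again homotopy equivalent to one of this form, because the connecting maps live in $\Hom$-spaces controlled by the homotopy projectivity of $\Box_{\mathtt{i}}^\bcdot$ established earlier.

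Next I would pin down the differential. Writing $N^j=\overline{V}^{\otimes_A j}\otimes_{\mathbb{L}}Y$, the differential $d\colon N^j\to N^{j+1}$ must be $\mathfrak{A}$-linear; I would argue that $\mathfrak{A}$-linearity together with the Leibniz-type compatibility forces $d$ on $N^0=Y$ to be given by a single $\mathbb{L}$-linear map $c_Y\colon Y\to\overline{V}\otimes_{\mathbb{L}}Y$, and that $d$ in higher degrees is then determined as the unique extension compatible with the comultiplication of $\overline{V}$ (essentially $d=c_{\overline V}\otimes 1\pm 1\otimes d$, the cobar-type differential twisted by $c_Y$). The condition $d^2=0$ translates into a coassociativity/curvature equation for $c_Y$ relative to the coalgebra structure on $\overline{V}$; I would record this equation explicitly, as it is exactly the data making $(Y,c_Y)$ a comodule-like object, i.e. a right module over the prospective dual bocs. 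The morphisms should match up similarly: a morphism of such complexes commuting with $d$ is equivalent to an $\mathbb{L}$-linear map $Y\to Y'$ intertwining $c_Y$ and $c_{Y'}$, again after a short homotopy-uniqueness argument.

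Finally I would assemble these two directions into an equivalence of categories. In one direction, send $(Y,c_Y)$ to the complex $N^\bcdot$ with $N^j=\overline{V}^{\otimes_A j}\otimes_{\mathbb{L}}Y$ and the cobar differential twisted by $c_Y$; one checks this lands in $\mathcal{F}(\Box)$ by filtering $Y$ by $\mathbb{L}$-submodules and observing that the associated graded pieces are precisely the $\Box_{\mathtt{i}}^\bcdot$. In the other direction, send a complex in $\mathcal{F}(\Box)$ to the pair $(N^0, d|_{N^0})$ as above. Faithfulness and fullness follow from the morphism analysis, and essential surjectivity from the filtration argument in the first paragraph; Theorem \ref{FofBox} will then be the packaging of this equivalence. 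I expect the main obstacle to be the first paragraph: showing that an arbitrary object of $\mathcal{F}(\Box)$ — a priori only an iterated cone of the $\Box_{\mathtt{i}}^\bcdot$ in $\mathcal{D}^b(\modu\mathfrak{A})$ — admits a strictly degreewise model of the prescribed form $\overline{V}^{\otimes_A j}\otimes_{\mathbb{L}}Y$, with the differential in the controlled shape; this requires carefully using homotopy projectivity of the $\Box_{\mathtt{i}}^\bcdot$ to lift the connecting morphisms of the extensions to honest chain maps and to show the resulting total complex can be put in the desired normal form without changing its homotopy type.
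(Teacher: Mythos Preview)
Your overall strategy for the object-level statement matches the paper's approach: filter an object of $\mathcal{F}(\Box)$ by iterated cones of the $\Box_{\mathtt{i}}$, use homotopy projectivity to control the connecting maps, and show inductively that each cone retains the form $\overline{V}^{\otimes_A j}\otimes_{\mathbb{L}} Y$ with the prescribed differential. The paper executes exactly this, citing \cite[Lemma 4.2]{MS16} for the filtration description of $\mathcal{F}(\Box)$ and Lemma \ref{propertiesofBox} for the homotopy control. The key technical ingredient you are underplaying is Lemma \ref{propertiesofBox}\eqref{Boxhom:iii}: every homotopy class of maps $\Box_{\mathtt{i}} \to M$ contains an \emph{$A$-linear} representative, i.e.\ one with $g(v)=0$ for all dashed generators $v$. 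This is what forces the cone's $d(v)$-component to be block-diagonal (so $d(v)(x\otimes y)=v\otimes x\otimes y$ persists), with the entire off-diagonal contribution of the connecting map absorbed into $d(\omega)$ and hence into a single $c_Y$. ``Homotopy projectivity'' alone does not give this normal form; you need the stronger statement that the homotopy can be chosen $A$-linear. Two smaller corrections: $N^0=A\otimes_{\mathbb{L}} Y$, not $Y$, and $c_Y$ is recovered as $y\mapsto d^0(\omega)(1\otimes y)$; and the filtration on $Y$ must satisfy $c_Y(Y_q)\subset \overline{V}\otimes_{\mathbb{L}} Y_{q-1}$ (conilpotence), which is part of the paper's statement and is what makes the inverse functor land in $\mathcal{F}(\Box)$.

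There is a genuine error in your morphism claim. You assert that a morphism of such complexes is, up to homotopy, an $\mathbb{L}$-linear map $Y\to Y'$ intertwining $c_Y$ and $c_{Y'}$. This is false: morphisms in $\modu\mathfrak{A}$ are elements of $\Hom_{A\otimes A^{\op}}(V,\Hom_{\mathbbm{k}}(M,N))$, which is strictly larger than $\Hom_A(M,N)$. In the paper's subsequent Theorem \ref{modN}, morphisms in $\mathcal{F}(\Box)$ are shown (after choosing $A$-linear representatives in each homotopy class) to be encoded by $\mathbb{L}$-linear maps $c_f\colon Y\to A\otimes_{\mathbb{L}} Z$ satisfying a compatibility $(\dagger\dagger)$, not by maps $Y\to Z$. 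This matters downstream: the dual bocs $(B,W)$ has $\overline{W}$ generated by $\mathbb{D}(\radoperator A)$, and with your description one would lose the dashed arrows of the dual biquiver entirely. For the present theorem, which describes $\mathcal{F}(\Box)$ as a \emph{full} subcategory by the shape of its objects, you should drop the morphism claim and defer it to the next step.
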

In the next step we further simplify the description of $\mathcal{F}(\Box)$ and with Theorem \ref{modN} show
\begin{thm*}
	The category $\mathcal{F}(\Box)$ is equivalent to a category $\mathcal{N}(\mathfrak{A})$ whose objects are pairs $(Y, c_Y)$ of an $\mathbb{L}$-module and a map $c_Y \colon Y \to  \overline{V}\otimes_{\mathbb{L}}Y$ satisfying some additional condition.
\end{thm*}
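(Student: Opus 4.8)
The plan is to build the equivalence $\mathcal{F}(\Box)\simeq\mathcal{N}(\mathfrak{A})$ by post-composing the equivalence of the previous theorem with a further reduction of the data $(Y,c_Y)$. Recall that by Theorem \ref{FofBox} an object of $\mathcal{F}(\Box)$ is a complex $N^\bcdot$ with $N^j\cong\overline{V}^{\otimes_A j}\otimes_{\mathbb{L}}Y$ and differential determined by a single map $c_Y\colon Y\to\overline{V}\otimes_{\mathbb{L}}Y$; the point is that the full complex, including the requirement $d^2=0$ and the compatibility of $d$ in higher degrees, is governed by $c_Y$ together with the comultiplication and differential of $V$. So first I would spell out exactly which identity $c_Y$ must satisfy in order for the associated $N^\bcdot$ to be a genuine complex of $\mathfrak{A}$-modules: writing $d^j\colon N^j\to N^{j+1}$ in terms of $c_Y$, the comultiplication $\mu\colon\overline{V}\to\overline{V}\otimes_A\overline{V}$ (or its restriction to $\mathbb{L}$-bimodule maps) and the differential $\delta$ of the bocs, the condition $d^{j+1}d^j=0$ unwinds, by a standard cancellation across all tensor factors, to a single equation on $c_Y$ of the shape $(\delta\otimes\id)c_Y + (\id\otimes c_Y)c_Y + (\mu\otimes\id)c_Y = 0$ (up to signs and the precise placement of $\overline{V}$-factors). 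This is the ``additional condition'' referred to in the statement, and defining $\mathcal{N}(\mathfrak{A})$ to have objects $(Y,c_Y)$ with $Y$ an $\mathbb{L}$-module and $c_Y$ satisfying this Maurer--Cartan-type equation is the natural target.

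Next I would define the functor $\mathcal{F}(\Box)\to\mathcal{N}(\mathfrak{A})$ on objects by $N^\bcdot\mapsto(Y,c_Y)$, reading off $Y=N^0$ (or the $\mathbb{L}$-module underlying $N^0$) and $c_Y$ from $d^0$, and on morphisms by restricting a chain map $f^\bcdot\colon N^\bcdot\to M^\bcdot$ to its degree-zero component; here one must check that a chain map is determined by $f^0$ and that $f^0$ being a morphism $(Y,c_Y)\to(Y',c_{Y'})$ in $\mathcal{N}(\mathfrak{A})$ — i.e.\ an $\mathbb{L}$-module map intertwining $c_Y$ and $c_{Y'}$ — is exactly the condition that it extends to a chain map. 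For the inverse functor I would send $(Y,c_Y)$ to the complex with $N^j=\overline{V}^{\otimes_A j}\otimes_{\mathbb{L}}Y$ and differentials built from $c_Y$, $\mu$ and $\delta$ as above, using the Maurer--Cartan equation to verify $d^2=0$, and send a morphism $g\colon Y\to Y'$ to $\id^{\otimes j}\otimes g$ in degree $j$. Then I would check these two functors are mutually quasi-inverse, which is essentially formal once the dictionary is set up; the only subtlety is that on the $\mathcal{F}(\Box)$ side one works up to the morphisms actually present in that category (chain maps, possibly modulo nothing, since $\mathcal{F}(\Box)$ sits inside an honest module category), so I must confirm that no homotopies need to be quotiented out — the complexes $\Box_{\mathtt{i}}^\bcdot$ being homotopically projective and the relevant $\Hom$-spaces being concentrated in degree zero should give this.

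The main obstacle I anticipate is bookkeeping: correctly identifying the higher differentials $d^j$ for $j\geq 1$ in terms of $c_Y$ and the structure maps of $V$, and then carrying the signs through the computation of $d^{j+1}d^j$ so that the cancellation is transparent and the surviving terms genuinely assemble into one closed identity on $c_Y$ independent of $j$. In particular one has to be careful that the condition extracted from $d^1d^0=0$ is the same as the one extracted from $d^{j+1}d^j=0$ for larger $j$ — this is where the coassociativity of $\mu$ and the Leibniz rule for $\delta$ on $\overline{V}$ are used, and it is the place where an error in the previous theorem's identification of $N^\bcdot$ would surface. Once that single equation is isolated and shown to be $j$-independent, defining $\mathcal{N}(\mathfrak{A})$ and verifying the equivalence is routine; I would also remark that under this dictionary the $\mathbb{L}$-module $Y$ plays the role of the underlying vector space of the future right-$\mathfrak{B}$-module and $c_Y$ is the beginning of its comodule structure, which motivates the combinatorial construction of $(B,W)$ in the following section.
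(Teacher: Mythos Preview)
Your treatment of objects is on the right track: the condition $(\dagger)$ in the paper, $(\partial_1\otimes\mathbbm{1}_Y)c_Y+(m_{\overline{V}}\otimes\mathbbm{1}_Y)(\mathbbm{1}_{\overline{V}}\otimes c_Y)c_Y=0$, is exactly the Maurer--Cartan-type equation you anticipate, and the verification that it is $j$-independent proceeds as you sketch.

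The genuine gap is in the morphisms. You propose that a morphism $(Y,c_Y)\to(Z,c_Z)$ in $\mathcal{N}(\mathfrak{A})$ is an $\mathbb{L}$-linear map $g\colon Y\to Z$ intertwining $c_Y$ and $c_Z$, and that a chain map $f^\bcdot$ is determined by restricting $f^0$ to $Y$. This is not correct. First, $\mathcal{F}(\Box)$ is the extension closure of the $\Box_\mathtt{i}$ inside $\mathcal{D}^b(\modu\mathfrak{A})$, so morphisms are homotopy classes of chain maps, not chain maps; the statement ``no homotopies need to be quotiented out'' is false, and the paper expends real effort (Lemma~\ref{propertiesofBox}\eqref{Boxhom:iii} together with Lemma~\ref{homologalg}) to show that every homotopy class contains an $A$-linear representative. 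Second, even for an $A$-linear representative, $f^0\colon A\otimes_{\mathbb{L}}Y\to A\otimes_{\mathbb{L}}Z$ is an $A$-module map, hence determined by its restriction $c_f\colon Y\to A\otimes_{\mathbb{L}}Z$, \emph{not} by a map $Y\to Z$. The target $A\otimes_{\mathbb{L}}Z$ is essential: it is what later dualises to $\Hom_{\mathbb{L}\otimes\mathbb{L}}(\mathbb{D}A,\Hom_{\mathbbm{k}}(Y,Z))$ in $\mathcal{R}(\mathfrak{A})$ and produces the bimodule $W$ of the dual bocs. With your morphisms the resulting category is too small to be equivalent to $\mathcal{F}(\Box)$ --- for instance the extra bocs morphisms between simple $A$-modules (those with $f(v)\neq 0$ for some $v\in Q_1^1$) would have no counterpart.

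Consequently the compatibility condition on morphisms is not mere intertwining but the equation $(\dagger\dagger)$ involving $m_l$, $m_r$ and $\partial_0$, and the composition law uses $m_A$ rather than ordinary composition of linear maps. The fullness argument --- that every homotopy class of chain maps $\Xi(Y)\to\Xi(Z)$ arises from some $c_f$ --- is the substantial part of the proof and requires the inductive cone argument via Lemma~\ref{homologalg}; this is precisely what your sketch is missing.
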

Finally, we show that $\mathcal{N}(\mathfrak{A})$ is equivalent to the category $\mathcal{R}(\mathfrak{A})$ whose objects are $\mathbb{L}$-modules $Y$ together with a map $s_Y\in \Hom_{\mathbb{L}\otimes \mathbb{L}}(\mathbb{D} \overline{V}, \Hom_\mathbbm{k}(Y,Y))$ while morphisms $(Y,s_Y) \to (Z,s_Z)$ are represented by elements of $\Hom_{\mathbb{L} \otimes \mathbb{L}}(\mathbb{D}A, \Hom_{\mathbbm{k}}(Y,Z))$. This, together with the quiver theoretic description of $\modu \Lambda$ in Proposiotion \ref{boxrepresentations}, suggests that $\mathcal{R}(\mathfrak{A})$ is equivalent to the category of a bocs $(B,W)$ where $B$ is an algebra generated by $\mathbb{D}\overline{V}$ and $W$ is a bimodule generated by $\mathbb{D}A$. In other words, the bocs $(B,W)$ is the quotient of the dual of the bar resolution of the bocs $(A,V)$ by the differential ideal generated by negative degrees. Hence, from the point of view of bocses, Ringel duality is a special case of Koszul duality for DG algebras.

For a quasi-hereditary algebra $\Lambda$ the $t$-structure on $\mathcal{D}^b(\modu \Lambda)$ glued along the filtration $\langle \Delta(\mathtt{1}) \rangle \subset \langle \Delta(\mathtt{1}), \Delta(\mathtt{2}) \rangle \subset \ldots \subset \langle \Delta(\mathtt{1}), \ldots, \Delta(\mathtt{n-1}) \rangle \subset \mathbb{D}^b(\modu \Lambda)$ from the standard $t$-structures on $\langle \Delta(\mathtt{i}) \rangle \simeq \mathcal{D}^b(\modu \mathbbm{k})$ is the standard $t$-structure, while the $t$-structure glued along a similar filtration for the right dual collection $\langle \nabla(\mathtt{n}) \rangle \subset \langle \nabla(\mathtt{n}), \nabla(\mathtt{n-1}) \rangle \subset \ldots \subset \langle \nabla(\mathtt{n}), \ldots, \nabla(\mathtt{2}) \rangle \subset \mathcal{D}^b(\modu \Lambda)$ is the $t$-structure where the characteristic tilting $\Lambda$-module is the projective generator for the heart, c.f. \cite{BodBon17}. In other words, Ringel duality can be viewed as passing to the right dual exceptional collection. With our main theorem we show that it yields Koszul duality on the level of bocses.	
	
The idea that passing to the dual exceptional collection corresponds to Koszul duality goes back to A. Bondal who proved in \cite{Bon89} that the full exceptional collection of simple modules over a path algebra of a directed quiver is right dual to the full exceptional collection of projective modules. The relation between Koszul duality and dual exceptional collections was further studied by A. Beilinson, D. Ginzburg and V. Schechtman in \cite{BGS88} for so-called mixed DG algebras. 
Also in the case of quasi-hereditary algebras, a relation between Koszul duality and Ringel duality (and also Serre duality) has been observed in a particular instance, namely that of strict polynomial functors (or equivalently Schur algebras of symmetric groups $S(n,d)$ for $n\geq d$). In this special case, it is more closely related to classical Koszul duality between the exterior algebra and the polynomial ring: (derived) tensoring with the exterior power $\Lambda^d$ induces an autoequivalence of the derived category of strict polynomial functors whose square, (derived) tensoring with symmetric power gives a Serre functor on this derived category. This is work by M. Cha\l upnik \cite{Cha08} and A. Touz\'e \cite{Tou13}, see also the summary by H. Krause, \cite{Kra13}.

The strategy of our proof for the part on Ringel duality follows \cite{Ovs06}. As already remarked, in his paper, the proofs of all statements except for the well-definedness in Theorem \ref{modN} are sketches. Here we provide explicit calculations. Furthermore,  in the proofs of Theorem \ref{FofBox} and the equivalence of \ref{modN} we chose to incline to explicit calculations and the known theory of quasi-hereditary algebras instead of referring to a yet to be developed derived homological algebra for bocses.

As mentioned earlier, in algebraic geometry, exceptional collections are used to realise derived categories of coherent sheaves on projective varieties as derived categories of finite dimensional algebras. In \cite{Bon89}, A. Bondal proved that full strong exceptional collections yield such derived equivalences. Here an exceptional collection is called \emphbf{full} if it classically generates the derived category and \emphbf{strong} if $\Hom(\Delta_\mathtt{i}, \Delta_\mathtt{l} [s]) = 0$ for all $\mathtt{i}$ and $\mathtt{l}$ and every $s \neq 0$. The next step is considering full \emphbf{almost strong} exceptional collections, i.e. collections where $\Hom(\Delta_\mathtt{i},\Delta_\mathtt{l}[s])=0$ for all $\mathtt{i}$ and $\mathtt{l}$ and every $s\neq 0,1$. In representation theory, quasi-hereditary algebras with such a set of standard modules are called \emphbf{left strongly quasi-hereditary}. They appear in O. Iyama's proof of finiteness of representation dimension, see \cite{Iya03, Rin10}. 

In algebraic geometry, examples of almost strong exceptional collections are given by exceptional collections of line bundles on smooth rational surfaces. Recall that every smooth rational surface $X$ (except for $\mathbb{P}^2$ which has a strong exceptional collection by the work of A. Be\u\i linson \cite{Bei78,Bei84}) is obtained from a Hirzebruch surface $\mathbb{F}_a$ by a sequence of blow-ups, i.e. there exists a birational morphism $f\colon X \to \mathbb{F}_a$. It is well-known that the derived category of a Hirzebruch surface admits a full strong exceptional collection $(F_\mathtt{1},\ldots,F_\mathtt{4})$, see e.g. \cite{HP11}. In \cite{HP11, HP14}, L. Hille and M. Perling showed how to obtain a full almost strong exceptional in $\mathcal{D}^b(\Coh X)$ from such a full strong exceptional collection on $\mathbb{F}_a$. In \cite{Bod13}, the first author proved that this exceptional collection on $X$ can be mutated to a collection $(\Delta_\mathtt{1}, \ldots, \Delta_{\mathtt{n-4}}, f^*(F_\mathtt{1}), \ldots, f^*(F_\mathtt{4}))$. The collection $(\Delta_\mathtt{1},\ldots,\Delta_{\mathtt{n-4}})$ is almost strong and the full subcategory of $\mathcal{D}^b(\Coh X)$ generated by $(\Delta_\mathtt{1},\ldots, \Delta_{\mathtt{n-4}})$ is equivalent to the derived category of modules over a quasi-hereditary algebra $\Lambda_f$. The objects $\Delta_\mathtt{1},\ldots, \Delta_{\mathtt{n-4}}$ correspond to the standard modules over $\Lambda_f$ and the dimension of both $\Hom_X(\Delta_\mathtt{i}, \Delta_\mathtt{l})$ and $\Ext^1_X(\Delta_\mathtt{i}, \Delta_\mathtt{l})$ is at most one for any pair $(\mathtt{i},\mathtt{l})$. Finally, the category of $\Lambda_f$-modules admits a duality which preserves simple modules. As the morphism $f$ is a contraction of a curve, we call algebras satisfying the above properties \emphbf{curve-like}. 

As an application of the main result of our paper, we illustrate how to obtain restrictions on the possible $A_\infty$-structures on the Ext-algebras of standard modules over curve-like algebras. We use them to classify all possible curve-like algebras with up to 4 simple modules.
\begin{thm*}
	There is one curve-like algebra with two simple modules, there are three curve-like algebras with three simple modules and thirteen curve-like algebras with four simple modules. All algebras with two and three simple modules are Morita equivalent to an algebra $\Lambda_f$ or its Ringel dual for a birational morphism $f$ of smooth surfaces. There are four curve-like algebras with four simple modules which are not Morita equivalent to $\Lambda_f$ or its Ringel dual for any $f$. In the classification of Section \ref{ssec_four_simple} these are algebras A1, B1, B2 and G1.  
\end{thm*}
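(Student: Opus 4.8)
The plan is to carry out a finite classification by means of the bocs dictionary of Theorem \ref{maintheoremKKO} together with the Ringel duality construction established above. Fix the number $m$ of simple modules. A curve-like algebra $\Lambda$ is quasi-hereditary, its standard modules $\Delta_\mathtt{i}$ satisfy $\dim_\mathbbm{k}\Hom_\Lambda(\Delta_\mathtt{i},\Delta_\mathtt{l})\le 1$ and $\dim_\mathbbm{k}\Ext^1_\Lambda(\Delta_\mathtt{i},\Delta_\mathtt{l})\le 1$ and form a left strongly quasi-hereditary (equivalently, almost strong) collection, and $\modu\Lambda$ carries a duality fixing the simples. Passing to the associated directed bocs $\mathfrak{A}=(A,V)$, this says that $A$ is the path algebra of a directed quiver $Q$ with at most one arrow $\mathtt{i}\to\mathtt{l}$ for $\mathtt{i}<\mathtt{l}$, that $\overline V$ is generated as an $A$-bimodule by at most one arrow $\mathtt{i}\to\mathtt{l}$ for each $\mathtt{i}<\mathtt{l}$, and that the duality forces the datum $(Q,\overline V)$ to be palindromic under $\mathtt{i}\mapsto m+1-\mathtt{i}$. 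The first step is to list all such pairs for $m=2,3,4$: each of the $\binom{m}{2}$ possible $\Hom$-arrows and $\binom{m}{2}$ possible $\Ext^1$-arrows is present or not, subject to the palindromic symmetry and to the requirement that the data really be that of a quasi-hereditary algebra with $\Ext^s_\Lambda(\Delta_\mathtt{i},\Delta_\mathtt{l})=0$ for $s\ge 2$; this is a bounded search.

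The second step equips each skeleton with the remaining structure, the comultiplication on $V$ --- equivalently, by the remark following the theorem stated in the introduction, the higher products $m_n$, $n\ge 3$, of the $A_\infty$-structure on $E:=\Ext^*_\Lambda(\Delta,\Delta)$. The key leverage is that $E$ is concentrated in degrees $0$ and $1$: since $m_n$ has degree $2-n$, it can be nonzero only on tuples with either exactly two or exactly one of the inputs in degree $0$, according as the output lies in degree $0$ or $1$. Together with the $A_\infty$-relations and with change of $A_\infty$-isomorphism, which I would use to put each structure into a normal form, this leaves only finitely many possibilities on each skeleton. Carrying this out produces the stated lists --- one curve-like algebra for $m=2$, three for $m=3$, thirteen for $m=4$ --- with explicit quivers, relations and $A_\infty$-data recorded in Section \ref{ssec_four_simple}.

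Third, I would apply the Ringel duality construction of the theorem above to each representative: the dual bocs $(B,W)$ is the quotient of the dual of the bar resolution of $\mathfrak{A}$ by the DG ideal generated by the negative-degree part, with $B$ generated by $\mathbb{D}\overline V$ and $W$ by $\mathbb{D}A$, and for these small examples this is an explicit finite computation, the induced differential recording the dualised higher products. This yields an involution on the classification list, with some self-dual fixed points, and in particular shows that the class of curve-like algebras is closed under Ringel duality, as the statement requires.

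Finally one has to separate the geometric algebras from the rest. For a birational morphism $f\colon X\to\mathbb{F}_a$ contracting $m$ curves, the algebra $\Lambda_f$ of \cite{Bod13} has $m$ standard modules, and its quiver and $A_\infty$-structure are dictated by the incidence combinatorics of the exceptional configuration and of the strong exceptional collection $(F_\mathtt{1},\dots,F_\mathtt{4})$ on $\mathbb{F}_a$. For $m\le 4$ there are only finitely many such $f$ up to the relevant equivalence --- parametrised by the Hirzebruch index $a$ and the blow-up tree --- so I would tabulate the corresponding $\Lambda_f$ and their Ringel duals and match them against the lists of the second step; this accounts for all curve-like algebras with two or three simples and for nine of the thirteen with four. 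The main obstacle is the converse for the remaining four, A1, B1, B2 and G1: one must show that no sequence of blow-ups over any Hirzebruch surface realises their quiver/$A_\infty$ data. I would do this by extracting from the defining exceptional collection of an arbitrary $\Lambda_f$ numerical invariants --- the Euler form on the Grothendieck group, the precise shape of the $\Hom$- and $\Ext^1$-quivers, and the constraints forced by $f^*$ being fully faithful and by $(F_\mathtt{1},\dots,F_\mathtt{4})$ being strong --- and checking case by case that each of the four candidates violates at least one of them.
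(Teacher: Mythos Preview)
There are several genuine gaps in your proposal, stemming from a misreading of the definition of curve-like and of how the duality condition is used.

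First, curve-like requires $\dim\Hom_\Lambda(\Delta_\mathtt{i},\Delta_\mathtt{l})=1=\dim\Ext^1_\Lambda(\Delta_\mathtt{i},\Delta_\mathtt{l})$ for all $\mathtt{i}<\mathtt{l}$, not $\le 1$. Hence the underlying differential biquiver is \emph{fixed} from the outset: one solid and one dashed arrow between every pair of vertices. Your first step, enumerating ``skeletons'', is unnecessary and in fact misleading. Moreover, the duality $\mathcal{D}$ preserving simples does not make the bocs data palindromic under $\mathtt{i}\mapsto m+1-\mathtt{i}$: $\mathcal{D}$ sends $\Delta_\mathtt{i}$ to $\nabla_\mathtt{i}$, so the constraint it imposes is $\dim\Hom(\Delta_\mathtt{i},\Delta_\mathtt{l})=\dim\Hom(\nabla_\mathtt{l},\nabla_\mathtt{i})$, i.e.\ a relation between $\Lambda$ and its \emph{Ringel dual}, not a symmetry of the quiver of $\Lambda$ itself.

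Second, and more importantly, the paper does not simply enumerate $A_\infty$-structures and then compute Ringel duals to pair them up. It uses the Ringel duality construction as a \emph{filter}: since the Ringel dual of a curve-like algebra must again be curve-like, one derives a priori constraints on the $A_\infty$-structure (Lemmas \ref{lem_comp_of_homs} and \ref{lem_comp_hom_ext}: all compositions of $\Hom$'s are nonzero, and of each pair $\psi a$, $b\varphi$ at least one is nonzero). These lemmas are proved by explicitly computing what the Ringel dual bocs would look like and checking that the contrary hypothesis forces a forbidden $\Ext^2$ between costandards or a wrong dimension. This cuts the enumeration for $m=4$ down to eleven patterns A--K; three further patterns (D, E, F) are then excluded by computing the regularised Ringel dual bocs and seeing that its dimensions do not match those of a curve-like algebra. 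Your proposal omits this entire mechanism; without it you have no tool to bound the search or to rule out cases.

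Third, for the geometric comparison the paper does not use Euler forms or constraints from $f^*$. The algebras $\Lambda_f$ arising from birational morphisms with four blow-ups (and total order) are listed explicitly in \cite{BodBon17}: they are A2, C, I1, I2, K, and together with their Ringel duals this accounts for nine of the thirteen. For the remaining four the paper observes that geometric $\Lambda_f$ always satisfy $\Ext^2_\Lambda(L(\mathtt{i}),L(\mathtt{l}))=0$ for $\mathtt{i}\neq\mathtt{l}$ (a sheaf-theoretic computation on the exceptional curves); this immediately excludes A1 and B2, and G1 is excluded as the Ringel dual of A1. The algebra B1 does satisfy the $\Ext^2$-condition, so its exclusion relies on the explicit geometric list rather than on any numerical invariant you propose.
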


The paper is structured as follows. In Section \ref{notation} we fix our notation. In Section \ref{quasihereditaryalgebras} we recall the necessary background on quasi-hereditary algebras. In particular, we identify the costandard modules in the derived category. Section \ref{directedbocses} is devoted to recalling the main results of \cite{KKO14} describing quasi-hereditary algebras in terms of directed bocses. In Section \ref{sec_homotopically_projective} we describe the Serre duals of $\nabla_\mathtt{i}$ as objects of the derived category of the module category of the bocs. In Section \ref{sec_cat_F_Box} we describe the category of modules filtered by costandard modules in terms of a ``category of comodules'' $\mathcal{N}(\mathfrak{A})$.  In Section \ref{dualising} we use a standard isomorphism to translate $\mathcal{N}(\mathfrak{A})$ into the category $\mathcal{R}(\mathfrak{A})$ closer to quiver representations. Section \ref{ringeldualbocs} defines the bocs corresponding to the Ringel dual of a quasi-hereditary algebra given the datum of the bocs of a quasi-hereditary algebra. Finally, in Section \ref{applications} we apply our results to obtain restrictions on the $A_\infty$-algebra structures on $\Ext$-algebras of standard modules over curve-like algebras.

\textbf{Acknowledgement} We would like to thank Steffen K{\"o}nig and the anonymous referee for many useful remarks. The first named author was partially supported by the EPSRC grant EP/K021400/1.

\section{Notation}\label{notation}

We work over an algebraically closed field $\mathbbm{k}$. We consider $\mathbbm{k}$-algebras which are always assumed to be unital, associative, and finite dimensional. For an algebra $\Lambda$, we denote by $\modu \Lambda$ the category of finite dimensional left $\Lambda$-modules. We consider the duality functor $\mathbb{D} = \Hom_{\mathbbm{k}}(-, \mathbbm{k}) \colon (\modu \Lambda)^{\textrm{op}} \to \modu (\Lambda^{\textrm{op}})$ which maps left $\Lambda$-modules to right ones. If $\Lambda$ is a basic algebra with $\Lambda/\textrm{rad} \Lambda \cong \mathbbm{k}^{\oplus n}$, the set $\{\mathtt{1},\ldots, \mathtt{n}\}$ parametrizes distinct maximal left ideals in $\Lambda$, i.e. pairwise non-isomorphic simple $\Lambda$-modules. For $\mathtt{i}\in \{\mathtt{1},\ldots, \mathtt{n}\}$ we denote by $L(\mathtt{i})$ the corresponding simple module and by $P(\mathtt{i})$ its irreducible projective cover. We shall write $\mathtt{i}$, $\mathtt{l}$ and $\mathtt{m}$ for elements of the set $\{\mathtt{1}, \ldots, \mathtt{n}\}$. 
We fix pairwise orthogonal idempotents $\{e_\mathtt{1},\ldots, e_{\mathtt{n}}\}$ in $\Lambda$. This choice yields an isomorphism $P(\mathtt{i}) \cong A e_{\mathtt{i}}$ and a homomorphism of algebras $\iota \colon \mathbb{L} \to \Lambda$, for the semi-simple algebra $\mathbb{L}:= \prod_{\mathtt{m}=\mathtt{1}}^{\mathtt{n}} \mathbbm{k}$. The map $\iota$ allows us to consider any $\Lambda$-module as an $\mathbb{L}$-module. 

A quiver $Q=(Q_0,Q_1,s,t)$ consists of a set of vertices $Q_0$, a set of arrows $Q_1$ and two functions $s,t\colon Q_1\to Q_0$ giving the source and the target of an arrow, respectively. The bocses corresponding to left strongly quasi-hereditary algebras can be described using differential biquivers. In this case, the arrows of $Q$ are graded. We write $Q_1^j$ for the component of degree $j$ in $Q_1$.

For category $\mathcal{A}$ which is either abelian or triangulated and a set of objects $\Theta\subset \mathcal{A}$ we denote by $\mathcal{F}(\Theta)$ the category of \emphbf{$\Theta$-filtered objects}, e.g. $\mathcal{A} = \modu \Lambda$ for a quasi-hereditary algebra $\Lambda$ and $\Theta= \Delta$ or $\Theta = \nabla$ . If $\mathcal{A}$ is triangulated, $\mathcal{F}(\Theta)$ is the smallest \emphbf{extension closed} subcategory of $\mathcal{A}$ containing $\Theta$, i.e. the smallest subcategory $\mathcal{A}'$ such that $\Theta \subset \mathcal{A}'$ and, for any distinguished triangle $X \to Y \to Z \to X[1]$ with $X$ and $Z$ in $\mathcal{A}'$ the object $Y$ also belongs to $\mathcal{A}'$. In the case when $\mathcal{A}$ is abelian, the subcategory $\mathcal{F}(\Theta)\subset /mathcal{D}^b(\mathcal{A})$ is an exact category and can be described as the full subcategory of $\mathcal{A}$ whose objects admit a finite filtration with graded factors isomorphic to objects of $\Theta$. If the objects in $\Theta$ are indecomposable, $\mathcal{F}(\Theta)$ is idempotent complete.

%Given a quasi-hereditary algebra $\Lambda$, the category of modules filtered by standard modules $\mathcal{F}(\Delta)$ is an example of an idempotent complete exact category. 
In general, for an idempotent complete exact category $\mathcal{E}$ we denote by $\mathcal{C}^b(\mathcal{E})$ the category of bounded complexes in $\mathcal{E}$, by $\mathcal{K}^b(\mathcal{E})$ its bounded homotopy category, and by $\mathcal{D}^b(\mathcal{E})$ its bounded derived category, which exists by work of Thomason and Trobaugh \cite{TT90}, see also \cite{Nee90}. The shift functor in $\mathcal{C}^b(\mathcal{E})$, $\mathcal{K}^b(\mathcal{E})$ as well as in $\mathcal{D}^b(\mathcal{E})$ is denoted by $[1]$.

For a bocs $\mathfrak{A} = (A, V)$ as defined in Definition \ref{def:directedbocs} we consider the following $A$-bilinear maps:
\begin{itemize}
	\item $m_A\colon A\otimes_\mathbb{L} A\to A$, the multiplication map
	\item $m_l\colon A\otimes_\mathbb{L} \overline{V}\to \overline{V}$, the defining map for the left module structure,
	\item $m_r\colon \overline{V}\otimes_\mathbb{L} A\to \overline{V}$, the definining map for the right module structure,
	\item $m_L\colon A\otimes_{\mathbb{L}} (\overline{V}\otimes_A\overline{V})\to \overline{V}\otimes_A\overline{V}$, the left module structure map,
	\item $m_R\colon (\overline{V}\otimes_A \overline{V})\otimes_{\mathbb{L}} A\to \overline{V}\otimes_A \overline{V}$, the right module structure map
	\item $m_{\overline{V}}\colon \overline{V}\otimes_\mathbb{L} \overline{V}\to \overline{V}\otimes_A \overline{V}$, the natural projection. 
\end{itemize}

\section{Quasi-hereditary algebras}\label{quasihereditaryalgebras}

Quasi-hereditary algebras were introduced by L. Scott in \cite{Sco87}, see also \cite{CPS88} for the more general notion of a highest weight category which allows infinitely many simple objects. Their distinguished feature is the existence of certain modules $\Delta_{\mathtt{i}}$ which are quotients of indecomposable projectives and project onto the simple modules. Dually, there also exist modules $\nabla_{\mathtt{i}}$ which have simple socle and embedd into the indecomposable injectives. In \cite{Rin91}, C. Ringel proved that the category of modules filtered by such modules has Auslander--Reiten sequences. He also introduced an algebra, now called the Ringel dual, such that the category of modules filtered by standard modules for the Ringel dual is equivalent to the category of modules filtered by costandard modules. Important examples of quasi-hereditary algebras include blocks of BGG category $\mathcal{O}$ associated to a complex semisimple Lie algebra (\cite{BGG76}), Schur algebras of symmetric groups (\cite{Don81, Par89}), and algebras of global dimension at most two (\cite{DR89}). For further information on quasi-hereditary algebras, see the excellent survey articles by Dlab--Ringel \cite{DR92} and Klucznik--Koenig \cite{KK99}.

\begin{defn}[{\cite[Theorem 1]{DR92}}]
Let $\Lambda$ be an algebra with isomorphism classes of simple modules indexed by $\{\mathtt{1},\dots,\mathtt{n}\}$. For $\mathtt{i}\in \{\mathtt{1},\dots,\mathtt{n}\}$ the module 
\[\Delta(\mathtt{i}):=P(\mathtt{i})/\left(\sum_{\substack{f\colon P(\mathtt{l})\to P(\mathtt{i})\\\mathtt{l}> \mathtt{i}}}\image f\right)\]
is called the \emphbf{standard module} associated to $\mathtt{i}$.  The algebra $\Lambda$ is called \emphbf{quasi-hereditary} if $\End_{\Lambda}(\Delta(\mathtt{i}))\cong \mathbbm{k}$ for all $\mathtt{i}\in \{\mathtt{1},\dots,\mathtt{n}\}$ and $\Lambda\in \mathcal{F}(\Delta)$.
\end{defn}

\begin{rmk}
Dually, using the injective modules $I(\mathtt{i})$ with socle $L(\mathtt{i})$, one can define the \emphbf{costandard modules} $\nabla(\mathtt{i})$. For an algebra $\Lambda$ to be quasi-hereditary is then equivalent to $\End_\Lambda(\nabla(\mathtt{i}))\cong \mathbbm{k}$ for all $\mathtt{i}\in \{\mathtt{1},\dots,\mathtt{n}\}$ and $\Lambda\in \mathcal{F}(\nabla)$. 
\end{rmk}

Dlab--Ringel's standardisation theorem states that every set of objects in an abelian category which behaves like the set of standard modules for a quasi-hereditary algebra (i.e. forms an exceptional collection in the abelian category), actually comes from a quasi-hereditary algebra:

\begin{thm}[Dlab--Ringel standardisation theorem, {\cite[Theorem 2]{DR92}}]
	\label{dlabringel}
Let $\mathcal{C}$ be an abelian category. Let $\{\Theta(\mathtt{i})\,|\,\mathtt{i}=\mathtt{1},\dots,\mathtt{n}\}$ be a standardisable set of objects, i.e. a set of objects in $\mathcal{C}$ satisfying
\begin{enumerate}
\item[(F)] $\dim_{\mathbbm{k}} \Hom_\mathcal{C}(\Theta(\mathtt{i}),\Theta(\mathtt{l}))<\infty$, $\dim_{\mathbbm{k}} \Ext^1_\mathcal{C}(\Theta(\mathtt{i}),\Theta(\mathtt{l}))<\infty$,
\item[(D)] $\End_\mathcal{C}(\Theta(\mathtt{i}))\cong \mathbbm{k}$, $\Hom_\mathcal{C}(\Theta(\mathtt{i}),\Theta(\mathtt{l}))\neq 0\Rightarrow \mathtt{i}\leq \mathtt{l}$ and $\Ext^1_\mathcal{C}(\Theta(\mathtt{i}),\Theta(\mathtt{l}))\neq 0\Rightarrow \mathtt{i}< \mathtt{l}$. 
\end{enumerate}
Then, there exists a quasi-hereditary algebra $\Gamma$, unique up to Morita equivalence, such that the  categories $\mathcal{F}(\Theta)$ and $\mathcal{F}(\Delta_\Gamma)$ are equivalent.
\end{thm}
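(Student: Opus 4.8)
The plan is to realise $\mathcal{F}(\Theta)$ as the category of $\Delta$-filtered modules over the endomorphism algebra of a suitable relative projective generator built inside $\mathcal{C}$. \textit{Step 1 (relative projective covers).} By downward induction on $\mathtt{i}$ I would construct $P(\mathtt{i})\in\mathcal{C}$ together with an epimorphism $P(\mathtt{i})\twoheadrightarrow\Theta(\mathtt{i})$ whose kernel $K(\mathtt{i})$ is filtered by the $\Theta(\mathtt{l})$ with $\mathtt{l}>\mathtt{i}$, and with $\Ext^1_{\mathcal{C}}(P(\mathtt{i}),\Theta(\mathtt{l}))=0$ for all $\mathtt{l}$. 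Take $P(\mathtt{n})=\Theta(\mathtt{n})$: the required vanishing holds because by (D) a nonzero $\Ext^1_{\mathcal{C}}(\Theta(\mathtt{n}),\Theta(\mathtt{l}))$ would force $\mathtt{n}<\mathtt{l}$. Given $P(\mathtt{i}+1),\dots,P(\mathtt{n})$, start from $\Theta(\mathtt{i})$ and, for $\mathtt{l}=\mathtt{i}+1,\dots,\mathtt{n}$ in this order, replace the current object $M$ by the middle term $M'$ of the universal extension $0\to\Theta(\mathtt{l})^{d_{\mathtt{l}}}\to M'\to M\to0$, where $d_{\mathtt{l}}=\dim_{\mathbbm{k}}\Ext^1_{\mathcal{C}}(M,\Theta(\mathtt{l}))<\infty$ by (F) and the extension corresponds, via $\End_{\mathcal{C}}(\Theta(\mathtt{l}))\cong\mathbbm{k}$, to a basis of $\Ext^1_{\mathcal{C}}(M,\Theta(\mathtt{l}))$. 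The long exact sequence then gives $\Ext^1_{\mathcal{C}}(M',\Theta(\mathtt{l}))=0$, and by (D) the passage $M\rightsquigarrow M'$ can only create $\Ext^1$ against $\Theta(\mathtt{m})$ with $\mathtt{m}>\mathtt{l}$; hence after one sweep $\Ext^1_{\mathcal{C}}(-,\Theta(\mathtt{l}))=0$ for all $\mathtt{l}>\mathtt{i}$, while for $\mathtt{l}\le\mathtt{i}$ this is automatic on an object filtered by $\Theta(\mathtt{m})$ with $\mathtt{m}\geq\mathtt{i}$, again by (D). The result is $P(\mathtt{i})$. A short d\'evissage along the filtrations using (D) and (F) gives $\Hom_{\mathcal{C}}(P(\mathtt{i}),\Theta(\mathtt{l}))=0$ for $\mathtt{l}<\mathtt{i}$ (so the $P(\mathtt{i})$ are pairwise non-isomorphic), $\Hom_{\mathcal{C}}(P(\mathtt{i}),\Theta(\mathtt{i}))\cong\mathbbm{k}$, finiteness and locality of $\End_{\mathcal{C}}(P(\mathtt{i}))$, and $\Ext^1_{\mathcal{C}}(P(\mathtt{i}),X)=0$ for \emph{every} $X\in\mathcal{F}(\Theta)$. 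Thus $P:=\bigoplus_{\mathtt{i}}P(\mathtt{i})$ is $\Ext$-projective in $\mathcal{F}(\Theta)$; since $\Theta(\mathtt{i})$ is a quotient of $P(\mathtt{i})$ it is a projective generator, and the kernel of an epimorphism from $\add P$ can be chosen inside $\mathcal{F}(\Theta)$, so $\mathcal{F}(\Theta)$ has $\add P$-resolutions.

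\textit{Step 2 (the algebra $\Gamma$).} Put $\Gamma:=\End_{\mathcal{C}}(P)^{\mathrm{op}}$, a finite-dimensional basic algebra with indecomposable projectives $P_\Gamma(\mathtt{i})=\Hom_{\mathcal{C}}(P,P(\mathtt{i}))$, and let $F:=\Hom_{\mathcal{C}}(P,-)\colon\mathcal{F}(\Theta)\to\modu\Gamma$. Since $\Ext^1_{\mathcal{C}}(P,-)$ kills $\mathcal{F}(\Theta)$, $F$ is exact; it is faithful because $P$ generates, and full because, given an $\add P$-presentation $P_1\to P_0\to X\to0$, applying the exact $F$ gives a projective presentation of $FX$, a $\Gamma$-morphism $FX\to FY$ lifts to $P_0$ by fullness of $F$ on $\add P$, and pushes down by faithfulness. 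Set $\Delta_\Gamma(\mathtt{i}):=F\Theta(\mathtt{i})$. Applying $F$ to $0\to K(\mathtt{i})\to P(\mathtt{i})\to\Theta(\mathtt{i})\to0$ exhibits $P_\Gamma(\mathtt{i})\twoheadrightarrow\Delta_\Gamma(\mathtt{i})$ with kernel filtered by the $\Delta_\Gamma(\mathtt{l})$, $\mathtt{l}>\mathtt{i}$; together with $\Hom_\Gamma(P_\Gamma(\mathtt{l}),\Delta_\Gamma(\mathtt{i}))=\Hom_{\mathcal{C}}(P(\mathtt{l}),\Theta(\mathtt{i}))=0$ for $\mathtt{l}>\mathtt{i}$ this identifies $\Delta_\Gamma(\mathtt{i})$ with the standard $\Gamma$-module; $\End_\Gamma(\Delta_\Gamma(\mathtt{i}))\cong\End_{\mathcal{C}}(\Theta(\mathtt{i}))\cong\mathbbm{k}$ by fullness; and $\Gamma=FP\in\mathcal{F}(\Delta_\Gamma)$ because $P\in\mathcal{F}(\Theta)$ and $F$ is exact. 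Hence $\Gamma$ is quasi-hereditary.

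\textit{Step 3 (equivalence and uniqueness).} The exact structures on $\mathcal{F}(\Theta)\subset\mathcal{C}$ and $\mathcal{F}(\Delta_\Gamma)\subset\modu\Gamma$ are the inherited ones (an extension of filtered objects is filtered), so $\Ext^1$ in either exact category agrees with $\Ext^1$ in the ambient abelian category; computing both via $\add P$-resolutions and using exactness and full faithfulness of $F$ yields a natural isomorphism $\Ext^1_{\mathcal{C}}(X,Y)\cong\Ext^1_\Gamma(FX,FY)$ for $X,Y\in\mathcal{F}(\Theta)$. As $F\Theta(\mathtt{i})=\Delta_\Gamma(\mathtt{i})$ and $F$ is exact, $F$ maps $\mathcal{F}(\Theta)$ fully faithfully into $\mathcal{F}(\Delta_\Gamma)$, and essential surjectivity follows by induction on filtration length: a short exact sequence $0\to FX\to N\to\Delta_\Gamma(\mathtt{i})\to0$ is classified by a class in $\Ext^1_\Gamma(\Delta_\Gamma(\mathtt{i}),FX)\cong\Ext^1_{\mathcal{C}}(\Theta(\mathtt{i}),X)$, hence is the image of a conflation in $\mathcal{F}(\Theta)$, so $N$ lies in the image of $F$. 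Thus $F\colon\mathcal{F}(\Theta)\xrightarrow{\sim}\mathcal{F}(\Delta_\Gamma)$. For uniqueness, if $\Gamma'$ is quasi-hereditary with $\mathcal{F}(\Delta_{\Gamma'})\simeq\mathcal{F}(\Theta)$, then $\Gamma'\in\mathcal{F}(\Delta_{\Gamma'})$ is a projective generator of that idempotent-complete exact category (projective in $\modu\Gamma'$, and every $\Delta_{\Gamma'}(\mathtt{i})$ is a quotient of it), hence corresponds to a projective generator of $\mathcal{F}(\Theta)$; projective generators of an idempotent-complete exact category with enough projectives agree up to $\add$, so $\add(\text{image of }\Gamma')=\add P$ and $\Gamma'\cong\End(\Gamma')^{\mathrm{op}}$ is Morita equivalent to $\End_{\mathcal{C}}(P)^{\mathrm{op}}=\Gamma$.

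\textit{Main obstacle.} The crux is Step 1: that the universal-extension process is well defined and \emph{terminates} with the correct $\Ext^1$-vanishing. Condition (F) is exactly what keeps each universal extension finite, and the directedness in (D) is exactly what makes a single sweep $\mathtt{l}=\mathtt{i}+1,\dots,\mathtt{n}$ suffice --- forming a universal extension by $\Theta(\mathtt{l})$ annihilates $\Ext^1(-,\Theta(\mathtt{l}))$ permanently and can only reintroduce $\Ext^1$ against strictly larger indices. Everything after Step 1 is formal bookkeeping in exact categories around the functor $\Hom_{\mathcal{C}}(P,-)$; the only mildly delicate auxiliary points are locality of $\End_{\mathcal{C}}(P(\mathtt{i}))$ and existence of $\add P$-resolutions inside $\mathcal{F}(\Theta)$, both of which reduce to d\'evissage along the $\Theta$-filtration together with (D) and (F).
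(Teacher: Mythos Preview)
The paper does not prove this theorem; it is quoted from Dlab--Ringel \cite{DR92} as background. Your sketch is essentially the original Dlab--Ringel argument: build $\Ext$-projective covers $P(\mathtt{i})$ by universal extensions, set $\Gamma=\End(\bigoplus P(\mathtt{i}))^{\op}$, and use $\Hom(P,-)$ to identify $\mathcal{F}(\Theta)$ with $\mathcal{F}(\Delta_\Gamma)$. Steps~2 and~3 are fine and indeed formal once Step~1 is in place.

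There is, however, one point in Step~1 that is not as innocent as ``a short d\'evissage'': the locality of $\End_{\mathcal C}(P(\mathtt{i}))$. You need the $P(\mathtt{i})$ to be indecomposable so that the $FP(\mathtt{i})$ are the indecomposable projectives of $\Gamma$ and the $F\Theta(\mathtt{i})$ are really \emph{the} standard modules. A naive filtration argument does not give this: for $f\in J:=\ker\big(\End(P(\mathtt{i}))\to\End(\Theta(\mathtt{i}))\big)$, the induced endomorphism on a graded piece $\Theta(\mathtt{l})^{d_{\mathtt{l}}}$ of $K(\mathtt{i})$ lies in $\Mat_{d_{\mathtt{l}}}(\mathbbm{k})$ and could in principle be invertible. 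What forces nilpotency is the \emph{universal} (minimal) nature of the extensions: if $\xi\in\Ext^1(M^{(\mathtt{l}-1)},\Theta(\mathtt{l})^{d_{\mathtt{l}}})$ is the universal class and $f$ induces $(\bar f_{\mathtt{l}},\hat f)$ on the extension $0\to\Theta(\mathtt{l})^{d_{\mathtt{l}}}\to M^{(\mathtt{l})}\to M^{(\mathtt{l}-1)}\to 0$, then $(\bar f_{\mathtt{l}})_*\xi=\hat f^*\xi$, and since the components of $\xi$ form a basis of $\Ext^1(M^{(\mathtt{l}-1)},\Theta(\mathtt{l}))$ this identifies $\bar f_{\mathtt{l}}$ with the matrix of $\hat f^*$. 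Starting from $\hat f|_{\Theta(\mathtt{i})}=0$ one gets $\bar f_{\mathtt{i}+1}=0$, hence $\hat f$ on $M^{(\mathtt{i}+1)}$ is nilpotent, hence $\bar f_{\mathtt{i}+2}$ is nilpotent, and so on; thus $f$ is upper triangular with nilpotent diagonal blocks, so $f$ is nilpotent and $J=\radoperator\End(P(\mathtt{i}))$. You should either include this argument or, as Dlab--Ringel do, phrase the construction so that minimality is visibly used; ``d\'evissage using (D) and (F)'' alone is not enough.
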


The costandard modules form a standardisable set in $\modu \Lambda$ with respect to the opposite ordering of $\{\mathtt{1},\dots,\mathtt{n}\}$.

As already mentioned in Section \ref{notation} the categories $\mathcal{F}(\Delta)$ and $\mathcal{F}(\nabla)$ are idempotent split exact categories.
Recall that an object $T$ in an exact category $\mathcal{E}$ is an \emphbf{$\Ext$-projective generator for} $\mathcal{E}$ if, for any $X \in \mathcal{E}$, there exists an admissible epimorphism $T^{\oplus j} \to X$ in $\mathcal{E}$, for some $j$, while the group $\Ext^1_{\mathcal{E}}(T,X)$ vanishes.

\begin{defn}
Let $\Lambda$ be a quasi-hereditary algebra. The (unique up to multiplicities of direct summands) $\Ext$-projective generator $T$ of $\mathcal{F}(\nabla)$ is called the \emphbf{characteristic tilting module} of $\Lambda$. The opposite of the endomorphism algebra of the characteristic tilting module is called the \emphbf{Ringel dual} of $\Lambda$. It is Morita equivalent to the algebra $\Gamma$ obtained by applying Dlab--Ringel standardisation theorem to the standardisable set of costandard modules.
\end{defn}

\begin{rmk}
As proven by Ringel in \cite[Theorem 5]{Rin91}, the module $T$ is indeed a \emphbf{tilting} module in the sense of Miyashita \cite{Miy86}, i.e. 
\begin{enumerate}[{(T}1{)}]
\item $\projdim T<\infty$,
\item $\Ext^j_\Lambda(T,T)=0$ for all $j\neq 0$,
\item $T$ has $\mathtt{n}$ indecomposable direct summands up to isomorphism.
\end{enumerate}
This implies in particular that a quasi-hereditary algebra $\Lambda$ and its Ringel dual $\Gamma$ are derived equivalent.
\end{rmk}

\begin{lem}[{\cite[Lemma 7.1]{MS16}}]
	Let $\Lambda$ be a quasi-hereditary algebra. Then 
	\[\mathcal{D}^b(\mathcal{F}(\Delta))\simeq \mathcal{D}^b(\modu \Lambda)\simeq \mathcal{D}^b(\mathcal{F}(\nabla)).\]  
\end{lem}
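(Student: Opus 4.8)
The statement to prove is the lemma attributed to Mazorchuk--Stroppel: for a quasi-hereditary algebra $\Lambda$,
\[\mathcal{D}^b(\mathcal{F}(\Delta))\simeq \mathcal{D}^b(\modu \Lambda)\simeq \mathcal{D}^b(\mathcal{F}(\nabla)).\]

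\textbf{Plan of proof.} The plan is to exhibit the inclusion functor $\iota\colon \mathcal{F}(\Delta)\hookrightarrow \modu\Lambda$ of exact categories and show that it induces an equivalence on bounded derived categories, and then to invoke the duality $\mathbb{D}$ together with the fact that Ringel duality (or just $\mathbb{D}$ applied to the $\Lambda^{\op}$-version) exchanges $\mathcal{F}(\Delta)$ and $\mathcal{F}(\nabla)$, so that the second equivalence follows formally from the first applied to a suitable algebra. Concretely, since $\modu\Lambda^{\op}$ is again the module category of a quasi-hereditary algebra whose standard modules are $\mathbb{D}\nabla_\Lambda(\mathtt{i})$ (with respect to the opposite order, as recalled just before this lemma), the first equivalence $\mathcal{D}^b(\mathcal{F}(\Delta_{\Lambda^{\op}}))\simeq \mathcal{D}^b(\modu\Lambda^{\op})$ dualises to $\mathcal{D}^b(\mathcal{F}(\nabla_\Lambda))\simeq \mathcal{D}^b(\modu\Lambda)$, because $\mathbb{D}$ is an exact duality sending a finite $\Delta_{\Lambda^{\op}}$-filtration to a finite $\nabla_\Lambda$-filtration and hence extends to an equivalence (an anti-equivalence composed with the canonical $\mathcal{D}^b((-)^{\op})\simeq \mathcal{D}^b(-)^{\op}$) of the bounded derived categories. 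So the whole content is the single equivalence $\mathcal{D}^b(\mathcal{F}(\Delta))\simeq\mathcal{D}^b(\modu\Lambda)$.

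For that equivalence I would use the standard criterion: an exact functor between exact categories that is fully faithful, and such that the source is closed under extensions and every object of the target has a finite coresolution (or resolution) by objects of the source, induces an equivalence on bounded derived categories. Here one first checks that $\mathcal{F}(\Delta)$ is a full extension-closed (hence exact) subcategory of $\modu\Lambda$ — this is classical for quasi-hereditary algebras and is already recalled in the excerpt. The two key homological inputs are: (i) $\Ext^{>0}_{\modu\Lambda}(\Delta(\mathtt{i}),-)$ restricted to $\mathcal{F}(\Delta)$ agrees with the Yoneda $\Ext$ computed inside the exact category $\mathcal{F}(\Delta)$ — equivalently, $\mathcal{F}(\Delta)$ is "$\Ext$-full" in $\modu\Lambda$; and (ii) every $\Lambda$-module admits a finite resolution $0\to X_m\to\cdots\to X_0\to M\to 0$ with all $X_j\in\mathcal{F}(\Delta)$. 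Point (ii) holds because $\Lambda\in\mathcal{F}(\Delta)$ by definition of quasi-hereditary, so projectives lie in $\mathcal{F}(\Delta)$, and $\Lambda$ has finite global dimension (a quasi-hereditary algebra always does), so the projective resolution of $M$ is finite and already provides such a resolution. Granting (i) and (ii), the general dévissage/resolution theorem for derived categories of exact categories (Keller's or the version in Thomason--Trobaugh, both of which are already cited in the excerpt for the existence of $\mathcal{D}^b$ of an exact category) yields that $\mathcal{D}^b(\mathcal{F}(\Delta))\to\mathcal{D}^b(\modu\Lambda)$ is an equivalence.

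The main obstacle is point (i): verifying that the inclusion is compatible with $\Ext$-groups in all degrees, i.e. that a long exact sequence of $\Lambda$-modules with outer terms and the "middle" pieces in $\mathcal{F}(\Delta)$ can be spliced from short exact sequences that are admissible in $\mathcal{F}(\Delta)$. This is where one uses the characteristic property that $\Ext^{j}_\Lambda(\Delta(\mathtt{i}),\Delta(\mathtt{l}))$ can be nonzero only when $\mathtt{i}<\mathtt{l}$ for $j\ge 1$, so that kernels and cokernels occurring in a $\Delta$-filtered resolution stay $\Delta$-filtered; more precisely one argues by induction on the length of $\Delta$-filtrations and on the poset $\{\mathtt{1},\ldots,\mathtt{n}\}$ that $\mathcal{F}(\Delta)$ is closed under kernels of admissible epimorphisms whose target is $\Delta$-filtered, which is exactly the "relative" homological algebra of quasi-hereditary algebras going back to Ringel. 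Everything else — fullness and faithfulness of $\iota$ on $\Hom$, exactness of $\mathbb{D}$, the reduction of the $\nabla$-statement to the $\Delta$-statement over $\Lambda^{\op}$ — is routine. Since the excerpt cites \cite{MS16} for this lemma, I would in the write-up simply sketch the resolution argument above and refer to \cite[Lemma 7.1]{MS16} (and e.g. \cite{DR92,Rin91}) for the details on $\Ext$-compatibility.
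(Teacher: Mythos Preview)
The paper does not give its own proof of this lemma: it is stated with a citation to \cite[Lemma~7.1]{MS16} and used as a black box. Your sketch is the standard argument and is essentially correct; it is exactly the kind of proof the cited reference gives.

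One small comment on emphasis: what you call the ``main obstacle'' (point~(i), $\Ext$-compatibility) is actually a consequence of a single structural fact rather than an inductive argument on the poset. Namely, Ringel showed that $\mathcal{F}(\Delta)$ is \emph{resolving}: it contains the projectives, is closed under extensions, and is closed under kernels of surjections in $\modu\Lambda$ (if $0\to K\to X\to Y\to 0$ is exact with $X,Y\in\mathcal{F}(\Delta)$ then $K\in\mathcal{F}(\Delta)$). Once you have this, every object of $\mathcal{F}(\Delta)$ has a finite projective resolution \emph{inside} $\mathcal{F}(\Delta)$, and since projectives are $\Ext$-projective in both categories, the Yoneda $\Ext$-groups in $\mathcal{F}(\Delta)$ automatically coincide with those in $\modu\Lambda$. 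Combined with your point~(ii) (finite global dimension, so finite projective resolutions exist for all modules), the Keller/Thomason--Trobaugh criterion applies directly. Your reduction of the $\nabla$-statement to the $\Delta$-statement over $\Lambda^{\op}$ via $\mathbb{D}$ is fine.
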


The costandard modules can be identified in $\mathcal{D}^b(\modu \Lambda)$ by a certain orthogonality property with respect to the standard modules. In Section \ref{sec_cat_F_Box}, we use this to identify the objects corresponding to the costandard modules in yet another description of the derived category of $\Lambda$.

\begin{lem}\label{costandards}
Let $\Lambda$ be a quasi-hereditary algebra, $\mathtt{i}\in \{\mathtt{1},\dots, \mathtt{n}\}$, and $M$ an object of $\mathcal{D}^b(\modu \Lambda)$ such that for all $\mathtt{i}\in \{\mathtt{1},\dots,\mathtt{n}\}$, $\Hom_\Lambda(\Delta(\mathtt{l}),M[s])\cong \begin{cases}\mathbbm{k}&\text{if $s=0, \mathtt{l}=\mathtt{i}$,}\\0&\text{else.}\end{cases}$ Then $M\cong \nabla(\mathtt{i})$.
\end{lem}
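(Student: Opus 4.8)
The plan is to characterize $\nabla(\mathtt{i})$ by the stated $\Hom$-orthogonality against the standard modules and to show that any $M$ satisfying it is forced to coincide with $\nabla(\mathtt{i})$. First I would recall that the standard modules $(\Delta(\mathtt{1}),\dots,\Delta(\mathtt{n}))$ form a full exceptional collection in $\mathcal{D}^b(\modu\Lambda)$ and that, because $\Lambda$ has finite global dimension, this triangulated category has a Serre functor; hence there is a right dual exceptional collection $(\nabla(\mathtt{n}),\dots,\nabla(\mathtt{1}))$, and it is a standard fact (going back to Bondal, and recalled in the introduction via the costandard modules being the right Koszul dual) that the right dual of $(\Delta(\mathtt{l}))$ is precisely the collection of costandard modules, i.e. $\Hom_\Lambda(\Delta(\mathtt{l}),\nabla(\mathtt{i})[s])\cong\mathbbm{k}$ if $s=0$ and $\mathtt{l}=\mathtt{i}$, and $0$ otherwise. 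So $\nabla(\mathtt{i})$ itself satisfies the hypothesis on $M$; it remains to prove uniqueness.

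For uniqueness, I would argue that the full exceptional collection $(\Delta(\mathtt{1}),\dots,\Delta(\mathtt{n}))$ generates $\mathcal{D}^b(\modu\Lambda)$, so an object is determined up to isomorphism by its $\Hom$-functor restricted to the $\Delta(\mathtt{l})[s]$: more precisely, I would consider the difference in the following sense. Given $M$ with the prescribed $\Hom$-groups, both $M$ and $\nabla(\mathtt{i})$ have the same (one-dimensional) image under $\Hom_\Lambda(\Delta(\mathtt{l}),-[s])$ for every $\mathtt{l}$ and $s$. The plan is to build a morphism $f\colon \nabla(\mathtt{i})\to M$ (or $M\to\nabla(\mathtt{i})$) realizing the identity on $\Hom_\Lambda(\Delta(\mathtt{i}),-)$, then to complete it to a triangle $\nabla(\mathtt{i})\xrightarrow{f} M\to C\to\nabla(\mathtt{i})[1]$, apply $\Hom_\Lambda(\Delta(\mathtt{l}),-[s])$ to the long exact sequence, and deduce from the hypotheses that $\Hom_\Lambda(\Delta(\mathtt{l}),C[s])=0$ for all $\mathtt{l}$ and all $s$; since $(\Delta(\mathtt{l}))$ generates, this forces $C\cong 0$ and hence $f$ is an isomorphism. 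To obtain such an $f$ I would use that $\nabla(\mathtt{i})$ sits in the subcategory $\langle\Delta(\mathtt{1}),\dots,\Delta(\mathtt{i})\rangle$ and argue by induction on the filtration, or more cleanly invoke that in the presence of a full exceptional collection the object $\nabla(\mathtt{i})$ corepresents the functor $N\mapsto\Hom_\Lambda(\Delta(\mathtt{i}),N)$ suitably localized — this is exactly the mutation/dual-collection statement — so a class in $\Hom_\Lambda(\Delta(\mathtt{i}),M)\cong\mathbbm{k}$ nonzero corresponds to a distinguished $f\colon\nabla(\mathtt{i})\to M$.

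I expect the main obstacle to be making the corepresentability/adjunction argument precise without circularity: one must know that $\nabla(\mathtt{i})$, as an object of $\mathcal{D}^b(\modu\Lambda)$, genuinely is the right dual to $\Delta(\mathtt{i})$ with respect to the full exceptional collection of standards, rather than merely that the costandard modules form a standardisable set. The cleanest route is to cite the construction of right dual exceptional collections via mutation along the global half-twist (as in the introduction and in \cite{Bon89}), which simultaneously gives existence of an object with the stated orthogonality and, by the same token, its uniqueness up to isomorphism, since mutations of a fixed exceptional collection are unique; then identify that dual object with $\nabla(\mathtt{i})$ using the classical fact $\dim\Hom_\Lambda(\Delta(\mathtt{l}),\nabla(\mathtt{i}))=\delta_{\mathtt{l}\mathtt{i}}$ and $\Ext^{>0}_\Lambda(\Delta(\mathtt{l}),\nabla(\mathtt{i}))=0$ for quasi-hereditary algebras. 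The remaining bookkeeping — running the long exact sequence and invoking generation to kill the cone — is routine.
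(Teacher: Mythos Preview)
Your approach is correct but takes a genuinely different route from the paper. You work entirely inside the triangulated category, invoking the Serre functor, the existence and uniqueness of the right dual exceptional collection via mutation, and a cone argument using generation by the $\Delta(\mathtt{l})$. This is valid and has the virtue of applying to any full exceptional collection, not just standard modules; the price is the machinery you yourself flag as the obstacle---producing the comparison map $f\colon\nabla(\mathtt{i})\to M$ without circularity requires either the mutation formalism or an explicit adjunction argument.

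The paper instead stays within classical quasi-hereditary theory and never works at the level of triangles. It first observes that $\Ext^s_\Lambda(\Delta(\mathtt{l}),M)=0$ for $s\neq 0$ and all $\mathtt{l}$ forces $\Ext^s_\Lambda(\Lambda,M)=0$ (since $\Lambda\in\mathcal{F}(\Delta)$), so $M$ is concentrated in degree zero, i.e.\ is an honest module. Then it quotes the standard characterisation $\Ext^1_\Lambda(\Delta(\mathtt{l}),M)=0$ for all $\mathtt{l}$ $\Leftrightarrow$ $M\in\mathcal{F}(\nabla)$, and finally uses that for $M\in\mathcal{F}(\nabla)$ one has $\dim\Hom_\Lambda(\Delta(\mathtt{l}),M)=[M:\nabla(\mathtt{l})]$, so the hypothesis pins down the $\nabla$-filtration multiplicities to be $\delta_{\mathtt{l}\mathtt{i}}$, whence $M\cong\nabla(\mathtt{i})$. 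This is shorter and avoids constructing any comparison morphism, but it is specific to the quasi-hereditary setting and uses the Dlab--Ringel/Klucznik--Koenig structure theory as a black box.
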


\begin{proof}
First of all note that $\Ext^s_\Lambda(\Delta(\mathtt{l}),M)=0$ for $s\neq 0$ implies that $\Ext^s_\Lambda(\Lambda,M)=0$ since $\Lambda$ is filtered by $\Delta$'s. This implies that $M$ is indeed a module. 

The condition $\Ext^1_\Lambda(\Delta(\mathtt{l}),M)= 0$ for all $\mathtt{l}$ is equivalent to $M\in \mathcal{F}(\nabla)$ by \cite[Proposition 2.1]{KK99}. In this case $\dim\Hom_{\Lambda}(\Delta(\mathtt{l}),M)=[M:\nabla(\mathtt{l})]$ counts the multiplicity of $\nabla(\mathtt{s})$ in $M$ in any given $\nabla$-filtration of $M$, see e.g. \cite[Lemma 2.4]{DR92} for the dual statement. It follows that $M\cong \nabla(\mathtt{i})$. 
\end{proof}

Note that the above lemma shows that the full exceptional collection $\langle \nabla(\mathtt{i}) \rangle$ of costandard modules is right dual to the full exceptional collection $\langle \Delta(\mathtt{i}) \rangle$ of standard modules, cf. \cite[Lemma 2.5]{BS10}.

\section{Directed bocses}\label{directedbocses}

In this section we recall from \cite{KKO14} the alternative definition of quasi-hereditary algebras via bocses. Bocses were introduced by Roiter in \cite{Roi79, Roi80}. This concept is closely related to the concept of a differential biquiver, which was studied already in \cite{KR75} by A. Kleiner and M. Roiter, see Theorem \ref{bocsbiquiver} which was generalised by T. Brzezi\'nski in \cite{Brz13}. The most striking application of the theory of bocses is certainly Yu. Drozd's tame and wild dichotomy theorem \cite{D80} (see also \cite{CB88}). Bocses are sometimes also called corings. For further reading, we refer to the survey article \cite{K16}, for general theory of bocses or corings to \cite{BW03}, \cite{BSZ09}, and \cite{Bur05}.

\begin{defn}\label{def:directedbocs}
\begin{enumerate}[(i)]
\item A \emphbf{prebocs} is a tuple $\mathfrak{A}=(A,V,\mu)$ where $A$ is an algebra, $V$ is an $A$-bimodule and $\mu\colon V\to V\otimes_A V$ is a  coassociative morphism of $A$-$A$-bimodules called the \emphbf{comultiplication}. 
\item A \emphbf{bocs} $\mathfrak{A}=(A,V,\mu,\varepsilon)$ is a prebocs $(A,V,\mu)$ together with an $A$-$A$-bilinear map $\varepsilon\colon V\to A$ satisfying the usual \emphbf{counit} axiom. In this case $V$ is also called an \emphbf{$A$-coring}.
\item A bocs is \emphbf{normal} if there is a \emphbf{grouplike} element $\omega\in V$ i.e. an element such that $\mu(\omega)=\omega\otimes \omega$ and $\varepsilon(\omega)=1$. 
\item\label{directedbocs:v} A bocs is \emphbf{directed} if the counit is surjective, the Gabriel quiver of $A$ is directed, and $\overline{V}:=\ker \varepsilon\cong \bigoplus Ae_\mathtt{l}\otimes_{\mathbbm{k}} e_\mathtt{i}A $ where the sum runs over certain $\mathtt{i},\mathtt{l}$ all satisfying $\mathtt{i}< \mathtt{l}$.
\item For a bocs $\mathfrak{A}=(A,V)$ the algebra $R:=R_\mathfrak{A}:=\Hom_A(V,A)$ with the multiplication $g\circ f$ of $f,g\in \Hom_{A}(V,A)$ given by the composition of the maps
\[
\begin{tikzcd}
V\arrow{r}{\mu} &V\otimes_A V\arrow{r}{1\otimes g} &V\otimes_A A\arrow{r}{\sim} &V\arrow{r}{f} &A
\end{tikzcd}
\]
is the \emphbf{right algebra} of $(A,V)$. 
\item Dually, the \emphbf{left algebra} of $(A,V)$ is the algebra $L:=L_\mathfrak{A}:=\Hom_{A^{\op}}(V,A)$ with the multiplication $g\circ f$ of two morphisms $f,g\in \Hom_{A^{\op}}(V,A)$ given by the composition of the maps
\[\begin{tikzcd}
V\arrow{r}{\mu} &V\otimes_A V\arrow{r}{f\otimes 1} &A\otimes_A V\arrow{r}{\sim} &V\arrow{r}{g}&A.
\end{tikzcd}
\] 
\end{enumerate}
\end{defn}

\begin{rmk}
\begin{enumerate}[(i)]
\item Over an algebraically closed field, any projective indecomposable $A$-bimodule is of the form $Ae_\mathtt{l} \otimes_{\mathbbm{k}} e_\mathtt{i} A$, for some pair $(\mathtt{i},\mathtt{l})$. Hence, condition \eqref{directedbocs:v} states that the kernel of $\varepsilon$ is projective and, for any direct summand $Ae_\mathtt{l} \otimes_{\mathbbm{k}} e_\mathtt{i} A$ of $\ker \varepsilon$, we have $\mathtt{i}<\mathtt{l}$.
\item In \cite{BW03} and other literature on corings, the opposite algebra of the right algebra is called the left dual of the coring, the opposite algebra of the left algebra is called the right dual of the coring. Our notation and terminology is adopted from \cite{BB91}.
\end{enumerate}
\end{rmk}

By Morita equivalence, one can always assume that the underlying algebra $A$ is basic. Hence, up to a choice of orthogonal idempotents $e_{\mathtt{i}}$, it can be regarded as a category
with objects $\mathtt{1},\dots,\mathtt{n}$.
In this context, multiplying a grouplike $\omega$ from the left and from the right with $e_\mathtt{i}$ we obtain elements $\omega_\mathtt{i}:=e_\mathtt{i}\omega e_\mathtt{i}$ with $\varepsilon(\omega_\mathtt{i})=e_\mathtt{i}$. 

For a bocs $\mathfrak{A}$, one can construct its category of representations. It is a concrete description of the more abstractly defined Kleisli category of the comonad $V\otimes_A -$.

\begin{defn}
Let $\mathfrak{A}=(A,V,\mu,\varepsilon)$ be a bocs. The category $\modu \mathfrak{A}$ of representations of $\mathfrak{A}$ is defined via:
\begin{description}
\item[objects] are $A$-modules, 
\item[morphisms] for $M,N\in \modu \mathfrak{A}$, $\Hom_{\mathfrak{A}}(M,N)=\Hom_{A\otimes A^{\op}}(V,\Hom_{\mathbbm{k}}(M,N))$,
\item[composition] for $f\in \Hom_{\mathfrak{A}}(L,M)$ and $g\in \Hom_{\mathfrak{A}}(M,N)$ their composition $g\circ f$ is given by the following composition of $A$-bilinear maps:
\[
\begin{tikzcd}
V\arrow{r}{\mu} &V\otimes_A V\arrow{r}{g\otimes f} &\Hom_{\mathbbm{k}}(M,N)\otimes_A \Hom_{\mathbbm{k}}(L,M)\arrow{r}{\comp} &\Hom_{\mathbbm{k}}(L,N),
\end{tikzcd}
\]
\item[unit] the morphism $\mathbbm{1}_M\in \Hom_{\mathfrak{A}}(M,M)$ is given by the composition of
\[
\begin{tikzcd}
V\arrow{r}{\varepsilon} &A\arrow{r}{\lambda} &\Hom_{\mathbbm{k}}(M,M)
\end{tikzcd}\]
where $\lambda$ is the morphism sending an element $a\in A$ to left multiplication with $a$. 
\end{description}
\end{defn}

Note that each bocs $\mathfrak{A}=(A,V,\mu,\varepsilon)$ has an \emphbf{opposite bocs} $\mathfrak{A}^{\op}=(A^{\op},V^{\op},\mu,\varepsilon)$ where $A^{\op}$ is the opposite algebra of $A$, $V^{\op}=V$, but regarded as  an $A^{\op}$-$A^{\op}$-bimodule instead of an $A$-$A$-bimodule and the comultiplication and counit remain unchanged. 

As in the case of algebras, the categories of $\mathfrak{A}$-modules and $\mathfrak{A}^{\op}$-modules are dual to each other:

\begin{lem}\label{lem_duality}
Let $\mathfrak{A}$ be a bocs, $\mathfrak{A}^{\op}$ its opposite bocs. Then, the $\mathbbm{k}$-duality $\mathbb{D}=\Hom_{\mathbbm{k}}(-,\mathbbm{k})$ induces a duality $\modu \mathfrak{A}\to \modu\mathfrak{A}^{\op}$. 
\end{lem}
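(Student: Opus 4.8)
The plan is to construct the duality functor explicitly and check that it is well-defined, contravariant, and an equivalence. Recall that for a bocs $\mathfrak{A}=(A,V,\mu,\varepsilon)$, an object of $\modu\mathfrak{A}$ is just an $A$-module, and $\Hom_\mathfrak{A}(M,N)=\Hom_{A\otimes A^{\op}}(V,\Hom_\mathbbm{k}(M,N))$. On objects, I would send an $A$-module $M$ to $\mathbb{D}M=\Hom_\mathbbm{k}(M,\mathbbm{k})$, which is naturally an $A^{\op}$-module, i.e. an object of $\modu\mathfrak{A}^{\op}$. On morphisms, given $f\in\Hom_{A\otimes A^{\op}}(V,\Hom_\mathbbm{k}(M,N))$ I would compose with the standard isomorphism $\Hom_\mathbbm{k}(M,N)\xrightarrow{\sim}\Hom_\mathbbm{k}(\mathbb{D}N,\mathbb{D}M)$, $\psi\mapsto\mathbb{D}\psi$, to get a map $V\to\Hom_\mathbbm{k}(\mathbb{D}N,\mathbb{D}M)$; the key point is that this map is $A^{\op}\otimes(A^{\op})^{\op}=A^{\op}\otimes A$-bilinear with respect to the $V^{\op}$-bimodule structure precisely because $\mathbb{D}$ swaps the two sides, so it defines an element of $\Hom_{\mathfrak{A}^{\op}}(\mathbb{D}N,\mathbb{D}M)$. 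This gives a contravariant functor $\mathbb{D}\colon\modu\mathfrak{A}\to\modu\mathfrak{A}^{\op}$.

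The main verification is functoriality, i.e. that $\mathbb{D}$ reverses composition and preserves identities. For the identity $\mathbbm{1}_M$, which is the composite $V\xrightarrow{\varepsilon}A\xrightarrow{\lambda}\Hom_\mathbbm{k}(M,M)$ with $\lambda(a)$ being left multiplication, I would check that applying $\mathbb{D}$ and using that $\varepsilon$ is unchanged in $\mathfrak{A}^{\op}$ yields exactly $V^{\op}\xrightarrow{\varepsilon}A^{\op}\xrightarrow{\lambda}\Hom_\mathbbm{k}(\mathbb{D}M,\mathbb{D}M)$, the identity in $\modu\mathfrak{A}^{\op}$; here one uses that $\mathbb{D}$ of left multiplication by $a$ on $M$ is right multiplication by $a$ on $\mathbb{D}M$, i.e. left multiplication by $a$ viewed in $A^{\op}$. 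For composition, given $f\in\Hom_\mathfrak{A}(L,M)$ and $g\in\Hom_\mathfrak{A}(M,N)$, the composite $g\circ f$ is $V\xrightarrow{\mu}V\otimes_AV\xrightarrow{g\otimes f}\Hom_\mathbbm{k}(M,N)\otimes_A\Hom_\mathbbm{k}(L,M)\xrightarrow{\comp}\Hom_\mathbbm{k}(L,N)$. Applying $\mathbb{D}$ and using that $\mu$ is unchanged while $\mathbb{D}(\comp(\beta\otimes\alpha))=\comp(\mathbb{D}\alpha\otimes\mathbb{D}\beta)$ in the opposite order, I would identify $\mathbb{D}(g\circ f)$ with the composite $V^{\op}\xrightarrow{\mu}V^{\op}\otimes_{A^{\op}}V^{\op}\xrightarrow{(\mathbb{D}f)\otimes(\mathbb{D}g)}\Hom_\mathbbm{k}(\mathbb{D}M,\mathbb{D}L)\otimes_{A^{\op}}\Hom_\mathbbm{k}(\mathbb{D}N,\mathbb{D}M)\xrightarrow{\comp}\Hom_\mathbbm{k}(\mathbb{D}N,\mathbb{D}L)$, which is exactly $\mathbb{D}f\circ\mathbb{D}g$ in $\modu\mathfrak{A}^{\op}$. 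The only subtlety here is the bookkeeping of which tensor factor goes where: since $V^{\op}$ as an $A^{\op}$-bimodule has its left/right actions swapped, the tensor product $V\otimes_AV$ becomes $V^{\op}\otimes_{A^{\op}}V^{\op}$ with the order of the two copies reversed, and this reversal is exactly what matches the coassociativity of $\mu$ with the reversed composition order of $\Hom$-spaces.

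Finally, to see that $\mathbb{D}$ is a duality (equivalence of categories $\modu\mathfrak{A}\to(\modu\mathfrak{A}^{\op})$, or equivalently $(\modu\mathfrak{A})^{\op}\simeq\modu\mathfrak{A}^{\op}$), I would note that $(\mathfrak{A}^{\op})^{\op}=\mathfrak{A}$ and that the composite $\modu\mathfrak{A}\xrightarrow{\mathbb{D}}\modu\mathfrak{A}^{\op}\xrightarrow{\mathbb{D}}\modu\mathfrak{A}$ is naturally isomorphic to the identity: on objects this is the canonical double-dual isomorphism $M\cong\mathbb{D}\mathbb{D}M$ of finite-dimensional vector spaces, which is $A$-linear, and one checks naturality on morphisms using that the evaluation map is natural. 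Since finite-dimensionality of $A$ (hence of every object, as we work with $\modu A=\modu\mathfrak{A}$ finite dimensional modules) guarantees $\mathbb{D}$ is exact and $\mathbb{D}\mathbb{D}\cong\id$ on both sides, $\mathbb{D}$ is an equivalence onto its target with quasi-inverse $\mathbb{D}$. I expect the routine part to be the diagram-chasing in the composition identity, and the only genuine point requiring care is the consistent choice of conventions for $V^{\op}$, the order of tensor factors over $A^{\op}$, and the direction in which $\comp$ is applied; once these are fixed compatibly, everything matches formally.
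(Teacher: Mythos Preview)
Your proposal is correct and follows essentially the same approach as the paper: the key step in both is the $A$-$A$-bimodule isomorphism $\Hom_\mathbbm{k}(M,N)\cong\Hom_\mathbbm{k}(\mathbb{D}N,\mathbb{D}M)$, from which the identification of morphism spaces follows. In fact you are more thorough than the paper, which only writes out the bimodule computation $a(\mathbb{D}f)b=\mathbb{D}(afb)$ explicitly and leaves the verification of functoriality and of $\mathbb{D}\mathbb{D}\cong\id$ to the reader.
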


\begin{proof}
Define $\mathbb{D}M:=\Hom_{\mathbbm{k}}(M,\mathbbm{k})$ for an $\mathfrak{A}$-module $M$. This defines $\mathbb{D}$ on objects. To define it on morphisms recall the bimodule action of $A$ on $\Hom_\mathbbm{k}({}_A M,{}_A N)$ as well as $\Hom_{\mathbbm{k}}({}_{A^{\op}}\mathbb{D}N,{}_{A^{\op}}\mathbb{D}M)$. For $f\in \Hom_{\mathbbm{k}}(M,N)$, $x\in M$, $a,b\in A$, the bimodule action is defined by $(afb)(x):=a\cdot f(bx)$. Dually, for $\varphi\in \Hom_{\mathbbm{k}}(\mathbb{D}N,\mathbb{D}M)$, $\chi\in \mathbb{D}N$, the bimodule action is defined by $(a\varphi b)(\chi)=\varphi(\chi a)\cdot b$. We claim that $\Hom_{\mathbbm{k}}(M,N)\cong \Hom_{\mathbbm{k}}(\mathbb{D}N,\mathbb{D}M)$ as $A$-$A$-bimodules. Noting that $(\mathbb{D}f)(\chi)=\chi\circ f$, this follows from
\begin{align*}
(a(\mathbb{D}f)b)(\chi)(x)&=((\mathbb{D}f)(\chi a)\cdot b)(x)=(\mathbb{D}f)(\chi a)(bx)
=(\chi a)(f(bx))\\
&=\chi(a f(bx))
=(\chi \circ (afb))(x)
=\mathbb{D}(afb)(\chi)(x).
\end{align*}
It follows that $\Hom_{A\otimes A^{op}}(V,\Hom_\mathbbm{k}(M,N))\cong \Hom_{A^{\op}\otimes A}(V,\Hom_{\mathbbm{k}}(\mathbb{D}N,\mathbb{D} M))$ proving that $\mathbb{D}$ defines a duality on $\modu \mathfrak{A}$. 
\end{proof}

We recall the main result of \cite{KKO14} stating that the category of $\Delta$-filtered modules for a quasi-hereditary algebra $\Lambda$ can be obtained as the category of modules for a bocs $\mathfrak{A}$ which can be constructed explicitly from the $A_\infty$-algebra structure on $\Ext^*_\Lambda(\Delta,\Delta)$.

\begin{thm}\label{maintheoremKKO}
Let $\Lambda$ be a quasi-hereditary algebra. Then, the following statements hold.
\begin{enumerate}[(i)]
\item There exists a Morita equivalent algebra $R\sim_{\Mor} \Lambda$ such that $R$ is the right algebra of a directed normal bocs $(A,V)$. The algebra $A$ can be chosen to be basic. 
\item Conversely, the right algebra of every directed normal bocs is quasi-hereditary. In this case, there are equivalences of categories $\modu \mathfrak{A}\simeq \mathcal{F}(\Delta_R)\simeq \ind_A^R$ where the latter is the category of all induced modules from $A$ to $R$, i.e. all $R$-modules of the form $R\otimes_A M$ for some $A$-module $M$. In particular, the simple $A$-module $L(\mathtt{i})$ in $\modu \mathfrak{A}$ is mapped to the standard module $\Delta(\mathtt{i})=R\otimes_A L(\mathtt{i})$ for $R$. 
\item Dually, the left algebra of every directed normal bocs is quasi-hereditary with equivalences of categories $\modu \mathfrak{A}\simeq \mathcal{F}(\nabla_L)\simeq \coind_A^L$ where the latter is the category of all coinduced modules from $A$ to $L$, i.e. $L$-modules of the form $\Hom_{A}(L,M)$ for some $A$-module $M$.
\item\label{maintheoremKKO:iv} Let $\mathfrak{A}=(A,V)$ be a directed normal bocs with right algebra $R$. Then, 
\[\Ext^j_A(M,N)\cong \Ext^j_R(R\otimes_A M,R\otimes_A N)\] for all $j\geq 2$. 
\item The left algebra $L$ of a directed normal bocs $\mathfrak{A}$ is Morita equivalent to the Ringel dual of its right algebra $R$.
\item Moreover, for $\mathfrak{A}^{\op}$, $L_{\mathfrak{A}^{\op}}$ is isomorphic to $R_\mathfrak{A}^{\op}$. 
\end{enumerate}
\end{thm}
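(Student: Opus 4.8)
The plan is to show that $R_{\mathfrak{A}}$ and $L_{\mathfrak{A}^{\op}}$ share the same underlying $\mathbbm{k}$-vector space and that their multiplications are opposite, so that the asserted isomorphism $L_{\mathfrak{A}^{\op}}\cong R_{\mathfrak{A}}^{\op}$ may be taken to be the identity on this common space. First I would unwind the conventions of Definition~\ref{def:directedbocs}: $R_{\mathfrak{A}}=\Hom_A(V,A)$ is the space of left $A$-linear maps $V\to A$, whereas the left algebra $\Hom_{A^{\op}}(V,A)$ of $(A,V)$ is built from the right $A$-linear maps $V\to A$. Applied to $\mathfrak{A}^{\op}=(A^{\op},V^{\op})$, this says $L_{\mathfrak{A}^{\op}}=\Hom_{(A^{\op})^{\op}}(V^{\op},A^{\op})$ is the space of right $A^{\op}$-linear maps $V^{\op}\to A^{\op}$. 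Now $V^{\op}=V$ and $A^{\op}=A$ as $\mathbbm{k}$-vector spaces, and the right $A^{\op}$-action on $V^{\op}$ (resp.\ on $A^{\op}$) is by definition the left $A$-action on $V$ (resp.\ on $A$); hence a $\mathbbm{k}$-linear map $V\to A$ is right $A^{\op}$-linear exactly when it is left $A$-linear. Thus $R_{\mathfrak{A}}=L_{\mathfrak{A}^{\op}}$ as $\mathbbm{k}$-vector spaces, and I write $\Phi$ for this identification.

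Next I would compare the products. Unravelling the right-algebra multiplication of $\mathfrak{A}$, for $f,g\in R_{\mathfrak{A}}$ and $v\in V$ with $\mu(v)=\sum v_{(1)}\otimes v_{(2)}$, one gets $(g\circ f)(v)=\sum f(v_{(1)}\cdot g(v_{(2)}))$, where $v_{(1)}\cdot g(v_{(2)})$ refers to the right $A$-action on $V$. For the left-algebra multiplication of $\mathfrak{A}^{\op}$ one has to push $v$ through $V^{\op}\xrightarrow{\mu}V^{\op}\otimes_{A^{\op}}V^{\op}\xrightarrow{f\otimes 1}A^{\op}\otimes_{A^{\op}}V^{\op}\xrightarrow{\sim}V^{\op}\xrightarrow{g}A^{\op}$. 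Two observations make this explicit: (a) the canonical isomorphism $V^{\op}\otimes_{A^{\op}}V^{\op}\cong V\otimes_A V$ interchanges the two tensor factors, so that the comultiplication of $\mathfrak{A}^{\op}$, which is $\mu$ itself, sends $v$ to $\sum v_{(2)}\otimes v_{(1)}$ inside $V^{\op}\otimes_{A^{\op}}V^{\op}$; and (b) the canonical isomorphism $A^{\op}\otimes_{A^{\op}}V^{\op}\xrightarrow{\sim}V^{\op}$ is $a\otimes w\mapsto w\cdot a$ for the original right $A$-action. Carrying these through, the product $g\circ f$ computed in $L_{\mathfrak{A}^{\op}}$ takes the value $\sum g(v_{(1)}\cdot f(v_{(2)}))$ at $v$, which by the previous formula is exactly $(f\circ g)(v)$ computed in $R_{\mathfrak{A}}$. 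Hence $\Phi$ is an isomorphism of $\mathbbm{k}$-algebras $R_{\mathfrak{A}}^{\op}\xrightarrow{\sim}L_{\mathfrak{A}^{\op}}$, which is the claim; applied to $\mathfrak{A}^{\op}$ in place of $\mathfrak{A}$, together with $(\mathfrak{A}^{\op})^{\op}=\mathfrak{A}$, it also yields $R_{\mathfrak{A}^{\op}}\cong L_{\mathfrak{A}}^{\op}$.

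Conceptually this is the standard duality for corings — the left dual ring of an $A$-coring $V$ is the opposite of the right dual ring of the $A^{\op}$-coring $V^{\op}$ (cf.\ \cite{BW03}) — and it uses neither the counit axiom nor the directedness or normality of $\mathfrak{A}$. I therefore do not expect a genuine obstacle: the only point demanding care is bookkeeping the various ``op''s, namely matching the ``op'' on the base algebra with the left/right linearity conventions in the definitions of $R$ and $L$ and tracking the flip hidden in $V^{\op}\otimes_{A^{\op}}V^{\op}\cong V\otimes_A V$, so that the two Sweedler-type formulas for the multiplications coincide after reversing the order of the arguments.
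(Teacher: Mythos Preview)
Your argument addresses only part (vi), and it is correct: unwinding the definitions shows that $R_{\mathfrak{A}}$ and $L_{\mathfrak{A}^{\op}}$ have the same underlying $\mathbbm{k}$-vector space (the left $A$-linear maps $V\to A$), and your Sweedler computation confirming that the two multiplications are opposite is accurate, with the key step being the flip $V^{\op}\otimes_{A^{\op}}V^{\op}\cong (V\otimes_A V)^{\op}$. As you note, this is the general coring fact that the left dual of $V^{\op}$ is the opposite of the right dual of $V$.

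There is nothing to compare against here: the paper does not give its own proof of Theorem~\ref{maintheoremKKO}. The theorem is explicitly presented as a recollection of the main results of \cite{KKO14}, and no proof is supplied in the present paper. Parts (i)--(v) are substantial results whose proofs in \cite{KKO14} involve the construction of the bocs from the $A_\infty$-structure on $\Ext^*_\Lambda(\Delta,\Delta)$, the analysis of induced and coinduced modules, and the identification of the characteristic tilting module; part (vi), by contrast, is exactly the elementary bookkeeping exercise you have carried out. So your proposal is a correct and self-contained proof of the one part of the statement that admits such a direct verification.
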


\begin{rmk}\label{exactstructureonbocsrep}
The equivalence $\modu \mathfrak{A}\simeq \ind_A^R$ induces  the structure of an exact category on $\modu \mathfrak{A}$ through restriction of the natural exact structure on $\modu R$. This exact structure can alternatively be defined by setting the exact sequences to be those which are equivalent (in $\modu \mathfrak{A}$) to images of exact sequences under  the embedding $\modu A\to \modu \mathfrak{A}$, for details see \cite{KKO14, KM17}.
\end{rmk}

\begin{rmk} 
The duality $\mathbb{D}\colon \modu \mathfrak{A} \to \modu \mathfrak{A}^{\op}$ of Lemma \ref{lem_duality} is compatible with the duality $\mathbb{D}(-) = \Hom_{\mathbbm{k}}(-,\mathbbm{k})\colon \modu R_{\mathfrak{A}} \to \modu L_{\mathfrak{A}^{\op}}$ in the sense that the following diagram commutes up to natural isomorphism:
\[
\begin{tikzcd}
\modu \mathfrak{A}\arrow{r}{\mathbb{D}}\arrow{d}[swap]{R\otimes_A -}&\modu \mathfrak{A}^{\op}\arrow{d}{\Hom_{A^{\op}}(L_{\mathfrak{A}^{\op}},-)}\\ \modu R_{\mathfrak{A}} \arrow{r}{\mathbb{D}} &\modu L_{\mathfrak{A}^{\op}}
\end{tikzcd}
\]
Indeed, for a left $A$-module $M$, the dual of the induced $R_{\mathfrak{A}}$-module is $\Hom_{\mathbbm{k}}(R_{\mathfrak{A}}\otimes_A M, \mathbbm{k})$ with right $R_{\mathfrak{A}}$-module structure induced by the left $R_{\mathfrak{A}}$ -module structure on $R_{\mathfrak{A}}\otimes_A M$. On the other hand by the tensor-hom adjunction, the left $L_{\mathfrak{A}^{\op}}$-module coinduced from $\mathbb{D}(M)$, $\Hom_{A^{\op}}(L_{\mathfrak{A}^{\op}}, \Hom_{\mathbbm{k}}(M, \mathbbm{k}))$, is isomorphic to $\Hom_{\mathbbm{k}}(M \otimes_{A^{\op}} L_{\mathfrak{A}^{\op}}, \mathbbm{k})$ with left $L_{\mathfrak{A}^{\op}}$-module structure induced by the right $L_{\mathfrak{A}^{\op}}$-module structure of $M \otimes_{A^{\op}} L_{\mathfrak{A}^{\op}}$. Since $R_{\mathfrak{A}}^{\op} \cong L_{\mathfrak{A}^{\op}}$, we have an isomorphism $(M\otimes_{A^{\op}} L_{\mathfrak{A}^{\op}})_{L_{\mathfrak{A}^{\op}}} \cong {}_{R_{\mathfrak{A}}}(R_{\mathfrak{A}}\otimes_A M)$ which induces a natural isomorphism $\mathbb{D}(\ind_A^{R_{\mathfrak{A}}}(M)) \cong \coind_A^{L_{\mathfrak{A}^{\op}}}(\mathbb{D}(M))$ of left $L_{\mathfrak{A}^{\op}}$ modules. 
\end{rmk}

Our goal in this article is to construct, directly from $(A,V)$, a bocs $(B,W)$ whose right algebra is Morita equivalent to the Ringel dual of the right algebra of $(A,V)$, i.e. the left algebra of $(A,V)$. To describe explicitly how the bocs $(B,W)$ is obtained we need the following lemma from \cite[Lemmas 7.5--7.7]{KKO14}.

\begin{lem}\label{bocsconstruction}
Let $B$ be a category with set of objects $\{\mathtt{1},\ldots,\mathtt{n}\}$ and let $U_1$ be a $B$-bimodule. Assume that the tensor category $U:=\bigoplus_{j=0}^\infty U_1^{\otimes j}$ is endowed with the tensor grading, i.e. $\deg B=0$ and $\deg U_1=1$. Suppose that $U$ is equipped with a differential $d$. Denote by $(d(B))$ the $B$-bimodule generated by $d(B)$ and let $W:=U_1/(d(B))$ with $\pi\colon U_1\to W$ the canonical projection. 
\begin{enumerate}[(i)]
\item There is a prebocs $(B,W,\mu)$ such that $\mu\pi=(\pi\otimes_B\pi)d_1$.
\item Assume, in addition, that $U_1=U_{\Omega}\oplus \overline{U}$ is decomposed as $B$-bimodule, where $U_{\Omega}$ is a projective bimodule $U_{\Omega}=\bigoplus_{\mathtt{i}\in \mathbb{L}} B\omega_{\mathtt{i}} B$ with $\omega_{\mathtt{i}}$ a generator of $U_{\Omega}$, i.e. the image of an element $e_{\mathtt{i}}\otimes e_\mathtt{i}$ under a fixed direct summand embedding $Be_{\mathtt{i}}\otimes_{\mathbbm{k}} e_{\mathtt{i}}B\hookrightarrow U_\Omega$. Suppose that 
\begin{enumerate}[{(d}1{)}]
\item $d(\omega_{\mathtt{i}})=\omega_{\mathtt{i}}\otimes \omega_{\mathtt{i}}$, 
\item for all $b\in B(\mathtt{i},\mathtt{l})$ we have $d(b)=\omega_\mathtt{l}b-b\omega_\mathtt{i}+\partial b$ for some $\partial b\in \overline{U}$,
\item for all $u\in \overline{U}(\mathtt{i},\mathtt{l})$ we have $d(u)=\omega_\mathtt{l}u+u\omega_\mathtt{i}+\partial u$ for some $\partial u\in \overline{U}\otimes \overline{U}$.
\end{enumerate}
Then, the prebocs is a bocs with counit $\varepsilon\colon W\to B$ such that $\tilde{\varepsilon}=\varepsilon \pi$ where $\tilde{\varepsilon}\colon U_1\to B$ is given by $\tilde{\varepsilon}(\omega_\mathtt{i})=\mathbbm{1}_\mathtt{i}$ and $\tilde{\varepsilon}(\overline{U})=0$. 
\item If $\overline{U}$ is a projective bimodule then $\overline{W}:=\ker \varepsilon$ is a projective bimodule.
\end{enumerate}
\end{lem}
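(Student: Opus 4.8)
The plan is to prove the three parts of Lemma~\ref{bocsconstruction} in order, treating (i) as the construction of the comultiplication, (ii) as the verification of the counit axiom, and (iii) as a short homological observation. Throughout, the key point is that $U = T_B(U_1)$ is a free (tensor) DG algebra on the bimodule $U_1$, so the differential $d$ is determined by its restriction $d_1 \colon U_1 \to U_1 \oplus (U_1 \otimes_B U_1)$ together with the Leibniz rule, and that $d^2 = 0$ forces compatibility conditions that will translate into coassociativity and the counit axiom. I expect the main obstacle to be part (ii): checking that the proposed $\varepsilon$ descends to $W = U_1/(d(B))$ and genuinely satisfies the counit axiom $(\varepsilon \otimes_B 1)\mu = \id_W = (1 \otimes_B \varepsilon)\mu$, which requires carefully unwinding how $\partial b$, $\partial u$ and the grouplike-type relations (d1)--(d3) interact under the projection $\pi$.

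For part (i), I would first write $d_1 = (d_1', d_1'')$ where $d_1' \colon U_1 \to U_1$ and $d_1'' \colon U_1 \to U_1 \otimes_B U_1$ are the two components of the differential in degree one. Since $d(B) \subseteq U_1$ lies in the degree-one part and $(d(B))$ is the bimodule it generates, the quotient $W = U_1/(d(B))$ is again a $B$-bimodule and $\pi \colon U_1 \to W$ is $B$-bilinear. The candidate comultiplication is defined by the requirement $\mu \pi = (\pi \otimes_B \pi) d_1''$; well-definedness amounts to showing that $(\pi \otimes_B \pi) d_1''$ kills $(d(B))$, i.e. vanishes on $d(B)$ and is compatible with the bimodule structure, which follows because for $b \in B$ we have $d_1''(d(B))$ landing (via $d^2 = 0$ in the appropriate graded piece) in the bimodule generated by $d(B) \otimes_B U_1 + U_1 \otimes_B d(B)$, whose image under $\pi \otimes_B \pi$ is zero. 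Coassociativity of $\mu$ then follows from the associativity of $d$ (the degree-two component of $d^2 = 0$ on $U_1$) after applying $\pi^{\otimes 3}$; this is essentially the statement that the reduced bar-type differential squares to zero, transported across $\pi$.

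For part (ii), under the extra hypotheses I would define $\tilde\varepsilon \colon U_1 \to B$ by $\tilde\varepsilon(\omega_\mathtt{i}) = \mathbbm{1}_\mathtt{i}$ and $\tilde\varepsilon(\overline{U}) = 0$, extended $B$-bilinearly, and check that $\tilde\varepsilon$ vanishes on $d(B)$: by (d2), for $b \in B(\mathtt{i},\mathtt{l})$ we get $\tilde\varepsilon(d(b)) = \tilde\varepsilon(\omega_\mathtt{l} b - b\omega_\mathtt{i} + \partial b) = \mathbbm{1}_\mathtt{l} b - b \mathbbm{1}_\mathtt{i} + 0 = 0$, so $\tilde\varepsilon$ descends to $\varepsilon \colon W \to B$. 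The counit axiom is then verified by evaluating $(\varepsilon \otimes_B \id_W)\mu\pi$ and $(\id_W \otimes_B \varepsilon)\mu\pi$ on a generator: write $d_1''(u) = $ the bar-differential expression dictated by (d1)--(d3), apply $\pi \otimes_B \pi$, then $\varepsilon$ on one side; the terms involving $\omega_\mathtt{i}$ contribute $\mathbbm{1}$ and collapse the tensor factor, the terms from $\partial u \in \overline{U} \otimes \overline{U}$ are killed by $\varepsilon$, and what remains is $\pi(u)$ on the nose. (It is here that one uses $d^2(\omega_\mathtt{i}) = 0$, equivalently (d1), to match signs/coefficients.) This is the calculation Ovsienko sketched and that I would carry out explicitly.

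Finally, part (iii) is immediate: $\overline{W} = \ker \varepsilon$, and from the decomposition $U_1 = U_\Omega \oplus \overline{U}$ together with $\tilde\varepsilon|_{U_\Omega}$ being an isomorphism onto $B$ (identifying $B\omega_\mathtt{i} B$ with $Be_\mathtt{i} \otimes_{\mathbbm{k}} e_\mathtt{i} B$, which maps onto $Be_\mathtt{i} B$) and $\tilde\varepsilon|_{\overline{U}} = 0$, the projection $\pi$ restricts to an isomorphism $\overline{U}/(\overline{U} \cap (d(B))) \xrightarrow{\sim} \overline{W}$. Since $(d(B))$ is generated by elements $d(b) = \omega_\mathtt{l} b - b \omega_\mathtt{i} + \partial b$ whose "$\overline{U}$-part" $\partial b$ can be used to split off a copy of $B$, one sees $\overline{W} \cong \overline{U}$ as $B$-bimodules, so $\overline{W}$ is projective whenever $\overline{U}$ is. The only subtlety is bookkeeping the splitting, which I would phrase via a $B$-bilinear section of $\pi|_{\overline{U}}$.
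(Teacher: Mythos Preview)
The paper does not give its own proof of this lemma; it is quoted from \cite[Lemmas 7.5--7.7]{KKO14} with no argument supplied here. So there is nothing in-paper to compare against. Your outline is along the standard lines and, with some tightening, would give a correct proof; let me flag two places where the reasoning is off or incomplete.

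In part (i), the decomposition $d_1 = (d_1', d_1'')$ with a component $d_1'\colon U_1 \to U_1$ is spurious: the differential has degree $+1$ with respect to the tensor grading, so $d_1$ lands entirely in $U_1\otimes_B U_1$. More to the point, the well-definedness of $\mu$ is not via ``$d_1(d(B))$ landing in $d(B)\otimes U_1 + U_1\otimes d(B)$''. Rather, one first observes that $(\pi\otimes\pi)d_1$ is already $B$-bilinear on $U_1$: by Leibniz $d_1(cu)=d(c)\otimes u + c\,d_1(u)$ and $\pi(d(c))=0$, so the non-bilinear term dies under $\pi\otimes\pi$. Then $(\pi\otimes\pi)d_1$ vanishes on $d(B)$ simply because $d_1(d(b))=d^2(b)=0$, and bilinearity extends this to all of $(d(B))$. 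Coassociativity then follows as you say, from the degree-three part of $d^2=0$ on $U_1$ after applying $\pi^{\otimes 3}$.

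In part (iii) you reach the correct conclusion $\overline{W}\cong\overline{U}$, but the argument you sketch does not establish it. The missing observation is that $\partial\colon B\to\overline{U}$ is an $\mathbb{L}$-derivation: comparing $\overline{U}$-components in $d(b_1b_2)=d(b_1)b_2+b_1d(b_2)$ via (d2) gives $\partial(b_1b_2)=(\partial b_1)b_2+b_1(\partial b_2)$. Now $K:=\ker(\tilde\varepsilon|_{U_\Omega})=\ker(B\otimes_{\mathbb{L}}B\to B)$ is the bimodule of noncommutative K\"ahler differentials with universal derivation $b\mapsto\omega_{\mathtt{l}}b-b\omega_{\mathtt{i}}$, so $\partial$ factors through a bimodule map $\tilde\partial\colon K\to\overline{U}$. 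It follows that $(d(B))=\{k+\tilde\partial(k):k\in K\}$ is exactly the graph of $\tilde\partial$ inside $\ker\tilde\varepsilon=K\oplus\overline{U}$, and hence $\overline{W}=(K\oplus\overline{U})/\mathrm{graph}(\tilde\partial)\cong\overline{U}$. Without this, neither the surjectivity of $\pi|_{\overline{U}}\colon\overline{U}\to\overline{W}$ nor the vanishing of $\overline{U}\cap(d(B))$ is justified, and your phrase about ``splitting off a copy of $B$'' does not supply the mechanism.
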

Above, we denote by $B(\mathtt{i},\mathtt{l})$, respectively $\overline{U}(\mathtt{i}, \mathtt{l})$, morphisms from $\mathtt{i}$ to $\mathtt{l}$ in $B$, respectively in $\overline{U}$, i.e. $B(\mathtt{i},\mathtt{l})\cong e_\mathtt{l}Be_\mathtt{i}$ and $\overline{U}(\mathtt{i},\mathtt{l})\cong e_\mathtt{l} \overline{U}e_\mathtt{i}$. 

Let $\Lambda$ be a quasi-hereditary algebra. Let $E:=\Ext^*_\Lambda(\Delta,\Delta)$ be the $\Ext$-algebra  of the direct sum of the standard modules. As the cohomology of the dg algebra $\Hom_\Lambda(P,P)$, where $P$ is a projective resolution of $\Delta$, by Kadeishvili's theorem \cite[Theorem 1]{Kad82}, $E$ has the structure of an  $A_\infty$-algebra. Let $C = T(E[1])$ be the differential graded coalgebra equal to the bar construction of $E$. The $\mathbbm{k}$-dual of $C$, $\mathcal{D} = \mathbb{D}C$ is a differential graded algebra. Then $U = \mathcal{D}/(\mathcal{D}_{\leq -1}, d(\mathcal{D}_{-1})) $ is differential graded algebra satisfying the conditions of the above lemma. The resulting bocs, denoted by $(A,V)$, has a right algebra $R$ Morita equivalent to $\Lambda$. Note that since $E$ as well as its bar construction are finite dimensional, we could equally well have first taken the dual of $\mathbb{D}E$ and then apply the cobar construction, i.e. considered the differential graded algebra $T((\mathbb{D}E)[-1])$, which is also isomorphic to $T(\mathbb{D}(E[1]))$. This is what was considered in \cite{KKO14}. For the equivalence, see e.g. \cite[Lemma 9]{EL17}, and also \cite[Section 19]{FHT01} for the case of DG algebras.

\section{The homotopically projective objects $\Box_\mathtt{i}$}\label{sec_homotopically_projective}

	Let $\mathfrak{A}$ be a directed normal bocs, $R$ its right algebra, and $T \colon \modu \mathfrak{A} \to \modu R$ the functor given on objects by $R\otimes_A (-)$. To define $T$ on morphisms, note that we have an isomorphism $\gamma\colon \Hom_\mathfrak{A}(M,N)\cong \Hom_A(M,R\otimes_A N)$. Then for $f\in \Hom_\mathfrak{A}(M,N)$, $T(f)$ is the composition $T(f)\colon R \otimes_A M \xrightarrow{\id_R\otimes \gamma(f)} R \otimes_A R \otimes_A N \xrightarrow{\Hom_A(\mu,A)\otimes_A N } R \otimes_AN $.
	The functor $T$ is fully faithful and yields an equivalence of $\modu \mathfrak{A}$ with the full subcategory $\mathcal{F}(\Delta)$ of $\modu R$. The derived functor $T \colon \mathcal{D}^b(\modu \mathfrak{A}) \to \mathcal{D}^b(\modu R)$ is an equivalence and it maps simple $\mathfrak{A}$-modules $L(\mathtt{i})$ to standard $R$-modules. With Lemma \ref{propertiesofBox} below we show that, for every $\mathtt{i} \in \{\mathtt{1},\ldots, \mathtt{n}\}$, the category $\mathcal{D}^b(\modu \mathfrak{A})$ contains a homotopically projective object $\Box_{\mathtt{i}}$ representing a certain cohomology functor, see Lemma \ref{propertiesofBox}. In particular, $\dim \Hom_{\mathfrak{A}}(\Box_\mathtt{i}, L(\mathtt{l})) =\delta_{\mathtt{i} \mathtt{l}}$ and $\Ext^j(\Box_\mathtt{i}, L(\mathtt{l}))$ vanishes for $j\neq 0$. Under the equivalence $T$, we get an analogous property of objects $T(\Box_{\mathtt{i}})$ and $\Delta(\mathtt{l})\cong T(L(\mathtt{l}))$. It follows that $\langle T(\Box_\mathtt{i})\rangle $ is a full exceptional collection left dual to $\langle \Delta(j) \rangle$. Since the collection $\langle \nabla(\mathtt{i})\rangle$ is right dual to $\langle \Delta(\mathtt{l})\rangle$, see Lemma \ref{costandards}, we conclude that $T(\Box_\mathtt{i})$ is the image under the inverse of the Serre functor of $\nabla(\mathtt{i})$:
	$$
	T(\Box_{\mathtt{i}}) = \textrm{RHom}_{R^{\textrm{op}}} (\mathbb{D}(\nabla(\mathtt{i})), R),
	$$
	where $\mathbb{D}(\nabla(\mathtt{i})) = \Hom_{\mathbbm{k}} (\nabla(\mathtt{i}), \mathbbm{k})$. 
		
	Since the duality $\mathbb{D} = \Hom_{\mathbbm{k}}(-, \mathbbm{k}) \colon \modu \mathfrak{A}^{\textrm{op}} \to \modu \mathfrak{A}$ preserves simple modules, we have $\dim \Hom_{\mathfrak{A}}(L(\mathtt{l}), \mathbb{D}(\Box_{\mathtt{i}}^{\mathfrak{A}^{\textrm{op}}})) = \delta_{\mathtt{i}\mathtt{l}}$, i.e. the images of $\mathbb{D}(\Box_{\mathtt{i}}^{\mathfrak{A}^{\textrm{op}}})$ in $\mathcal{D}^b(\modu R)$ are the costandard modules. In Theorem \ref{modN} and Proposition \ref{dualmodN} we shall give equivalent descriptions of the extension closed subcategory $\mathcal{F}(\Box_\mathtt{i})$ of $\mathcal{D}^b(\modu \mathfrak{A})$ generated by $\Box_{\mathtt{i}}$, while in the main Theorem \ref{thm_Rigel_dual_bocs} we shall use these descriptions of $\mathcal{F}(\Box^{\mathfrak{A}^{\textrm{op}}})$ to describe $\mathcal{F}(\nabla)$ as a category of modules over a bocs.

%As the first step in the direction of showing that the right algebra of some bocs $(B,W)$ is equivalent to the Ringel dual of the right algebra of $(A,V)$ we construct the homotopically projective complexes $\Box_{\mathtt{i}}$.
%identify the costandard modules in $D^b(\modu \mathfrak{A})\cong D^b(\modu R)$ with the help of Lemma \ref{costandards}. 
%This occupies the remainder of this section.

Let $\mathfrak{A} = (A,V,\mu, \varepsilon)$ be a directed normal  bocs. Then $V = A \oplus \overline{V}$ as left and as right modules (but in general not as bimodules) and the restriction of the comultiplication $\mu \colon V \to V \otimes_A V$ to $\overline{V}$ is given by 
\begin{align*}
&\overline{\mu} \colon \overline{V} \to (A \otimes_A \overline{V}) \oplus (\overline{V} \otimes_A A) \oplus (\overline{V} \otimes_A \overline{V}),& &\overline{\mu} = (\omega \otimes_A \textrm{Id}, \textrm{Id} \otimes_A \omega, \partial_1)^T.&
\end{align*}

It is well known that to a normal bocs $(A,V)$ one can associate a tensor algebra equipped with a differential, see e.g. \cite[Lemma 3.2]{BSZ09}. (The first proof of this fact in the case where $A$ is the path algebra of a quiver is due to Roiter, see \cite{Roi79, Roi80}, cf. Theorem \ref{bocsbiquiver}.) Let $\mathcal{U}:=A[\overline{V}]:=\bigoplus_{j=0}^\infty \overline{V}^{\otimes j}$ be the tensor algebra considered as a graded algebra via the tensor grading, i.e. $\deg A=0$ and $\deg \overline{V}=1$. Defining $\partial_0 \colon A \to \overline{V}$ and $\partial_1 \colon \overline{V} \to \overline{V} \otimes_A \overline{V}$ via
\begin{align*} 
&\partial_0 a:=a\omega-\omega a,& &\partial_1 v=\mu(v)-\omega\otimes v-v\otimes \omega,&
\end{align*} 
and extending by the graded Leibniz rule we obtain a differential $\partial \colon \mathcal{U} \to \mathcal{U}$. Note that if $\mathfrak{A}=(A,V)$ is directed, then $\overline{V}^{\otimes_A \mathtt{n}}=0$ and hence $\mathcal{U}$ is finite dimensional. The following proposition is well known, cf. \cite[Proposition 3.5]{BSZ09}, \cite[Lemma 9.1]{KKO14}.

\begin{prop}\label{boxrepresentations}
Let $\mathfrak{A}=(A,V)$ be a directed normal  bocs. Let $Q_1^1$ be a set of generators for the projective $A$-$A$-bimodule $\overline{V}$ (i.e. elements corresponding to $e_\mathtt{l}\otimes_\mathbbm{k} e_\mathtt{i}$ in a direct summand of $\overline{V}$ of the form $Ae_\mathtt{l}\otimes_{\mathbbm{k}} e_\mathtt{i}A$; in this case we write $v\colon \mathtt{i}\to \mathtt{l}\in Q_1^1$). Write $\omega_\mathtt{i}$ for a generator of $Ae_\mathtt{i}\otimes_{\mathbbm{k}} e_\mathtt{i}A$. Let $\partial_0$ and $\partial_1$ be as defined above. The category $\modu \mathfrak{A}$ can be described equivalently as the category of $A$-modules with morphisms 
\[f\in \Hom_{A\otimes A^{\op}}\left(\bigoplus_{\mathtt{i}\in \{\mathtt{1},\dots,\mathtt{n}\}}A\omega_\mathtt{i} A\oplus\bigoplus_{v\in Q_1^1} Av A, \Hom_{\mathbbm{k}}(M,N)\right)\]
satisfying for all $a\in A(\mathtt{i},\mathtt{l})$ the relation
\[f(\omega_\mathtt{l}a-a\omega_\mathtt{i}+\partial_0 a)=0.\] 
In this language, the composition of two morphisms $f,g$ is given as
\begin{align*}
(gf)(\omega_{\mathtt{i}})&:=g(\omega_\mathtt{i})f(\omega_\mathtt{i})\\
(gf)(v)&:=g(\omega_\mathtt{l})f(v)+g(v)f(\omega_\mathtt{i})+\sum_{(v)} g(v_{(1)})f(v_{(2)})
\end{align*}
for all $\mathtt{i}\in \{\mathtt{1},\dots,\mathtt{n}\}$ and $v\colon \mathtt{i}\to \mathtt{l}$ runs through the elements of $Q_1^1$. Here we use Sweedler notation and write $\partial_1(v)=\sum_{(v)} v_{(1)}\otimes v_{(2)}$ with $v_{(1)}, v_{(2)}\in \overline{V}$. 
\end{prop}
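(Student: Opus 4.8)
The goal here is Proposition \ref{boxrepresentations}, giving a concrete ``quiver-with-relations'' style description of $\modu \mathfrak{A}$ in terms of the tensor-algebra differential $\partial = (\partial_0, \partial_1)$.

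\textbf{Overall strategy.} The plan is to unwind the abstract definition of $\modu \mathfrak{A}$ via the standard fact that a normal bocs $(A,V,\mu,\varepsilon)$ is equivalent data to the pair $(A, \overline{V})$ together with the differential $\partial$ on $\mathcal{U} = A[\overline{V}]$; this is the content of \cite[Lemma 3.2, Proposition 3.5]{BSZ09} already cited, so I may invoke it. Concretely: since $(A,V)$ is directed normal, $V = A\omega \oplus \overline{V}$ as a bimodule in the sense that the grouplike $\omega$ spans a free rank-one piece and $\overline{V} = \ker\varepsilon$; hence $V \cong A \oplus \overline{V}$ as left and as right $A$-modules, with $\mu$ restricted to $\overline{V}$ given by $\overline{\mu} = (\omega\otimes\id, \id\otimes\omega, \partial_1)^T$ as displayed just before the proposition, and $\mu(\omega) = \omega\otimes\omega$, $\varepsilon(\omega)=1$, $\varepsilon|_{\overline V}=0$.

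\textbf{Key steps, in order.} First I would rewrite $\Hom_\mathfrak{A}(M,N) = \Hom_{A\otimes A^{\op}}(V, \Hom_\mathbbm{k}(M,N))$ using the decomposition $V = A\omega \oplus \overline{V}$. A bimodule map out of $V$ is \emph{not} simply a pair of bimodule maps out of $A\omega$ and out of $\overline{V}$, because $V$ is not a direct sum \emph{as a bimodule}; instead a bimodule map $f\colon V\to\Hom_\mathbbm{k}(M,N)$ is determined by its values $f(\omega)$ and $f(v)$ for $v$ ranging over the bimodule generators $Q_1^1$ of $\overline V$, subject to the relations coming from the two ways of writing elements $a\omega = \omega a + (a\omega - \omega a)$; writing $\partial_0 a = a\omega - \omega a$ (which lies in $\overline V$ since $\varepsilon(a\omega-\omega a)=a-a=0$) gives exactly the stated relation $f(\omega_\mathtt{l}a - a\omega_\mathtt{i} + \partial_0 a) = 0$ after idempotent-splitting $\omega = \sum_\mathtt{i}\omega_\mathtt{i}$. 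So I would identify $\Hom_\mathfrak{A}(M,N)$ with bimodule maps out of the bimodule $\bigoplus_\mathtt{i} A\omega_\mathtt{i}A \oplus \bigoplus_{v\in Q_1^1} AvA$ subject to those relations — here one uses that the projective-bimodule presentation of $\overline V$ together with the extra freely-adjoined pieces $A\omega_\mathtt{i}A$ surjects onto $V$ with kernel generated by the elements $\omega_\mathtt{l}a - a\omega_\mathtt{i} + \partial_0 a$.

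Second, I would compute the composition law. Take $f\in\Hom_\mathfrak{A}(L,M)$, $g\in\Hom_\mathfrak{A}(M,N)$; by definition $g\circ f$ is $\comp\circ(g\otimes f)\circ\mu$. Evaluate on $\omega_\mathtt{i}$: since $\mu(\omega_\mathtt{i}) = \omega_\mathtt{i}\otimes\omega_\mathtt{i}$, this gives $g(\omega_\mathtt{i})f(\omega_\mathtt{i})$ (composition in $\Hom_\mathbbm{k}$). Evaluate on a generator $v\colon\mathtt{i}\to\mathtt{l}$: using $\mu(v) = \omega_\mathtt{l}\otimes v + v\otimes\omega_\mathtt{i} + \partial_1(v)$ and Sweedler notation $\partial_1(v) = \sum_{(v)} v_{(1)}\otimes v_{(2)}$, applying $g\otimes f$ and then composition yields $g(\omega_\mathtt{l})f(v) + g(v)f(\omega_\mathtt{i}) + \sum_{(v)}g(v_{(1)})f(v_{(2)})$, which is the claimed formula. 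Finally I would check the unit $\mathbbm 1_M = \lambda\circ\varepsilon$ translates to $\mathbbm 1_M(\omega_\mathtt{i}) = \id_{e_\mathtt{i}M}$ and $\mathbbm 1_M(v) = 0$, and verify directly from the two composition formulas that this is a two-sided identity — a short routine computation using $\varepsilon(v)=0$, which I would leave to the reader.

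\textbf{Main obstacle.} The genuinely delicate point is the first step: correctly describing a \emph{bimodule} homomorphism out of $V$ in terms of generators, since $V = A\omega\oplus\overline V$ only as a one-sided module. One must argue that adjoining the free bimodule pieces $A\omega_\mathtt{i}A$ to a projective-bimodule cover of $\overline V$ and quotienting by the relation $\omega_\mathtt{l}a - a\omega_\mathtt{i} + \partial_0 a$ recovers $V$ as a bimodule — equivalently, that $\partial_0$ correctly records the ``twist'' between the left and right $A$-actions through $\omega$. This is exactly the normal-bocs-to-differential-tensor-algebra correspondence of \cite[Proposition 3.5]{BSZ09} (and \cite[Lemma 9.1]{KKO14}), so in practice the proof is: quote that correspondence, then carry out the two Sweedler-notation composition computations explicitly. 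Everything after the first step is bookkeeping.
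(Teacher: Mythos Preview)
Your proposal is correct and matches the paper's approach: the paper does not actually prove this proposition but simply states it as well known with the citations \cite[Proposition 3.5]{BSZ09} and \cite[Lemma 9.1]{KKO14}, exactly the references you invoke. Your write-up is in fact more detailed than what the paper offers, since you spell out explicitly why the relation $f(\omega_\mathtt{l}a - a\omega_\mathtt{i} + \partial_0 a)=0$ arises from the bimodule presentation of $V$ and carry out the Sweedler-notation computation for the composition; the paper leaves all of this to the cited sources.
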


In the remainder we describe the morphisms of bocs representations in this language which should be compared to the familiar presentation of morphisms of quivers with relations. Traditionally for a basic algebra $A=\mathbbm{k}Q/I$, a morphism of representations $\gamma$ is given by $\mathtt{n}$ linear maps $\gamma_\mathtt{i}\colon M_\mathtt{i}\to N_\mathtt{i}$ such that $a \gamma_\mathtt{i}(x)=\gamma_{\mathtt{l}}(ax)$ for all $a\in A(\mathtt{i},\mathtt{l})$ and all $x\in M_\mathtt{i}$. Setting $g(\omega_\mathtt{i})=\gamma_\mathtt{i}$ and $g(v)=0$ defines a morphism of bocs representations $g\colon M\to N$. It is easy to check that this defines an essentially surjective and faithful functor $\Phi\colon \modu A\to \modu \mathfrak{A}$, which is in general not full. In the description of the previous  proposition, a morphism $f$ is in the image of $\Phi$ if and only if $f(v)=0$ for all $v\in Q^1_1$. We call such a morphism $A$-linear. From now on, we will not distinguish between a morphism in $\modu A$ and its image in $\modu \mathfrak{A}$.  
Additionally to those $A$-linear morphisms there are in general some extra maps. In the case of a regular bocs (see Section \ref{applications} for a definition) the map with $f(\omega_\mathtt{i})=0$ for all $\mathtt{i}\in \{\mathtt{1},\dots,\mathtt{n}\}$ and $f(v)=\mathbbm{1}_\mathbbm{k}$ for some $v\colon \mathtt{i}\to \mathtt{l}$ defines a homorphism of bocs representations between the simple $A$-modules $L(\mathtt{i})$ and $L(\mathtt{l})$, i.e. a homomorphism between the corresponding standard modules over the associated right algebra, see Lemma \ref{miemietz-lemma} for the precise statement.

The language of differential biquivers, introduced by M. Kleiner and A. Roiter in \cite{KR75} is useful when working with normal bocses $\mathfrak{A} = (A,V)$ with projective kernel where the algebra $A$ is hereditary.  Such bocses are also called \emphbf{free} and they correspond to almost strong exceptional collections and left strongly quasi-hereditary algebras (Proposition \ref{prop_almost_strong}).  
%For working with left strongly quasi-hereditary algebras, the language of differential biquivers, introduced by M. Kleiner and A. Roiter in \cite{KR75} before the notion of a bocs, is useful. It is a different way of talking about normal bocses $(A,V)$ with projective kernel where $A$ is a hereditary algebra (also called \emphbf{free} bocses). 
A \emphbf{biquiver} is a quiver $(Q_0,Q_1)$ where the arrows are either of degree $0$ or $1$. The arrows of degree $0$ are called \emphbf{solid}. The arrows of  degree $1$ are called \emphbf{dashed}. A \emphbf{differential biquiver} is a biquiver $Q$ together with a linear map $\partial\colon \mathbbm{k}[Q]\to \mathbbm{k}[Q]$ of degree $1$ which squares to $0$, satisfies $\partial(e)=0$ for the trivial paths and satisfies the graded Leibniz rule. The following theorem is due to A. Roiter, \cite{Roi79, Roi80}

\begin{thm}\label{bocsbiquiver}
	There is a one-to-one correspondence between free normal bocses with projective kernel and differential biquivers given by:
	\begin{itemize}
		\item Given a differential biquiver $(Q,\partial)$, the corresponding bocs $(A,V)$ is given by $A:=kQ^0$, the path algebra of the degree $0$ part, and $V=A\omega\oplus \underbrace{A\otimes_{\mathbb{L}} \mathbbm{k}Q^1\otimes_{\mathbb{L}} A}_{=:\overline{V}}$ as left modules (where $A\omega\cong A$) with right module structure given by the embedding $\begin{pmatrix}1\\\partial\end{pmatrix}\colon A\to A\omega\oplus \overline{V}$. The comultiplication is then given by \[\mu(a\omega+v)=a\omega\otimes \omega +v\otimes \omega+\omega\otimes v + \partial_1(v)\] and the counit by $\varepsilon(a\omega+v)=a$. 
		\item In the other direction, let $(A,V)$ be a free normal bocs with projective kernel. Let $\omega$ be a group-like element and let $Q$ be the biquiver with degree $0$ part such that $kQ^0\cong A$. Let $Q^1$ be a free generating system of $\overline{V}:=\ker\varepsilon$, i.e. $\overline{V}\cong A\otimes_{\mathbb{L}} \mathbbm{k}Q^1\otimes_{\mathbb{L}} A$. Define $\partial(a)=a\omega-\omega a$ for a solid arrow $a$ and $\partial(v)=\mu(v)-\omega\otimes v-v\otimes \omega$ for a dashed arrow (cf. Proposition \ref{boxrepresentations}). 
	\end{itemize}
\end{thm}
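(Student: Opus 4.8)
The plan is to construct the two assignments explicitly and show that they are mutually inverse, reducing the whole statement to the classical correspondence between normal bocses and tensor algebras carrying a differential of the prescribed shape (recalled above via $\mathcal{U}=A[\overline{V}]$, cf. \cite[Lemma 3.2]{BSZ09}, \cite[Lemma 9.1]{KKO14}). The only genuinely new ingredient is recognising that the hypotheses ``free'' and ``projective kernel'' are exactly what packages this data as a biquiver and the differential as a differential biquiver structure.

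Starting from a differential biquiver $(Q,\partial)$, I set $A:=\mathbbm{k}Q^0$ — hereditary, being the path algebra of the necessarily acyclic quiver $Q^0$ — and $\overline{V}:=A\otimes_{\mathbb{L}}\mathbbm{k}Q^1\otimes_{\mathbb{L}}A$, which is a direct sum of bimodules $Ae_{\mathtt{l}}\otimes_{\mathbbm{k}}e_{\mathtt{i}}A$ and hence a projective $A$-bimodule; then $V:=A\omega\oplus\overline{V}$ with the indicated left structure, right structure encoded by $\binom{1}{\partial}\colon A\to A\omega\oplus\overline{V}$, counit $\varepsilon(a\omega+v)=a$, and $\mu(a\omega+v)=a\omega\otimes\omega+v\otimes\omega+\omega\otimes v+\partial_1(v)$. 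The routine verifications here are: that the right $A$-action is well defined and associative, which is precisely the graded Leibniz rule for $\partial$ in degree $0$ together with $\partial(e_{\mathtt{i}})=0$; that $\varepsilon$ satisfies the counit axiom, a one-line computation using $\varepsilon(\omega)=1$, $\varepsilon|_{\overline{V}}=0$ and $\partial_1(v)\in\overline{V}\otimes_A\overline{V}$; and that $\omega$ is grouplike, so the bocs is normal. Conversely, from a free normal bocs $(A,V)$ with projective kernel, Gabriel's theorem gives $A\cong\mathbbm{k}Q^0$ for a finite quiver $Q^0$ (using that $\mathbbm{k}$ is algebraically closed and $A$ basic), normality provides a grouplike $\omega$ — which, after passing to an isomorphic bocs, we take in diagonal form $\omega=\sum_{\mathtt{i}}\omega_{\mathtt{i}}$ with $\omega_{\mathtt{i}}=e_{\mathtt{i}}\omega e_{\mathtt{i}}$ — and projectivity of the bimodule $\overline{V}=\ker\varepsilon$ over the basic algebra $A$ forces $\overline{V}\cong\bigoplus Ae_{\mathtt{l}}\otimes_{\mathbbm{k}}e_{\mathtt{i}}A\cong A\otimes_{\mathbb{L}}\mathbbm{k}Q^1\otimes_{\mathbb{L}}A$, the summands being the dashed arrows. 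One then defines $\partial$ by $\partial(a)=a\omega-\omega a$ on solid arrows and $\partial(v)=\mu(v)-\omega\otimes v-v\otimes\omega$ on dashed arrows and extends by the graded Leibniz rule; that $\partial(a)\in\overline{V}$ follows by applying $\varepsilon$, that $\partial(v)\in\overline{V}\otimes_A\overline{V}$ by applying $\varepsilon\otimes\mathrm{id}$ and $\mathrm{id}\otimes\varepsilon$ and invoking the counit axiom, and $\partial(e_{\mathtt{i}})=0$ from the diagonal form of $\omega$.

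The heart of the argument — and the step I expect to cost the most work — is the equivalence of coassociativity of $\mu$ with the identity $\partial^2=0$. Since $\partial^2$ is a degree-$2$ derivation of $\mathcal{U}$ it is determined on generators, so it suffices to treat $A$ and $\overline{V}$ separately. On $A$ one has $\partial^2=\partial_1\partial_0$, and expanding $\mu(\partial_0 a)=\mu(a\omega)-\mu(\omega a)$ while moving the element $a$ across the tensor symbol over $A$ shows $\partial_1(\partial_0 a)=0$ using only that $\mu$ is a bimodule map and $\mu(\omega)=\omega\otimes\omega$; this holds unconditionally. On $\overline{V}$ the plan is to evaluate both $(\mu\otimes\mathrm{id})\mu$ and $(\mathrm{id}\otimes\mu)\mu$ on a generator $v$, expand everything in Sweedler notation $\partial_1(v)=\sum_{(v)}v_{(1)}\otimes v_{(2)}$ using $\mu(\omega)=\omega\otimes\omega$ and $\mu(v)=v\otimes\omega+\omega\otimes v+\partial_1(v)$, and compare. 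Here $\overline{V}\otimes_A\overline{V}\otimes_A\overline{V}$ is a direct summand of $V^{\otimes_A 3}$ (as $\overline{V}$ is a direct summand of $V$ both as a left and as a right module), all contributions with at most two tensor factors in $\overline{V}$ coincide on the two sides irrespective of $\partial_1$, and the projection of coassociativity onto $\overline{V}^{\otimes_A 3}$ reads $\sum_{(v)}\partial_1(v_{(1)})\otimes v_{(2)}=\sum_{(v)}v_{(1)}\otimes\partial_1(v_{(2)})$, which is exactly $\partial^2(v)=0$ once $\partial$ is extended to $\mathcal{U}$ by the graded Leibniz rule with sign $\partial(x\otimes y)=\partial_1(x)\otimes y-x\otimes\partial_1(y)$ for $x\in\overline{V}$. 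The care required is purely bookkeeping: tracking the graded signs and the canonical identifications $A\omega\otimes_A V\cong V\cong V\otimes_A A\omega$ so that the ``lower-order'' terms really do cancel.

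It remains to check that the two constructions are mutually inverse. Passing from $(Q,\partial)$ through the two assignments returns the same algebra $\mathbbm{k}Q^0$, the same free generating set $Q^1$ of $\overline{V}$, and, by the very formulas relating $\mu$ and $\partial$, the same differential; passing from $(A,V)$ the construction depends a priori on the choices of grouplike $\omega$ and of free generating system $Q^1$, and a short argument shows that any two such choices produce isomorphic differential biquivers, while isomorphic biquivers visibly yield isomorphic bocses. Hence the assignments descend to a bijection on isomorphism classes, which is the asserted one-to-one correspondence.
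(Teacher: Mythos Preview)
The paper does not prove this theorem: it is stated as a result of Roiter \cite{Roi79, Roi80} and no proof is supplied in the text. So there is no proof in the paper to compare against; your proposal is an independent verification.

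That said, your argument is essentially sound and follows the natural line. The identification of coassociativity on $\overline{V}$ with $(\partial_1\otimes\mathrm{id})\partial_1=(\mathrm{id}\otimes\partial_1)\partial_1$, hence with $\partial^2|_{\overline{V}}=0$ via the graded Leibniz sign, is the crux and you handle it correctly; likewise the observation that $\partial_1\partial_0=0$ holds automatically once one moves $a$ across the tensor over $A$. One small point worth tightening: when you say $\overline{V}\otimes_A\overline{V}\otimes_A\overline{V}$ is a direct summand of $V^{\otimes_A 3}$, this uses that the left and right splittings $V\cong A\oplus\overline{V}$ given by $\omega$ are compatible enough to induce a splitting on the iterated tensor product; this is where normality (the existence of a grouplike $\omega$) is really doing work, and it would be worth one line making that explicit. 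Also, in the final paragraph you assert that different choices of $\omega$ and of free generators of $\overline{V}$ yield isomorphic differential biquivers; this is true but is not entirely trivial (two grouplikes differ by an element of $\overline{V}$ and one has to conjugate the biquiver accordingly), so if you intend a complete proof rather than a sketch you should spell that out.
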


\begin{rmk}\label{rmk_bocs_of_strongly}
	Specialising the general construction from \cite{KKO14} to the case of left strongly quasi-hereditary algebras, $A=\mathbbm{k}Q^0$ is given by $\mathbb{L}[s^{-1}\mathbb{D}\Ext^1(\Delta,\Delta)]$ and similarly $\mathbbm{k}Q^1$ is given by $\mathbb{L}[s^{-1}\mathbb{D}\Hom(\Delta,\Delta)]$. The differential is obtained as the dual of the higher multiplications on the dual bar construction of the $A_\infty$-algebra $\Ext^*_{\Lambda}(\Delta,\Delta)$, see \cite{KKO14, K16} for more details.
\end{rmk}

Using the description of Proposition \ref{boxrepresentations}, we define the object $\Box_{\mathtt{i}}\in \mathcal{D}^b(\modu \mathfrak{A})$ by 
\[(\Box_{\mathtt{i}})^j:=\begin{cases}A\otimes_{\mathbb{L}} L(\mathtt{i})&\text{for $j=0$,}\\\overline{V}^{\otimes_{A} j}\otimes_{\mathbb{L}} L(\mathtt{i})&\text{for $j\geq 1$,}\\0&\text{else,}\end{cases}\]
with differential given by 
\begin{align*}
&d_\Box(\omega_{\mathtt{l}})(x\otimes \lambda)=-\partial(x)\otimes \lambda,& &\textrm{for all } \mathtt{l}\in \{\mathtt{1},\dots,\mathtt{n}\},\, x \in \overline{V},&\\
&d_\Box(v)(x\otimes \lambda)=v\otimes x\otimes \lambda,& &\textrm{for all } v\in Q_1^1.&
\end{align*}
We write $\mathbbm{1}_\mathtt{i}$ for the element $1\otimes e_\mathtt{i}\in  \Box_\mathtt{i}^0=A\otimes_\mathbb{L} L(\mathtt{i})=P(\mathtt{i})$.

\begin{lem}
$\Box_{\mathtt{i}}\in \mathcal{D}^b(\modu \mathfrak{A})$. 
\end{lem}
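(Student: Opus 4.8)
The claim is that the data $(\Box_{\mathtt{i}})^\bcdot$ together with the maps $d_\Box(\omega_\mathtt{l})$ and $d_\Box(v)$ actually define a legitimate object of $\mathcal{D}^b(\modu \mathfrak{A})$, i.e.\ a bounded complex of $\mathfrak{A}$-modules. The plan is to verify two things: first, that for each $j$ the formulas assemble into a well-defined morphism $d\colon (\Box_\mathtt{i})^j\to(\Box_\mathtt{i})^{j+1}$ in $\modu\mathfrak{A}$ (i.e.\ a morphism of bocs representations, which by Proposition \ref{boxrepresentations} means checking the relation $f(\omega_\mathtt{l}a-a\omega_\mathtt{i}+\partial_0 a)=0$ for all $a$); second, that $d\circ d=0$ with respect to the composition of morphisms in $\modu\mathfrak{A}$ as spelled out in Proposition \ref{boxrepresentations}. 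Boundedness is immediate since $\mathfrak{A}$ is directed, so $\overline{V}^{\otimes_A j}=0$ for $j\geq \mathtt{n}$.

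For the first point I would argue as follows. The component $(\Box_\mathtt{i})^j=\overline{V}^{\otimes_A j}\otimes_\mathbb{L} L(\mathtt{i})$ is an honest $A$-module (by left multiplication on the leftmost $\overline{V}$-factor, or on $A$ when $j=0$), so we only need the $\omega$- and $v$-components of $d$ to satisfy the defining relation of Proposition \ref{boxrepresentations}. This is a direct computation: one expands $d(\omega_\mathtt{l}a-a\omega_\mathtt{i}+\partial_0 a)$ using $d(\omega_\mathtt{l})(x\otimes\lambda)=-\partial(x)\otimes\lambda$, $d(v)(x\otimes\lambda)=v\otimes x\otimes\lambda$, bilinearity, and the defining identity $\partial_0 a=a\omega-\omega a$ for the differential on $\mathcal{U}=A[\overline{V}]$ together with the graded Leibniz rule for $\partial$. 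The key input is that $\partial$ is a derivation on the tensor algebra and that $d_\Box$ is, by design, essentially ``multiplication against $\partial$'' on the first tensor slot; the relation then falls out of the compatibility of $\partial$ with the $A$-bimodule structure on $\overline{V}$ (encoded in the maps $\partial_0,\partial_1$ and the identity $\partial^2=0$ restricted to low degrees).

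For the second point, $d^2=0$, one computes the composite $d\circ d$ in the category $\modu\mathfrak{A}$, using the composition formula
\[
(gf)(\omega_\mathtt{i})=g(\omega_\mathtt{i})f(\omega_\mathtt{i}),\qquad
(gf)(v)=g(\omega_\mathtt{l})f(v)+g(v)f(\omega_\mathtt{i})+\sum_{(v)}g(v_{(1)})f(v_{(2)}).
\]
Applying this to $f=g=d$ and evaluating on $\omega_\mathtt{i}$ gives $\partial^2$ on the first slot, which vanishes because $\partial^2=0$ on $\mathcal{U}$; evaluating on a generator $v\colon\mathtt{i}\to\mathtt{l}$ produces three terms, $-\partial(v\otimes x\otimes\lambda)$ coming from $d(\omega_\mathtt{l})\circ d(v)$, $v\otimes(-\partial(x)\otimes\lambda)$ from $d(v)\circ d(\omega_\mathtt{i})$, and $\sum_{(v)}v_{(1)}\otimes(v_{(2)}\otimes x\otimes\lambda)$ from the Sweedler term, and these must cancel. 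The cancellation is precisely the graded Leibniz rule for $\partial$ applied to $v\otimes(x\otimes\lambda)$, once one recognises that $\partial(v)=\partial_1(v)=\sum_{(v)}v_{(1)}\otimes v_{(2)}$ and that the sign on $d_\Box(\omega)$ is chosen to make the Koszul signs work out. So the heart of the argument is just that $(\mathcal{U},\partial)$ is a DG algebra and the $\Box_\mathtt{i}$ are built by tensoring $L(\mathtt{i})$ onto it.

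The main obstacle I anticipate is bookkeeping rather than conceptual: getting the signs right in the graded Leibniz rule (the chosen convention puts a minus on $d_\Box(\omega_\mathtt{l})$, which is exactly what is needed for $d^2=0$ to hold on the nose) and being careful that all the tensor products are over the correct rings ($\otimes_A$ between consecutive $\overline{V}$ factors, $\otimes_\mathbb{L}$ for the rightmost factor with $L(\mathtt{i})$), so that $\partial_1\colon\overline{V}\to\overline{V}\otimes_A\overline{V}$ lands where expected and the formulas are well-defined modulo these identifications. Once the identifications and signs are fixed, both verifications are short and mechanical, relying only on Proposition \ref{boxrepresentations} and the fact that $\partial$ is a square-zero derivation.
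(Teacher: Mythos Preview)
Your proposal is correct and follows essentially the same approach as the paper: boundedness from directedness, the relation of Proposition \ref{boxrepresentations} verified via the graded Leibniz rule for $\partial$, and $d^2=0$ checked on $\omega$ using $\partial^2=0$ and on $v$ using the Leibniz identity $\partial(v\otimes x)=\partial(v)\otimes x-v\otimes\partial(x)$. The paper carries out exactly these two computations explicitly (suppressing the $\otimes\lambda$ as you anticipate), so there is nothing to add.
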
 

\begin{proof}
That $\Box_{\mathtt{i}}$ is a bounded complex follows from the fact that the bocs is directed.
We have to check that $d_\Box$ defines a morphism in $\modu \mathfrak{A}$ which squares to $0$. For the first claim note that for $a\in A(\mathtt{l},\mathtt{m})$, $x\in \overline{V}^{\otimes_{A} j}(\mathtt{i},\mathtt{l})$ and $\lambda\in L(\mathtt{i})$,
\begin{align*}
d_\Box(\omega_\mathtt{m}a-a\omega_\mathtt{l}+\partial(a))(x\otimes \lambda)&=-\partial(ax)\otimes \lambda+a\partial(x)\otimes \lambda+\partial(a)x\otimes \lambda\\
&=(-\partial(ax)+a\partial(x)+\partial(a)x)\otimes \lambda=0
\end{align*}
because $\partial$ satisfies the graded Leibniz rule. It also squares to zero:
\begin{align*}
(d^{j+1}_\Box d^j_\Box)(\omega_\mathtt{l})(x\otimes \lambda)&=d_\Box^{j+1}(\omega_\mathtt{l})\left(d_\Box^j(\omega_\mathtt{l})(x\otimes \lambda)\right)
=d_\Box^{j+1}(\omega_\mathtt{l})(-\partial(x)\otimes \lambda)\\
&=\partial^2(x)\otimes \lambda
=0
\end{align*}
as $\partial$ also squares to zero. In the next set of equations we use Sweedler notation and write $\partial(v)=\sum_{(v)}v_{(1)}\otimes v_{(2)}$ for $v\in Q^1_1(\mathtt{l},\mathtt{m})$. Furthermore, as the part with ``$\otimes \lambda$'' does not change throughout, we surpress it from the notation. Then, 
\begin{align*}
(d^{j+1}_\Box d^j_\Box)(v)(x)&=d_\Box^{j+1}(\omega_\mathtt{m})(d_\Box^j(v)(x))+d_\Box^{j+1}(v)(d_\Box^j(\omega_\mathtt{l})(x))+\sum_{(v)}d_\Box^{j+1}(v_{(1)})(d_\Box^j(v_{(2)})(x))\\
&=d_\Box^{j+1}(\omega_\mathtt{m})(v\otimes x)+d_\Box^{j+1}(v)(-\partial(x))+\sum_{(v)} d_\Box(v_{(1)})(v_{(2)}\otimes x)\\
&=-\partial(v\otimes x)-v\otimes \partial(x)+\partial(v)\otimes x=0
\end{align*}
as $\partial$ satisfies the graded Leibniz rule.
\end{proof}

\begin{ex}\label{example}
We illustrate the notions discussed in Sections \ref{sec_cat_F_Box} and \ref{dualising} in the following running example which belongs to the class of curve-like algebras studied in Section \ref{applications}. It corresponds to the Auslander algebra of the algebra $\mathbbm{k}[x]/(x^3)$, see 2C in Section \ref{ssec_3_simples}. 

Consider the differential biquiver
\begin{equation}\label{eqtn_quiver_with_3_vertices}
\xymatrix{\mathtt{1} \ar@<0.5ex>[r]^a \ar@{.>}@<-0.5ex>[r]_{\varphi} \ar@/^1pc/@<1ex>[rr]^c \ar@{.>}@/_1pc/@<-1ex>[rr]_{\chi} & \mathtt{2} \ar@<0.5ex>[r]^b \ar@{.>}@<-0.5ex>[r]_{\psi}& \mathtt{3} }
\end{equation}
with 
\begin{align*}
&\partial^1(\chi) = \psi \varphi,& &\partial^0(c)= \psi a + b \varphi.&
\end{align*}
Using the construction in Theorem \ref{bocsbiquiver}, it yields a bocs $\mathfrak{A} = (A, V)$  where the algebra $A$ is the path algebra of the quiver on the solid arrows of (\ref{eqtn_quiver_with_3_vertices}), which is of extended Dynkin type $\tilde{\mathbb{A}}_2$. The bimodule $\overline{V}$ is the direct sum of the projective bimodules $Ae_\mathtt{2}\otimes e_\mathtt{1}A$, $Ae_\mathtt{3}\otimes e_\mathtt{2}A$ and $Ae_\mathtt{3}\otimes e_\mathtt{1}A$. The respective generators are indicated by dashed arrows in the quiver. Using this description,
\begin{align*}
\overline{V} = \textrm{Span}_{\mathbbm{k}}(\varphi,\, \psi,\, \chi,\, \psi a,\, b\varphi).
\end{align*}
The bimodule $V$ is then given by $V=A\omega\oplus \overline{V}$ as left modules, where the right action is deformed by $\partial^0$, i.e. $\omega\cdot c=c\omega-\psi a-b\varphi$. 

The comultiplication $\mu$ is given on generators by
\begin{align*}
\mu(\varphi)&=\omega\otimes \varphi+\varphi\otimes \omega,\\
\mu(\psi)&=\omega\otimes \psi+\psi\otimes \omega,\\
\mu(\chi)  &=\omega\otimes \chi+\chi\otimes \omega+ \psi \otimes \varphi.
\end{align*}

The complexes $\Box_\mathtt{1}$, $\Box_\mathtt{2}$, $\Box_\mathtt{3}$ are determined by the simple $\mathbb{L}$-modules $L(\mathtt{1})$, $L(\mathtt{2})$ and $L(\mathtt{3})$. As complexes of $\mathfrak{A}$-modules, $\Box_\mathtt{1}$, $\Box_\mathtt{2}$ and $\Box_\mathtt{3}$ are given by left $A$-modules $(\Box_\mathtt{i})^j$ together with $\mathbbm{k}$-linear maps $(\Box_\mathtt{i})^j \to (\Box_\mathtt{i})^{j+1}$, for $\omega_\mathtt{1}$, $\omega_\mathtt{2}$, $\omega_\mathtt{3}$ and any dashed arrow in \ref{eqtn_quiver_with_3_vertices}, see Proposition \ref{boxrepresentations}.

The complex $\Box_\mathtt{1}$ consists of three modules. $(\Box_\mathtt{1})^0\cong P_\mathtt{1}$ is the projective $A$-module with basis $e_\mathtt{1},\, a,\, ba,\, c$. The module $(\Box_\mathtt{1})^1$ has $\mathbbm{k}$-basis $\varphi,\, \chi,\, b\varphi, \, \psi a$ and $(\Box_\mathtt{1})^2$ is one-dimensional with basis $\psi \otimes \varphi$.

For $\mathtt{i}=\mathtt{1},\mathtt{2},\mathtt{3}$, the differential $d_{\Box_\mathtt{1}}(\omega_\mathtt{i})$ is non-zero on the following elements:
\begin{align*}
&d_{\Box_\mathtt{1}}(\omega_\mathtt{i})(c) = -\psi a - b\phi,& &d_{\Box_\mathtt{1}}(\omega_\mathtt{i})(\chi)=- \psi \otimes \varphi.&
\end{align*}
For the dashed arrows in the quiver \ref{eqtn_quiver_with_3_vertices}, the maps $d_{\Box_\mathtt{1}}$ are non-zero on the following elements of the basis:
\begin{align*}
&d_{\Box_\mathtt{1}}(\varphi)(e_\mathtt{1})= \varphi,& &d_{\Box_\mathtt{1}}(\psi)(a)= \psi a,& &d_{\Box_\mathtt{1}}(\psi)(\varphi)= \psi \otimes \varphi,& &d_{\Box_\mathtt{1}}(\chi)(e_\mathtt{1})= \chi.&
\end{align*} 

Writing the complex as a complex of representations of the corresponding differential biquiver, i.e. using Proposition \ref{boxrepresentations}, we obtain the following

\begin{center}
	\includegraphics{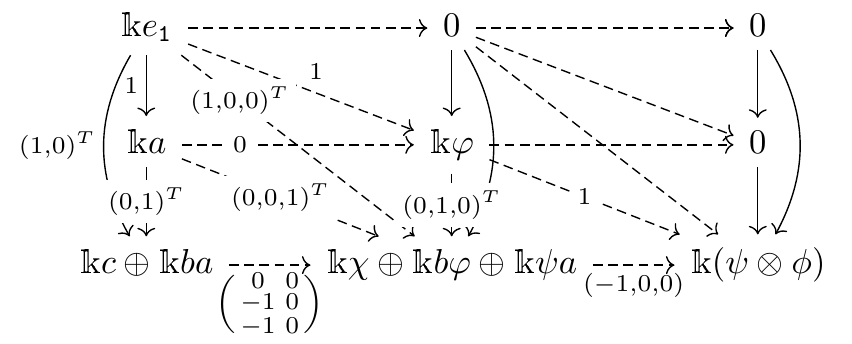}
\end{center}

The complex $\Box_\mathtt{2}$ consists of two modules. $(\Box_\mathtt{2})^0$ is the projective $A$-module with basis $e_\mathtt{2},\, b$ while $(\Box_\mathtt{2})^1$ is one dimensional with basis $\psi$. The differentials $d_{\Box_\mathtt{2}}(\omega_\mathtt{i})$, $d_{\Box_\mathtt{2}}(\varphi)$ and $d_{\Box_\mathtt{2}}(\chi)$ vanish while
\begin{align*}
&d_{\Box_\mathtt{2}}(\psi)(e_\mathtt{2})= \psi.
\end{align*}

Again using the language of biquivers we obtain:

\begin{center}
\includegraphics{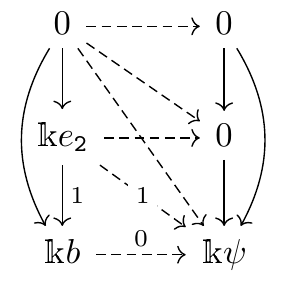}
\end{center}

Finally, the complex $\Box_\mathtt{3}$ consists of the one-dimensional projective $A$-module with basis $e_\mathtt{3}$ in degree zero. 

\end{ex}

The next lemma is a slightly strengthened form of the universal property of projective modules in $\modu \mathfrak{A}$. 

\begin{lem}\label{existenceofs}
Let $M\in \modu \mathfrak{A}$. Let $x\in M_\mathtt{i}$. Then, there exists a unique morphism $s\colon P(\mathtt{i})\to M$ satisfying $s(\omega_\mathtt{i})(\mathbbm{1}_\mathtt{i})=x$ and $s(v)=0$ for all $v\in Q_1^1$. 
\end{lem}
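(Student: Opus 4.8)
The plan is to reduce the statement to the ordinary universal property of the indecomposable projective $A$-module $P(\mathtt{i}) \cong Ae_\mathtt{i}$, via the functor $\Phi \colon \modu A \to \modu\mathfrak{A}$ from the discussion following Proposition \ref{boxrepresentations}. The key point is that a morphism $s \colon P(\mathtt{i}) \to M$ in $\modu\mathfrak{A}$ with $s(v) = 0$ for every $v \in Q_1^1$ is automatically $A$-linear: the $A$-$A$-bimodule $\overline{V}$ is generated by $Q_1^1$, and the restriction of $s$ to $\overline{V}$ is a bimodule homomorphism, so it vanishes identically; hence $s$ lies in the image of $\Phi$. Thus prescribing $s(v) = 0$ for all $v$ amounts to choosing an $A$-module homomorphism $\tilde{s} \colon P(\mathtt{i}) \to M$ with $s = \Phi(\tilde{s})$, and by the description of $\Phi$ on morphisms one then has $s(\omega_\mathtt{j}) = \tilde{s}_\mathtt{j}$, the $\mathtt{j}$-th component of $\tilde{s}$, for every $\mathtt{j}$.

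For existence, I would start from $x \in M_\mathtt{i} = e_\mathtt{i}M$, use the universal property of $Ae_\mathtt{i}$ to obtain the unique $A$-module homomorphism $\tilde{s} \colon P(\mathtt{i}) \to M$ with $\tilde{s}(\mathbbm{1}_\mathtt{i}) = x$, and set $s := \Phi(\tilde{s})$; then $s(v) = 0$ for all $v \in Q_1^1$ and $s(\omega_\mathtt{i})(\mathbbm{1}_\mathtt{i}) = \tilde{s}_\mathtt{i}(\mathbbm{1}_\mathtt{i}) = \tilde{s}(\mathbbm{1}_\mathtt{i}) = x$. For uniqueness, let $s'$ be any morphism satisfying the two conditions; by the first paragraph $s' = \Phi(\tilde{s}')$ for an $A$-module homomorphism $\tilde{s}'$ whose components are the maps $s'(\omega_\mathtt{j})$, and the normalisation $s'(\omega_\mathtt{i})(\mathbbm{1}_\mathtt{i}) = x$ forces $\tilde{s}'(\mathbbm{1}_\mathtt{i}) = x = \tilde{s}(\mathbbm{1}_\mathtt{i})$. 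Since $\mathbbm{1}_\mathtt{i}$ generates $P(\mathtt{i})$ over $A$, this determines $\tilde{s}'$ uniquely, so $\tilde{s}' = \tilde{s}$ and $s' = s$.

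I do not expect a real obstacle. The only points needing a little care are the identification of $\Phi$ on morphisms with the componentwise description of $A$-module maps and the observation that $s$ killing all the arrows $v$ forces $A$-linearity, both of which are already recorded after Proposition \ref{boxrepresentations}. If one prefers not to invoke $\Phi$ explicitly, the argument can be carried out directly: for $s$ with $s(v) = 0$ one has $s(\partial_0 a) = 0$ for all $a$, so the defining relation $s(\omega_\mathtt{l}a - a\omega_\mathtt{j} + \partial_0 a) = 0$ of Proposition \ref{boxrepresentations} reduces to the compatibility condition making the family $(s(\omega_\mathtt{j}))_\mathtt{j}$ an $A$-module homomorphism $P(\mathtt{i}) \to M$, and one concludes as above.
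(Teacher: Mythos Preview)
Your proposal is correct and follows essentially the same approach as the paper: reduce to the universal property of $P(\mathtt{i})=Ae_\mathtt{i}$ in $\modu A$ and then apply the faithful functor $\Phi\colon\modu A\to\modu\mathfrak{A}$, using that morphisms in the image of $\Phi$ are precisely those with $s(v)=0$ for all $v\in Q_1^1$. The paper's proof is a two-line sketch of exactly this; you have simply spelled out the uniqueness part (and the reason $s(v)=0$ forces $A$-linearity) more explicitly than the paper does.
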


\begin{proof}
The statement is true regarding $P(\mathtt{i})$ and $M$ as objects in $\modu A$. Applying the natural functor $\modu A\to \modu \mathfrak{A}$ gives a morphism in $\modu \mathfrak{A}$ with the claimed properties.
%Suppose there is such map. Then, 
%\begin{align*}
%s(\omega_\mathtt{j})(a)&=s(\omega_\mathtt{j}a)(\mathbbm{1}_\mathtt{i})\\
%&=s(a\omega_\mathtt{i})(\mathbbm{1}_\mathtt{i})-s(\partial a)(\mathbbm{1}_\mathtt{i})\\
%&=as(\omega_\mathtt{i})(\mathbbm{1}_\mathtt{i})=ax
%\end{align*}
%On the other hand, setting $s(\omega_\mathtt{j})(a):=ax$ we obtain a homomorphism in $\modu\mathfrak{A}$ as 
%\begin{align*}
%s(\omega_\mathtt{j}b-b\omega_\mathtt{i}+\partial b)(a)&=s(\omega_\mathtt{j})(ba)-bs(\omega_\mathtt{i})(a)\\
%&=bax-bax=0\qedhere
%\end{align*}
\end{proof}

Let $M\in \mathcal{D}^b(\modu \mathfrak{A})$. Then, as one can easily see from the description of the morphisms in $\modu \mathfrak{A}$, there is an exact functor $\mathcal{D}^b(\modu \mathfrak{A})\to \mathcal{D}^b(\modu \mathbb{L})$ given by sending $(M,d_M)$ to $(M,d_M(\omega))$. As $\mu(\omega)=\omega\otimes \omega$, it follows that $(d_M\circ d_M)(\omega)=d_M(\omega)\circ d_M(\omega)$, hence the functor sends complexes to complexes.
Composing with the standard cohomology functor $H^j$ yields a cohomological functor $H^j\colon \mathcal{D}^b(\modu \mathfrak{A})\to \modu \mathbb{L}$. Composing further with the projection to the $\mathtt{i}$-th component yields cohomological functors $H_\mathtt{i}^j\colon \mathcal{D}^b(\modu \mathfrak{A})\to \modu \mathbbm{k}$. The next lemma shows that these functors are represented by $\Box_\mathtt{i}[-j]$. 

For an idempotent split exact category $\mathcal{E}$ a complex $N$ is called \emphbf{homotopically projective} if $\Hom_{\mathcal{K}^b(\mathcal{E})}(N,M)=0$ for each acyclic complex $M\in K^b(\mathcal{E})$. In this case, $\Hom_{\mathcal{K}^b(\mathcal{E})}(N,-)\cong \Hom_{\mathcal{D}^b(\mathcal{E})}(N,-)$. Dually, a complex $N$ is called \emphbf{homotopically injective} if $\Hom_{\mathcal{K}^b(\mathcal{E})}(M,N)=0$ for each acyclic complex $M$. In this case, $\Hom_{\mathcal{K}^b(\mathcal{E})}(-,N)\cong \Hom_{\mathcal{D}^b(\mathcal{E})}(-,N)$. 

\begin{lem}\label{propertiesofBox}
Let $M\in \mathcal{C}^b(\modu \mathfrak{A})$.
\begin{enumerate}[(i)]
\item\label{Boxhom:i} $\Hom_{\mathcal{C}^b(\modu \mathfrak{A})}(\Box_\mathtt{i},M)\cong Z^0(d_M(\omega_\mathtt{i}))\oplus \bigoplus_{j\in \mathbb{Z}}\bigoplus_{v\in Q^1_1}\Hom_{\mathbbm{k}}(\Box_\mathtt{i}^j(s(v)),M^j(t(v)))$.
\item\label{Boxhom:ii} Two maps are homotopic if and only if they coincide when projected to $H^0(d_M(\omega_\mathtt{i}))$. In particular, $H^j_\mathtt{i}$ is represented by $\Box_\mathtt{i}[-j]$. 
\item\label{Boxhom:iii} The homotopy in \eqref{Boxhom:ii} can be chosen to be a morphism of $A$-modules. 
\item The object $\Box_\mathtt{i}$ is homotopically projective. 
\end{enumerate}
\end{lem}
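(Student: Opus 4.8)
The statement to prove is Lemma \ref{propertiesofBox}, an omnibus describing $\Hom$-spaces, homotopies, and homotopy-projectivity of the complexes $\Box_\mathtt{i}$. The four parts are logically nested, so I would prove them in the order (i) $\Rightarrow$ (ii) $\Rightarrow$ (iii) $\Rightarrow$ (iv), with part (i) doing the bulk of the bookkeeping and part (iv) being essentially a formal consequence.

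\textbf{Part (i): computing the chain maps.} A morphism $f\colon \Box_\mathtt{i}\to M$ in $\mathcal{C}^b(\modu\mathfrak{A})$ is a collection $f^j\in\Hom_\mathfrak{A}(\Box_\mathtt{i}^j,M^j)$ commuting with differentials. Using the description of $\modu\mathfrak{A}$-morphisms from Proposition \ref{boxrepresentations}, each $f^j$ is determined by its values $f^j(\omega_\mathtt{l})$ (the ``$A$-linear part'') and $f^j(v)$ for $v\in Q_1^1$, subject to the relation $f^j(\omega_\mathtt{l}a-a\omega_\mathtt{l'}+\partial_0 a)=0$. The key observation is that $\Box_\mathtt{i}^j=\overline{V}^{\otimes_A j}\otimes_\mathbb{L}L(\mathtt{i})$ is a projective $A$-module, freely generated over $\mathbb{L}$ in the sense that specifying an $A$-linear map out of it amounts to specifying a $\mathbb{L}$-linear map on generators; this is exactly the content of Lemma \ref{existenceofs}. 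So I would: first analyse the $A$-linear components $f^j(\omega)$, show that the chain-map condition applied to the $\omega$-parts forces $(f^j(\omega))_j$ to be a cocycle of the complex $(M^\bcdot, d_M(\omega))$ of $\mathbb{L}$-modules in degree $0$ (using that $\Box_\mathtt{i}$ is concentrated in non-negative degrees with $\Box_\mathtt{i}^0=P(\mathtt{i})$, so the relevant component lives in $Z^0(d_M(\omega_\mathtt{i}))$ after restricting to the idempotent $e_\mathtt{i}$); then observe that the components $f^j(v)$ for $v\in Q_1^1$ are completely unconstrained by the chain-map equation once the $\omega$-parts are fixed — the cross-terms in the composition formula $d_M\circ f$ versus $f\circ d_\Box$ involving $f(v)$ cancel against the terms in $d_\Box(v)$, because $d_\Box(v)(x)=v\otimes x$ is itself ``free''. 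Carefully matching terms using Sweedler notation for $\partial_1$ gives the claimed direct sum decomposition. This term-chasing is the most laborious step but is routine given the explicit formula for $d_\Box$.

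\textbf{Parts (ii)--(iv).} For (ii): a nullhomotopic map $f$ is one with $f=d_M h+h d_\Box$ for $h^j\colon\Box_\mathtt{i}^j\to M^{j-1}$. Since $\Box_\mathtt{i}^j$ is projective, by Lemma \ref{existenceofs} I may and (for (iii)) do choose $h$ to be $A$-linear, i.e. $h(v)=0$ for all $v$; then $d_M h+hd_\Box$ applied to $\omega_\mathtt{i}$ computes the standard coboundary in $(M^\bcdot,d_M(\omega_\mathtt{i}))$, while applied to $v\in Q_1^1$ it can absorb the (free) component $f(v)$ via the term $h\circ d_\Box(v)$. Conversely if $f$ projects to $0$ in $H^0(d_M(\omega_\mathtt{i}))$, its $\omega$-part is a coboundary, and then one builds $h$ degree by degree, first to kill the $\omega$-part and then using projectivity to also kill the $f(v)$-part; the upshot is that $\Hom_{\mathcal{K}^b}(\Box_\mathtt{i},M)\cong H^0(d_M(\omega_\mathtt{i}))=H^0_\mathtt{i}(M)$, naturally, so $\Box_\mathtt{i}$ represents $H^0_\mathtt{i}$ (and $\Box_\mathtt{i}[-j]$ represents $H^j_\mathtt{i}$ after shifting). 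Part (iii) is simply the remark recorded during the proof of (ii) that the homotopy can be taken $A$-linear. Finally (iv): if $M$ is acyclic in $\mathcal{K}^b(\modu\mathfrak{A})$ then in particular $H^0(d_M(\omega_\mathtt{i}))=0$ — acyclicity in $\modu\mathfrak{A}$ with the exact structure of Remark \ref{exactstructureonbocsrep} implies acyclicity of $(M^\bcdot,d_M(\omega))$ since the functor $(M,d_M)\mapsto(M,d_M(\omega))$ to $\mathcal{D}^b(\modu\mathbb{L})$ is exact — so by (ii) every chain map $\Box_\mathtt{i}\to M$ is nullhomotopic, i.e. $\Hom_{\mathcal{K}^b}(\Box_\mathtt{i},M)=0$, which is the definition of homotopically projective.

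\textbf{Main obstacle.} The one genuinely delicate point is verifying in part (i) that the chain-map equation imposes \emph{no} condition on the $f(v)$-components beyond what is already there, i.e. that the ``interaction'' between $d_\Box(v)(x)=v\otimes x$ and $d_M$ matches up correctly after using $\mu(v)=\omega\otimes v+v\otimes\omega+\partial_1(v)$ and the composition rule for $\modu\mathfrak{A}$-morphisms. I expect this to require writing the composite $d_M^{\,j}\circ f^j$ and $f^{j+1}\circ d_\Box^{\,j}$ evaluated on each generator $\omega_\mathtt{l}$ and $v$, and checking cancellation of the Sweedler-type sums $\sum_{(v)}\cdots$ term by term — essentially the same bookkeeping as in the preceding lemma showing $d_\Box^2=0$, which is reassuring. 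Everything else is formal homological algebra over the semisimple base $\mathbb{L}$, leveraging projectivity of each $\Box_\mathtt{i}^j$ as an $A$-module via Lemma \ref{existenceofs}.
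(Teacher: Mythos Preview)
Your overall strategy matches the paper's, and parts (iii)--(iv) are exactly as the paper does them. But your framing of part (i) has the logical dependency inverted, and this is worth fixing before you start writing.

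You propose to show that the $\omega$-components $(f^j(\omega_\mathtt{l}))_j$ form a cocycle, and then that the $f^j(v)$ are ``unconstrained once the $\omega$-parts are fixed''. The second claim is false as stated: the chain-map condition evaluated on $v\in Q_1^1$ is a genuine relation linking $f^{j+1}(\omega_\mathtt{m})$, $f^j(\omega_\mathtt{l})$, and the $f^{j}(v),f^{j+1}(v)$ (via the composition rule and $\partial_1(v)$). The paper's approach is the reverse of yours: treat the $f^j(v)$ together with the single value $f^0(\omega_\mathtt{i})(\mathbbm{1}_\mathtt{i})\in Z^0(d_M(\omega_\mathtt{i}))$ as the free data, and then \emph{define} $f^j(\omega_\mathtt{m})$ for $j\geq 1$ recursively using the chain-map equation on $v$ (this is the pair of equations $(\star)$, $(\star\star)$ in the paper). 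What then needs verification is (a) that the resulting $f^j$ is a valid $\mathfrak{A}$-morphism, i.e. satisfies the relation $f^j(\omega_\mathtt{m}a-a\omega_\mathtt{l}+\partial_0 a)=0$, and (b) that the chain-map condition on $\omega$ --- which was \emph{not} used in the recursive definition --- holds automatically. Point (b) is exactly the Sweedler-sum cancellation you flag as the main obstacle, but you should expect it to appear as a consistency check on the recursion, not as a freeness statement.

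For (ii), your outline (``kill the $\omega$-part, then use projectivity to kill the $f(v)$-part'') is again slightly off: the paper builds the $A$-linear homotopy $s$ in one recursion, setting $s(v)=0$ and $s^{j+1}(\omega_\mathtt{m})(x\otimes y):=f^j(x)(y)-d_M^{j-1}(x)s^j(\omega_\mathtt{l})(y)$, which simultaneously handles both the $\omega$- and $v$-components of $f$. This is cleaner than a two-stage argument and gives (iii) for free.
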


\begin{proof}
\begin{enumerate}[(i)]
\item We show that the map 
\begin{align*}\Hom_{\mathcal{C}^b(\modu \mathfrak{A})}(\Box_\mathtt{i},M)&\to Z^0(d_M(\omega_\mathtt{i}))\oplus \bigoplus_{j\in \mathbb{Z}}\bigoplus_{v\in Q^1_1}\Hom_{\mathbbm{k}}(\Box_{\mathtt{i}}^j(s(v)),M^j(t(v)))\\
 (f^j)_{j\in \mathbb{Z}}&\mapsto (f^0(\omega_\mathtt{i})(\mathbbm{1}_\mathtt{i}), f^j(v))_{j \in \mathbb{Z}, v\in Q_1^1}
\end{align*}
is an isomorphism.

Let $f=(f^j)_{j\in \mathbb{Z}}\in \Hom_{\mathcal{C}^b(\modu \mathfrak{A})}(\Box_\mathtt{i},M)$. First we show  $f^0(\omega_\mathtt{i})(\mathbbm{1}_\mathtt{i})\in Z^0(d_M(\omega_\mathtt{i}))$:
\begin{align*}
d_M^0(\omega_\mathtt{i})(f^0(\omega_\mathtt{i})(\mathbbm{1}_\mathtt{i}))&=(d_M^0f^0)(\omega_\mathtt{i})(\mathbbm{1}_\mathtt{i})=(f^1d_\Box^0)(\omega_\mathtt{i})(\mathbbm{1}_\mathtt{i})=f^1(\omega_\mathtt{i})(d_\Box^0(\omega_\mathtt{i})(\mathbbm{1}_\mathtt{i}))\\
&=f^1(\omega_\mathtt{i})(-\partial(1)\otimes e_\mathtt{i})=f^1(\omega_\mathtt{i})(0)=0,
\end{align*}
as $\mathbbm{1}_\mathtt{i} = 1 \otimes e_\mathtt{i}$. Next we show that $(f^0(\omega_\mathtt{i})(\mathbbm{1}_\mathtt{i}),f^j(v))$ uniquely defines $f$. For achieving this, for all $\mathtt{m}\in \{\mathtt{1},\dots,\mathtt{n}\}$, $f^j(\omega_\mathtt{m})$ has to be specified from the given data. For $j<0$, it is clear that $f^j(\omega_\mathtt{m})=0$ since $\Box_\mathtt{i}^j=0$. For $j\geq 1$, $f^j(\omega_\mathtt{m})$ can be defined recursively from $f^{j-1}(\omega_\mathtt{l})$: for this we compute $(f^jd_\Box^{j-1})(v)(x\otimes \lambda)$ for $v\in Q_1^1(\mathtt{l},\mathtt{m})$, $x\in \overline{V}^{\otimes_{\mathbb{L}}^{j-1}}(\mathtt{i},\mathtt{l})$, and $\lambda\in L(\mathtt{i})$ in two ways. We again use Sweedler notation and write $\partial(v)=\sum_{(v)} v_{(1)}\otimes v_{(2)}$. On the one hand,
\begin{multline*}\tag{$\star$}
(f^jd_\Box^{j-1})(v)(x\otimes \lambda)=f^j(\omega_\mathtt{m})(d_\Box^{j-1}(v)(x\otimes \lambda))+f^j(v)(d_\Box^{j-1}(\omega_\mathtt{l})(x\otimes \lambda))\\+\sum_{(v)} f^j(v_{(1)})d_\Box^{j-1}(v_{(2)})(x\otimes \lambda),
\end{multline*}
on the other hand, since $f^jd_\Box^{j-1}=d^{j-1}_Mf^{j-1}$, this is equal to
\begin{align*}\tag{$\star \star$}
(d_M^{j-1}f^{j-1})(v)(x\otimes \lambda)=d_M^{j-1}(\omega_\mathtt{m})(f^{j-1}(v)(x\otimes \lambda))+d_M^{j-1}(v)(f^{j-1}(\omega_\mathtt{l})(x\otimes \lambda))\\+\sum_{(v)}d_M^{j-1}(v_{(1)})(f^{j-1}(v_{(2)})(x\otimes \lambda)).
\end{align*}
Comparing the two expressions, one sees that $f^j(\omega_{\mathtt{m}})(d_\Box^{j-1}(v)(x\otimes \lambda))=f^j(\omega_\mathtt{m})(v\otimes x\otimes \lambda)$ is determined by $f^{j-1}(\omega_\mathtt{l})(x\otimes \lambda)$ and summands containing $f(v)$ where $v\in Q_1^1$. 
For $j=0$ note that for $a\in A(\mathtt{i},\mathtt{m})$,
\begin{align*}
f^0(\omega_\mathtt{m})(a)&=(f^0(\omega_\mathtt{m})a)(\mathbbm{1}_\mathtt{i})=f^0(\omega_\mathtt{m}a)(\mathbbm{1}_\mathtt{i})=f^0(a\omega_\mathtt{l})(\mathbbm{1}_\mathtt{i})-f^0(\partial(a))(\mathbbm{1}_\mathtt{i})\\&=af^0(\omega_\mathtt{i})(\mathbbm{1}_\mathtt{i})-f^0(\partial(a))(\mathbbm{1}_\mathtt{i}).
\end{align*}
Hence, $f$ is completely determined by the given data. To check that each such data defines a morphism of complexes, we have to prove that each of the $f^j$ is indeed a morphism of bocs representations and that $(d_M^{j-1}f^{j-1})(\omega_\mathtt{m})=(f^jd_\Box^{j-1})(\omega_\mathtt{m})$ for all $\mathtt{m}\in \{\mathtt{1},\dots,\mathtt{n}\}$ (for the $v\in Q_1^1$ this statement is already true by the construction of $f^j(\omega_\mathtt{m})$). 

For checking that each $f^j$ is a morphism of bocs representations, taking into account $(\star)$ and $\partial(av)=\partial(a)\otimes v+a\partial(v)$, $f^j(\omega_\mathtt{m} a)(v\otimes x)=f^j(\omega_\mathtt{m})(av\otimes x) = f^j(\omega_\mathtt{m})(d_\Box^{j-1}(av)(x))$ is equal to (again surpressing ``$\otimes \lambda$'' from the notation)
\[
(f^jd_\Box^{j-1})(av)(x)-f^j(av)d_\Box^{j-1}(\omega_\mathtt{l})(x)-f^j(\partial a)d_\Box^{j-1}(v)(x)-\sum_{(v)}f^j(av_{(1)})d_\Box^{j-1}(v_{(2)})(x).
\]
By subtracting the term $f^j(a\omega_\mathtt{m})(v\otimes x)=af^j(\omega_\mathtt{m})(v\otimes x) = af^j(\omega_\mathtt{m})(d_\Box^{j-1}(v))(x)$ we get $-f^j(\partial a)d_\Box^{j-1}(v)(x)$ because all the maps involved have been defined to be $A$-linear. Thus, $f^j$ is a morphism of bocs representations, see Proposition \ref{boxrepresentations}.

To check that $(d_M^{j-1}f^{j-1})(\omega_\mathtt{m})=(f^jd_\Box^{j-1})(\omega_\mathtt{m})$ we apply it to some $v\otimes x$ with $v\in Q_1^1(\mathtt{l},\mathtt{m})$ and $x\in \overline{V}^{\otimes_A j-2}(\mathtt{i},\mathtt{l})$ and use induction (the case of $j\leq 0$ being vacuously true). We write $\partial(v)=\sum_{(v)} v_{(1)}\otimes v_{(2)}$. Using the description of composition of Proposition \ref{boxrepresentations}, we get:
\begin{align*}\tag{$\star \star \star$}
(f^jd_\Box^{j-1})(\omega_\mathtt{m})(v\otimes x)&=f^{j}(\omega_\mathtt{m})d_\Box^{j-1}(\omega_\mathtt{m})(v\otimes x)\\
&=\left(-\sum_{(v)}f^j(\omega_\mathtt{m})(v_{(1)}\otimes v_{(2)}\otimes x)\right)+f^j(\omega_\mathtt{m})(v\otimes \partial(x)).
\end{align*}
We compute the two summands separately. Writing $(\partial \otimes \mathbbm{1}_{\overline{V}})\partial(v)=(\mathbbm{1}_{\overline{V}}\otimes \partial)\partial(v)=\sum_{(v)} v_{(1)}\otimes v_{(2)}\otimes v_{(3)}$, and using $(\star)$, $(\star \star)$ we get that the first summand 
\[- \sum_{(v)}f^j(\omega_\mathtt{m})(v_{(1)} \otimes v_{(2)} \otimes x) = -\sum_{(v)} f^j(\omega_\mathtt{m})(d_{\Box}^{j-1}(v_{(1)})  (v_{(2)} \otimes x))\]
 is equal to 
\footnotesize\begin{multline*}
\bigg(\underbrace{\sum_{(v)}-d_M^{j-1}(\omega_\mathtt{m})f^{j-1}(v_{(1)})(v_{(2)}\otimes x)}_{(\diamond)}-\underbrace{d_M^{j-1}(v_{(1)})f^{j-1}(\omega)(v_{(2)}\otimes x)-d_M^{j-1}(v_{(1)})f^{j-1}(v_{(2)})(v_{(3)}\otimes x)}_{(*)}\\+\underbrace{f^j(v_{(1)})d_\Box^{j-1}(\omega)(v_{(2)}\otimes x)+f^j(v_{(1)})d_\Box(v_{(2)})(v_{(3)}\otimes x)}_{(\dagger)}\bigg).
\end{multline*}\normalsize
Again using $(\star)$ and $(\star \star)$ the second summand $f^j(\omega_\mathtt{m})(v \otimes \partial\,x) = f^{j}(\omega_\mathtt{m})(d_{\Box}^{j-1}(v)(\partial\,x))$ of $(\star \star \star)$ is equal to
\begin{multline*}
\underbrace{d_M^{j-1}(\omega_\mathtt{m})f^{j-1}(v)(\partial x)}_{(\diamond)}+\underbrace{d^{j-1}_M(v)f^{j-1}(\omega_\mathtt{l})(\partial x)}_{(\bullet)}-\underbrace{f^j(v)d_\Box^{j-1}(\omega_\mathtt{l})(\partial x)}_{(\circ)}\\+\bigg(\underbrace{\sum_{(v)}d_M^{j-1}(v_{(1)})f^{j-1}(v_{(2)})(\partial x)}_{(*)}-\underbrace{f^j(v_{(1)})d_\Box^{j-1}(v_{(2)})(\partial x)}_{(\dagger)}\bigg).
\end{multline*}
The term marked $(\circ)$ vanishes as $\partial$ is a differential. The three terms marked with $(\dagger)$ cancel out as $\partial$ is a derivation:
\begin{align*}
&d_\Box^{j-1}(\omega)(v_{(2)} \otimes x) + \sum_{(v_{(2)})}d_\Box^{j-1}(v_{(2)})(v_{(3)}\otimes x) - d_\Box^{j-1}(v_{(2)})(\partial x) &\\
&=\sum_{(v_{(2)})}-v_{(2)}\otimes v_{(3)} \otimes x + v_{(2)} \otimes \partial x + \sum_{(v_{(2)})}v_{(2)}\otimes v_{(3)} \otimes x -v_{(2)} \otimes \partial x = 0.&
\end{align*}
Note that all three terms marked $(*)$ start with $d_M^{j-1}(v_{(1)})$. Using $(\star)$ and $(\star \star)$ to define $f^{j-1}(\omega_\mathtt{m})(v_{(2)} \otimes x)$, we get that
\begin{align*}
	&f^{j-1}(v_{(2)})(\partial x) - f^{j-1}(\omega)(v_{(2)} \otimes x) - \sum_{(v_{(2)})}f^{j-1}(v_{(2)})(v_{(3)} \otimes x)&  \\
	& =-d_M^{j-2}(\omega)f^{j-2}(v_{(2)})(x) - d_M^{j-2}(v_{(2)})f^{j-2}(\omega)(x) - \sum_{(v_{(2)})} d_M^{j-2}(v_{(2)})f^{j-2}(v_{(3)})(x).&
\end{align*}
Thus,  the three terms marked $(*)$ combine to give
\begin{multline*}
\sum_{(v)}\underbrace{-d_M^{j-1}(v_{(1)})d_M^{j-2}(\omega)f^{j-2}(v_{(2)})(x)}_{(\diamond)}-\underbrace{d_M^{j-1}(v_{(1)})d_M^{j-2}(v_{(2)})f^{j-2}(\omega_\mathtt{l})(x)}_{(\bullet)}\\\underbrace{-d_M^{j-1}(v_{(1)})d_M^{j-2}(v_{(2)})f^{j-2}(v_{(3)})(x)}_{(\diamond)}.
\end{multline*}
By inductive hypothesis, $d_M^{j-2} \circ f^{j-2} = f^{j-1} \circ d_{\Box}^{j-2}$. Moreover, since $d_M$ squares to zero, the composition defined in Proposition \ref{boxrepresentations} implies that
\begin{align*}
\sum_{(v)}d_M^{j-1}(v_{(1)}) d_M^{j-2}(v_{(2)}) + d_M^{j-1}(\omega_\mathtt{m})d_M^{j-2}(v) + d_M^{j-1}(v)d_M^{j-2}(\omega_\mathtt{l}) =0.
\end{align*}
Then, using $(\star \star)$, the term marked $(\bullet)$ adds up with the one marked $(\bullet)$ in the second summand to give:
\begin{align*}
&d_M^{j-1}(v)f^{j-1}(\omega_\mathtt{l})(\partial x) - \sum_{(v)} d_M^{j-1}(v_{(1)})d_M^{j-2}(v_{(2)})f^{j-2}(\omega_\mathtt{l})(x)&\\
&=d_M^{j-1}(v)f^{j-1}(\omega_\mathtt{l})(\partial x) + d_M^{j-1}(\omega_\mathtt{m})d_M^{j-2}(v) f^{j-2}(\omega_\mathtt{l})(x) + d_M^{j-1}(v)d_M^{j-2}(\omega_\mathtt{l})f^{j-2}(\omega_\mathtt{l})(x)&\\
&=(d_M^{j-1}f^{j-1})(v)(\partial x) - d_M^{j-1}(\omega_\mathtt{m})f^{j-1}(v)(\partial x) - \sum_{(v)} d_M^{j-1}(v_{(1)})f^{j-1}(v_{(2)})(\partial x) &\\
&+ d_M^{j-1}(\omega_\mathtt{m})d_M^{j-2}(v)f^{j-2}(\omega_\mathtt{l})(x) + d_M^{j-1}(v)d_M^{j-2}(\omega_\mathtt{l})f^{j-2}(\omega_\mathtt{l})(x)&\\
&=d_M^{j-1}(\omega_\mathtt{m})f^{j-1}(v)(\partial x) + d_M^{j-1}(v)f^{j-1}(\omega_\mathtt{l})(\partial x) + \sum_{(v)} d_M^{j-1}(v_{(1)})(f^{j-1}(v_{(2)})(\partial x)) &\\
&- d_M^{j-1}(\omega_\mathtt{m})f^{j-1}(v)(\partial x) - \sum_{(v)} d_M^{j-1}(v_{(1)})f^{j-1}(v_{(2)})(\partial x) &\\
&+ d_M^{j-1}(\omega_\mathtt{m})d_M^{j-2}(v)f^{j-2}(\omega_\mathtt{l})(x) + d_M^{j-1}(v)d_M^{j-2}(\omega_\mathtt{l})f^{j-2}(\omega_\mathtt{l})(x)&\\
&=d_M^{j-1}(\omega_\mathtt{m})d_M^{j-2}(v)f^{j-2}(\omega_\mathtt{l})(x) + d_M^{j-1}(v) (d_M^{j-2}f^{j-2})(\omega_\mathtt{l})(x) + d_M^{j-1}(v)f^{j-1}(\omega_\mathtt{l})(\partial x)&\\
&=d_M^{j-1}(\omega_\mathtt{m})d_M^{j-2}(v)f^{j-2}(\omega_\mathtt{l})(x) + d_M^{j-1}(v)(d_M^{j-2}f^{j-2})(\omega_\mathtt{l})(x) + d_M^{j-1}(v)f^{j-1}(\omega_\mathtt{l})(\partial x)&\\
&=d_M^{j-1}(\omega_\mathtt{m})d_M^{j-2}(v)f^{j-2}(\omega_\mathtt{l})(x), 
\end{align*}
since
\begin{align*}
(d_M^{j-2}f^{j-2})(\omega_\mathtt{l})(x) = (f^{j-1}d_\Box^{j-1})(\omega_\mathtt{l})(x) = -f^{j-1}(\partial x).
\end{align*}
Summing up $d_M^{j-1}(\omega_\mathtt{m})d_M^{j-2}(v)f^{j-2}(\omega_\mathtt{l})(x)$ with the terms marked by $(\diamond)$, gives:
\begin{align*}
&d_M^{j-1}(\omega_\mathtt{m})d_M^{j-2}(v)f^{j-2}(\omega_\mathtt{l})(x) + d_M^{j-1}(\omega_\mathtt{m})f^{j-1}(v)(\partial x) -\sum_{(v)}( d_M^{j-1}(\omega_\mathtt{m}) f^{j-1}(v_{(1)})(v_{(2)}\otimes x)&\\
&+d_M^{j-1}(v_{(1)})d_M^{j-2}(\omega_\mathtt{l})f^{j-2}(v_{(2)})(x) + d_M^{j-1}(v_{(1)})d_M^{j-2}(v_{(2)})f^{j-2}(v_{(3)})(x))&\\
&=d_M^{j-1}(\omega_\mathtt{m})d_M^{j-2}(v)f^{j-2}(\omega_\mathtt{l})(x) + d_M^{j-1}(\omega_\mathtt{m})f^{j-1}(v)(\partial x)-\sum_{(v)}(d_M^{j-1}(\omega_\mathtt{m})f^{j-1}(v_{(1)})(v_{(2)}\otimes x) &\\
&- d_M^{j-1}(\omega_\mathtt{m})d_M^{j-2}(v_{(1)})f^{j-2}(v_{(2)})(x))
\end{align*}
where again we use the fact that $(d_M^{j-1}d_M^{j-2})(v_{(1)}) = 0$ and the composition defined by Proposition \ref{boxrepresentations}. To simplify the notation, we subtract the initial $d_M^{j-1}(\omega_\mathtt{m})$ from the above before transforming it further:
\begin{align*}
&d_M^{j-2}(v)f^{j-2}(\omega_\mathtt{l})(x) + f^{j-1}(v) (\partial x) &\\
&- \sum_{(v)}f^{j-1}(v_{(1)})(v_{(2)}\otimes x) + \sum_{(v)} d_M^{j-2}(v_{(1)})f^{j-2}(v_{(2)})(x)&\\
&=d_M^{j-2}(v)f^{j-2}(\omega_\mathtt{l})(x) - f^{j-1}(v) d_\Box^{j-1}(\omega_\mathtt{l})(x)+f^{j-1}(\omega_\mathtt{m})d_\Box^{j-2}(v)(x) &\\
&+ f^{j-1}(v)d_\Box^{j-1}(\omega_\mathtt{l})(x) -d_M^{j-2}(\omega_\mathtt{m})f^{j-2}(v)(x) - d_M^{j-2}(v)(f^{j-2}(\omega)(x))&\\
&=f^{j-1}(\omega_\mathtt{m})(d_\Box^{j-2}(v)(x)) -d_M^{j-2}(\omega_\mathtt{m})f^{j-2}(v)(x),
\end{align*}
where the first equality follows from $d_\Box(\omega)(x) = -\partial x$ and $(\star)$, $(\star \star)$.

It shows that 
\begin{align*}
(\star \star \star) = d_M^{j-1}(\omega_\mathtt{m})f^{j-1}(\omega_\mathtt{m})(v \otimes x) - d_M^{j-1}(\omega_\mathtt{m})d_M^{j-2}(\omega_\mathtt{m})f^{j-2}(v)(x)= d_M^{j-1}f^{j-1}(\omega_\mathtt{m})(v\otimes x),
\end{align*}
where the last equation follows from the fact that $d_M$ squares to zero.
\item[(ii)/(iii)] In view of (i), it suffices to prove that $f$ is homotopic to zero if and only if $f^0(\omega_\mathtt{i})(\mathbbm{1}_\mathtt{i})=0$ in $H^0(d_M(\omega_\mathtt{i}))$. First suppose that $f$ is homotopic to zero, that is there exist maps $s^j\colon \Box_\mathtt{i}^j\to M^{j-1}$ such that $f^j=d_M^{j-1} s^j+s^{j+1}d_\Box^j$ for all $j\in \mathbb{Z}$. In particular, 
\begin{align*}
f^0(\omega_\mathtt{i})(\mathbbm{1}_\mathtt{i})&=(d_M^{-1}s^0)(\omega_\mathtt{i})(\mathbbm{1}_\mathtt{i})+(s^1d_\Box^0)(\omega_\mathtt{i})(\mathbbm{1}_\mathtt{i})\\
&=d_M^{-1}(\omega_\mathtt{i})s^0(\omega_\mathtt{i})(\mathbbm{1}_\mathtt{i})+s^1(\omega_\mathtt{i})d_\Box^0(\omega_\mathtt{i})(\mathbbm{1}_\mathtt{i})\\
&=d_M^{-1}(\omega_\mathtt{i})s^0(\omega_\mathtt{i})(\mathbbm{1}_\mathtt{i})+s^1(\omega_\mathtt{i})(0)\\
&=d_M^{-1}(\omega_\mathtt{i})s^0(\omega_\mathtt{i})(\mathbbm{1}_\mathtt{i}).
\end{align*}
Thus, $f^0(\omega_\mathtt{i})(\mathbbm{1}_\mathtt{i})=0$ in $H^0(d_M(\omega_\mathtt{i}))$. Conversely, we first consider $f$ such that $f^0(\omega_\mathtt{i})(\mathbbm{1}_\mathtt{i})=0$ in $Z^0(d_M(\omega_\mathtt{i}))$ and show that in this case there exists a homotopy $s$ between $f$ and $0$. Define the homotopy $s$ as follows: $s(v)=0$ for all $v\in Q^1_1$ and $0=s^{0}\colon A\otimes_{\mathbb{L}}L(i)\to M^{-1}$ and inductively 
\[s^{j+1}(\omega_\mathtt{m})(x\otimes y\otimes \lambda):=f^j(x)(y\otimes \lambda)-d_M^{j-1}(x)s^j(\omega_\mathtt{l})(y\otimes \lambda),\]
where $x\in \overline{V}(\mathtt{l},\mathtt{m})$ and $y\in \overline{V}^{\otimes n}(\mathtt{i},\mathtt{l})$.
First we need to check that $s^{j+1}$ defines a morphism of bocs representations. Again, we use Proposition \ref{boxrepresentations}. We omit the ``$\otimes \lambda$'' since it does not effect the calculation:
\footnotesize
\begin{align*}
s^{j+1}(\omega a-a\omega+\partial(a))(x\otimes y)&=s^{j+1}(\omega)(ax\otimes y)-as^{j+1}(\omega)(x\otimes y)\\
&=f^j(ax)(y)-d_M^{j-1}(ax)s^j(\omega)(y)-a\left(f^j(x)(y)-d^{j-1}_M(x)s^j(\omega)(y)\right)=0,
\end{align*}
\normalsize
since $f$ and $d_M$ are morphisms of $A$-bimodules.

To check the identity $f^j=s^{j+1}d_\Box^j+d_M^{j-1}s^j$ we need to apply it to $\omega_\mathtt{m}$ as well as to $v$ for $v\in \overline{V}$. For $\omega_\mathtt{m}$ we apply it to $x\otimes y\otimes \lambda$ where $x\in \overline{V}(\mathtt{l},\mathtt{m})$ and $y\in \overline{V}^{\otimes n}(\mathtt{i},\mathtt{l})$. Using Sweedler notation, we write $\partial x=\sum_{(x)} x_{(1)}\otimes x_{(2)}$. We compute each of the summands of
\[(s^{j+1}d_\Box^j+d_M^{j-1}s^j)(\omega_\mathtt{m})(x\otimes y\otimes \lambda)=s^{j+1}(\omega_\mathtt{m})d_\Box^{j}(\omega_{\mathtt{m}})(x\otimes y\otimes \lambda)+d_M^{j-1}(\omega_\mathtt{m})s^j(\omega_\mathtt{m})(x\otimes y\otimes \lambda)\]
separately. To save some space and make the equations more readable we omit the ``$\otimes \lambda$'' since it does not change throughout the whole calculation.
The first summand is equal to
\begin{align*}
&s^{j+1}(\omega_\mathtt{m})d_\Box^j(\omega_\mathtt{m})(x\otimes y)\\
&=\left(-\sum_{(x)} s^{j+1}(\omega_\mathtt{m})(x_{(1)}\otimes x_{(2)}\otimes y)\right)+s^{j+1}(\omega_\mathtt{m})(x\otimes \partial(y))\\
&=\left(-\sum_{(x)}f^j(x_{(1)})(x_{(2)}\otimes y)+d_M^{j-1}(x_{(1)})s^j(\omega)(x_{(2)}\otimes y)\right)\\
&+s^{j+1}(\omega_\mathtt{m})(x\otimes \partial(y))\\
&=\left(-\sum_{(x)}f^j(x_{(1)})(x_{(2)}\otimes y)+d^{j-1}_M(x_{(1)})\left(f^{j-1}(x_{(2)})(y)-d_M^{j-2}(x_{(2)})s^{j-1}(\omega_\mathtt{l})(y)\right)\right)\\
&+s^{j+1}(\omega_\mathtt{m})(x\otimes \partial(y))\\
&=\big(-\underbrace{\sum_{(x)} f^j(x_{(1)})(x_{(2)}\otimes y)}_{(**)}+\underbrace{d_M^{j-1}(x_{(1)})f^{j-1}(x_{(2)})(y)}_{(*)}-\underbrace{d_M^{j-1}(x_{(1)})d_M^{j-2}(x_{(2)})s^{j-1}(\omega_\mathtt{l})(y)}_{(\dagger)}\big)\\
&+\underbrace{s^{j+1}(\omega_\mathtt{m})(x\otimes \partial y)}_{(\diamond)}.
\end{align*}
The second summand equals
\begin{align*}
d_M^{j-1}(\omega_\mathtt{m})s^j(\omega_\mathtt{m})(x\otimes y)&=d_M^{j-1}(\omega_\mathtt{m})\left(f^{j-1}(x)(y)-d^{j-2}_M(x)s^{j-1}(\omega_\mathtt{l})(y)\right)\\
&=\underbrace{d_M^{j-1}(\omega_\mathtt{m})f^{j-1}(x)(y)}_{(*)}-\underbrace{d_M^{j-1}(\omega_\mathtt{m})d_M^{j-2}(x)s^{j-1}(\omega_\mathtt{l})(y)}_{(\dagger)}.
\end{align*}
As $d_M$ squares to zero, the two terms marked with $(\dagger)$ sum up to 
\begin{align*}
(\dagger)&=-(d_M^{j-1}d_M^{j-2})(x)s^{j-1}(\omega_\mathtt{l})(y)+d_M^{j-1}(x)d_M^{j-2}(\omega_\mathtt{l})s^{j-1}(\omega_\mathtt{l})(y)\\
&=d_M^{j-1}(x)d_M^{j-2}(\omega_\mathtt{l})s^{j-1}(\omega_\mathtt{l})(y).
\end{align*}
Since $f$ is a morphism of complexes, the two terms marked with $(*)$ combine to give
\begin{align*}
(*)&=(d_M^{j-1}f^{j-1})(x)(y)-d_M^{j-1}(x)f^{j-1}(\omega_\mathtt{l})(y)\\
&=\underbrace{(f^{j}d_\Box^{j-1})(x)(y)}_{(**)}-\underbrace{d_M^{j-1}(x)f^{j-1}(\omega_\mathtt{l})(y)}_{(\ddagger)}.
\end{align*}
By induction, the result of $(\dagger)$ combines with $(\ddagger)$ to give
\[(\bullet)=-d_M^{j-1}(x)s^j(\omega_\mathtt{l})d^{j-1}_\Box(\omega_\mathtt{l})(y)=d^{j-1}_M(x)s^j(\omega_\mathtt{l})(\partial y).\]
The two terms marked with $(**)$ combine to give
\begin{align*}
(**)&=f^j(\omega_\mathtt{m})d_\Box^{j-1}(x)(y)+f^j(x)d_\Box^{j-1}(\omega_\mathtt{l})(y)\\
&=f^j(\omega_\mathtt{m})(x\otimes y)+\underbrace{f^j(x)(-\partial(y))}_{(\diamond)}.
\end{align*}
The terms marked with $(\diamond)$ and $(\bullet)$ cancel off by definition of $s^{j+1}$ yielding $f^j(\omega_\mathtt{m})(x\otimes y)$ as the final result.

Since $s^j(v)=0$, applying $s^{j+1}d_\Box^j+d_M^{j-1}s^j$ to $v\in Q^1_1(\mathtt{l},\mathtt{m})$ gives:
\begin{align*}
(s^{j+1}d_\Box^j+d_M^{j-1}s^j)(v)(y\otimes \lambda)&=s^{j+1}(\omega_\mathtt{m})d_\Box^{j}(v)(y\otimes \lambda)+d_M^{j-1}(v)s^j(\omega_\mathtt{l})(y\otimes \lambda)\\
&=f^j(v)(y\otimes \lambda)-d_M^{j-1}(v)s^j(\omega_\mathtt{l})(y\otimes \lambda)+d_M^{j-1}(v)s^j(\omega_\mathtt{l})(y\otimes \lambda)\\
&=f^j(v)(y\otimes \lambda).
\end{align*}
In the general case, let $f^0(\omega_\mathtt{i})(\mathbbm{1}_\mathtt{i})=d^{-1}_M(\omega_\mathtt{i})(x)$. Then, by Lemma \ref{existenceofs}, there exists a morphism $s^{0}$ with $s^{0}(\omega_\mathtt{i})(\mathbbm{1}_\mathtt{i})=x$. Then, the morphism of complexes $g$ with $g^0=f^0-d_M^{-1}s^{0}$ and $g^j=f^j$, otherwise, is a morphism of complexes satisfying $g^0(\omega_\mathtt{i})(\mathbbm{1}_\mathtt{i})=0$. Thus, it is null-homotopic by what we have shown so far. Thus, also $f$ is null-homotopic where the previous null-homotopy is adjusted by $s^{0}$. Observing that $s^{0}$ as well as the $s^j$ constructed before can be chosen to satisfy $s^j(v)=0$ for all $v\in Q^1_1$, statement (iii) follows.
\item[(iv)] Let $M$ be acyclic, i.e. let there exist exact sequences $0\to D^j\stackrel{f^j}{\to} M^{j+1}\stackrel{g^j}{\to} D^{j+1}\to 0$ in $\modu \mathfrak{A}$ for each $j\in \mathbb{Z}$ such that the differential on $M$ is given by the composition $d_M^j \colon M^j \to D^j \to M^{j+1}$. By Remark \ref{exactstructureonbocsrep} without loss of generality one can assume that $f^j$ and $g^j$ are $A$-linear, i.e. $(f^j(\omega), g^j(\omega))$ form an exact sequence of $A$-modules. An exact sequence of $A$-modules yields an exact sequence of vector spaces for every $\mathtt{i}$. In particular, $(M,d_M(\omega_\mathtt{i}))$ is acyclic. Since $\Hom_{\mathcal{K}^b(\modu \mathfrak{A})}(\Box_\mathtt{i},M)\cong H^0(d_M(\omega_\mathtt{i}))$, it follows that $\Box_{\mathtt{i}}$ is homotopically projective.\qedhere
\end{enumerate}
\end{proof}

Applying the $\mathbbm{k}$-duality $\mathbb{D}$ on $\modu \mathfrak{A}$, which obviously extends to $\mathcal{D}^b(\modu \mathfrak{A})$, we obtain a dual statement. 

\begin{cor}
Let $\Diamond_\mathtt{i}:=\mathbb{D}(\Box^{\mathfrak{A}^{\op}}_\mathtt{i})$. Then $\Hom_{\mathcal{K}^b(\modu \mathfrak{A})}(M,\Diamond_\mathtt{i})=\mathbb{D}H^0(d_M(\omega_\mathtt{i}))$. In particular, $\Diamond_\mathtt{i}$ is a homotopically injective object in $\mathcal{D}^b(\modu \mathfrak{A})$. 
\end{cor}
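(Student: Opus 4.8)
The plan is to deduce the statement from Lemma~\ref{propertiesofBox} by transporting it along the $\mathbbm{k}$-duality $\mathbb{D}$. First I would record that the duality $\mathbb{D}\colon\modu\mathfrak{A}\to\modu\mathfrak{A}^{\op}$ of Lemma~\ref{lem_duality} is exact: it restricts to the ordinary $\mathbbm{k}$-duality on $\modu A$, and the admissible short exact sequences of $\modu\mathfrak{A}$ (see Remark~\ref{exactstructureonbocsrep}) are precisely the images of exact sequences in $\modu A$, hence are sent to admissible short exact sequences in $\modu\mathfrak{A}^{\op}$. Consequently $\mathbb{D}$ extends termwise to a contravariant equivalence $\mathcal{C}^b(\modu\mathfrak{A})\to\mathcal{C}^b(\modu\mathfrak{A}^{\op})$ commuting with the shift (with the convention $(\mathbb{D}M)^j=\mathbb{D}(M^{-j})$), and it induces contravariant equivalences on $\mathcal{K}^b$ and $\mathcal{D}^b$; since all modules are finite dimensional, $\mathbb{D}$ is its own quasi-inverse up to natural isomorphism. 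In particular there is a natural isomorphism
\[
\Hom_{\mathcal{K}^b(\modu\mathfrak{A})}(M,\mathbb{D}N)\;\cong\;\Hom_{\mathcal{K}^b(\modu\mathfrak{A}^{\op})}(N,\mathbb{D}M),\qquad M\in\mathcal{C}^b(\modu\mathfrak{A}),\ N\in\mathcal{C}^b(\modu\mathfrak{A}^{\op}).
\]

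Applying this with $N=\Box^{\mathfrak{A}^{\op}}_\mathtt{i}$ yields $\Hom_{\mathcal{K}^b(\modu\mathfrak{A})}(M,\Diamond_\mathtt{i})\cong\Hom_{\mathcal{K}^b(\modu\mathfrak{A}^{\op})}(\Box^{\mathfrak{A}^{\op}}_\mathtt{i},\mathbb{D}M)$, and Lemma~\ref{propertiesofBox}(ii), applied to the bocs $\mathfrak{A}^{\op}$, identifies the latter with $H^0(d_{\mathbb{D}M}(\omega_\mathtt{i}))$. It then remains to identify $H^0(d_{\mathbb{D}M}(\omega_\mathtt{i}))$ with $\mathbb{D}H^0(d_M(\omega_\mathtt{i}))$. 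For this I would check that $\mathbb{D}$ is compatible with the exact functor $\mathcal{D}^b(\modu\mathfrak{A})\to\mathcal{D}^b(\modu\mathbb{L})$, $(M,d_M)\mapsto(M,d_M(\omega))$: unwinding the description of $\mathbb{D}$ on morphisms from the proof of Lemma~\ref{lem_duality}, and using that the grouplike $\omega$ and the counit $\varepsilon$ are the same for $\mathfrak{A}$ and $\mathfrak{A}^{\op}$, one sees that the $\mathtt{i}$-th component of $d_{\mathbb{D}M}(\omega_\mathtt{i})$ is the $\mathbbm{k}$-dual of the $\mathtt{i}$-th component of $d_M(\omega_\mathtt{i})$; taking cohomology in degree zero and using $H^0(\mathbb{D}C^\bullet)\cong\mathbb{D}H^0(C^\bullet)$ gives the asserted formula.

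For the last assertion, recall that $\Box^{\mathfrak{A}^{\op}}_\mathtt{i}$ is homotopically projective by Lemma~\ref{propertiesofBox}(iv). If $M\in\mathcal{C}^b(\modu\mathfrak{A})$ is acyclic then, as in the proof of that part, the admissible short exact sequences assembling $M$ may be chosen $A$-linear, so the underlying $\mathbb{L}$-complex $(M,d_M(\omega))$ is acyclic and $H^0(d_M(\omega_\mathtt{i}))=0$; by the displayed formula $\Hom_{\mathcal{K}^b(\modu\mathfrak{A})}(M,\Diamond_\mathtt{i})\cong\mathbb{D}H^0(d_M(\omega_\mathtt{i}))=0$, so $\Diamond_\mathtt{i}$ is homotopically injective. (Alternatively, one can observe directly that the contravariant equivalence $\mathbb{D}$ carries acyclic complexes to acyclic complexes, hence homotopically projective objects to homotopically injective ones.) I expect the only real work to be in the middle step: making the compatibility of $\mathbb{D}$ with the underlying-$\mathbb{L}$-complex functor precise — in particular getting the degree conventions for dualising complexes right so that the identification lands in $\mathbb{D}H^0(d_M(\omega_\mathtt{i}))$ and not in a shift thereof.
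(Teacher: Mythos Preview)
Your proposal is correct and follows the same approach as the paper: the paper simply states that applying the $\mathbbm{k}$-duality $\mathbb{D}$ (which obviously extends to $\mathcal{D}^b(\modu\mathfrak{A})$) yields the dual statement, and your argument is a careful unpacking of exactly this. The details you supply---exactness of $\mathbb{D}$ with respect to the bocs exact structure, the identification $\Hom_{\mathcal{K}^b(\modu\mathfrak{A})}(M,\Diamond_\mathtt{i})\cong\Hom_{\mathcal{K}^b(\modu\mathfrak{A}^{\op})}(\Box^{\mathfrak{A}^{\op}}_\mathtt{i},\mathbb{D}M)$, and the compatibility $d_{\mathbb{D}M}(\omega_\mathtt{i})=\mathbb{D}(d_M(\omega_\mathtt{i}))$---are precisely what the paper leaves implicit.
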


We are now ready to identify the costandard modules in the bocs language. 

\begin{prop}\label{diamondcostandard}
Let $\mathfrak{A}$ be a directed normal bocs. Let $R$ be its right algebra. Let $\mathcal{F}(\Diamond)$ be the extension closure of the $\Diamond_\mathtt{i}$ in $\mathcal{D}^b(\modu \mathfrak{A})$. Then $\mathcal{F}(\Diamond)\simeq \mathcal{F}(\nabla_R)$.
\end{prop}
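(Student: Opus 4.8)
The plan is to transport the statement to $\mathcal{D}^b(\modu R)$ along the equivalence $T$ of Section~\ref{sec_homotopically_projective}. Since $T\colon \mathcal{D}^b(\modu \mathfrak{A})\to \mathcal{D}^b(\modu R)$ is a triangulated equivalence, it sends the extension closure $\mathcal{F}(\Diamond)$ of the objects $\Diamond_\mathtt{i}$ onto the extension closure of the objects $T(\Diamond_\mathtt{i})$ inside $\mathcal{D}^b(\modu R)$. So it suffices to prove two things. First, that $T(\Diamond_\mathtt{i})\cong \nabla_R(\mathtt{i})$ in $\mathcal{D}^b(\modu R)$ for every $\mathtt{i}$. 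Second, that the extension closure of $\{\nabla_R(\mathtt{1}),\dots,\nabla_R(\mathtt{n})\}$ in $\mathcal{D}^b(\modu R)$ is exactly the full subcategory $\mathcal{F}(\nabla_R)\subset\modu R$ of $\nabla$-filtered modules. Granting both, $T$ restricts to the desired equivalence $\mathcal{F}(\Diamond)\xrightarrow{\sim}\mathcal{F}(\nabla_R)$.

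For the first point I would check the hypotheses of Lemma~\ref{costandards} for $M:=T(\Diamond_\mathtt{i})$. Since $T$ is an equivalence and $T(L(\mathtt{l}))\cong\Delta(\mathtt{l})$ by Theorem~\ref{maintheoremKKO}, we have $\Hom_R(\Delta(\mathtt{l}),T(\Diamond_\mathtt{i})[s])\cong\Hom_{\mathcal{D}^b(\modu\mathfrak{A})}(L(\mathtt{l})[-s],\Diamond_\mathtt{i})$; and because $\Diamond_\mathtt{i}$ is homotopically injective, the corollary above identifies the latter with $\mathbb{D}H^0\bigl(d_{L(\mathtt{l})[-s]}(\omega_\mathtt{i})\bigr)$. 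But $L(\mathtt{l})[-s]$ is the simple module $L(\mathtt{l})$ concentrated in cohomological degree $s$ with zero differential, so, viewed as a complex of $\mathbb{L}$-modules, its $\mathtt{i}$-th component has cohomology concentrated in degree $s$, equal there to $e_\mathtt{i}L(\mathtt{l})$, which is $\mathbbm{k}$ if $\mathtt{l}=\mathtt{i}$ and $0$ otherwise. Hence $\Hom_R(\Delta(\mathtt{l}),T(\Diamond_\mathtt{i})[s])$ is $\mathbbm{k}$ when $s=0$ and $\mathtt{l}=\mathtt{i}$, and vanishes in all other cases, so Lemma~\ref{costandards} yields $T(\Diamond_\mathtt{i})\cong\nabla_R(\mathtt{i})$.

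For the second point, let $\mathcal{M}\subset\mathcal{D}^b(\modu R)$ be the full subcategory of $\nabla$-filtered $R$-modules. It contains every $\nabla_R(\mathtt{i})$ and is clearly contained in the extension closure, so I only need that $\mathcal{M}$ is extension-closed inside $\mathcal{D}^b(\modu R)$. Given a distinguished triangle $X\to Y\to Z\xrightarrow{\delta}X[1]$ with $X,Z\in\mathcal{M}$, the morphism $\delta$ lies in $\Hom_{\mathcal{D}^b(\modu R)}(Z,X[1])=\Ext^1_R(Z,X)$, hence comes from a short exact sequence $0\to X\to Y'\to Z\to 0$ of $R$-modules whose associated distinguished triangle has connecting morphism $\delta$; since a distinguished triangle is determined up to isomorphism by its connecting morphism, $Y\cong Y'$, so $Y$ is an $R$-module, and a $\nabla$-filtration of it is obtained by refining $0\subset X\subset Y'$ by $\nabla$-filtrations of $X$ and $Z$. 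Thus $\mathcal{M}$ is extension-closed, hence equals the extension closure of the $\nabla_R(\mathtt{i})$, which settles the second point and completes the argument.

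The substantial work has already been done in Lemma~\ref{propertiesofBox} and the corollary computing $\Hom(-,\Diamond_\mathtt{i})$, so the hard part here is the careful bookkeeping in the first point: getting the shift $L(\mathtt{l})[-s]$ right and matching exactly the normalisation of Lemma~\ref{costandards}. The identification of the triangulated extension closure with $\mathcal{F}(\nabla_R)$ is then a routine consequence of $\Hom_{\mathcal{D}^b(\modu R)}(Z,X[1])=\Ext^1_R(Z,X)$. As a by-product, the computation in the first point re-proves that $\langle T(\Diamond_\mathtt{i})\rangle$ is the full exceptional collection right dual to $\langle\Delta(\mathtt{i})\rangle$, consistently with the discussion opening Section~\ref{sec_homotopically_projective}.
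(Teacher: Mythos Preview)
Your proof is correct and follows essentially the same route as the paper's: transport along the equivalence $T$, identify $T(\Diamond_\mathtt{i})$ with $\nabla_R(\mathtt{i})$ via Lemma~\ref{costandards} by computing $\Hom(L(\mathtt{l})[-s],\Diamond_\mathtt{i})\cong\mathbb{D}H^0(d_{L(\mathtt{l})[-s]}(\omega_\mathtt{i}))$ using the corollary on homotopic injectivity of $\Diamond_\mathtt{i}$. The only difference is that you spell out explicitly the ``second point'' (that the triangulated extension closure of the $\nabla_R(\mathtt{i})$ in $\mathcal{D}^b(\modu R)$ coincides with the abelian $\mathcal{F}(\nabla_R)$), which the paper absorbs into its standing conventions in Section~\ref{notation}.
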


\begin{proof}
Recall that in the description of $\mathcal{D}^b(\modu R)$ as $\mathcal{D}^b(\modu \mathfrak{A})$ the standard modules for $R$  correspond to the simple modules $L(\mathtt{i})$ in $\modu \mathfrak{A}$ for $\mathtt{i}\in \{\mathtt{1},\dots,\mathtt{n}\}$. 
By the characterisation of the costandard modules given in Lemma \ref{costandards}, we thus have to check that  
\[\Hom_{\mathcal{D}^b(\modu \mathfrak{A})}(L(\mathtt{l}),\Diamond_\mathtt{i}[s])\cong\begin{cases} \mathbbm{k}&\text{if $s=0, \mathtt{l}=\mathtt{i}$,}\\0&\text{else.}\end{cases}\]
But this statement is true since the latter is just $\mathbb{D}H^0(d_{L(\mathtt{l})[-s]}(\omega_\mathtt{i}))$. 
\end{proof}

\begin{ex}
We continue our running example. The dual bocs is the bocs associated to the quiver opposite to (\ref{eqtn_quiver_with_3_vertices}). Hence one gets analogous complexes $\Box_\mathtt{i}^{\mathfrak{A}^\textrm{op}}$. In fact, in this example $\mathfrak{A}^\textrm{op}$ is isomorphic to $\mathfrak{A}$, hence $\Diamond_{\mathtt{i}}$ is isomorphic to the $\mathbbm{k}$-dual of $\Box_{3-\mathtt{i}}$ of the running example \ref{example}. In particular, $\Diamond_\mathtt{1}$ is concentrated in degree 0, $\Diamond_\mathtt{2}$ has two components, in degree $-1$ and 0, and $\Diamond_\mathtt{3}$ lies in degrees $-2, -1, 0$ with $\dim(\Diamond_\mathtt{3})^{-2}=1$, $\dim(\Diamond_\mathtt{3})^{-1}=4$, and $\dim(\Diamond_\mathtt{3})^{0}=4$.
\end{ex}

From now on we work in the category $\mathcal{F}(\Box)$, which seems easier to handle. The duality $\mathbb{D}$ ensures that the results transfer to $\mathcal{F}(\Diamond)$. 

\section{The category $\mathcal{F}(\Box)$}\label{sec_cat_F_Box}

Recall that for a morphism of complexes $f\colon M\to N$ its mapping cone, $\cone(f)$, is defined by $(\cone(f))^j=N^j\oplus M^{j-1}$ with differential given by $d_{\cone(f)}^j=\begin{pmatrix}d_N^j&(-1)^j f^j\\0&d_M^{j-1}\end{pmatrix}$.

\begin{thm}\label{FofBox}
Let $\mathfrak{A}=(A,V)$ be a directed normal bocs. 
\begin{enumerate}[(i)]
\item\label{filtration:i} The category $\mathcal{F}(\Box)\subset \mathcal{D}^b(\modu \mathfrak{A})$ can equivalently be described as the full subcategory whose   objects $M\in \mathcal{D}^b(\modu \mathfrak{A})$, regarded as complexes, have a ``filtration'' 
\[0=M_0\subset M_1\subset \dots\subset M_{r-1}\subset M_r=M\]
such that $M_q\cong \cone(g_q)$ for fixed $g_q\colon \Box_{\mathtt{i}_q}[-1]\to M_{q-1}$ for $q=1,\dots, r$ and $\mathtt{i}_q\in \{\mathtt{1},\dots,\mathtt{n}\}$. In this case, the number $r$ is an invariant of $M$. Furthermore, the morphism $g_q$ can be chosen to be a morphism of complexes of $A$-modules. 
\item The category $\mathcal{F}(\Box)\subset \mathcal{D}^b(\modu \mathfrak{A})$ can equivalently be described as the full subcategory with objects isomorphic to objects $N$ with 
\[N^j=\begin{cases}A\otimes_\mathbb{L} Y&\text{if $j=0$,}\\\overline{V}^{\otimes_A j} \otimes_{\mathbb{L}} Y&\text{if $j>0$,}\\0&\text{else,}\end{cases}\]
 for an $\mathbb{L}$-module $Y$ with differential given by $d_N(\omega_\mathtt{m})(x\otimes y)=-\partial(x)\otimes y+(-1)^{j}x\otimes c_Y(y)$ for $x\in \overline{V}^{\otimes_A j}$, $y\in Y$ for some $\mathbb{L}$-linear map $c_Y\colon Y\to \overline{V}\otimes_\mathbb{L} Y$ and $d_N(v)(x\otimes y)=v\otimes x\otimes y$. Furthermore, the pair $(Y,c_Y)$ can be chosen in such a way that $Y$ has a filtration $0=Y_0\subset Y_1\subset \dots \subset Y_{r-1}\subset Y_r=Y$ with $c_Y(Y_{q})\subseteq \overline{V}\otimes_{\mathbb{L}} Y_{q-1}$ for $q=1,\dots, r$.
\end{enumerate}
\end{thm}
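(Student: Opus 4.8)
The plan is to establish the ``$\cone$-filtration'' picture of part (i) as an essentially formal statement about the extension closure $\mathcal{F}(\Box)$, sharpened using the homotopy-projectivity of the $\Box_{\mathtt{i}}$, and then to deduce the concrete description of part (ii) from it by a single induction on the filtration length. Throughout I write $N(Y,c_Y)$ for the complex of the shape prescribed in (ii) attached to an $\mathbb{L}$-module $Y$ and a map $c_Y\colon Y\to\overline{V}\otimes_{\mathbb{L}}Y$; note $\Box_{\mathtt{i}}=N(L(\mathtt{i}),0)$.

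For part (i), one inclusion is immediate: if $M$ carries a filtration $0=M_0\subset\dots\subset M_r=M$ with $M_q\cong\cone(g_q)$, $g_q\colon\Box_{\mathtt{i}_q}[-1]\to M_{q-1}$, then each step is a distinguished triangle $M_{q-1}\to M_q\to\Box_{\mathtt{i}_q}\to M_{q-1}[1]$, so $M\in\mathcal{F}(\Box)$ by extension-closedness. For the converse I would let $\mathcal{G}$ be the class of objects admitting such a filtration; it contains every $\Box_{\mathtt{i}}$ (take $r=1$, $g_1=0$), so it suffices to check that $\mathcal{G}$ is extension-closed. Given a triangle $X\to Y\to Z\to X[1]$ with $X,Z\in\mathcal{G}$, I would induct on the filtration length of $Z$: splitting off the bottom step $Z_1\cong\Box_{\mathtt{j}_1}$ and applying the octahedral axiom to $Z_1\hookrightarrow Z$ and $X\to Y\to Z$ produces $Y_1$ sitting in triangles $X\to Y_1\to\Box_{\mathtt{j}_1}\to X[1]$ and $Y_1\to Y\to Z/Z_1\to Y_1[1]$; the first exhibits $Y_1\cong\cone(\Box_{\mathtt{j}_1}[-1]\to X)$, so prepending this step to a filtration of $X$ gives $Y_1\in\mathcal{G}$, while $Z/Z_1\in\mathcal{G}$ has a shorter filtration, and induction yields $Y\in\mathcal{G}$. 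The $A$-linearity refinement uses Lemma \ref{propertiesofBox}: since $\Box_{\mathtt{i}}$ is homotopically projective, $\Hom_{\mathcal{D}^b(\modu\mathfrak{A})}(\Box_{\mathtt{i}},N)=\Hom_{\mathcal{K}^b(\modu\mathfrak{A})}(\Box_{\mathtt{i}},N)$, and by the explicit description in Lemma \ref{propertiesofBox}(i),(iii) every homotopy class of chain maps $\Box_{\mathtt{i}}\to N$ contains an $A$-linear representative (take all components on $Q_1^1$ to be zero). Running through the filtration and replacing, inductively, $M_{q-1}$ by an isomorphic complex of $A$-modules assembled from $A$-linear cones, transporting $g_q$ along the isomorphism and picking an $A$-linear representative $g'_q$, one arrives at $M\cong\cone(g'_r)$ with all $g'_q$ morphisms of complexes of $A$-modules.

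For part (ii) I would prove the equivalence of the two descriptions by induction on $r$ using part (i); the base case $r=0$ is $M=0=N(0,0)$. For the step, assume $M_{q-1}\cong N(Y_{q-1},c_{Y_{q-1}})$, so that $M_q\cong\cone(g\colon\Box_{\mathtt{i}_q}[-1]\to N(Y_{q-1},c_{Y_{q-1}}))$ for an $A$-linear chain map $g$. Since $\Box_{\mathtt{i}_q}^j=\overline{V}^{\otimes_A j}\otimes_{\mathbb{L}}L(\mathtt{i}_q)$ (with $A$ for $j=0$), the mapping cone has underlying graded $\mathfrak{A}$-module $\cone(g)^j=N(Y_{q-1},c_{Y_{q-1}})^j\oplus\Box_{\mathtt{i}_q}^j=\overline{V}^{\otimes_A j}\otimes_{\mathbb{L}}(Y_{q-1}\oplus L(\mathtt{i}_q))$, so with $Y_q:=Y_{q-1}\oplus L(\mathtt{i}_q)$ it already has the right shape; its differential restricts to that of $N(Y_{q-1},c_{Y_{q-1}})$ on the first summand, to that of $\Box_{\mathtt{i}_q}$ on the second, plus a cross term built from $g$. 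The heart of the argument is identifying this cross term: reading off the recursion in the proof of Lemma \ref{propertiesofBox}(i) for an $A$-linear $g$ (all $Q_1^1$-components zero), $g$ is determined by the single cocycle $\xi=g^1(\mathbbm{1}_{\mathtt{i}_q})\in\overline{V}\otimes_{\mathbb{L}}Y_{q-1}$ and its tensor-degree-$j$ component is $(-1)^j\,\id_{\overline{V}^{\otimes_A j}}\otimes c$, where $c\colon L(\mathtt{i}_q)\to\overline{V}\otimes_{\mathbb{L}}Y_{q-1}$ sends the generator to $\xi$. Hence the cross term has exactly the prescribed form $(-1)^j\,\id\otimes(-)$, and defining $c_{Y_q}$ to be $c_{Y_{q-1}}$ on $Y_{q-1}$ and $c$ on $L(\mathtt{i}_q)$ makes $\cone(g)$, after matching the mapping-cone sign with the $(-1)^j$ in the stated differential, exactly $N(Y_q,c_{Y_q})$. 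Since $c$ lands in $\overline{V}\otimes Y_{q-1}$, the flag $0=Y_0\subset\dots\subset Y_q$ satisfies $c_{Y_q}(Y_p)\subseteq\overline{V}\otimes Y_{p-1}$, which is the ``furthermore''. Conversely, any pair $(Y,c_Y)$ admits such a flag: induct on $\dim_{\mathbbm{k}}Y$, splitting off a codimension-one $\mathbb{L}$-submodule containing $\bigoplus_{\mathtt{i}<\mathtt{m}}e_{\mathtt{i}}Y$, where $\mathtt{m}$ is maximal with $e_{\mathtt{m}}Y\neq0$, using directedness of the bocs to see $c_Y(Y)\subseteq\overline{V}\otimes\bigoplus_{\mathtt{i}<\mathtt{m}}e_{\mathtt{i}}Y$; then the subcomplexes $N_q$ with $N_q^j=\overline{V}^{\otimes_A j}\otimes_{\mathbb{L}}Y_q$ form a $\Box$-filtration of $N(Y,c_Y)$, which therefore lies in $\mathcal{F}(\Box)$. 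Finally the invariance of $r$ in part (i) is then clear, since $r=\dim_{\mathbbm{k}}Y$; alternatively, $r$ equals the $\nabla$-filtration length of the image of $M$ under $T$ followed by the Serre functor, because $T(\Box_{\mathtt{i}})$ is Serre-dual to $\nabla(\mathtt{i})$.

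I expect the main obstacle to be the explicit cone computation in the inductive step of part (ii): on the one hand the cohomological-degree bookkeeping that makes $\cone(g\colon\Box_{\mathtt{i}_q}[-1]\to N(Y_{q-1},c_{Y_{q-1}}))$ genuinely of the form $\overline{V}^{\otimes_A\bullet}\otimes_{\mathbb{L}}Y_q$ rather than a shifted variant, and, more substantially, extracting from Lemma \ref{propertiesofBox} that an $A$-linear chain map out of $\Box_{\mathtt{i}_q}$ has the ``homogeneous'' components $\id_{\overline{V}^{\otimes_A j}}\otimes c$, which is precisely what forces the cross term into the required shape $(-1)^j\,\id\otimes c_{Y_q}$. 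The accompanying sign bookkeeping needed to reconcile the mapping-cone differential with the formula $d_N(\omega_{\mathtt{m}})(x\otimes y)=-\partial(x)\otimes y+(-1)^j x\otimes c_Y(y)$ is tedious but routine, as are the octahedral filtration-pullback of part (i) and the existence of the flag of $Y$ in the converse direction.
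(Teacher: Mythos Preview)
Your proposal is correct and follows essentially the same approach as the paper: part (i) via extension-closedness of the cone-filtered objects plus the $A$-linear representative from Lemma~\ref{propertiesofBox}, and part (ii) by induction on the filtration length, computing the cone explicitly and reading off $c_{Y_q}$ from the recursion in Lemma~\ref{propertiesofBox}(i). The only cosmetic differences are that the paper cites \cite[Lemma 4.2]{MS16} rather than running the octahedral argument, and deduces the invariance of $r$ directly from the equivalence $\mathcal{F}(\Box)^{\op}\simeq\mathcal{F}(\nabla)$ (your Serre-functor alternative), whereas your $r=\dim_{\mathbbm{k}}Y$ route implicitly needs the later identification of $\dim Y$ with $\sum_{\mathtt{i}}\dim H^0_{\mathtt{i}}(M)$ to be a derived invariant.
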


\begin{proof}
\begin{enumerate}[(i)]
\item Defining the category of objects having such filtration, it is shown in \cite[Lemma 4.2]{MS16} that this category is closed under extensions and hence it coincides with $\mathcal{F}(\Box)$. That the $g_q$ can be chosen to belong to the class of $A$-module homomorphisms follows from the fact that by Lemma \ref{propertiesofBox} each homotopy class of morphisms $\Box_{\mathtt{i}_q}\to M_{q-1}[1]$ contains an $A$-module homomorphism and that the cones of homotopic maps are isomorphic. That the number $r$ does not depend on the choice of filtration follows from the fact that the category $\mathcal{F}(\Box)^{\op}$ is equivalent to $\mathcal{F}(\nabla)$ by Proposition \ref{diamondcostandard} and hence the filtration multiplicities are invariant, see e.g. \cite[Corollary 6.4, Proposition 5.11]{MS16}. 
\item We define $N$ inductively from $M_q$ by induction on $q$. Suppose that $N_{q-1}$ corresponding to $M_{q-1}$ was already defined as 
\[N_{q-1}^j=\begin{cases}A\otimes_{\mathbb{L}} Y_{q-1}&\text{for $j=0$,}\\\overline{V}^{\otimes_A j}\otimes_\mathbb{L} Y_{q-1}&\text{if $j>0$,}\\0&\text{else.}\end{cases}\]
By assumption, there exists $g_q \colon \Box_{i_q}[-1] \to M_{q-1}$. By Lemma \ref{propertiesofBox} \eqref{Boxhom:iii}, we can assume that $g_q$ is $A$-linear. Put $N_q:=\cone(g_q)$. Then, by definition of the cone,
\[(N_q)^{j}=\begin{cases}(A\otimes Y_{q-1})\oplus (A\otimes L(\mathtt{i}_q))&\text{for $j=0$,}\\(\overline{V}^{\otimes_A j}\otimes_{\mathbb{L}} Y_{q-1})\oplus (\overline{V}^{\otimes j}\otimes L(\mathtt{i}_q))&\text{for $j>0$,}\\0&\text{else.}\end{cases}\]
Set $Y_q:=Y_{q-1}\oplus L(\mathtt{i}_q)$. The differential on $N_q$ is given by $d_{N_q}^j=\begin{pmatrix}d_{N_{q-1}}^j&(-1)^jg_q^j\\0&d_{\Box_{\mathtt{i}_q}}^{j}\end{pmatrix}$. Observe that since $g_q^j$ is chosen to be an $A$-linear map, i.e. $g^j_q(v)=0$, one has 
\[d^j_{N_q}(v)=\begin{pmatrix}d^j_{N_{q-1}}(v)&0\\0&d^{j}_{\Box_{\mathtt{i}_q}}(v)\end{pmatrix}.\]
Thus, by induction $d^j_{N_q}(v)(x\otimes y)=v\otimes x\otimes y$ as this holds for $\Box_{\mathtt{i}}$ and $d^j_{N_q}(v)$ has block diagonal shape. 
Furthermore, by the recursive definition of the $A$-linear representative of $g^j_q$ in the proof of Lemma \ref{propertiesofBox} \eqref{Boxhom:i} (taking into account that the $g_q$ are $A$-module homomorphisms), $g^j_q(\omega_\mathtt{m})(x\otimes \lambda)=x\otimes c_{N,q}(\lambda)$ for some $c_{N,q}\colon L(\mathtt{i}_q)\to \overline{V}\otimes Y_{q-1}$. Indeed,  let $c_{N,q}(\lambda e_{\mathtt{i}_q})=\lambda g^0_q(\omega_{\mathtt{i}_q})(\mathbbm{1}_{\mathtt{i}_q})$. Then, for $j=0$, $g^0_q(\omega_\mathtt{m})(x\otimes e_{\mathtt{i}_q})=xg^0_q(\omega_{\mathtt{i}_q})(\mathbbm{1}_{\mathtt{i}_q})=x c_{N,q}(e_{\mathtt{i}_q})$ and for $j\geq 1$, $g^j_q(\omega_\mathtt{m})(v\otimes x\otimes \lambda)=d_{N_{q-1}}^{j-1}(v)g_q^{j-1}(\omega_\mathtt{l})(x\otimes \lambda)=d_{N_{q-1}}^{j-1}(v)( x\otimes c_{N,q}(\lambda))=v\otimes x\otimes c_{N,q}(\lambda)$ by induction and the form of $d^j_{N_q}$ (we use $(\star)$, $(\star \star)$ and the fact that $g^j_{q-1}(v)=0$). Defining $c_{N_q}\colon Y_q\to \overline{V}\otimes_{\mathbb{L}} Y_{q-1}$ by $c_{N_q}|_{Y_{q-1}}:=c_{N_{q-1}}$ and $c_{N_q}|_{L(\mathtt{i}_q)}:=c_{N,q}$ yields the desired map. Indeed, it remains to check the first formula
\begin{align*}
d^j_{N_q}(\omega_\mathtt{m})(x\otimes (y+\lambda))
&=d^j_{N_{q-1}}(\omega_\mathtt{m})(x\otimes y)+(-1)^jg^j_{q}(\omega_\mathtt{m})(x\otimes \lambda)+d^j_{\Box_{\mathtt{i}_q}}(\omega_\mathtt{m})(x\otimes \lambda)\\
&=-\partial(x)\otimes y+(-1)^jx\otimes c_{N_{q-1}}(y)+(-1)^jg^j_q(\omega_\mathtt{m})(x\otimes \lambda)-\partial(x)\otimes \lambda\\
&=-\partial(x)\otimes y+(-1)^jx\otimes c_{N_{q-1}}(y)+(-1)^jx\otimes c_{N,q}(\lambda)-\partial(x)\otimes \lambda\\
&=-\partial(x)\otimes (y+\lambda)+(-1)^jx\otimes c_{N_q}(y+\lambda).&\qedhere
\end{align*}
\end{enumerate}
\end{proof}

We need the following easy exercise in homological algebra.

\begin{lem}\label{homologalg}
Let $\mathcal{A}$ be an additive category. Let $\mathcal{K}^b(\mathcal{A})$ be the homotopy category of $\mathcal{A}$. Let $f\colon X\to Y$ be a morphism in $\mathcal{K}^b(\mathcal{A})$. Let $N\in \mathcal{K}^b(\mathcal{A})$. 

\begin{enumerate}[(i)]
\item Every morphism $\alpha\colon \cone(f)\to N$ can be written in the form $(g,h)$ where $g\colon Y\to N$ is a morphism of complexes and $h$ is a homotopy between $gf$ and $0$. Conversely, every such pair defines a morphism $\cone(f) \to N$.
\item Suppose that $g,\tilde{g}\colon Y\to N$ are homotopic with homotopy $\tilde{h}$. Then, $(g,h)$ and $(\tilde{g},h-\tilde{h}f)$ are homotopic via the homotopy $(\tilde{h},0)$.
\item Suppose that $(g,h),(g,\hat{h})\colon \cone(f)\to N$ are two morphisms of complexes. Then $h-\hat{h}\colon X[1]\to N$ is a morphism of complexes.
\item Let $g$, $h$ and $\hat{h}$ be as in (ii). Suppose that $h-\hat{h}$ is homotopic to $\eta$ via a homotopy $s$. Then, $(g,h)$ is homotopic to $(g,\hat{h}+\eta)$ via the homotopy $(0,s)$. 
\end{enumerate}
\end{lem}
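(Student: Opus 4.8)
The statement to prove is Lemma~\ref{homologalg}, a collection of four elementary facts about morphisms out of a mapping cone in the homotopy category $\mathcal{K}^b(\mathcal{A})$ of an additive category. I will prove each part by unwinding the definition $(\cone(f))^j = N^j \oplus M^{j-1}$ with differential $d_{\cone(f)}^j = \begin{pmatrix} d_N^j & (-1)^j f^j \\ 0 & d_M^{j-1}\end{pmatrix}$ (reading the roles of $N$ and $M$ appropriately: here $f\colon X\to Y$, so the cone has $j$-th term $Y^j \oplus X^{j-1}$) and writing out what it means for a collection of component maps to commute with the differentials or to be null-homotopic. None of these is deep; the whole point is bookkeeping with signs.

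For part~(i): a morphism $\alpha\colon \cone(f)\to N$ is a collection $\alpha^j\colon Y^j\oplus X^{j-1}\to N^j$; write $\alpha^j = (g^j, h^j)$ with $g^j\colon Y^j\to N^j$ and $h^j\colon X^{j-1}\to N^j$. The chain-map condition $d_N^j\alpha^j = \alpha^{j+1} d_{\cone(f)}^j$ splits, upon restricting to the $Y$-summand and the $X$-summand separately, into $d_N^j g^j = g^{j+1} d_Y^j$ (so $g$ is a chain map) and $d_N^j h^j = (-1)^j g^{j+1} f^j + h^{j+1} d_X^{j-1}$, i.e. (after reindexing) $h$ is a chain homotopy witnessing $gf\simeq 0$. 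The converse is the same computation read backwards. For part~(ii): if $\tilde h$ is a homotopy from $g$ to $\tilde g$, define $S\colon \cone(f)\to N$ of degree $-1$ by $S^j = (\tilde h^j, 0)\colon Y^j\oplus X^{j-1}\to N^{j-1}$ and check that $d_N S + S d_{\cone(f)}$ has $Y$-component $d_N\tilde h + \tilde h d_Y = g - \tilde g$ and $X$-component $(-1)^{j}\tilde h f$ coming through the off-diagonal entry of $d_{\cone(f)}$; matching this against $(g,h) - (\tilde g, h - \tilde h f)$ gives exactly the claim (the sign on $\tilde h f$ is absorbed correctly by the $(-1)^j$ in the cone differential, which is the one spot to be careful). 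For part~(iii): if $(g,h)$ and $(g,\hat h)$ are both chain maps, subtracting the two instances of the relation $d_N^j h^j = (-1)^j g^{j+1}f^j + h^{j+1}d_X^{j-1}$ kills the $gf$ term and leaves $d_N^j(h-\hat h)^j = (h-\hat h)^{j+1} d_X^{j-1}$, which after the shift is precisely the statement that $h-\hat h\colon X[1]\to N$ is a chain map. For part~(iv): given a homotopy $s$ from $h-\hat h$ to $\eta$, the degree $-1$ map $(0,s)\colon \cone(f)\to N$ has $d_N(0,s) + (0,s)d_{\cone(f)}$ equal to $(0, d_N s + s d_X)$ on the nose (the off-diagonal $f$-entry only feeds into the $Y$-component, which is forced to be zero since the first coordinate is $0$), and $d_N s + s d_X = (h-\hat h) - \eta$, giving $(g,h)\simeq (g,\hat h + \eta)$.

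The plan is therefore to present a short preliminary paragraph fixing the component notation and the splitting of the cone differential, and then dispatch the four items in order, each in two or three lines. I expect no genuine obstacle; the only thing requiring care is the consistent tracking of the sign $(-1)^j$ in $d_{\cone(f)}$ through parts (i) and (ii) — in particular making sure the homotopy in (ii) is $(\tilde h, 0)$ and not $(\tilde h, \pm \tilde h f)$ or similar, and that the ``$h - \tilde h f$'' in the target pair is the combination the signs actually produce. Since the paper states this as an ``easy exercise in homological algebra,'' I will keep the write-up terse and refer the reader to the block-matrix form of the cone differential rather than displaying every component equation.

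\begin{proof}
Write $\cone(f)^j = Y^j \oplus X^{j-1}$ with differential $d^j = \left(\begin{smallmatrix} d_Y^j & (-1)^j f^j \\ 0 & d_X^{j-1}\end{smallmatrix}\right)$. A graded map $\alpha\colon \cone(f)\to N$ of degree $0$ (resp.\ of degree $-1$) is the same as a pair $(g,h)$ with $g\colon Y\to N$, $h\colon X[1]\to N$ graded of the appropriate degree, via $\alpha^j = (g^j, h^j)$.

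(i) The equality $d_N\alpha = \alpha d_{\cone(f)}$, restricted to the summand $Y$, reads $d_N^j g^j = g^{j+1} d_Y^j$, so $g$ is a morphism of complexes; restricted to the summand $X[1]$ it reads $d_N^j h^j = (-1)^j g^{j+1} f^j + h^{j+1} d_X^{j-1}$, which says precisely that $h$ is a homotopy between $gf$ and $0$. Conversely, any such pair satisfies both equations, hence defines a morphism of complexes $\cone(f)\to N$.

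(ii) Let $\tilde h$ be a homotopy from $g$ to $\tilde g$, i.e.\ $g - \tilde g = d_N \tilde h + \tilde h d_Y$. Consider the degree $-1$ map $S = (\tilde h, 0)\colon \cone(f)\to N$. On the $Y$-summand $d_N S + S d_{\cone(f)}$ equals $d_N \tilde h + \tilde h d_Y = g - \tilde g$, while on the $X[1]$-summand it equals $(-1)^j \tilde h f$ (coming from the off-diagonal entry of $d_{\cone(f)}$) $+\, 0$. Hence $S$ is a homotopy from $(g,h)$ to $(\tilde g, h - \tilde h f)$.

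(iii) If $(g,h)$ and $(g,\hat h)$ are both morphisms of complexes, subtracting the two instances of the identity from (i) gives $d_N^j (h-\hat h)^j = (h-\hat h)^{j+1} d_X^{j-1}$, which is exactly the statement that $h - \hat h\colon X[1]\to N$ is a morphism of complexes.

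(iv) Let $s$ be a homotopy from $h - \hat h$ to $\eta$, i.e.\ $(h-\hat h) - \eta = d_N s + s d_X$. The degree $-1$ map $(0,s)\colon \cone(f)\to N$ satisfies $d_N(0,s) + (0,s)d_{\cone(f)} = (0,\, d_N s + s d_X)$, since the first coordinate being zero annihilates the contribution of the off-diagonal entry of $d_{\cone(f)}$. As $d_N s + s d_X = (h-\hat h) - \eta$, this is a homotopy from $(g,h)$ to $(g,\hat h + \eta)$.
\end{proof}
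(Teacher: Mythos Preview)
Your proof is correct and follows exactly the same block-matrix approach as the paper's. One caveat: the paper's own proof works with the presentation $C^j=Y^j\oplus X^{j+1}$ and $d_C^j=\left(\begin{smallmatrix}d_Y^j&f^{j+1}\\0&-d_X^{j+1}\end{smallmatrix}\right)$ rather than the $(-1)^j$-convention you lifted from the paragraph before Theorem~\ref{FofBox}; with that choice the off-diagonal contribution in part~(ii) is simply $\tilde h^{j+1}f^{j+1}$, so the difference $(g,h)-(\tilde g,\,h-\tilde h f)$ matches on the nose without the stray $(-1)^j$ left over in your computation.
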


\begin{proof}
\begin{enumerate}[(i)]
\item Here we use the description of $C:=\cone(f)$ as 
\[C^j=Y^{j}\oplus X^{j+1}\]
with differential given by $d_C^j=\begin{pmatrix}d_Y^j&f^{j+1}\\0&-d_X^{j+1}\end{pmatrix}$. Let $\alpha\colon C\to N$ be an arbitrary morphism of complexes. Since $C^j=Y^j\oplus X^{j+1}$, $\alpha^j=(g^j,h^{j+1})$ for some $g^j\colon X^j\to N^j$ and $h^{j+1}\colon Y^{j+1}\to N^j$. Since $\alpha$ is a morphism of complexes $d_N^j\alpha^j=\alpha^{j+1} d_C^j$ which is equivalent to 
\[(d_N^j g^j, d_N^j h^{j+1})=(g^{j+1}, h^{j+2})\begin{pmatrix}d_Y^{j}&f^{j+1}\\0&-d_X^{j+1}\end{pmatrix}=(g^{j+1}d_Y^j, g^{j+1}f^{j+1}-h^{j+2}d_X^{j+1}).\]
Equality in the first component means that $g=(g^j)_{j\in \mathbb{Z}}\colon Y\to N$ is a morphism of complexes. Equality in the second component means that $h=(h^j)_{j\in \mathbb{Z}}$ is a homotopy between $gf$ and $0$.
\item We compute 
\begin{align*}
d_N^{j-1}(\tilde{h}^j,0)+(\tilde{h}^{j+1},0)d_C^j&=(d_N^{j-1}\tilde{h}^j,0)+(\tilde{h}^{j+1},0)\begin{pmatrix}d_Y^j&f^{j+1}\\0&-d_X^{j+1}\end{pmatrix}\\
&=(d_N^{j-1}\tilde{h}^j+\tilde{h}^{j+1}d_Y^j, \tilde{h}^{j+1}f^{j+1})\\
&=(g^j-\tilde{g}^j,h^{j+1}-(h^{j+1}-\tilde{h}^{j+1}f^{j+1})).
\end{align*}
The claim follows.
\item Recall that the differential on $X[1]$ is given by $-d_X$. Thus,
\begin{align*}
d_N^j(h^{j+1}-\hat{h}^{j+1})-(h^{j+2}-\hat{h}^{j+2})d_{X[1]}^j
&=d_N^j(h^{j+1}-\hat{h}^{j+1})+(h^{j+2}-\hat{h}^{j+2})d_X^{j+1}\\
&=g^{j+1}f^{j+1}-g^{j+1}f^{j+1}=0.
\end{align*}
\item That $h-\hat{h}$ is homotopic to $\eta$ via a homotopy $s$ is equivalent to $d_N^{j-1}s^j+s^{j+1}d_{X[1]}^j=h^{j+1}-\hat{h}^{j+1}-\eta^j$. This follows from 
\begin{align*}
d_N^{j-1}(0,s^j)+(0,s^{j+1})\begin{pmatrix}d_Y^j&f^{j+1}\\0&-d_X^{j+1}\end{pmatrix}&=(0,d_N^{j-1}s^j-s^{j+1}d_X^{j+1})\\
&=(0,d_N^{j-1}s^j+s^{j+1}d_{X[1]}^j)\\
&=(0,h-\hat{h}-\eta).\qedhere
\end{align*}
\end{enumerate}
\end{proof}

 In view of Theorem \ref{FofBox} every object of the category $\mathcal{F}(\Box)$ gives an $\mathbb{L}$-module $Y$ together with a $\mathbb{L}$-linear map $c_Y \colon Y \to \overline{V}\otimes_{\mathbb{L}} Y$. With Theorem \ref{modN} and Proposition \ref{dualmodN} below we provide an alternative description of the category $\mathcal{F}(\Box)$. Namely, we give an equivalent condition for a map $c_Y$ to define an object of $\mathcal{F}(\Box)$ and we translate it into a condition on the 'dual' map $s_Y \colon \mathbb{D}\overline{V} \to \textrm{Hom}_\mathbbm{k}(Y,Y)$. This allows us to assign to any object of $\mathcal{F}(\Box)$ a module over an appropriate quotient of the tensor algebra $\mathbb{L}[\mathbb{D}\overline{V}]$. Together with an accurate description of morphisms in $\mathcal{F}(\Box)$ this will allow us to present $\mathcal{F}(\Diamond) \simeq \mathcal{F}(\Box)^{\textrm{op}}$ as a module over a bocs $\mathfrak{B} = (B, W)$, Theorem \ref{thm_Rigel_dual_bocs} (the algebra $B$ will be the mentioned quotient of $\mathbb{L}[\mathbb{D}\overline{V}]$). 

\begin{thm}\label{modN}
Let $\mathfrak{A}=(A,V,\mu,\varepsilon)$ be a directed normal bocs. 
\begin{enumerate}[(i)]
\item We define category $\mathcal{N}(\mathfrak{A})$ via:
\begin{description}
\item[objects] pairs $(Y,c_Y)$ where $Y$ is an $\mathbb{L}$-module and $c_Y\colon Y\to \overline{V}\otimes_{\mathbb{L}} Y$ is an $\mathbb{L}$-linear map satisfying 
\begin{equation*}\tag{$\dagger$}
(\partial_1\otimes \mathbbm{1}_Y)c_Y+(m_{\overline{V}}\otimes \mathbbm{1}_Y)(\mathbbm{1}_{\overline{V}}\otimes c_Y)c_Y=0
\end{equation*} 
such that there is a filtration $(Y_q)_{q=1,\dots,r}$ with $Y_q/Y_{q-1}\cong L(\mathtt{i}_q)$ and $c_Y|_{Y_q}\colon Y_q\to \overline{V}\otimes_{\mathbb{L}} Y_{q-1}$. 
\item[morphisms] A morphism $(Y,c_Y)\to (Z,c_Z)$ is given by a map $c_f\colon Y\to A\otimes_\mathbb{L} Z$ satisfying 
\begin{equation*}\tag{$\dagger \dagger$}
-(m_l\otimes \mathbbm{1}_Z)(\mathbbm{1}_{\overline{V}}\otimes c_Z)c_f+(\partial_0\otimes \mathbbm{1}_Z)c_f+(m_r\otimes \mathbbm{1}_Z)(\mathbbm{1}_{\overline{V}}\otimes c_f)c_Y=0.
\end{equation*}
\item[composition] Given $c_g\colon (Y,c_Y)\to (Z,c_Z)$ and $c_f\colon (X,c_X)\to (Y,c_Y)$, the map corresponding to their composition is obtained by $c_{gf}:=(m_A\otimes \mathbbm{1}_Z)(\mathbbm{1}_A\otimes c_g)c_f$. 
\item[unit] $c_{\mathbbm{1}}\colon Y\to A\otimes_{\mathbb{L}} Y$ is given by $y\mapsto 1\otimes y$. 
\end{description}
\item The categories $\mathcal{F}(\Box)$ and $\mathcal{N}(\mathfrak{A})$ are equivalent.
\end{enumerate}
\end{thm}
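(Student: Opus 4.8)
The plan is to construct mutually quasi-inverse functors between $\mathcal{F}(\Box)$ and $\mathcal{N}(\mathfrak{A})$, building directly on Theorem~\ref{FofBox}(ii). First I would set up the functor $\mathcal{F}(\Box)\to\mathcal{N}(\mathfrak{A})$. Given $M\in\mathcal{F}(\Box)$, by Theorem~\ref{FofBox}(ii) I may choose a representative $N$ with $N^j=\overline{V}^{\otimes_A j}\otimes_\mathbb{L}Y$ (in degree $0$ read $\overline{V}^{\otimes_A 0}=A$) and differential $d_N(\omega_\mathtt{m})(x\otimes y)=-\partial(x)\otimes y+(-1)^j x\otimes c_Y(y)$, $d_N(v)(x\otimes y)=v\otimes x\otimes y$, together with a filtration of $Y$ with $c_Y(Y_q)\subseteq\overline{V}\otimes_\mathbb{L}Y_{q-1}$; refining the filtration we may assume $Y_q/Y_{q-1}\cong L(\mathtt{i}_q)$. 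Send $M$ to $(Y,c_Y)$. The first thing to verify is that $(\dagger)$ holds: this is exactly the statement $d_N(\omega_\mathtt{m})^2=0$ on the summand $\overline{V}^{\otimes_A j}\otimes_\mathbb{L}Y$, written out using the graded Leibniz rule for $\partial$ (which kills the $\partial^2$-term and the cross term $\partial(x)\otimes c_Y(y)$ once signs are tracked) and leaving the two terms $(\partial_1\otimes\mathbbm{1}_Y)c_Y$ and $(m_{\overline{V}}\otimes\mathbbm{1}_Y)(\mathbbm{1}_{\overline{V}}\otimes c_Y)c_Y$, whose vanishing is $(\dagger)$. This is the same computation already performed in the lemma showing $\Box_\mathtt{i}\in\mathcal{D}^b(\modu\mathfrak{A})$, now with the $c_Y$-decoration, so I would present it compactly rather than in full.

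Next I would analyse morphisms. Using Lemma~\ref{propertiesofBox}(i)--(iii) and the cone/filtration description, a morphism in $\mathcal{D}^b(\modu\mathfrak{A})$ between two such complexes $N=N(Y,c_Y)$ and $N'=N(Z,c_Z)$ is represented, up to homotopy, by an $A$-linear chain map; by the recursive shape forced in the proof of Lemma~\ref{propertiesofBox}(i) such a chain map $f$ is determined by $f^0(\omega)(\mathbbm{1})$, i.e.\ by an $\mathbb{L}$-linear map $c_f\colon Y\to A\otimes_\mathbb{L}Z$, via $f^j(\omega_\mathtt{m})(x\otimes y)=$ the evident ``$x\otimes c_f(y)$ spread out by $d_{N'}$''. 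The condition that $f$ commutes with the $\omega$-part of the differentials, $d_{N'}\circ f=f\circ d_N$, unwinds precisely to $(\dagger\dagger)$: the term $d_{N'}(\omega)\circ f$ contributes $-(m_l\otimes\mathbbm{1}_Z)(\mathbbm{1}_{\overline{V}}\otimes c_Z)c_f$ together with a $-\partial_0$-type term that, combined with the $\partial_0$ coming from $A$-bilinearity, yields $(\partial_0\otimes\mathbbm{1}_Z)c_f$, while $f\circ d_N(\omega)$ contributes $(m_r\otimes\mathbbm{1}_Z)(\mathbbm{1}_{\overline{V}}\otimes c_f)c_Y$. I would also check that composition of chain maps corresponds to $(m_A\otimes\mathbbm{1})(\mathbbm{1}_A\otimes c_g)c_f$ and the identity to $y\mapsto 1\otimes y$, which is a direct bookkeeping with the formula $f^j(\omega_\mathtt{m})(a)=af^0(\omega)(\mathbbm{1})-f^0(\partial_0 a)(\mathbbm{1})$ from Lemma~\ref{propertiesofBox}. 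Finally, faithfulness and fullness on Hom-sets require identifying \emph{homotopies}: two $A$-linear chain maps $f,\tilde f$ are homotopic iff $f^0(\omega)(\mathbbm{1})-\tilde f^0(\omega)(\mathbbm{1})=0$ in $H^0$, and by Lemma~\ref{propertiesofBox}(iii) the homotopy can be taken $A$-linear, so on the $\mathcal{N}$-side the datum $c_f$ is actually canonical --- $\Hom_{\mathcal{N}(\mathfrak{A})}$ is defined with honest maps, no quotient --- and one checks the homotopy relation forces $c_f=c_{\tilde f}$ exactly because $\Box_\mathtt{i}$ has trivial higher self-extensions. This pins down full faithfulness.

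For essential surjectivity I would go the other way: given $(Y,c_Y)\in\mathcal{N}(\mathfrak{A})$ with its filtration $Y_q/Y_{q-1}\cong L(\mathtt{i}_q)$, define $N=N(Y,c_Y)$ by the formulas of Theorem~\ref{FofBox}(ii); condition $(\dagger)$ guarantees $d_N(\omega_\mathtt{m})^2=0$, the $v$-relations of Proposition~\ref{boxrepresentations} hold because $d_N(v)$ is ``$v\otimes-$'' and $\partial$ is a derivation (the same check as for $\Box_\mathtt{i}$), and the filtration $N_q$ built from $Y_q$ realises $N$ as an iterated cone of shifted $\Box_{\mathtt{i}_q}$'s by reversing the inductive construction in the proof of Theorem~\ref{FofBox}(ii), hence $N\in\mathcal{F}(\Box)$ and maps to $(Y,c_Y)$ on the nose. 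The main obstacle is the bookkeeping in the morphism part: one must show that \emph{every} morphism in $\mathcal{F}(\Box)\subseteq\mathcal{D}^b(\modu\mathfrak{A})$ --- a priori a roof of quasi-isomorphisms --- is represented by an honest $A$-linear chain map between the chosen models, and that the passage $f\mapsto c_f$ is a well-defined bijection on Hom-sets compatible with composition; this rests on homotopy projectivity of the $\Box_\mathtt{i}$ (Lemma~\ref{propertiesofBox}(iv)) so that $\Hom_{\mathcal{D}^b}=\Hom_{\mathcal{K}^b}$, on part (iii) to make the representatives and homotopies $A$-linear, and on Lemma~\ref{homologalg} to control how different choices of the cone-data differ. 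I expect the sign conventions in $d_N(\omega_\mathtt{m})$ (the $(-1)^j$) to be the most error-prone point, and I would fix them once and for all by demanding $d_N^2(\omega)=0$ and then carry them consistently through $(\dagger)$, $(\dagger\dagger)$, and the composition formula.
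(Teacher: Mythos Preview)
Your proposal is essentially correct and uses the same ingredients as the paper: the identification of $(\dagger)$ with $d_N^2=0$, of $(\dagger\dagger)$ with the chain-map condition, the inductive cone argument via Lemma~\ref{homologalg}, and Lemma~\ref{propertiesofBox} to reduce to $A$-linear representatives. The one structural difference is the direction of the functor. The paper builds $\Xi\colon \mathcal{N}(\mathfrak{A})\to \mathcal{F}(\Box)$ by sending $(Y,c_Y)$ to the explicit complex, then checks (a) well-definedness (your $(\dagger)$ and $(\dagger\dagger)$ computations), (b) essential surjectivity straight from Theorem~\ref{FofBox}(ii), (c) faithfulness from the trivial observation $c_f(y)=\Xi(c_f)^0(1\otimes y)$, and (d) fullness by the inductive cone argument. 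Going this way avoids having to make choices of representatives on objects, and it makes faithfulness a one-line remark rather than a separate homotopy analysis.

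One point to tighten in your write-up: your claim ``two $A$-linear chain maps $f,\tilde f$ are homotopic iff $f^0(\omega)(\mathbbm{1})-\tilde f^0(\omega)(\mathbbm{1})=0$ in $H^0$'' is Lemma~\ref{propertiesofBox}(ii) for maps \emph{out of a single} $\Box_\mathtt{i}$, not out of a general $\Xi(Y)$. Extending it to filtered objects is exactly what requires the induction on the length of the filtration together with Lemma~\ref{homologalg}; you invoke these for fullness but should make clear they are also what pins down uniqueness of $c_f$ within a homotopy class. In the paper's direction this issue is absorbed into the fullness proof (producing an $A$-linear representative of the required shape) combined with the chain-level faithfulness of $\Xi$, so the homotopy question does not have to be argued separately.
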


\begin{proof}
We only prove (ii), (i) follows by transport of structure. We define a functor $\Xi\colon \mathcal{N}(\mathfrak{A})\to \mathcal{F}(\Box)$. The pair $(Y,c_Y)$ is sent to the complex $\Xi(Y)$ with 
\[(\Xi(Y))^j:=\begin{cases}A\otimes_\mathbb{L} Y&\text{for $j=0$}\\\overline{V}^{\otimes_A j}\otimes_\mathbb{L} Y&\text{for $j>0$}\\0&\text{else}\end{cases}\] 
and differential given by $d_{\Xi(Y)}^j(\omega_\mathtt{m})(x\otimes y)=-\partial(x)\otimes y+(-1)^j x\otimes c_Y(y)$ and $d_{\Xi(Y)}^j(v)(x\otimes y)=v\otimes x\otimes y$ for $v\in Q_1^1(\mathtt{m},\mathtt{l})$, $x\in \overline{V}^{\otimes_A j}(\mathtt{i},\mathtt{m})$, and $y\in Y$. Furthermore, for a morphism $c_f\colon Y\to A\otimes_{\mathbb{L}} Z$ define $\Xi(c_f)$ to be the $A$-linear map of complexes with $\Xi(c_f)^j(x\otimes y):=(\mathbbm{1}_{\overline{V}^{\otimes_A(j-1)}}\otimes m_r\otimes \mathbbm{1}_Z)(x\otimes_A c_f(y))$.

We first prove that this functor is well-defined. For this, we first show that $d_{\Xi(Y)}^j$ is a morphism in $\modu \mathfrak{A}$. Indeed, for $a\in A(\mathtt{l},\mathtt{m})$,
\begin{multline*}
d_{\Xi(Y)}^j(\omega_\mathtt{m} a-a\omega_{\mathtt{l}}+\partial a)(x\otimes y)\\=-\partial(ax)\otimes y+(-1)^j ax\otimes c_Y(y)+a\partial(x)\otimes y-(-1)^jax\otimes c_Y(y)+\partial(a)\otimes x\otimes y=0
\end{multline*}
by the Leibniz rule for $\partial$. To prove that $d_{\Xi(Y)}$ defines a differential, write $c_Y(y)=\sum_{(y)} v_{(1)}\otimes y_{(2)}$ with $v_{(1)}\in \overline{V}$ and $y_{(2)}\in Y$. Then,
\[
(d_{\Xi(Y)}^{j+1}d_{\Xi(Y)}^j)(\omega_\mathtt{m})(x\otimes y)
=d_{\Xi(Y)}^{j+1}(\omega_\mathtt{m})(-\partial(x)\otimes y+(-1)^jx\otimes c_Y(y)).\]
The first summand is equal to
\begin{align*}
d_{\Xi(Y)}^{j+1}(\omega_\mathtt{m})(-\partial(x)\otimes y)&=\partial^2(x)\otimes y-(-1)^{j+1}\partial(x)\otimes c_Y(y)\\
&=(-1)^{j+2}\partial(x)\otimes c_Y(y).
\end{align*}
The second summand equals
\begin{align*}
&d_{\Xi(Y)}^{j+1}(\omega_\mathtt{m})((-1)^jx\otimes c_Y(y))\\
&=\sum_{(y)}(-1)^j(-\partial(x\otimes v_{(1)})\otimes y_{(2)}+(-1)^{2j+1}x\otimes v_{(1)}\otimes c_Y(y_{(2)}))\\
&=\sum_{(y)}(-1)^{j+1}\partial(x)\otimes v_{(1)}\otimes y_{(2)}+(-1)^{2j+1}x\otimes \partial(v_{(1)})\otimes y_{(2)}+(-1)^{2j+1}x\otimes v_{(1)}\otimes c_Y(y_{(2)})\\
&=\sum_{(y)}(-1)^{j+1}\partial(x)\otimes v_{(1)}\otimes y_{(2)}=(-1)^{j+1}\partial(x)\otimes c_Y(y).
\end{align*}
The third equality follows from $(\dagger)$. 
Thus, the two summands cancel each other. 

Now we check that
\begin{itemize}
	\item $d_N$ is a differential if and only if condition $(\dagger)$ is satisfied,
	\item $f$ is a morphism of complexes if and only if $c_f$ satisfies $(\dagger \dagger)$.
\end{itemize}

Writing $\partial(v)=\sum_{(v)} v_{(1)}\otimes v_{(2)}$ with $v_{(1)}, v_{(2)}\in \overline{V}$, we get 
\small
\begin{multline*}
(d_{\Xi(Y)}^{j+1}d_{\Xi(Y)}^j)(v)(x\otimes y)\\=d_{\Xi(Y)}^{j+1}(\omega_\mathtt{m})(v\otimes x\otimes y)+d_{\Xi(Y)}^{j+1}(v)(-\partial(x)\otimes y+(-1)^j x\otimes c_Y(y))+\sum_{(v)}d_{\Xi(Y)}^{j+1}(v_{(1)})d_{\Xi(Y)}^j(v_{(2)})(x\otimes y).
\end{multline*}\normalsize
The first summand equals
\[d_{\Xi(Y)}^{j+1}(\omega_\mathtt{m})(v\otimes x\otimes y)=-\partial(v\otimes x)\otimes y+(-1)^{j+1}(v\otimes x\otimes c_Y(y)),\]
the second
\[d_{\Xi(Y)}^{j+1}(v)(-\partial(x)\otimes y+(-1)^{j}x\otimes c_Y(y))=-v\otimes \partial(x)\otimes y+(-1)^jv\otimes x\otimes c_Y(y),\]
the third
\[d_{\Xi(Y)}^{j+1}(v_{(1)})d_{\Xi(M)}^j(v_{(2)})(x\otimes y)=\partial(v)\otimes x\otimes y.\]
The respective first parts of each of these three summands cancel each other as $\partial$ satisfies the graded Leibniz rule. The remaining parts of the first and the second summand also cancel each other.

As $-\otimes_{\mathbb{L}}-$ is an exact functor, the filtration $(Y_q)_{q=1,\dots,r}$ induces a filtration of $\Xi(Y)$ in $\mathcal{C}^b(\modu \mathfrak{A})$. The subquotient $\Xi(Y_{q})/\Xi(Y_{q-1})$ has $\overline{V}^{\otimes_A j} \otimes_{\mathbb{L}} Y_q/Y_{q-1}$ in degree $j\geq 0$. Since $c_Y(Y_{q})\subset \overline{V}\otimes Y_{q-1}$, the differential on the subquotient is given by $d(\omega)(x\otimes y)= -\partial(x)\otimes y$, $d(v)(x\otimes y) =x \otimes x \otimes y$, i.e. $Y_q/Y_{q-1}\cong \Box_{\mathtt{i}_q}$.  This shows that the functor is well-defined on objects. 

For checking well-definedness on morphisms write $c_Y(y)=\sum_{(y)} v_{(1)} \otimes y_{(2)}$ as well as $c_f(y)=\sum_{(y)} a_{(1)}\otimes z_{(2)}$ with $v_{(1)}\in \overline{V}$, $a_{(1)}\in A$, $y_{(2)}\in Y$, and $z_{(2)}\in Z$. Then, for a morphism $c_f\colon Y\to A\otimes Z$,
\footnotesize\begin{align*}
(\Xi(c_f)^{j+1}d_{\Xi(Y)}^j)(\omega_\mathtt{m})(x\otimes y)&=\Xi(c_f)^{j+1}(\omega_\mathtt{m})(-\partial(x)\otimes y+(-1)^jx\otimes c_Y(y))\\
&=\underbrace{-\partial(x)a_{(1)}\otimes z_{(2)}}_{(\diamond)}+\underbrace{\sum_{(y)}(-1)^j(\mathbbm{1}_{\overline{V}^{\otimes_A(j-1)}}\otimes m_r\otimes \mathbbm{1}_Z)x\otimes v_{(1)}\otimes c_f(y_{(2)}).}_{(*)}
\end{align*}\normalsize
On the other hand,
\small
\begin{align*}
(d_{\Xi(Z)}^j\Xi(c_f)^{j})(\omega_{\mathtt{m}})(x\otimes y)&=\sum_{(y)}d_{\Xi(Z)}(\omega_\mathtt{m})(xa_{(1)}\otimes z_{(2)})\\
&=\sum_{(y)}-\partial(xa_{(1)})\otimes z_{(2)}+(-1)^jxa_{(1)}\otimes c_Y(z_{(2)})\\
&=\underbrace{-\partial(x)\circ c_f(y)}_{(\diamond)}+\underbrace{\sum_{(y)}(-1)^{j+1}x\otimes \partial(a_{(1)})\otimes z_{(2)}+(-1)^jx a_{(1)}\otimes c_Y(z_{(2)}).}_{(*)}
\end{align*}
The two parts labelled $(\diamond)$ are equal, the parts labelled $(*)$ are equal by the defining property of $c_f$ to be a morphism in $\mathcal{N}(\mathfrak{A})$. 
\normalsize
Furthermore note that,
\begin{align*}
(\Xi(c_f)^{j+1}d_{\Xi(Y)}^j)(v)(x\otimes y)&=\Xi(c_f)(\omega_\mathtt{m})(v\otimes x\otimes y)\\
&=v\otimes x\otimes c_f(y)
\end{align*}
and
\begin{align*}
(d_{\Xi(Z)}^j\Xi(c_f))(v)(x\otimes y)&=d_{\Xi(Z)}(v)(x\otimes c_f(y))\\
&=v\otimes x\otimes c_f(y).
\end{align*}
To check that $\Xi$ is a functor let $c_f\colon (X,c_X)\to (Y,c_Y)$ and $c_g\colon (Y,c_Y)\to (Z,c_Z)$. Then,
\begin{align*}
\Xi(c_g)^j\circ \Xi(c_f)^j&=(\mathbbm{1}_{\overline{V}^{\otimes_A (j-1)}}\otimes m_r\otimes \mathbbm{1}_Z)(\mathbbm{1}_{\overline{V}^{\otimes_A j}}\otimes c_g)(\mathbbm{1}_{\overline{V}^{\otimes_A (j-1)}}\otimes m_r\otimes \mathbbm{1}_Y)(\mathbbm{1}_{\overline{V}^{\otimes_A j}} \otimes c_f)\\
&=(\mathbbm{1}_{\overline{V}^{\otimes_A (j-1)}}\otimes m_r\otimes \mathbbm{1}_Z)(\mathbbm{1}_{\overline{V}^{\otimes_A(j-1)}}\otimes m_r\otimes \mathbbm{1}_Z)(\mathbbm{1}_{\overline{V}^{\otimes_A j}\otimes_{\mathbbm{L}} A}\otimes c_g)(1_{\overline{V}^{\otimes_A j}}\otimes c_f)\\
&=(\mathbbm{1}_{\overline{V}^{\otimes_A (j-1)}}\otimes m_r\otimes \mathbbm{1}_Z)(1_{\overline{V}^{\otimes_A j}}\otimes m_A\otimes \mathbbm{1}_Z)(\mathbbm{1}_{\overline{V}^{\otimes_A j}\otimes_{\mathbbm{L}} A}\otimes c_g)(1_{\overline{V}^{\otimes_A j}}\otimes c_f)\\
&=\Xi(c_{gf})^j
\end{align*}
and for $x\in \overline{V}^{\otimes_A j}$, $y\in Y$,
\begin{align*}
\Xi(c_{\mathbbm{1}})(x\otimes y)&=(\mathbbm{1}_{\overline{V}^{\otimes_A (j-1)}}\otimes m_r\otimes \mathbbm{1}_Z)(x\otimes c_{\mathbbm{1}}(y))&=(\mathbbm{1}_{\overline{V}^{\otimes_A (j-1)}}\otimes m_r\otimes \mathbbm{1}_Z)(x\otimes 1\otimes y)\\
&=x\otimes y.
\end{align*}
To check that $\Xi$ is an equivalence, note that by Theorem \ref{FofBox}, for each $M\in \mathcal{D}^b(\modu \mathfrak{A})$ there is an isomorphic $N=\Xi(Y)$ with a map $c_Y\colon Y\to \overline{V}\otimes_{\mathbb{L}} Y$. Then, the previous calculations show that $c_Y$ satisfies the defining property of $(Y,c_Y)$ to be an object of $\mathcal{N}(\mathfrak{A})$ as $d_N$ is a differential. 

That $\Xi$ is faithful follows from the fact that $c_f(y)=\Xi(c_f)^0(1\otimes y)$. Thus, $\Xi(c_f)=0$ implies that $c_f=0$. We are left with proving that $\Xi$ is full. For this let $f=(f^j)_{j\in \mathbb{Z}}$ be an arbitrary morphism $\Xi(Y)\to \Xi(Z)$. We prove by induction on the length $r$ of the filtration that $f$ is represented by an $A$-linear map. For $r=1$ this is the content of Lemma \ref{propertiesofBox}. For the induction step, note that $\Xi(Y_q)$ is constructed as the cone of a morphism $g_q\colon \Box_{\mathtt{i}_q}[-1]\to  \Xi(Y_{q-1})$. Thus, by the Lemma \ref{homologalg}, each morphism $\alpha\colon \Xi(Y_q)\to \Xi(Z)$ can be represented by a pair $(g,h)$ where $g\colon \Xi(Y_{q-1})\to \Xi(Z)$ is a morphism of complexes and $h\colon \Box_{\mathtt{i}_q}[-1] \to \Xi(Z)$ is a homotopy between $gg_q$ and $0$. By induction, $g$ is homotopic to an $A$-linear $\tilde{g}$ via a homotopy $\tilde{h}$. Thus, $(g,h)$ is homotopic to $(\tilde{g},h-\tilde{h}g_q)$. Let $\hat{h}$ be an $A$-linear homotopy between $\tilde{g}g_q$ and $0$ which can
  be chosen by Lemma \ref{propertiesofBox}. Then, by the foregoing lemma, $(\tilde{g},\hat{h})$ is a morphism $\Xi(Y_q)\to \Xi(Z)$ as well. Again invoking the foregoing lemma, $h-\tilde{h}g_q-\hat{h}$ is a morphism of complexes. By Lemma \ref{propertiesofBox}, there exists an $A$-linear map $\eta$ homotopic to $h-\tilde{h}g_q-\hat{h}$. Furthermore the previous lemma also implies that $(\tilde{g},h-\tilde{h}g_q)$ is homotopic to $(\tilde{g},\hat{h}+\eta)$, which is $A$-linear.

We have to show that the $f^j$ can be chosen (within their homotopy class) to satisfy $f^j(x\otimes y)=x\otimes_A c_f(y)$. We proceed by induction. Let $c_f(y):=f^0(1\otimes y)$. Then, by $A$-linearity of $f$, $f^0(a\otimes y)=ac_f(y)$. Recalling the recursive definition of $f^j$ in the proof of Lemma \ref{propertiesofBox}, it follows that $f^j(v\otimes x\otimes y)=v\otimes x\otimes c_f(y)$ as $f^j$ is $A$-linear and $d_{\Xi(Z)}(v)(x\otimes z)=v\otimes x\otimes z$. Defining $c_f$ in this way, the foregoing calculations show that in order for $f$ to be a morphism of complexes, the defining property for $c_f$ has to be satisfied. %\qedhere
%\end{enumerate}
\end{proof}

\begin{ex}\label{example2}
In the running example, consider an $\mathbb{L}$-module $Y$ with $Y_1\cong L(\mathtt{2})$, $Y_2/Y_1 \cong L(\mathtt{3})$, $Y_3/Y_2 \cong L(\mathtt{1})$ and $Y/Y_3 \cong L(\mathtt{3})$. We choose a basis $v_1,v_2,v_3,w_3$ of $Y$, with $v_i$ supported at the vertex $\mathtt{i}$ and $Y_3$ spanned by $v_1, v_2, v_3$. Then $\overline{V} \otimes_{\mathbb{L}} Y$  is a left $\mathbb{L}$-module supported at the vertices $\mathtt{2}$ and $\mathtt{3}$. The space $e_\mathtt{2} \overline{V}\otimes_{\mathbb{L}} Y$ is spanned by $\varphi \otimes v_1$, while $e_\mathtt{3} \overline{V} \otimes_{\mathbb{L}} Y = \textrm{Span }\{\psi \otimes v_3, \, \psi \otimes w_3,\, \chi \otimes v_1,\, b\varphi \otimes v_1,\, \psi a \otimes v_1\}$.

The map $c_Y \colon Y \to \overline{V}\otimes_{\mathbb{L}} Y$ defined by $c_Y(v_1)=0$, $c_Y(v_2)=0$, $c_Y(v_3)=\psi \otimes v_2$, $c_Y(w_3)=\psi \otimes v_2$ maps $Y_q$ to $\overline{V} \otimes Y_{q-1}$ and satisfies condition $(\dagger)$. Therefore, it defines a complex $N^\bcdot$ in $\mathcal{F}(\Box)$. In fact, one can check that it is the complex $\mathbb{S}(\nabla(\mathtt{1})\oplus \nabla(\mathtt{2})\oplus I(\mathtt{2}))$ where $\mathbb{S}$ denotes the Serre functor of $\mathcal{D}^b(\modu R)$ and $I(\mathtt{2})$ is the injective envelope of $\nabla(\mathtt{2})$. 

On the other hand, map $\wt{c}_Y$ equal to $c_Y$ on $v_1$, $v_2$ and $v_3$ and such that $\wt{c}_Y(w_3) = \chi \otimes v_1$ does not satisfy $(\dagger)$:
$$
(\partial_1 \otimes \mathbbm{1}_Y)\widetilde{c}_Y(w_3) + (m_{\overline{V}} \otimes \mathbbm{1}_Y)(\mathbbm{1}_{\overline{V}} \otimes \widetilde{c}_Y)\widetilde{c}_Y(w_3) = \psi \varphi \otimes v_1 + 0 \neq 0. 
$$ 
Thus the composition of maps $N^0 \xrightarrow{d^0} N^1 \xrightarrow{d^1} N^2$ defined by $\wt{c}_Y$ is non-zero. Indeed:
\begin{align*}
d^2(\omega_\mathtt{i})(e_\mathtt{3} \otimes w_3) &= d(\omega_\mathtt{i})(-\partial(e_\mathtt{3})\otimes w_3 - e_\mathtt{3} \otimes \wt{c}_Y(w_3)) = -d(\omega_\mathtt{i})(e_\mathtt{3} \otimes_A \chi\otimes v_1) \\
&=  \partial(\chi) \otimes v_1 - \chi \otimes \wt{c}_Y(v_1) = \psi \otimes \varphi \otimes v_1 \neq 0.
\end{align*}
A morphism between two objects of $\mathcal{F}(\Box)$ defined by $\mathbb{L}$-modules $Y$ and $Z$ is given by a map $c_f \colon Y \to A \otimes_{\mathbb{L}}Z$ satisfying $(\dagger \dagger)$. 
\end{ex}

\section{Dualising $\mathcal{N}(\mathfrak{A})$}\label{dualising}

In this section, we dualise the definition of $\mathcal{N}(\mathfrak{A})$ by replacing the map $c_V\colon Y\to \overline{V}\otimes_{\mathbb{L}} Y$ with a map $s_Y\colon \mathbb{D}\overline{V}\to \Hom_{\mathbbm{k}}(Y,Y)$. For two finite dimensional $\mathbb{L}$-modules $X,Y$ let $p_{X,Y}\colon \mathbb{D}(X\otimes_{\mathbbm{L}} Y)\to \mathbb{D}Y\otimes_\mathbb{L} \mathbb{D}X$ be the canonical isomorphism. Furthermore note that $\mathbb{L}\cong \prod_{\mathtt{i}=\mathtt{1}}^{\mathtt{n}}\mathbbm{k}$ as algebras, and hence every $\mathbb{L}$-module $U$ is projective. In particular, we can fix dual bases $x^s$ of $U$ and $\xi^s$ of $\mathbb{D}U$, i.e. bases such that $\xi^s(x^t)=\delta_{st}$.

\begin{lem}\label{psiisomorphisms}
Let $Y,Z\in \modu \mathbb{L}$. Let $U$ be an $\mathbb{L}$-module with basis $x^s$ and its dual $\xi^s\in\mathbb{D}U$. 
\begin{enumerate}[(i)] 
\item\label{psiisomorphisms:i} There is an isomorphism
\[\Hom_{\mathbb{L}\otimes \mathbb{L}}(\mathbb{D}U,\Hom_{\mathbbm{k}}(Y,Z))\cong \Hom_{\mathbb{L}}(Y,U\otimes_\mathbb{L} Z).\]
This isomorphism is given from left to right by $\Phi^U(f)(y):= \sum_{s} x^s\otimes f(\xi^s)(y)$ for $f\in \Hom_{\mathbb{L}\otimes \mathbb{L}}(\mathbb{D}U,\Hom_{\mathbbm{k}}(Y,Z))$ and $y\in Y$ and in the other direction by $\Psi^U(g)(\varphi)(y):=m(\varphi\otimes \mathbbm{1}_Z)(g(y))$, where $g\in \Hom_{\mathbb{L}}(Y,U\otimes_{\mathbb{L}} Z), \varphi\in \mathbb{D}U, y\in Y$, and $m$ is the canonical identification $\mathbbm{k}\otimes_{\mathbbm{k}} Z\to Z$. 
\item\label{psiisomorphisms:ii} The above isomorphism is functorial in the sense that for each morphism $h\colon U\to U'$ of $\mathbb{L}$-modules $\Psi^{U'}((h\otimes \mathbbm{1}_Y)f)=\Psi^U(f)\circ \mathbb{D}h$. 
\item\label{psiisomorphisms:iii} Furthermore, the isomorphism is compatible with the standard adjunction in the following sense: Let $\alpha\colon \Hom_{\mathbb{L}\otimes \mathbb{L}}(\mathbb{D}U',\Hom_{\mathbb{L}\otimes \mathbb{L}}(\mathbb{D}U,\Hom_{\mathbbm{k}}(Y,Z)))\to \Hom_{\mathbb{L}}(\mathbb{D}U\otimes_{\mathbb{L}}\mathbb{D}U',\Hom_{\mathbbm{k}}(Y,Z))$ be the standard bimodule tensor-hom adjunction. Then, 
\[\Hom(p_{U',U},\Hom_{\mathbbm{k}}(Y,Z))\alpha\Hom_{\mathbb{L}\otimes \mathbb{L}}(\mathbb{D}U',\Psi^U)\Psi^{U'}=\Psi^{U'\otimes U}.\]
\item\label{psiisomorphisms:iv} Finally, the isomorphism is compatible with composition in the following sense:
\[\alpha\Hom_{\mathbb{L}\otimes \mathbb{L}}(\mathbb{D}U',\Psi^{U})\Psi^{U'}((1\otimes g) f)=m_{\mathbb{L}}(\Psi^U \otimes \Psi^{U'})(g\otimes f).\]
\end{enumerate}
\end{lem}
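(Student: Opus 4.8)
The plan is to treat part~\eqref{psiisomorphisms:i} as the substance of the lemma and to derive \eqref{psiisomorphisms:ii}--\eqref{psiisomorphisms:iv} from it by unwinding the explicit formulas for $\Phi^U$ and $\Psi^U$. The only structural input is the semisimplicity of $\mathbb{L}\cong\prod_{\mathtt{i}=\mathtt{1}}^{\mathtt{n}}\mathbbm{k}$: every $\mathbb{L}$- and every $\mathbb{L}\otimes\mathbb{L}$-module is projective, the dual bases $x^s\in U$, $\xi^s\in\mathbb{D}U$ with $\xi^s(x^t)=\delta_{st}$ exist, and they may be chosen inside the isotypic pieces $e_\mathtt{l}Ue_\mathtt{i}$, so that the ``resolution of the identity'' $u=\sum_s x^s\,\xi^s(u)$ is compatible with the $\mathbb{L}$-bimodule structure of $U$. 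This identity, and its evaluation against $f$, $g$, $\varphi$ and the like, is what makes every computation below collapse.

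For \eqref{psiisomorphisms:i} I would first check well-definedness: that $\Phi^U(f)$ indeed lands in $\Hom_{\mathbb{L}}(Y,U\otimes_{\mathbb{L}}Z)$ and $\Psi^U(g)$ in $\Hom_{\mathbb{L}\otimes\mathbb{L}}(\mathbb{D}U,\Hom_{\mathbbm{k}}(Y,Z))$, with the bimodule conventions fixed in Section~\ref{notation}. Choosing the $x^s$ componentwise reduces this to the fact that $\sum_s x^s\,\xi^s(-)$ is the identity on $U$ and respects the $(\mathtt{i},\mathtt{l})$-grading. Then $\Phi^U$ and $\Psi^U$ are seen to be mutually inverse by a one-line computation each: for $f$ and $y\in Y$,
\[
\Psi^U(\Phi^U(f))(\xi^t)(y)=m\bigl(\xi^t\otimes\mathbbm{1}_Z\bigr)\Bigl(\textstyle\sum_s x^s\otimes f(\xi^s)(y)\Bigr)=\sum_s\xi^t(x^s)\,f(\xi^s)(y)=f(\xi^t)(y),
\]
and dually $\Phi^U(\Psi^U(g))(y)=\sum_s x^s\otimes m(\xi^s\otimes\mathbbm{1}_Z)(g(y))=g(y)$ after expanding $g(y)$ in the basis $\{x^s\}$ of $U$. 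Additivity of $\Phi^U$ and $\Psi^U$ in their arguments is immediate from the formulas, and $\{\xi^t\}$ being a basis of $\mathbb{D}U$ finishes the proof.

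Parts \eqref{psiisomorphisms:ii}--\eqref{psiisomorphisms:iv} are then formal, each obtained by substituting $\Psi^U(g)(\varphi)(y)=m(\varphi\otimes\mathbbm{1}_Z)(g(y))$ into both sides. Naturality in $U$, \eqref{psiisomorphisms:ii}, drops out of $m(\chi\otimes\mathbbm{1}_Z)\circ(h\otimes\mathbbm{1}_Z)=m((\chi\circ h)\otimes\mathbbm{1}_Z)=m(\mathbb{D}h(\chi)\otimes\mathbbm{1}_Z)$ for $\chi\in\mathbb{D}U'$. For \eqref{psiisomorphisms:iii} one fixes the conventions for the flip $p_{U',U}\colon\mathbb{D}(U'\otimes_{\mathbb{L}}U)\to\mathbb{D}U\otimes_{\mathbb{L}}\mathbb{D}U'$ and for the tensor-hom adjunction $\alpha$, and then, evaluating on the dual basis $\{x^s\otimes x^t\}$ of $U'\otimes_{\mathbb{L}}U$, checks that both composites carry a twice-nested functional to the same element of $\Hom_{\mathbbm{k}}(Y,Z)$; concretely it is the statement that one may pair a functional on $U'\otimes_{\mathbb{L}}U$ with a vector either in one step or in two. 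Part \eqref{psiisomorphisms:iv} is of the same flavour: after inserting the formula for $\Psi$ on each tensor factor, both sides unwind to the composite of $f$ and $g$ followed by the multiplication $m_{\mathbb{L}}$ on $\Hom$-spaces, and the asserted equality is associativity of the operation ``plug in $\sum_s x^s\otimes\xi^s$''.

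The main obstacle is organisational, not mathematical: several (bi)module structures are superimposed --- on $Y$ and $Z$ over $\mathbb{L}$, on $U$ and hence on $\mathbb{D}U$, and on the spaces $\Hom_{\mathbbm{k}}(Y,Z)$ --- and in addition \eqref{psiisomorphisms:iii} involves the flip $p_{U',U}$ together with the adjunction $\alpha$, and \eqref{psiisomorphisms:iv} the composition map $m_{\mathbb{L}}$. One must commit to compatible sign- and tensor-order conventions for all of these at the outset and carry them consistently through each line; once that is done and the identity $u=\sum_s x^s\,\xi^s(u)$ is available, every one of the four verifications is a short manipulation.
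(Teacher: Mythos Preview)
Your proposal is correct and follows essentially the same approach as the paper: verify that $\Phi^U$ and $\Psi^U$ are mutually inverse via the dual-basis identity $u=\sum_s x^s\,\xi^s(u)$, then derive \eqref{psiisomorphisms:ii}--\eqref{psiisomorphisms:iv} by direct substitution of the formula $\Psi^U(g)(\varphi)(y)=m(\varphi\otimes\mathbbm{1}_Z)(g(y))$ and elementary manipulation. The paper omits the well-definedness check you sketch and proceeds immediately to the two inverse computations and the explicit line-by-line verifications of \eqref{psiisomorphisms:ii}--\eqref{psiisomorphisms:iv}, but the substance is identical.
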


\begin{proof}
\begin{enumerate}[(i)]
\item To check that these define inverse equivalences note that
\begin{align*}
\Psi^U\Phi^U(f)(\varphi)(y)&=m(\varphi\otimes \mathbbm{1}_Z)(\Phi^U(f)(y))&=\sum_s m(\varphi\otimes \mathbbm{1}_Z)(x^s\otimes f(\xi^s)(y))\\
&=\sum_s m(\varphi(x^s)\otimes f(\xi^s)(y))
&=\sum_s m(1\otimes f(\varphi(x^s)\xi^s)(y)\\
&=f(\varphi)(y)
\end{align*}
and
\begin{align*}
\Phi^U\Psi^U(g)(y)&=\sum_sx^s\otimes \Psi^U(g)(\xi^s)(y)
&=\sum_sx^s\otimes m(\xi^s\otimes \mathbbm{1})g(y)\\
&=\sum_s\sum_{(y)} x^s\xi^s(u_{(1)})\otimes z_{(2)}
&=\sum_{(y)} u_{(1)}\otimes z_{(2)}\\
&=g(y)
\end{align*}
where, using Sweedler notation, $g(y)=:\sum_{(y)}u_{(1)}\otimes z_{(2)}$.
\item Using that $\mathbb{D}h(\varphi)=\varphi\circ h$ for $\varphi\in \mathbb{D}U'$, we obtain for $y\in Y$ and $f\colon Y\to U\otimes Z$:
\begin{align*}
(\Psi^U(f)\circ \mathbb{D}h)(\varphi)(y)&=\Psi^U(f)(\varphi\circ h)(y)\\
&=m((\varphi\circ h\otimes \mathbbm{1}_Z)f(y)\\
&=m(\varphi\otimes \mathbbm{1}_Z)(h\otimes \mathbbm{1}_Z)f(y)\\
&=\Psi^{U'}((h\otimes \mathbbm{1}_Z)f)(\varphi)(y).
\end{align*}
\item To check equality we apply $\Psi^{U'\otimes U}$ to some $g\in \Hom_{\mathbb{L}}(Y,U'\otimes_{\mathbb{L}} U\otimes_{\mathbb{L}} Z)$ the result to $p_{U',U}^{-1}(\varphi\otimes \varphi')$ where $\varphi\in \mathbb{D}U$ and $\varphi'\in \mathbb{D}U'$, and finally the result to $y\in Y$:
\small
\begin{align*}
\Psi^{U'\otimes U}(g)(p_{U',U}^{-1}(\varphi\otimes \varphi'))(y)&=m(p_{U',U}^{-1}(\varphi\otimes \varphi')\otimes \mathbbm{1}_Z)g(y)\\
&=m(\varphi\otimes \mathbbm{1}_Z)(m(\varphi'\otimes \mathbbm{1}_Z)g(y))\\
&=m(\varphi\otimes \mathbbm{1}_Z)(\Psi^{U'}(g)(\varphi')(y))\\
&=\Psi^U(\Psi^{U'}(g)(\varphi'))(\varphi)(y)\\
&=\Hom(\mathbb{D}U',\Psi^U)\Psi^{U'}(g)(\varphi')(\varphi)(y)\\
&=\alpha\Hom(\mathbb{D}U',\Psi^U)\Psi^{U'}(g)(\varphi\otimes \varphi')(y)\\
&=\Hom(p_{U',U},\Hom(Y,Z))\alpha\Hom(\mathbb{D}U',\Psi^U)\Psi^{U'}(g)(p_{U',U}^{-1}(\varphi\otimes \varphi'))(y).
\end{align*}\normalsize
\item Let $\varphi\in \mathbb{D}U, \varphi'\in \mathbb{D}U'$, $x\in X$. Then,
\begin{align*}
\alpha\Hom(\mathbb{D}U',\Psi^U)\Psi^{U'}(1\otimes g)f(\varphi\otimes \varphi')(x)&=\Hom(\mathbb{D}U',\Psi^U)\Psi^{U'}((1\otimes g)f)(\varphi')(\varphi)(x)\\
&=\Psi^U(\Psi^{U'}((1\otimes g)f)(\varphi')(\varphi)(x)\\
&=\Psi^U(m(\varphi'\otimes 1)((1\otimes g)f))(\varphi)(x)\\
&=m(\varphi\otimes \mathbbm{1})m(\varphi'\otimes \mathbbm{1})(1\otimes g)f(x)\\
&=m(\varphi\otimes \mathbbm{1})(g)m(\varphi'\otimes \mathbbm{1})(f)(x)\\
&=m_\mathbb{L}(\Psi^U(g)\otimes \Psi^{U'}(f)(\varphi\otimes \varphi')(x).\qedhere
\end{align*}
\end{enumerate}
\end{proof}

\begin{prop}\label{dualmodN}
Let $\mathfrak{A}=(A,V,\mu,\varepsilon)$ be a directed bocs. 
\begin{enumerate}[(i)]
\item The following defines a category $\mathcal{R}(\mathfrak{A})$:
\begin{description}
\item[objects] pairs $(Y,s_Y)$ such that $Y$ is an $\mathbb{L}$-module and \[s_Y\in \Hom_{\mathbb{L}\otimes \mathbb{L}}(\mathbb{D}\overline{V},\Hom_{\mathbbm{k}}(Y,Y))\] satisfies 
\begin{equation}\tag{$\dagger^*$}
s_Y\mathbb{D}\partial + m(s_Y\otimes s_Y)p_{\overline{V},\overline{V}} \mathbb{D}m_{\overline{V}}=0,
\end{equation}
where $m$ denotes the composition of morphisms $m\colon \Hom_{\mathbbm{k}}(Y,Y) \otimes_{\mathbbm{k}} \Hom_{\mathbbm{k}}(Y,Y) \to \Hom_{\mathbbm{k}}(Y,Y)$.
\item[morphisms] for two objects $(Y,s_Y), (Z,s_Z)$ a morphism is given by an element $s_f\in \Hom_{\mathbb{L}\otimes \mathbb{L}}(\mathbb{D}A,\Hom_{\mathbbm{k}}(Y,Z))$ satisfying 
\begin{equation}\tag{$\dagger \dagger ^*$}
m((s_f\otimes s_Y)p_{\overline{V},A}\mathbb{D}m_r-(s_Z\otimes s_f)p_{A,\overline{V}} \mathbb{D}m_l)+s_f\mathbb{D}\partial=0.
\end{equation} 
\item[composition] for two morphisms $s_f\colon (X,s_X)\to (Y,s_Y)$ and $s_g\colon (Y,s_Y)\to (Z,s_Z)$ their composition is given by $s_{gf}:=m(s_g\otimes s_f)p_{A,A}\mathbb{D}m_A$.
\item[unit] Let $s_{\mathbbm{1}}\colon (Y,s_Y)\to (Y,s_Y)$ be the map defined by $s_{\mathbbm{1}}(v)(y)=v(1)y$. 
\end{description}
\item The categories $\mathcal{N}(\mathfrak{A})$ and $\mathcal{R}(\mathfrak{A})$ are equivalent.
\end{enumerate}
\end{prop}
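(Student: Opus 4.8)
The plan is to prove Proposition~\ref{dualmodN}(ii) by transporting the structure of $\mathcal{N}(\mathfrak{A})$ across the family of isomorphisms $\Psi^U$ from Lemma~\ref{psiisomorphisms}, specialised to $U = \overline{V}$ for objects and $U = A$ for morphisms. Concretely, to an object $(Y, c_Y)$ of $\mathcal{N}(\mathfrak{A})$ I assign the pair $(Y, s_Y)$ with $s_Y := (\Psi^{\overline{V}})^{-1}(c_Y) \in \Hom_{\mathbb{L}\otimes\mathbb{L}}(\mathbb{D}\overline{V}, \Hom_\mathbbm{k}(Y,Y))$, and to a morphism $c_f \colon (Y,c_Y) \to (Z,c_Z)$ the element $s_f := (\Psi^A)^{-1}(c_f) \in \Hom_{\mathbb{L}\otimes\mathbb{L}}(\mathbb{D}A, \Hom_\mathbbm{k}(Y,Z))$. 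Since each $\Psi^U$ is a bijection by Lemma~\ref{psiisomorphisms}\eqref{psiisomorphisms:i}, the assignment is automatically bijective on objects (once one checks the defining conditions correspond) and fully faithful (once one checks $(\dagger\dagger)$ corresponds to $(\dagger\dagger^*)$); it is essentially surjective for the same reason, so it will be an equivalence.

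The substantive content is therefore three dualisation computations. \textbf{Condition $(\dagger)\leftrightarrow(\dagger^*)$:} I must show that $c_Y$ satisfies $(\partial_1\otimes\mathbbm{1}_Y)c_Y + (m_{\overline{V}}\otimes\mathbbm{1}_Y)(\mathbbm{1}_{\overline{V}}\otimes c_Y)c_Y = 0$ if and only if $s_Y = (\Psi^{\overline{V}})^{-1}(c_Y)$ satisfies $s_Y\mathbb{D}\partial + m(s_Y\otimes s_Y)p_{\overline{V},\overline{V}}\mathbb{D}m_{\overline{V}} = 0$. For the first summand I use functoriality, Lemma~\ref{psiisomorphisms}\eqref{psiisomorphisms:ii}, with $h = \partial_1 \colon \overline{V}\to\overline{V}\otimes_A\overline{V}$ — or rather its composition into $\overline{V}^{\otimes_{\mathbb{L}}2}$ via $m_{\overline{V}}$ — to turn $(\partial_1\otimes\mathbbm{1}_Y)c_Y$ into $s_Y \circ \mathbb{D}\partial_1$ up to the identification $\mathbb{D}\partial = p_{\cdot,\cdot}^{-1}\mathbb{D}(m_{\overline{V}}\partial_1)$ or whatever precise dual convention $\partial$ refers to here (this bookkeeping about $\partial$ versus $\partial_1$ versus $m_{\overline{V}}\partial_1$ is one place to be careful). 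For the quadratic summand I use Lemma~\ref{psiisomorphisms}\eqref{psiisomorphisms:iii} together with \eqref{psiisomorphisms:iv}: the composite $(m_{\overline{V}}\otimes\mathbbm{1}_Y)(\mathbbm{1}_{\overline{V}}\otimes c_Y)c_Y$ is, after applying $(\Psi^{\overline{V}\otimes\overline{V}})^{-1}$ and then $\Hom(p_{\overline{V},\overline{V}},-)\circ\alpha\circ\Hom(\mathbb{D}\overline{V},\Psi^{\overline{V}})$, exactly $m(s_Y\otimes s_Y)p_{\overline{V},\overline{V}}$ precomposed with $\mathbb{D}m_{\overline{V}}$, using that $m_\mathbb{L}$ matches up with $m\colon\Hom_\mathbbm{k}(Y,Y)^{\otimes2}\to\Hom_\mathbbm{k}(Y,Y)$ under $\Psi$. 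Applying $(\Psi^{\overline{V}\otimes_{\mathbb{L}}\overline{V}})^{-1}$ to the whole equation $(\dagger)$ then gives $(\dagger^*)$, and conversely.

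\textbf{Condition $(\dagger\dagger)\leftrightarrow(\dagger\dagger^*)$} is the same mechanism with the three terms of $(\dagger\dagger)$: the term $(m_r\otimes\mathbbm{1}_Z)(\mathbbm{1}_{\overline{V}}\otimes c_f)c_Y$ is a composite $\Psi^{\overline{V}}\otimes\Psi^A$ type expression handled by \eqref{psiisomorphisms:iv} with the roles arranged so it dualises to $m(s_f\otimes s_Y)p_{\overline{V},A}\mathbb{D}m_r$; the term $(m_l\otimes\mathbbm{1}_Z)(\mathbbm{1}_{\overline{V}}\otimes c_Z)c_f$ dualises to $m(s_Z\otimes s_f)p_{A,\overline{V}}\mathbb{D}m_l$; and $(\partial_0\otimes\mathbbm{1}_Z)c_f$ dualises to $s_f\mathbb{D}\partial$ by \eqref{psiisomorphisms:ii} with $h=\partial_0$ (again modulo the $\partial$-vs-$\partial_0$ convention). \textbf{Functoriality} — that composition $c_{gf} = (m_A\otimes\mathbbm{1})(\mathbbm{1}_A\otimes c_g)c_f$ corresponds to $s_{gf}=m(s_g\otimes s_f)p_{A,A}\mathbb{D}m_A$, and that the unit $c_{\mathbbm{1}}(y)=1\otimes y$ corresponds to $s_{\mathbbm{1}}(v)(y)=v(1)y$ — is again a direct application of Lemma~\ref{psiisomorphisms}\eqref{psiisomorphisms:iii},\eqref{psiisomorphisms:iv}, specialised to $U=U'=A$ and $m_A$. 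The unit check is essentially unwinding $\Phi^A$ applied to the map $v\mapsto(y\mapsto v(1)y)$, which gives $\sum_s x^s\otimes \xi^s(1)y$; choosing the $x^s$ to include the idempotents $e_\mathtt{i}$ one sees this equals $1\otimes y$.

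The main obstacle I expect is purely notational bookkeeping rather than conceptual: keeping track of the various canonical flips $p_{U,U'}$, the distinction between the tensor products $\otimes_{\mathbb{L}}$ and $\otimes_A$ appearing in $m_{\overline{V}}$, and the precise meaning of the dualised differential $\mathbb{D}\partial$ (whether $\partial$ here denotes $\partial_0$, $\partial_1$, or their common packaging), so that the signs and the placement of $m_{\overline{V}}$ in $(\dagger^*)$ and $(\dagger\dagger^*)$ come out exactly as stated. Since $\mathbb{L}$ is semisimple every module is projective, so there are no issues with $\mathbb{D}$ being exact or with the existence of dual bases, and Lemma~\ref{psiisomorphisms} has been set up precisely to package all the compatibilities needed; once the conventions are pinned down, each of the four verifications is a short diagram chase.
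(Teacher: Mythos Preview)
Your approach is essentially identical to the paper's: define the functor via $\Psi^{\overline{V}}$ on objects and $\Psi^{A}$ on morphisms, then verify that each of the three ingredients of Lemma~\ref{psiisomorphisms} (functoriality in $U$, compatibility with adjunction, compatibility with composition) translates the corresponding summand of $(\dagger)$, $(\dagger\dagger)$, and the composition/unit formula into its starred counterpart. One small slip: in the conventions of Lemma~\ref{psiisomorphisms} the map $\Psi^{U}$ goes from $\Hom_{\mathbb{L}}(Y,U\otimes_{\mathbb{L}}Z)$ to $\Hom_{\mathbb{L}\otimes\mathbb{L}}(\mathbb{D}U,\Hom_{\mathbbm{k}}(Y,Z))$, so you want $s_Y=\Psi^{\overline{V}}(c_Y)$ and $s_f=\Psi^{A}(c_f)$ rather than the inverses; with this correction your outline is exactly the paper's proof.
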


\begin{proof}
We only prove (ii), (i) follows by transport of structure. According to the previous lemma, there is an isomorphism 
\[\Psi^{\overline{V}}\colon \Hom_{\mathbb{L}}(Y,\overline{V}\otimes_{\mathbb{L}}Y)\to \Hom_{\mathbb{L}\otimes \mathbb{L}}(\mathbb{D}\overline{V},\Hom_{\mathbbm{k}}(Y,Y)).\]
Define the functor $\Psi\colon \mathcal{N}(\mathfrak{A})\to \mathcal{R}(\mathfrak{A})$ on objects by $F((Y,c_Y))=(Y,\Psi^{\overline{V}}(c_Y))$. Again invoking the foregoing lemma, for each two $\mathbb{L}$-modules $Y,Z$ there is an isomorphism
\[\Psi^A\colon \Hom_{\mathbb{L}}(Y,A\otimes_{\mathbb{L}} Z)\to \Hom_{\mathbb{L}\otimes \mathbb{L}}(\mathbb{D}A, \Hom_{\mathbbm{k}}(Y,Z)).\] 
Define $\Psi$ on morphisms by $F(c_f)=\Psi^A(c_f)$ for a morphism $c_f\colon (Y,c_Y)\to (Z,c_Z)$. We have to prove that this defines a functor.
Recall that the condition for $(Y,c_Y)$ to be an element of $\mathcal{N}(\mathfrak{A})$ is
\[(\partial_1\otimes \mathbbm{1}_Y)c_Y+(m_{\overline{V}}\otimes \mathbbm{1}_Y)(\mathbbm{1}_{\overline{V}}\otimes c_Y)c_Y=0.\]
We examine the two summands separatly. Applying $\Psi^{\overline{V}\otimes_A\overline{V}}$ to the first summand yields
\[\Psi^{\overline{V}\otimes_A \overline{V}}((\partial\otimes \mathbbm{1}_Y)c_Y=s_Y\circ \mathbb{D}\partial_1\]
according to Lemma \ref{psiisomorphisms} \eqref{psiisomorphisms:ii}. For the second summand, we obtain:
\footnotesize
\begin{align*}
\Psi^{\overline{V}\otimes_A \overline{V}}((m_{\overline{V}}\otimes \mathbbm{1}_Y)(\mathbbm{1}_{\overline{V}}\otimes c_Y)c_Y)&=\Psi^{\overline{V}\otimes_{\mathbb{L}} \overline{V}}((\mathbbm{1}_{\overline{V}}\otimes c_Y)c_Y)\circ \mathbb{D}m_{\overline{V}}\\
&=(\Hom(p_{\overline{V},\overline{V}},\Hom(Y,Y))\alpha\Hom(\mathbb{D}\overline{V},\Psi^{\overline{V}})\Psi^{\overline{V}}(\mathbbm{1}_{\overline{V}}\otimes c_Y)c_Y) \mathbb{D}m_{\overline{V}}\\
&=(\alpha\Hom(\mathbb{D}\overline{V},\psi^{\overline{V}})\Psi^{\overline{V}}((\mathbbm{1}_{\overline{V}}\otimes c_Y)c_Y)p_{\overline{V},\overline{V}}\mathbb{D}m_{\overline{V}}\\
&=m_{\mathbb{L}}(\Psi^{\overline{V}}(c_Y)\otimes \Psi^{\overline{V}}(c_Y))p_{\overline{V},\overline{V}}\mathbb{D}m_{\overline{V}}\\
&=m_{\mathbb{L}}(s_Y\otimes s_Y)p_{\overline{V},\overline{V}}\mathbb{D}m_{\overline{V}}.
\end{align*}\normalsize
Here we apply Lemma \ref{psiisomorphisms} \eqref{psiisomorphisms:i}, \eqref{psiisomorphisms:ii}, and \eqref{psiisomorphisms:iv} to obtain the first, second, and third equality, respectively. 
Altogether, the condition translates to the stated equality in $\mathcal{R}(\mathfrak{A})$.

For the morphisms, the condition for $c_f\colon (Y,c_Y)\to (Z,c_Z)$ to be a morphism is
\[-(m_l\otimes \mathbbm{1}_Z)(\mathbbm{1}_A\otimes c_Z)c_f+(\partial_0\otimes \mathbbm{1}_Z)c_f+(m_r\otimes \mathbbm{1}_Z)(\mathbbm{1}_{\overline{V}}\otimes c_f)c_Y=0.\]
We apply $\Psi^{\overline{V}}$ and consider the three summands separately. For the second summand, applying Lemma \ref{psiisomorphisms} \eqref{psiisomorphisms:ii}, we obtain $s_f\mathbb{D}\partial_0$. For the first summand, we obtain
\begin{align*}
\Psi^{\overline{V}}(-(m_l\otimes \mathbbm{1}_Z)(\mathbbm{1}_A\otimes c_Z)c_f)&=-\Psi^{A\otimes \overline{V}}((\mathbbm{1}_A\otimes c_Z)c_f)\mathbb{D}m_l\\
&= -m_{\mathbb{L}}(s_Z\otimes s_f)p_{A,\overline{V}}\mathbb{D}m_l,
\end{align*}
where Lemma \ref{psiisomorphisms} \eqref{psiisomorphisms:ii}, \eqref{psiisomorphisms:iii}, and \eqref{psiisomorphisms:iv} were applied similarly to the argument for objects. Similarly the third summand translates to $m_{\mathbb{L}}(s_f\otimes s_Y)p_{\overline{V},A}\mathbb{D}m_r$. Altogether, the claimed formula for morphisms results.

For the composition, recall that $c_{gf}$ is given by 
\[c_{gf}=(m_A\otimes \mathbbm{1}_Z)(\mathbbm{1}_A\otimes c_g)c_f.\]
Applying Lemma \ref{psiisomorphisms} similarly to before, one obtains the claimed formula for $s_{gf}$.

For the units, note that 

\[\Psi(c_{\mathbbm{1}})(v)(y)=m(v\otimes \mathbbm{1}_Y)(c_{\mathbbm{1}}(y))=m(v\otimes \mathbbm{1}_Y)(1\otimes y)=v(1)y=s_{\mathbbm{1}}(v)(y).\]

From $\Psi^U$ being an isomorphism for all $U$, we obtain that $\Psi$ is an equivalence.
\end{proof}

\begin{ex}
	Recall that in the running example \ref{example2} we have considered an $\mathbb{L}$-module $Y$ and the map $c_Y \colon Y \to \overline{V} \otimes_{\mathbbm{k}} Y$. Under the equivalence $\mathcal{N}(\mathfrak{A}) \simeq \mathcal{R}(\mathfrak{A})$, the object $(Y, c_Y) \in \mathcal{N}(\mathfrak{A})$ corresponds to $(Y, s_Y)\in \mathcal{R}(\mathfrak{A})$, where $s_Y \colon \mathbb{D}\overline{V} \to \Hom_{\mathbbm{k}}(Y,Y)$ is the map with $s_Y(\varphi)\equiv 0$, $s_Y(\chi)\equiv 0$, $s_Y(\psi a)\equiv 0$, $s_Y(b \varphi)\equiv 0$ and $s_Y(\psi)(v_1)=0$, $s_Y(\psi)(v_2)=0$, $s_Y(\psi)(v_3)=v_2$ and $s_Y(\psi)(w_3)=v_2$.
%	
%	Under the duality of Lemma \ref{psiisomorphisms} the map $\wt{c}_Y$ corresponds to $\wt{s}_Y$ with $\widetilde{s}_Y(\varphi) \equiv 0$, $\widetilde{s}_Y(\psi a)\equiv 0$, $\widetilde{s}_Y(b\varphi) \equiv 0$. Maps $\widetilde{s}_Y(\psi)$ and $\widetilde{s}_Y(\chi)$ are non-zero on one basis element each: $\widetilde{s}_Y(\psi)(v_3) = v_2$ and $\widetilde{s}_Y(\chi)(w_3)=v_1$. The map $\widetilde{s}_Y$ does not satisfy $(\dagger^*)$ as
%	$$
%	\wt{s}_Y \mathbb{D}\partial(\psi \varphi) + m (\wt{s}_Y \otimes\wt{s}_Y)p_{\overline{V}, \overline{V}} \mathbb{D}m_{\overline{V}}(\psi \varphi) = \wt{s}_Y(\chi) + \wt{s}_Y(\psi) \circ \wt{s}_Y(\varphi) \neq 0.
%	$$
\end{ex}

%{\color{red}
%\subsection{Example}\label{ssec_example}

%The full subcategory $\mathcal{F}(\Box)$ of $D^b(\modu \mathfrak{A})$ is the closure under extensions of the full subcategory on objects $\Box_\mathtt{1}$, $\Box_\mathtt{2}$ and $\Box_\mathtt{3}$. By Theorem \ref{FofBox} an object of $\mathcal{F}(\Box)$ is determined by an $\mathbb{L}$-module $Y$ together with a filtration with simple graded factors $0 = Y_0 \subset Y_1 \subset \ldots \subset Y_r \simeq Y$ and map $c_Y \colon Y\to \overline{V} \otimes_{\mathbb{L}} Y$ such that $c_Y(Y_q) \subset \overline{V} \otimes_{\mathbb{L}} Y_{q-1}$. Modules $N^{j}:= \overline{V}^{\otimes_A j} \otimes_{\mathbb{L}} Y$ with differential $d^j \colon N^j \to N^{j+1}$ defined as in Theorem \ref{modN} form a complex in $\mathcal{F}(\Box)$ if and only if $c_Y$ satisfies condition $(\dagger)$.

%For a $\mathbb{L}$-module $Y$ morphisms $c_Y \colon Y \to \overline{V} \otimes_{\mathbb{L}} Y$ of left $\mathbb{L}$-modules are in one-to-one correspondence with morphisms $s_Y \colon \mathbb{D} \overline{V} \to \Hom_{\mathbbm{k}}(Y,Y)$ of $\mathbb{L}$-bimodules. Moreover, $c_Y$ satisfies condition $(\dagger)$ if and only if $s_Y$ satisfies condition $(\dagger^*)$. Similarly, $c_f \colon Y \to A \otimes_{\mathbb{L}} Z$ satisfies $(\dagger \dagger)$ if and only if the corresponding $s_f \colon \mathbb{D}A \to \Hom_{\mathbbm{k}}(Y,Z)$ satisfies $(\dagger \dagger^*)$. This yields a presentation of the category $\mathcal{F}(\Box)$ as $R(\mathfrak{A})$ given by Proposition \ref{dualmodN}. 

%}

\section{Construction of the Ringel dual bocs}\label{ringeldualbocs}

The goal of this section is to construct a bocs $\mathfrak{B}$ from the data of a bocs $\mathfrak{A}$ such that the category $\mathcal{R}(\mathfrak{A}^{\op})$, which we have shown to be equivalent to $\mathcal{F}(\nabla_R)$ in the previous sections, becomes equivalent to the category of modules for $\mathfrak{B}$.

Let $\mathfrak{A} = (A,V)$ be a directed normal bocs. The corresponding DG algebra $\mathcal{U} = \bigoplus_{j=0}^\infty \overline{V}^{\otimes_A j}$ is augmented, non-negatively graded and finite dimensional. Let $\mathcal{D}^! = \mathbb{D}T(\mathcal{U}[1])$ be the $\mathbbm{k}$-dual of the bar construction of $\mathcal{U}$ and $I\subset \mathcal{D}^!$ the ideal generated by $\mathcal{D}^!_{\leq -1}$ and $d(\mathcal{D}^!_{-1})$. Since $I$ is a differential ideal, the quotient $\mathcal{U}^! =\mathcal{D}^!/I$ is a DG algebra. By \cite[Lemma 8.1]{KKO14} $\mathcal{U}^!$ is a DG algebra assigned to a directed normal bocs $\mathfrak{B}^! = (B^!,W^!)$ with $B^! = \mathbb{L}[\mathbb{D} \overline{V}]/(\mathbb{L}[\mathbb{D}\overline{V}]\cap I)$ and $W^! = \mathcal{U}^!_1/ (d(B^!))$. The algebra $B^!$ is the quotient of $\mathbb{L}[\mathbb{D} \overline{V}]$ by the ideal $J$ generated by the image of the map $\mathbb{D} \partial_1 + p_{\overline{V}, \overline{V}}\mathbb{D}m_{\overline{V}} \colon \mathbb{D}(\overline{V} \otimes \overline{V}) \to \mathbb{D} \overline{V} \oplus (\mathbb{D} \overline{V} \otimes_\mathbb{L} \mathbb{D} \overline{V})$. The bimodule $W^!$ is generated over $B^!$ by $\mathbb{D}A$ and its group-like elements are $\mathbb{D}e_i$. The projective bimodule $\overline{W}^!$ is generated as a $B^!$-bimodule by $\mathbb{D}(\textrm{rad}A)$.

The algebra structure on $B^!$ is the algebra structure of the tensor algebra over $\mathbb{L}$. $W^!$ is the quotient of a projective bimodule generated by $\mathbb{L}$-bimodule $\mathbb{D}A$. The comultiplication $\mu \colon W^! \to W^! \otimes_{B^!} W^!$ is induced by the multiplication on $A$. Finally, the counit $\varepsilon_{\mathfrak{B}^!}\colon W^! \to B^!$ is the $B^!$-bimodule morphism generated by $\mathbb{D}\varepsilon_{\mathfrak{A}}\colon \mathbb{D}A \to \mathbb{D}V$.

\begin{defn}
	Let $\mathfrak{A}$ be a directed bocs. We call $\mathfrak{B}^!$ as constructed above the  \emphbf{Koszul dual bocs}.
	The bocs $\mathfrak{B} = ((\mathfrak{A}^{\op})^!)^{\op}$ is the \emphbf{Ringel dual bocs}.
\end{defn}

%\begin{defn}
%Let $\mathfrak{A}=(A,V,\mu,\varepsilon)$ be a directed bocs. Then the opposite bocs $\mathfrak{B}=(B,W,\mu',\varepsilon')$ of the bocs constructed from $\mathfrak{A}^{\op}$ using the foregoing proposition is called the \emphbf{Ringel dual bocs}.
%\end{defn}

We combine our results so far to obtain the main theorem of this paper. 

\begin{thm}\label{thm_Rigel_dual_bocs}
Let $\mathfrak{A}=(A,V)$ be a directed bocs and $\mathfrak{B}=(B,W)$ its Ringel dual. Then the right algebra of $\mathfrak{B}$ is Morita equivalent to the Ringel dual of the right algebra of  $\mathfrak{A}$. 
\end{thm}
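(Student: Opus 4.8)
The strategy is to chain together the equivalences already established and then feed the result into the bocs-description of $\mathcal{F}(\Delta)$ from Theorem~\ref{maintheoremKKO}. First I would apply the previous sections to $\mathfrak{A}^{\op}$ in place of $\mathfrak{A}$: by Proposition~\ref{diamondcostandard} (applied to $\mathfrak{A}^{\op}$), the extension closure $\mathcal{F}(\Diamond^{\mathfrak{A}^{\op}})$ in $\mathcal{D}^b(\modu\mathfrak{A}^{\op})$ is equivalent to $\mathcal{F}(\nabla_{R_{\mathfrak{A}^{\op}}})$; dualising via $\mathbb{D}$ and using Lemma~\ref{lem_duality} together with Theorem~\ref{maintheoremKKO}(vi) ($L_{\mathfrak{A}^{\op}}\cong R_{\mathfrak{A}}^{\op}$), this identifies $\mathcal{F}(\Box^{\mathfrak{A}^{\op}})\simeq\mathcal{F}(\nabla_{R_{\mathfrak{A}}})^{\op}$. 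By Theorem~\ref{FofBox}, Theorem~\ref{modN} and Proposition~\ref{dualmodN} (all applied to $\mathfrak{A}^{\op}$) we then have
\[
\mathcal{F}(\nabla_{R_{\mathfrak{A}}})^{\op}\;\simeq\;\mathcal{F}(\Box^{\mathfrak{A}^{\op}})\;\simeq\;\mathcal{N}(\mathfrak{A}^{\op})\;\simeq\;\mathcal{R}(\mathfrak{A}^{\op}).
\]
So it remains to show that $\mathcal{R}(\mathfrak{A}^{\op})$ is (anti-)equivalent to the category of modules over the Ringel dual bocs $\mathfrak{B}=((\mathfrak{A}^{\op})^!)^{\op}$, in a way compatible with the identification of simple modules with standard modules; then Theorem~\ref{maintheoremKKO}(ii) gives that the right algebra of $\mathfrak{B}$ is the quasi-hereditary algebra standardising the costandard modules of $R_{\mathfrak{A}}$, i.e.\ the Ringel dual of $R_{\mathfrak{A}}$.

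The core step is therefore to match $\mathcal{R}(\mathfrak{C})$ with $\modu\mathfrak{C}^!$ for a directed normal bocs $\mathfrak{C}$ (here $\mathfrak{C}=\mathfrak{A}^{\op}$). I would read off from the construction recalled just before Definition~\ref{defn} of the Ringel dual bocs: the Koszul dual bocs $\mathfrak{C}^!=(B^!,W^!)$ has $B^!=\mathbb{L}[\mathbb{D}\overline{V_{\mathfrak{C}}}]/J$ where $J$ is generated by the image of $\mathbb{D}\partial_1+p_{\overline{V},\overline{V}}\mathbb{D}m_{\overline{V}}$, the bimodule $W^!$ is generated by $\mathbb{D}A_{\mathfrak{C}}$ with group-likes $\mathbb{D}e_{\mathtt{i}}$, and $\overline{W^!}$ is generated by $\mathbb{D}(\operatorname{rad}A_{\mathfrak{C}})$, with comultiplication and counit induced by the multiplication on $A_{\mathfrak{C}}$ and by $\mathbb{D}\varepsilon_{\mathfrak{C}}$. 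Now invoke Proposition~\ref{boxrepresentations} for the bocs $\mathfrak{C}^!$: a $\mathfrak{C}^!$-module is an $\mathbb{L}$-module $Y$ together with the action of the solid arrows (a $B^!$-module structure), which is exactly the datum $s_Y\in\Hom_{\mathbb{L}\otimes\mathbb{L}}(\mathbb{D}\overline{V},\Hom_{\mathbbm{k}}(Y,Y))$ subject to the relation that annihilates the ideal $J$ — and that relation is precisely condition $(\dagger^*)$, since the relator is the dual of $\partial_1+m_{\overline{V}}(\mathbbm{1}\otimes c)$, cf.\ the translation carried out in the proof of Proposition~\ref{dualmodN}. Morphisms of $\mathfrak{C}^!$-modules are, by the definition of $\modu\mathfrak{C}^!$, elements of $\Hom_{B^!\otimes(B^!)^{\op}}(W^!,\Hom_{\mathbbm{k}}(Y,Z))$; since $W^!$ is generated by $\mathbb{D}A_{\mathfrak{C}}$ this is the datum $s_f\in\Hom_{\mathbb{L}\otimes\mathbb{L}}(\mathbb{D}A,\Hom_{\mathbbm{k}}(Y,Z))$, and the single defining relation coming from $\partial(b)=\omega_{\mathtt{l}}b-b\omega_{\mathtt{i}}+\partial b$ in Lemma~\ref{bocsconstruction}(d2) together with the comultiplication induced by $m_A$ unwinds to exactly $(\dagger\dagger^*)$, with composition $s_{gf}=m(s_g\otimes s_f)p_{A,A}\mathbb{D}m_A$ matching the Sweedler-style composition in Proposition~\ref{boxrepresentations}. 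This gives an isomorphism of categories $\mathcal{R}(\mathfrak{C})\cong\modu\mathfrak{C}^!$.

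Finally I would take the opposite everywhere. Since $\mathfrak{B}=((\mathfrak{A}^{\op})^!)^{\op}$, Lemma~\ref{lem_duality} gives $\modu\mathfrak{B}\simeq(\modu(\mathfrak{A}^{\op})^!)^{\op}\simeq\mathcal{R}(\mathfrak{A}^{\op})^{\op}$, and combining with the chain of equivalences above yields $\modu\mathfrak{B}\simeq\mathcal{F}(\nabla_{R_{\mathfrak{A}}})$. Under this equivalence the simple $\mathfrak{B}$-modules $L(\mathtt{i})$ (the one-dimensional $\mathbb{L}$-modules with $s_Y\equiv 0$) correspond to the costandard modules $\nabla(\mathtt{i})$ of $R_{\mathfrak{A}}$ — this is where one uses that $\mathbb{D}$ preserves simples (as noted after Proposition~\ref{diamondcostandard}) and that $\langle\nabla(\mathtt{i})\rangle$ is the standardisable set of Theorem~\ref{dlabringel}. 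Hence by Theorem~\ref{maintheoremKKO}(ii), applied to the directed normal bocs $\mathfrak{B}$, its right algebra is quasi-hereditary with $\mathcal{F}(\Delta_{R_{\mathfrak{B}}})\simeq\modu\mathfrak{B}\simeq\mathcal{F}(\nabla_{R_{\mathfrak{A}}})$ and standard modules matching the costandard modules of $R_{\mathfrak{A}}$; by the uniqueness clause of the Dlab--Ringel standardisation theorem (Theorem~\ref{dlabringel}) this forces $R_{\mathfrak{B}}$ to be Morita equivalent to the algebra standardising $\nabla_{R_{\mathfrak{A}}}$, which by definition is the Ringel dual of $R_{\mathfrak{A}}$.

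\textbf{Main obstacle.} The delicate point is the bookkeeping in the previous paragraph-but-one: verifying that the relation cutting out $B^!$ from $\mathbb{L}[\mathbb{D}\overline{V}]$ and the relation defining morphisms in $\modu\mathfrak{B}^!$ really dualise, on the nose (signs included), to $(\dagger^*)$ and $(\dagger\dagger^*)$, and that the two notions of composition agree. This is essentially the content of the translation already performed in the proof of Proposition~\ref{dualmodN} via Lemma~\ref{psiisomorphisms}, so the work is to confirm that applying that dictionary to the explicit generators-and-relations presentation of $\mathfrak{B}^!$ from Lemma~\ref{bocsconstruction} reproduces the definition of $\mathcal{R}(\mathfrak{A}^{\op})$ verbatim; everything else is formal juggling of the equivalences established in Sections~\ref{sec_homotopically_projective}--\ref{dualising} together with Theorem~\ref{maintheoremKKO} and Theorem~\ref{dlabringel}.
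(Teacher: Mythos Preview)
Your overall architecture is exactly the paper's: chain the equivalences of Sections~\ref{sec_homotopically_projective}--\ref{dualising} (applied to $\mathfrak{A}^{\op}$), identify $\mathcal{R}(\mathfrak{A}^{\op})$ with the module category of the Koszul dual bocs via Proposition~\ref{boxrepresentations}, take opposites, and finish with Dlab--Ringel uniqueness (Theorem~\ref{dlabringel}). The ``core step'' you isolate---matching $(\dagger^*)$, $(\dagger\dagger^*)$ and the composition law with the generators-and-relations presentation of $\mathfrak{B}^!$---is precisely what the paper does in the two sentences beginning ``Indeed, an object of $\modu\mathfrak{B}$ is\ldots''.

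There is one bookkeeping slip in your first paragraph. Applying Proposition~\ref{diamondcostandard} to $\mathfrak{A}^{\op}$ gives $\mathcal{F}(\Diamond^{\mathfrak{A}^{\op}})\simeq\mathcal{F}(\nabla_{R_{\mathfrak{A}^{\op}}})$, but $\Diamond^{\mathfrak{A}^{\op}}_{\mathtt{i}}=\mathbb{D}(\Box^{(\mathfrak{A}^{\op})^{\op}}_{\mathtt{i}})=\mathbb{D}(\Box^{\mathfrak{A}}_{\mathtt{i}})$, so dualising lands you in $\mathcal{F}(\Box^{\mathfrak{A}})$ inside $\mathcal{D}^b(\modu\mathfrak{A})$, not in $\mathcal{F}(\Box^{\mathfrak{A}^{\op}})$. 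The equivalence $\mathcal{F}(\Box^{\mathfrak{A}^{\op}})\simeq\mathcal{F}(\nabla_{R_{\mathfrak{A}}})^{\op}$ you want is obtained the way the paper does it: apply Proposition~\ref{diamondcostandard} \emph{directly to $\mathfrak{A}$} to get $\mathcal{F}(\Diamond^{\mathfrak{A}})\simeq\mathcal{F}(\nabla_{R_{\mathfrak{A}}})$, and then use $\Diamond^{\mathfrak{A}}_{\mathtt{i}}=\mathbb{D}(\Box^{\mathfrak{A}^{\op}}_{\mathtt{i}})$ together with Lemma~\ref{lem_duality}. With this correction the rest of your argument goes through and coincides with the paper's proof.
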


\begin{proof}
By Theorem \ref{maintheoremKKO} the right algebra $R_\mathfrak{A}$ of $\mathfrak{A}$ is quasi-hereditary with $\modu \mathfrak{A}\simeq \mathcal{F}(\Delta_{R_\mathfrak{A}})$. Let $\mathcal{F}(\nabla)$ be the subcategory of $\nabla$-filtered objects of $R_\mathfrak{A}$. According to Proposition \ref{diamondcostandard}, the category $\mathcal{F}(\nabla)$ is equivalent to $\mathcal{F}(\Diamond)$ in $\mathcal{D}^b(\modu \mathfrak{A})$. Applying the duality $\mathbb{D}$, the category $\mathcal{F}(\Diamond)$ is in turn equivalent to $\mathcal{F}(\Box)^{\op}$ in $\mathcal{D}^b(\modu \mathfrak{A}^{\op})$. As observed in Theorem \ref{modN}, the category $\mathcal{F}(\Box)^{\op}$ is equivalent to the category $N(\mathfrak{A}^{\op})^{\op}$ which is in turn equivalent to $\mathcal{R}(\mathfrak{A}^{\op})^{\op}$ by Proposition \ref{dualmodN}. Using the description of bocs representations in \ref{boxrepresentations}, it is easy to see that $\modu \mathfrak{B}^{\op}$ is equivalent to $\mathcal{R}(\mathfrak{A}^{\op})^{\op}$. Indeed, an object of $\modu \mathfrak{B}$ is a $B = \mathbb{L}[N_0]/J$ module, i.e. an $\mathbb{L}$-module $M$ together with a map $s_M \colon \mathbb{D}\overline{V} \to \Hom_\mathbbm{k}(M,M)$ vanishing on $J$. The later condition can be written as $s_M \circ \mathbb{D}\partial_1 + m(s_M\otimes s_M)p_{\overline{V}, \overline{V}} \mathbb{D}m_{\overline{V}} =0$. A morphism $f\colon M \to N$ in $\modu \mathfrak{B}$ is a map $s_f \colon\mathbb{D}A = \mathbb{D}\mathbb{L} \oplus \mathbb{D}\textrm{rad}(A) \to \Hom_{\mathbbm{k}}(M,N)$ which vanishes on the image of $N_0$ in $N_1$. Taking into account the definition of $\mathcal{U}$, the second condition translates into $m((s_f \otimes s_M)p_{\overline{V},A} \mathbb{D}m_r - (s_N \otimes s_f)p_{A, \overline{V}} \mathbb{D}m_l) + s_f \mathbb{D}_0 = 0$.  Thus, applying duality, $\modu \mathfrak{B}\simeq \mathcal{R}(\mathfrak{A}^{\op})\simeq \mathcal{F}(\nabla)$. By Theorem \ref{dlabringel}, the quasi-hereditary algebra with prescribed category of standard modules is unique up to Morita equivalence. Thus, $R_\mathfrak{B}$ is  Morita equivalent to the Ringel dual of $R_\mathfrak{A}$. 
\end{proof}

\begin{ex}\label{example_dual_bocs}
	In the running example \ref{example} we have considered a DG algebra $\mathcal{U}$ associated to the bocs $\mathfrak{A}$.  $\mathcal{U}$ is concentrated in degrees $0, 1, 2$. Its bar construction is a DG coalgebra with $A \oplus (A \otimes_{\mathbb{L}} \overline{V}) \oplus (\overline{V} \otimes_{\mathbb{L}} A)$ in degree $-1$, $\overline{V}$ in degree $0$ and $\overline{V} \otimes_{A} \overline{V}$ in degree $1$. Its dual is the DG algebra $\mathcal{D}^!$ with 
	\begin{align*} 
	&{\mathcal{D}^!}_{-1} = \textrm{span}\{ \widehat{\psi \varphi}\},&\\ &{\mathcal{D}^!}_0 = \mathbb{L} \oplus \textrm{span}\{\widehat{\varphi},\widehat{\psi}, \widehat{\chi}, \widehat{\psi a}, \widehat{b\varphi},\widehat{\psi} \otimes_{\mathbb{L}} \widehat{\varphi}\},&\\
	&{\mathcal{D}^!}_1 = ({\mathcal{D}^!}_0 \otimes_{\mathbb{L}} \mathbb{L}) \oplus \textrm{span}\{ \widehat{a}, \widehat{b},\widehat{c}, \widehat{ba}, \widehat{\psi} \otimes_{\mathbb{L}} \widehat{a}, \widehat{b}\otimes_{\mathbb{L}} \widehat{\varphi}\}.
	\end{align*}
	The non-zero differentials are
	\begin{align*}
	&\partial^{-1}(\widehat{\psi \varphi}) = \widehat{\chi}+ \widehat{\psi} \otimes_{\mathbb{L}} \widehat{\varphi},& &\partial^0(\widehat{\psi a}) = \widehat{c} + \widehat{\psi} \otimes_{\mathbb{L}} \widehat{a},& &\partial^0(\widehat{b \varphi}) = \widehat{c} + \widehat{b} \otimes_{\mathbb{L}} \widehat{\varphi},& &\partial^1(\widehat{ba}) = \widehat{b} \otimes_{\mathbb{L}} \widehat{a}.&
	\end{align*}
	Let $\mathfrak{B}^! =(B^!,W^!)$ be the Koszul dual bocs. $B^! = {\mathcal{D}^!}_0/\partial^{-1}({\mathcal{D}^!}^{-1})$ is the path algebra of the quiver
	\begin{center}
		\includegraphics{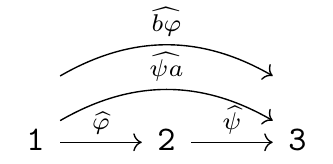}
	\end{center}
	If we put $e = e_\mathtt{1} + e_\mathtt{2} + e_\mathtt{3}$ then $W^!$ is isomorphic to
	$$
	W ^!= {\mathcal{U}^!}_1/\partial^0(B^!) \cong B^! \otimes_{\mathbb{L}} e \oplus \textrm{span}\{\widehat{a}, \widehat{b}, \widehat{c}, \widehat{ba}, \widehat{\psi} \otimes_{\mathbb{L}} \widehat{a},  \widehat{b}\otimes_{\mathbb{L}} \widehat{\varphi}\}.
	$$ 
	The right $B^!$-module structure is twisted by $\partial^0$, i.e. $e \cdot x= x\otimes_{\mathbb{L}}e - \partial^0(x)$, for any $x\in B$. The bimodule $\overline{W}^! = \textrm{span}\{\widehat{a}, \widehat{b}, \widehat{c}, \widehat{ba}, \widehat{\psi} \otimes_{\mathbb{L}} \widehat{a},  \widehat{b}\otimes_{\mathbb{L}} \widehat{\varphi}\}$ is a projective $B^!$-bimodule generated by $\widehat{a}, \widehat{b}, \widehat{c}, \widehat{ba}$. The comultiplication on $B^! \otimes_{\mathbb{L}} e$ is determined by $\mu(e) = e \otimes e$, while the comultiplication on $\overline{W}^!$ is determined by the multiplication in $A$, i.e. we have $\mu(\widehat{ba}) = e \otimes_{\mathbb{L}} \widehat{ba} + \widehat{ba} \otimes_{\mathbb{L}} e + \widehat{b} \otimes_{\mathbb{L}} \widehat{a}$. 
	
	This bocs is in some way not minimal possible, namely as we will see in Example \ref{regularisation_example} it is not regular. Similarly to \cite[Appendix A.2]{KKO14}, it provides an instance of non-uniqueness of directed bocses for quasi-hereditary algebras.
\end{ex}

\section{Smooth rational surfaces and curve-like algebras}\label{applications}

In this section we demonstrate how knowledge of additional properties of a quasi-hereditary algebra can be used to exclude certain possibilities for the $A_\infty$-structure on the $\Ext$-algebra of the standard modules.  

Recall, that an algebra $\Lambda$ is \emphbf{left strongly quasi-hereditary} if it is quasi-hereditary and the projective dimension of every standard $\Lambda$-module is at most one. The corresponding biquiver has then a simple form (Remark \ref{rmk_bocs_of_strongly}). Using the language of bocses, there are different equivalent description of this:

\begin{prop}[{\cite[Proposition 4.42]{K16}}]\label{prop_almost_strong}
The following are equivalent for a quasi-hereditary algebra $\Lambda$:
\begin{enumerate}[(1)]
\item $\Lambda$ is left strongly quasi-hereditary.
\item The exceptional collection of standard modules is almost strong.
\item $\Lambda$ is Morita equivalent to the right algebra of a free normal bocs.
\end{enumerate}
\end{prop}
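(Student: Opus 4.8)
The plan is to establish the equivalence $(1)\Leftrightarrow(2)\Leftrightarrow(3)$ by a cycle of implications, using the bocs description of $\modu\Lambda$ from Theorem~\ref{maintheoremKKO} together with the explicit construction recalled in Remark~\ref{rmk_bocs_of_strongly} and Proposition~\ref{boxrepresentations}. First I would recall that, by Theorem~\ref{maintheoremKKO}, we may assume $\Lambda = R$ is the right algebra of a directed normal bocs $\mathfrak{A}=(A,V)$, and that $\modu\mathfrak{A}\simeq\mathcal{F}(\Delta)$ with $L(\mathtt{i})\mapsto\Delta(\mathtt{i})$. The key homological fact linking the three conditions is the isomorphism $\Ext^j_A(M,N)\cong\Ext^j_R(R\otimes_A M,R\otimes_A N)$ for $j\geq 2$ from Theorem~\ref{maintheoremKKO}\eqref{maintheoremKKO:iv}: applying it to $M=N=L(\mathtt{i})$ shows that $\projdim_\Lambda\Delta(\mathtt{i})\leq 1$ for all $\mathtt{i}$ is equivalent to $\Ext^{\geq 2}_A(L,L)=0$, i.e.\ to $A$ being hereditary. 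This is the pivot around which all three equivalences turn.

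For $(1)\Rightarrow(3)$: assuming $\Lambda$ left strongly quasi-hereditary, the above gives that the underlying algebra $A$ of $\mathfrak{A}$ is hereditary; since $\mathfrak{A}$ is directed (hence normal with projective kernel $\overline{V}$), Theorem~\ref{bocsbiquiver} then exhibits $\mathfrak{A}$ as a free normal bocs with projective kernel, and its right algebra is $\Lambda$. For $(3)\Rightarrow(1)$: conversely, if $\Lambda$ is Morita equivalent to the right algebra of a free normal bocs $\mathfrak{A}'=(A',V')$, then $A'$ is hereditary by definition of "free", so $\Ext^{\geq 2}_{A'}(L,L)=0$, and Theorem~\ref{maintheoremKKO}\eqref{maintheoremKKO:iv} transports this to $\Ext^{\geq 2}_\Lambda(\Delta,\Delta)=0$, hence — because $\mathcal{F}(\Delta)$ contains the projectives and $\Lambda$ has a $\Delta$-filtration — to $\projdim_\Lambda\Delta(\mathtt{i})\leq 1$. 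For $(1)\Leftrightarrow(2)$: recall that $\nabla$ is right dual to $\Delta$ as an exceptional collection (Lemma~\ref{costandards} and the remark after it), and that $\Hom^s_{\mathcal{D}^b(\modu\Lambda)}(\Delta(\mathtt{i}),\Delta(\mathtt{l}))$ for $s\geq 2$ can be computed via $\Ext^s_A$ as above; "almost strong" says precisely that these groups vanish for $s\geq 2$, which by the pivot fact is again equivalent to $A$ hereditary, hence to $(1)$. Alternatively, and more directly, $\projdim_\Lambda\Delta(\mathtt{i})\leq 1$ together with $\Lambda\in\mathcal{F}(\Delta)$ forces $\Ext^s_\Lambda(\Delta(\mathtt{i}),\Delta(\mathtt{l}))=0$ for $s\geq 2$ by dimension shifting along a projective resolution whose terms lie in $\mathcal{F}(\Delta)$, and conversely the vanishing of all higher $\Ext$'s between $\Delta$'s propagates to all modules in $\mathcal{F}(\Delta)$, in particular to $\Lambda$ itself, giving $\projdim_\Lambda\Delta(\mathtt{i})\leq 1$.

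Concretely one also wants to verify the shape of the biquiver promised in Remark~\ref{rmk_bocs_of_strongly}: when $A=\mathbbm{k}Q^0$ is hereditary, the degree-$0$ arrows $Q^0_1$ form a basis of $\mathbb{D}\Ext^1_\Lambda(\Delta,\Delta)$ and the degree-$1$ arrows $Q^1_1$ a generating set of $\overline{V}$, corresponding to $\mathbb{D}\Hom_\Lambda(\Delta,\Delta)$ modulo identities; this follows from the construction of $\mathfrak{A}$ out of the dual bar construction of the $A_\infty$-algebra $\Ext^*_\Lambda(\Delta,\Delta)$ recalled at the end of Section~\ref{directedbocses} together with Theorem~\ref{bocsbiquiver}.

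\textbf{Main obstacle.} The one genuine subtlety is the direction $(2)$ or $(3)\Rightarrow(1)$, where one must pass from vanishing of $\Ext$-groups \emph{between the standard modules} to a bound on the projective dimension of each $\Delta(\mathtt{i})$ \emph{as a $\Lambda$-module}. The point to be careful about is that a projective resolution of $\Delta(\mathtt{i})$ need not a priori consist of $\Delta$-filtered modules; one resolves this either by invoking Theorem~\ref{maintheoremKKO}\eqref{maintheoremKKO:iv} to reduce the computation to the hereditary algebra $A$ (where projective dimension is automatically $\leq 1$) and then lifting, or by using the standard fact that the syzygy of $\Delta(\mathtt{i})$ inside a projective cover $P(\mathtt{i})$ is again $\Delta$-filtered with factors $\Delta(\mathtt{l})$, $\mathtt{l}>\mathtt{i}$, so that an inductive argument on the ordering applies. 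Either way the computation itself is routine; the care lies in setting it up so that the exactness properties of $\modu\mathfrak{A}$ (Remark~\ref{exactstructureonbocsrep}) are correctly matched with $\Ext$-computations in $\modu\Lambda$.
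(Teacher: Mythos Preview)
Your proposal is correct and uses essentially the same ingredients as the paper, but the organisation differs in one place worth noting. The paper argues the cycle $(1)\Rightarrow(2)\Rightarrow(3)\Rightarrow(1)$: the first implication is immediate, the second is your ``pivot'' via Theorem~\ref{maintheoremKKO}\eqref{maintheoremKKO:iv} (vanishing of $\Ext^{\geq 2}_\Lambda(\Delta,\Delta)$ transfers to $\Ext^{\geq 2}_A(L,L)=0$, so $A$ is hereditary), and the third is handled more directly than in your outline. Namely, for $(3)\Rightarrow(1)$ the paper observes that the equivalence $T\colon\modu\mathfrak{A}\to\mathcal{F}(\Delta)$ sends $A$ to $\Lambda$, hence projective $A$-modules to projective $\Lambda$-modules, and that the exact structure on $\modu\mathfrak{A}$ is inherited from $\modu A$ (Remark~\ref{exactstructureonbocsrep}); therefore a length-one projective resolution of $L(\mathtt{i})$ in $\modu A$ is carried to a length-one projective resolution of $\Delta(\mathtt{i})$ in $\modu\Lambda$. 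This bypasses entirely the step you flagged as the ``main obstacle'' --- there is no need to argue from $\Ext$-vanishing between standard modules back to a global bound on $\projdim_\Lambda\Delta(\mathtt{i})$.

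Your route through the obstacle (via the $\Delta$-filtered syzygy $K$ of $\Delta(\mathtt{i})$, then $\Ext^1_\Lambda(K,\Delta)\cong\Ext^2_\Lambda(\Delta(\mathtt{i}),\Delta)=0$, so $K$ is Ext-projective in $\mathcal{F}(\Delta)$, hence projective) is also valid, but it relies on the additional standard facts that $\mathcal{F}(\Delta)$ is closed under kernels of surjections and that its Ext-projectives coincide with $\add(\Lambda)$. The paper's transport-of-resolutions argument avoids invoking these and is the cleaner way to close the cycle.
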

\begin{proof}
	The implication $\mathit{(1)}\Rightarrow \mathit{(2)}$ is clear. For $\mathit{(2)}\Rightarrow \mathit{(3)}$, let $\Lambda$ be a quasi-hereditary algebra with almost strong exceptional collection of standard modules. The algebra $\Lambda$ is Morita equivalent to the right algebra of a directed normal bocs $(A, V)$, see Theorem \ref{maintheoremKKO}.(i). For any pair $L(\mathtt{i})$, $L(\mathtt{l})$ of simple $A$-modules, Theorem \ref{maintheoremKKO}.(iv) implies vanishing of $\Ext^2_A(L(\mathtt{i}), L(\mathtt{l})) \cong \Ext^2_\Lambda(\Delta(\mathtt{i}), \Delta(\mathtt{l})$. Hence, the algebra $A$ is hereditary, i.e. the bocs $(A,V)$ is free. 
	
	Let now $\mathfrak{A} =(A,V)$ be a free directed normal bocs and $\Lambda$ its right algebra. Since the equivalence $T\colon \modu \mathfrak{A} \xrightarrow{\simeq} \mathcal{F}(\Delta)$ is an additive functor which maps $A$ to $\Lambda$, $T$ maps projective $A$-modules to projective $\Lambda$-modules.
	As the exact structure on $\modu \mathfrak{A}$ comes from the exact structure on $\modu A$, a short projective resolution in $\modu A$ yields a short projective resolution in $\modu \Lambda$. Hence, if $\textrm{projdim}_A(L(\mathtt{i})) \leq1$ then so is $\textrm{projdim}_\Lambda(\Delta(\mathtt{i}))$. In particular, if $A$ is hereditary, $\textrm{projdim}_\Lambda(\Delta(\mathtt{i})) \leq 1$, i.e. $\Lambda$ is left strongly quasi-hereditary which finishes the proof of $\mathit{(3)}\Rightarrow \mathit{(1)}$.
\end{proof}

Assume now that $\Lambda$ is a left strongly quasi-hereditary algebra with a duality $\mathcal{D}$ on $\modu \Lambda$ preserving simple modules. Then the functor $\mathcal{D}$ maps standard modules to costandard and projective to injective, hence $\Lambda$ is also right strongly quasi-hereditary, i.e. all costandard $\Lambda$-modules have injective dimension less than two. By \cite{Par01}, $\Lambda$ has global dimension two. 

Let $R(\Lambda)$ be the Ringel dual of $\Lambda$. Then, by \cite{Rin10}, $R(\Lambda)$ is right strongly quasi-hereditary and it has a duality preserving simple modules, hence it is also left strongly quasi-hereditary.

It follows that the class of left strongly quasi-hereditary algebras with duality preserving simple modules is closed under Ringel duality.  Since the duality maps standard $\Lambda$-modules to costandard, we have
\begin{equation}
\begin{aligned}
&\dim \Hom_\Lambda(\Delta(\mathtt{i}), \Delta(\mathtt{l})) = \dim \Hom_\Lambda(\nabla(\mathtt{l}), \nabla(\mathtt{i})),&\\
&\dim \Ext^1_\Lambda(\Delta(\mathtt{i}), \Delta(\mathtt{l})) = \dim \Ext^1_\Lambda(\nabla(\mathtt{l}), \nabla(\mathtt{i})). &
\end{aligned}
\end{equation}

We say that an algebra $\Lambda$ is \emphbf{curve-like} if it is a left strongly quasi-hereditary algebra with a duality preserving simple modules and  
\[\dim \Hom_\Lambda(\Delta(\mathtt{i}), \Delta(\mathtt{l})) =1 = \dim \Ext^1_\Lambda(\Delta(\mathtt{i}), \Delta(\mathtt{l}))\] 
for all $\mathtt{1}\leq \mathtt{i}< \mathtt{l} \leq \mathtt{n}$ (see the introduction for a motivation where the name comes from).
	We believe that curve-like quasi-hereditary algebras provide an interesting class of finite-dimensional algebras. Some examples of these have already provided useful counterexamples in the work of V. Mazorchuk \cite{Maz10} and the second author \cite{K16}. 
	
	We use the explicit construction of a Ringel dual bocs to give non-obvious conditions on the Ext-algebra of standard modules over a curve-like algebra, Lemmas \ref{lem_comp_of_homs}, \ref{lem_comp_hom_ext}. We prove that for algebras with a small number of simple objects any bocs satisfying these conditions is the bocs of a curve-like algebra.

\begin{defn}
	Let $(A,V)$ be a directed bocs with $A$ basic. An arrow $a$ in the quiver of $A$ (which is identified with an element of $A$) is called \emphbf{superfluous} or \emphbf{non-regular} if $\partial(a)$ is a generator of an indecomposable direct summand of the projective bimodule $\overline{V}$. A bocs is called \emphbf{regular} if it does not contain any superfluous arrows.
\end{defn}

In the case that the bocs corresponds to a directed biquiver, a solid arrow $a\in Q(\mathtt{i},\mathtt{l})$ is called superfluous if $\partial(a)=\lambda v+\sum_j \mu_j p_j$ where $0\neq \lambda\in \mathbbm{k}$, $\mu_j\in \mathbbm{k}$, and the $p_j$ are paths from $\mathtt{i}$ to $\mathtt{l}$ with $v$ not contained in any of them. In this case, the corresponding element $a\in A$ is superfluous. There is an equivalence of module categories of bocses removing $a$ and $v$ and replacing any occurence of $v$ in the differentials of the arrows by $-\frac{1}{\lambda}\sum_j\mu_j p_j$. This process is called \emphbf{regularisation} and was introduced by M. Kleiner and A. Roiter in the case where $A$ is the path algebra of a quiver in \cite{KR75}.

\begin{ex}\label{regularisation_example}
	In the Example \ref{example_dual_bocs} the Koszul dual bocs is not regular as the arrows $\widehat{\psi a}$ and $\widehat{b \varphi}$ are superfluous. The regularisation of $\mathfrak{B}^!$ is a bocs $\mathfrak{B}^!_r = (B^!_r, W^!_r)$ with algebra $B^!_r$ equal to the subalgebra $B^!\setminus \{\widehat{\psi a}\}$ of $B^!$ (one could also choose $B^!_r = B^! \setminus \{\widehat{b \varphi}\}$). The bimodule $W^!_r$ is $W^! \setminus \{c\}$. We also have $\partial^0_{\mathfrak{B}^!_r}(\widehat{b \varphi}) = \widehat{b} \otimes_{\mathbb{L}} \widehat{\varphi} - \widehat{\psi} \otimes_{\mathbb{L}} \widehat{a}$. In other words $B^!_r$ is the path algebra of the quiver   
	\begin{center}
		\includegraphics{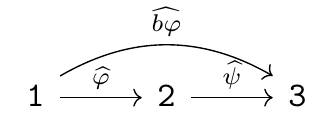}
	\end{center}
	If we put $e = e_\mathtt{1} + e_\mathtt{2} + e_\mathtt{3}$ then $W^!_r\cong  B^!_r \otimes_{\mathbb{L}} e \oplus \textrm{span}\{\widehat{a}, \widehat{b}, \widehat{ba}, \widehat{\psi} \otimes_{\mathbb{L}} \widehat{a},  \widehat{b}\otimes_{\mathbb{L}} \widehat{\varphi}\}$ as let $B^!_r$ module and the right $B^!_r$-module structure is twisted by $\partial^0$. It follows that the bocs $\mathfrak{A}$ of Example \ref{example} is self-Koszul dual up to regularisation. It is well-known that in this case $\Lambda, \Lambda^{op}$ and their Ringel duals are all Morita equivalent.
\end{ex}

We need the following characterisation of regular directed bocses, which can be found in unpublished notes of S. Ovsienko and might be well known in the Kiev school. A proof  will appear in a forthcoming article of the second author with V. Miemietz \cite{KM17}.

\begin{lem}\label{miemietz-lemma}
Let $\mathfrak{A}$ be a directed normal bocs. Then, the following are equivalent:
\begin{enumerate}[(1)]
\item $\mathfrak{A}$ is regular.
\item $\Ext^1_A(L(\mathtt{i}),L(\mathtt{l}))\cong \Ext^1_{\mathfrak{A}}(\Delta(\mathtt{i}),\Delta(\mathtt{l}))$.
\item As a projective bimodule, $\overline{V}$ has $\sum_{\mathtt{i}\neq \mathtt{l}}\dim \Hom_{\mathfrak{A}}(L(\mathtt{i}),L(\mathtt{l}))$ generators.
\end{enumerate} 
\end{lem}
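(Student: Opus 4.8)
Lemma \ref{miemietz-lemma}: for a directed normal bocs $\mathfrak{A}$, the conditions (1) regularity, (2) $\Ext^1_A(L(\mathtt{i}),L(\mathtt{l}))\cong \Ext^1_{\mathfrak{A}}(\Delta(\mathtt{i}),\Delta(\mathtt{l}))$, and (3) $\overline{V}$ has $\sum_{\mathtt{i}\neq\mathtt{l}}\dim\Hom_{\mathfrak{A}}(L(\mathtt{i}),L(\mathtt{l}))$ generators, are equivalent.

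**Plan.** The natural route is to set up the comparison of $\Ext$-groups explicitly via a partial projective resolution in $\modu\mathfrak{A}$ and count dimensions. First I would recall from Theorem \ref{maintheoremKKO}\eqref{maintheoremKKO:iv} that $\Ext^j_A(L(\mathtt{i}),L(\mathtt{l}))\cong\Ext^j_{\mathfrak{A}}(\Delta(\mathtt{i}),\Delta(\mathtt{l}))$ already holds for all $j\geq 2$, so the only possible discrepancy is in degree $1$. Then I would build the start of a projective resolution of $L(\mathtt{i})$ in $\modu\mathfrak{A}$: using the quiver-theoretic description of $\modu\mathfrak{A}$ from Proposition \ref{boxrepresentations} and Lemma \ref{existenceofs}, the projective cover is $P(\mathtt{i})$, and the next term is $\bigoplus_{a} P(\mathtt{l}_a)$ indexed by the arrows $a\colon\mathtt{i}\to\mathtt{l}_a$ of the Gabriel quiver of $A$ together with a term accounting for the generators $v$ of $\overline{V}$ ending at $\mathtt{i}$, since by the composition formula of Proposition \ref{boxrepresentations} and the definition of morphisms in $\modu\mathfrak{A}$ a map out of $P(\mathtt{i})$ involves both $f(\omega_\mathtt{i})$-data (the $A$-linear part) and $f(v)$-data. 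Applying $\Hom_{\mathfrak{A}}(-,L(\mathtt{l}))$ and using that $\Hom_{\mathfrak{A}}(P(\mathtt{m}),L(\mathtt{l}))=\delta_{\mathtt{m}\mathtt{l}}\mathbbm{k}$, one reads off $\Ext^1_{\mathfrak{A}}(\Delta(\mathtt{i}),\Delta(\mathtt{l}))$ as a subquotient whose dimension is controlled by which $\partial(a)$ hit the generators $v$ of $\overline{V}$.

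**The core dimension count.** The key observation is the following: let $a$ be an arrow of the Gabriel quiver of $A$. Writing $\partial_0 a = \partial(a) \in \overline{V}$, this is a sum of a $\mathbbm{k}$-combination of the generators $v$ and a combination of longer paths (elements of $\overline{V}$ with a nontrivial $A$-coefficient). The arrow $a$ is superfluous precisely when $\partial(a)$ has a nonzero coefficient at some generator $v$. So, passing to the cokernel description, the contribution of $a$ to $\Ext^1_{\mathfrak{A}}(\Delta(\mathtt{i}),\Delta(\mathtt{l}))$ survives if and only if $a$ is \emph{not} superfluous, while $\Ext^1_A(L(\mathtt{i}),L(\mathtt{l}))$ always has dimension equal to the number of Gabriel-quiver arrows $\mathtt{i}\to\mathtt{l}$ (since $A$ is a basic algebra with $\dim\Ext^1_A(L(\mathtt{i}),L(\mathtt{l}))=\dim e_\mathtt{l}(\operatorname{rad}A/\operatorname{rad}^2 A)e_\mathtt{i}$). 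Thus $(1)\Leftrightarrow(2)$ follows once one checks that for a directed bocs the canonical map $\Ext^1_A(L(\mathtt{i}),L(\mathtt{l}))\to\Ext^1_{\mathfrak{A}}(\Delta(\mathtt{i}),\Delta(\mathtt{l}))$ (induced by $\Phi\colon\modu A\to\modu\mathfrak{A}$ together with $T$) is always injective and is surjective iff no superfluous arrow occurs. For $(2)\Leftrightarrow(3)$, note that $\dim\Hom_{\mathfrak{A}}(L(\mathtt{i}),L(\mathtt{l}))$ for $\mathtt{i}\neq\mathtt{l}$ equals the number of generators $v\colon\mathtt{i}\to\mathtt{l}$ of $\overline{V}$ by Proposition \ref{boxrepresentations} (an element of $\Hom_{A\otimes A^{\op}}(\overline{V},\Hom_\mathbbm{k}(L(\mathtt{i}),L(\mathtt{l})))$ is determined by its values on the generators $v$, and the composition with $\omega$-terms vanishes on simples since $L(\mathtt{i})_\mathtt{m}=0$ for $\mathtt{m}\neq\mathtt{i}$; the relation $f(\omega_\mathtt{l}a-a\omega_\mathtt{i}+\partial_0 a)=0$ is automatic here), so condition (3) says the total number of generators of $\overline{V}$ equals the total number of generators that "genuinely contribute" — which by the previous paragraph is equivalent to regularity via comparing with the Gabriel quiver of $A$, or more directly: each generator $v$ of $\overline{V}$ is "cancelled" in $\Ext^1_{\mathfrak{A}}$ computation against exactly the arrows $a$ with $\partial(a)$ supported at $v$, and a counting argument (rank-nullity applied to the map $\operatorname{rad}A/\operatorname{rad}^2A \to \overline{V}/(\text{longer paths})$ induced by $\partial_0$) yields the equivalence with (2).

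**Main obstacle.** The delicate point is making the partial projective resolution in $\modu\mathfrak{A}$ precise enough to extract $\Ext^1$ correctly — morphisms in $\modu\mathfrak{A}$ are not just $A$-linear maps, so the usual "first syzygy = $\operatorname{rad}P(\mathtt{i})$" computation must be corrected by the $v$-components described in Proposition \ref{boxrepresentations}, and one must be careful that the exact structure on $\modu\mathfrak{A}$ (Remark \ref{exactstructureonbocsrep}) is the one used to define $\Ext^1_{\mathfrak{A}}$. Concretely, I would identify $\Ext^1_{\mathfrak{A}}(\Delta(\mathtt{i}),\Delta(\mathtt{l}))$ with the cokernel of the map from $\mathbbm{k}$-span of the arrows $\{a\colon\mathtt{i}\to\mathtt{l}\text{ in Gabriel quiver of }A\}$ into the $\mathbbm{k}$-span of the generators $\{v\colon\mathtt{i}\to\mathtt{l}\}$ given by extracting the $v$-coefficient of $\partial(a)$ — wait, more carefully, $\Ext^1_{\mathfrak{A}}$ should be the \emph{sum} of these pieces, and the generators $v$ contribute positively (as genuine non-$A$-linear homomorphisms between simples lifting to an extension) while superfluous arrows kill some of them. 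Getting the signs and the direction of this map right, and verifying it is the connecting map of the relevant short exact sequence, is the real work; once that linear-algebra picture is nailed down, all three conditions become the statement that this map (ranging over all pairs $\mathtt{i}\neq\mathtt{l}$) is zero. Since the paper explicitly defers the proof to \cite{KM17}, I expect the authors only sketch this; I would present the resolution-and-dimension-count argument as above.
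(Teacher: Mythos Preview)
The paper does not prove this lemma; it is stated without argument and deferred to \cite{KM17}. So there is nothing in the paper to compare against, and I assess your sketch on its own merits.

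Your final picture is correct: all three conditions are equivalent to the vanishing of the $\mathbb{L}$--$\mathbb{L}$--bilinear map
\[
\bar\partial_0\colon\ \operatorname{rad}A/\operatorname{rad}^2A\ \longrightarrow\ \overline{V}/(\operatorname{rad}A\cdot\overline{V}+\overline{V}\cdot\operatorname{rad}A)
\]
induced by $\partial_0$; ``no superfluous arrow'' is precisely $\bar\partial_0=0$. However, two of your intermediate claims are wrong, and as stated they make the argument circular.

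\textbf{First error.} You say the canonical map $\Ext^1_A(L(\mathtt{i}),L(\mathtt{l}))\to\Ext^1_{\mathfrak{A}}(\Delta(\mathtt{i}),\Delta(\mathtt{l}))$ is always injective and surjective precisely in the regular case. The direction is reversed. By Remark~\ref{exactstructureonbocsrep} every short exact sequence in $\modu\mathfrak{A}$ is isomorphic to one induced from $\modu A$, so the map is always \emph{surjective}. It fails to be injective exactly when some superfluous arrow exists: if $a\colon\mathtt{i}\to\mathtt{l}$ has $\partial_0a=\lambda v+(\text{terms in }\operatorname{rad}\cdot\overline{V}+\overline{V}\cdot\operatorname{rad})$ with $\lambda\neq0$, then the non-split $A$-extension of $L(\mathtt{i})$ by $L(\mathtt{l})$ given by $a$ becomes equivalent in $\modu\mathfrak{A}$ to the split one via the morphism with $f(\omega)=\mathrm{id}$ and $f(v)=\pm\lambda^{-1}$. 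In general, writing $r_{\mathtt{i}\mathtt{l}}=\operatorname{rank}(\bar\partial_0\colon e_\mathtt{l}(\operatorname{rad}/\operatorname{rad}^2)e_\mathtt{i}\to e_\mathtt{l}(\overline{V}/\operatorname{rad}\overline{V}+\overline{V}\operatorname{rad})e_\mathtt{i})$, one finds
\[
\dim\Ext^1_{\mathfrak{A}}(\Delta(\mathtt{i}),\Delta(\mathtt{l}))=\#\{\text{arrows }\mathtt{i}\to\mathtt{l}\}-r_{\mathtt{i}\mathtt{l}},
\]
so (2) holds iff all $r_{\mathtt{i}\mathtt{l}}=0$.

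\textbf{Second error.} You claim that for $\mathtt{i}\neq\mathtt{l}$ one has $\dim\Hom_{\mathfrak{A}}(L(\mathtt{i}),L(\mathtt{l}))=\#\{\text{generators }v\colon\mathtt{i}\to\mathtt{l}\}$ because the relation $f(\omega_\mathtt{l}a-a\omega_\mathtt{i}+\partial_0a)=0$ is ``automatic''. It is not. On simples the two $\omega$-terms vanish, but the remaining condition $f(\partial_0a)=0$ forces $f$ to annihilate the generator part of $\partial_0a$. Hence
\[
\dim\Hom_{\mathfrak{A}}(L(\mathtt{i}),L(\mathtt{l}))=\#\{\text{generators }v\colon\mathtt{i}\to\mathtt{l}\}-r_{\mathtt{i}\mathtt{l}},
\]
and condition (3), summed over all $\mathtt{i}\neq\mathtt{l}$, is again equivalent to $\sum r_{\mathtt{i}\mathtt{l}}=0$, i.e.\ to $\bar\partial_0=0$.

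Once these two dimension formulas are corrected, the proof is immediate and purely linear-algebraic; there is no need to build a projective resolution in $\modu\mathfrak{A}$.
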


We use the construction of the bocs of $R(\Lambda)$ given in Theorem \ref{thm_Rigel_dual_bocs} to exclude possible $A_\infty$-structures on the Ext-algebra of standard modules over a curve-like algebra.

\begin{lem}\label{lem_comp_of_homs}
	Let $\Lambda$ be a curve-like algebra. Then the composition of homomorphisms between standard $\Lambda$-modules is non-zero.
\end{lem}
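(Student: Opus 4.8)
The plan is to argue by contradiction, using the characterisation of curve-like algebras together with the combinatorial description of the Ringel dual bocs from Theorem~\ref{thm_Rigel_dual_bocs}. Suppose there exist indices $\mathtt{i} < \mathtt{l} < \mathtt{m}$ such that the (one-dimensional) spaces $\Hom_\Lambda(\Delta(\mathtt{i}),\Delta(\mathtt{l}))$ and $\Hom_\Lambda(\Delta(\mathtt{l}),\Delta(\mathtt{m}))$ compose to zero in $\Hom_\Lambda(\Delta(\mathtt{i}),\Delta(\mathtt{m}))$. Since $\Lambda$ is left strongly quasi-hereditary, by Proposition~\ref{prop_almost_strong} it is Morita equivalent to the right algebra of a free normal bocs $\mathfrak{A} = (A,V)$, and under the equivalence $\modu\mathfrak{A}\simeq \mathcal{F}(\Delta)$ the standard modules correspond to the simple $A$-modules $L(\mathtt{i})$. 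Morphisms of standard modules correspond to bocs morphisms, and by the discussion after Proposition~\ref{boxrepresentations} (and Lemma~\ref{miemietz-lemma}) a nonzero homomorphism $\Delta(\mathtt{i})\to\Delta(\mathtt{l})$ is represented, up to $A$-linear morphisms, by a dashed arrow $v\colon \mathtt{i}\to\mathtt{l}$ in the biquiver, with composition governed by the formula $(gf)(v) = g(\omega_\mathtt{l})f(v) + g(v)f(\omega_\mathtt{i}) + \sum_{(v)} g(v_{(1)})f(v_{(2)})$.

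The key step is to translate the vanishing of the composition into a statement about the differential $\partial$ of the biquiver, and then into a structural property of the Ringel dual bocs $\mathfrak{B}$ that contradicts curve-likeness. Concretely, I would compute the composition of the morphisms attached to $v_{\mathtt{i}\mathtt{l}}$ and $v_{\mathtt{l}\mathtt{m}}$ using the composition rule above: the middle term $g(v)f(v)$ is forced to lie in the part of $\Hom_\mathfrak{A}(L(\mathtt{i}),L(\mathtt{m}))$ detected by a dashed arrow $\mathtt{i}\to\mathtt{m}$, and its vanishing means (after regularising, cf.\ Lemma~\ref{miemietz-lemma} and the regularisation process) that in $A$ the corresponding degree-zero arrow is superfluous or, equivalently on the Ringel dual side, that a certain generator of $\overline{W}$ becomes redundant. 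The cleanest route is probably to dualise: $\Hom_\Lambda(\Delta(\mathtt{i}),\Delta(\mathtt{l}))\cong \Ext^1_{R(\Lambda)}(\nabla\text{-type data})$ via the isomorphisms $(\ref{psiisomorphisms:i})$--$(\ref{psiisomorphisms:iv})$ of Lemma~\ref{psiisomorphisms} and the explicit form of $\mathfrak{B}$ in Section~\ref{ringeldualbocs}, where the multiplication on $\mathbb{D}A$ dualises to the comultiplication $\mu$ of $W$; under the curve-like hypothesis all relevant $\Hom$ and $\Ext^1$ spaces between standard modules (and hence, by equation~(8.0.1), between costandard modules) are one-dimensional, so a vanishing composition would make $W$ (equivalently the bar-dual DG algebra $\mathcal{U}^!$) have a differential forcing an extra relation, shrinking $B = B^!$ below the size dictated by $\sum_{\mathtt{i}\neq\mathtt{l}}\dim\Hom_\mathfrak{A}(L(\mathtt{i}),L(\mathtt{l}))$ and contradicting Lemma~\ref{miemietz-lemma}(3) applied to the (regularised) Ringel dual bocs.

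An alternative, more elementary approach avoids the bocs machinery entirely: use that $\Lambda$ has global dimension two (established just before the definition of curve-like) and that the standard modules form an almost strong exceptional collection, so that for $\mathtt{i}<\mathtt{l}<\mathtt{m}$ one has a nonzero map $\Delta(\mathtt{i})\to\Delta(\mathtt{l})$ which is injective (standard modules being ``upper triangular'' with one-dimensional Homs, the image is a submodule), and similarly for $\Delta(\mathtt{l})\to\Delta(\mathtt{m})$; one then checks directly that the composite is nonzero by tracking the action on the top of $\Delta(\mathtt{i})$, or by using the self-duality preserving simples to reduce to a symmetric statement about costandards. I expect the main obstacle to be ruling out the case where the middle map $\Delta(\mathtt{i})\to\Delta(\mathtt{l})$ has image contained in a submodule of $\Delta(\mathtt{l})$ that is killed by every map $\Delta(\mathtt{l})\to\Delta(\mathtt{m})$ --- this is exactly where one-dimensionality of all the $\Hom$-spaces, together with the structure of $\Delta(\mathtt{l})$ as an $A$-module (which, since $A$ is hereditary, is transparent: $\Delta(\mathtt{l}) = P_A(\mathtt{l})$ is projective with simple top $L(\mathtt{l})$ and the relevant $\Hom$ out of it is detected on the socle layer), forces the composite to be nonzero. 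Pinning down this local picture of $\Delta(\mathtt{l})$ and its radical filtration, and matching it against the dashed-arrow/composition description of Proposition~\ref{boxrepresentations}, is the crux of the argument.
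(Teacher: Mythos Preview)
Your first approach is essentially the paper's: argue by contradiction, pass to the Ringel dual bocs, and observe that $\psi\varphi=0$ produces an element $\wh{\psi\varphi}$ in degree $-1$ with $\partial(\wh{\psi\varphi})=\wh{\psi}\otimes\wh{\varphi}$ (no $\wh{\tau}$ term precisely because the composition vanishes), hence a relation $\wh{\psi}\,\wh{\varphi}=0$ in $B$. But the contradiction you draw is not the right one. Lemma~\ref{miemietz-lemma}(3) counts generators of the kernel bimodule $\overline{V}$ (dashed arrows), not relations in the algebra; a relation in $B$ does not ``shrink'' the generator count it refers to. What the relation actually tells you is that $B$ is not hereditary, so $\Ext^2_B(L(\mathtt{i}),L(\mathtt{m}))\neq 0$; then Theorem~\ref{maintheoremKKO}\eqref{maintheoremKKO:iv} transports this to $\Ext^2_{R(\Lambda)}(\Delta(\mathtt{i}),\Delta(\mathtt{m}))\neq 0$, i.e.\ $\Ext^2_\Lambda(\nabla(\mathtt{i}),\nabla(\mathtt{m}))\neq 0$, contradicting the (right) strong quasi-heredity of $\Lambda$. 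You also need the small check, which you omit, that $\wh{\varphi},\wh{\psi}$ are not superfluous and survive as nonzero arrows in $B$: this is because $\varphi,\psi$ are bimodule generators of $\overline{V}$ in a regular bocs, so $\partial(\wh{\varphi})=\partial(\wh{\psi})=0$, and they cannot be killed by relations coming from $\mathbb{D}(\overline{V}\otimes_A\overline{V})$ since those always carry a $p_{\overline{V},\overline{V}}\mathbb{D}m_{\overline{V}}$ term.

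Your alternative approach does not work. Morphisms $\Delta(\mathtt{i})\to\Delta(\mathtt{l})$ for $\mathtt{i}<\mathtt{l}$ are essentially never injective: the top of $\Delta(\mathtt{i})$ is $L(\mathtt{i})$, which must land in $\rad\Delta(\mathtt{l})$ since $\Delta(\mathtt{l})$ has simple top $L(\mathtt{l})$, so there is no reason to track anything on the top. You also conflate $\Delta(\mathtt{l})\in\modu\Lambda$ with the projective $A$-module $P_A(\mathtt{l})$; under the equivalence $\modu\mathfrak{A}\simeq\mathcal{F}(\Delta)$ it is the simple $L(\mathtt{l})\in\modu A$ (not $P_A(\mathtt{l})$) that corresponds to $\Delta(\mathtt{l})$, and the extra morphisms in $\modu\mathfrak{A}$ are not $A$-linear, so no hereditary-$A$ argument about submodules of $P_A(\mathtt{l})$ applies. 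The obstacle you flag (``image killed by the next map'') is genuine and cannot be ruled out by one-dimensionality of Hom alone; that is precisely why the paper resorts to the dual-bocs/$\Ext^2$ argument.
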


\begin{proof}
	Assume that 
	$\psi \in \Hom(\Delta(\mathtt{l}), \Delta(\mathtt{m}))$, $\varphi \in \Hom(\Delta(\mathtt{i}), \Delta(\mathtt{l}))$ such that $\psi\varphi =0$. By the construction in \cite{KKO14}, in the bocs $(A,V)$ corresponding to $\Lambda$, this will give corresponding generators of the directs summands $Ae_{\mathtt{l}}\otimes_{\mathbbm{k}} e_{\mathtt{m}}A$ and $Ae_{\mathtt{i}}\otimes_{\mathbbm{k}} e_{\mathtt{l}}A$ of $\ker\varepsilon$, which by abuse of notation we denote by the same letters. We depict the situation with the following picture:
\[
\begin{tikzcd}
\mathtt{i}\arrow[dashed]{r}{\varphi}&\mathtt{l}\arrow[dashed]{r}{\psi}&\mathtt{m}
\end{tikzcd}
\]
	The bocs of the Ringel dual quasi-hereditary algebra has $\widehat{\varphi}$, $\widehat{\psi}$ in degree zero and $\widehat{\psi\varphi}$ in degree minus one (where we denote the corresponding elements of a dual basis with a hat above their names). We depict the situation with the following picture:
	\[
\begin{tikzcd}
\mathtt{i}\arrow{r}{\hat{\varphi}}\arrow[bend right, dotted]{rr}[swap]{\wh{\psi\varphi}}&\mathtt{l}\arrow{r}{\hat{\psi}}&\mathtt{m}
\end{tikzcd}
\]
	As $\varphi$, $\psi$ are generators of $\ker\varepsilon$ and the bocs $(A,V)$ was assumed to be regular, only the term $p_{\overline{V},\overline{V}}\mathbb{D}m_{\overline{V}}$ contributes to $d|_{N_{-1}}(\wh{\psi\varphi})$. Thus, $\partial(\wh{\psi\varphi}) = \wh{\psi}\otimes \wh{\varphi}$ (note that, if $\psi \circ \varphi = \tau$, we would have $\partial(\wh{\psi\varphi})=\wh{\psi} \otimes \wh{\varphi} + \wh{\tau}$). To prove that this gives a relation in $B$, i.e. a non-trivial $\Ext^2$ between costandard modules, 
	we have to prove that $\hat{\varphi},\hat{\psi}$ are not superfluous in $(A,V)$, cf. the foregoing lemma. To prove that they are not superfluous, note that since $\varphi,\psi$ were generators of $\ker\varepsilon$ and $(A,V)$ is assumed to be regular, $\partial(\hat{\varphi})=\partial(\hat{\psi})=0$. Furthermore, note that $\hat{\varphi}$ and $\hat{\psi}$ are also non-zero in $A$ as the only relations arise from $\mathbb{D}(\overline{V}\otimes_A\overline{V})$ and always involve the term $p_{\overline{V},\overline{V}}\mathbb{D}\overline{V}$. From Theorem \ref{maintheoremKKO} \eqref{maintheoremKKO:iv} we conclude that $\Ext^2(\nabla(\mathtt{i}), \nabla(\mathtt{m}))\neq 0$, a contradiction to the fact that the algebra is strongly quasi-hereditary.
\end{proof}

\begin{lem}\label{lem_comp_hom_ext}
Let $\Lambda$ be a curve-like algebra. Let \small
\[\varphi\in \Hom_\Lambda(\Delta(\mathtt{i}),\Delta(\mathtt{l})), \psi\in \Hom_\Lambda(\Delta(\mathtt{l}),\Delta(\mathtt{m})), a\in \Ext^1_\Lambda(\Delta(\mathtt{i}),\Delta(\mathtt{l})),b\in \Ext^1_\Lambda(\Delta(\mathtt{l}),\Delta(\mathtt{m})).\] \normalsize
Then at least one of the compositions $b\varphi$ and $\psi a$ is non-zero.
\end{lem}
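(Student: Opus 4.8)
### Proof Plan

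The plan is to mimic the argument of Lemma \ref{lem_comp_of_homs}, but now working with the degree $-1$ part of the bar construction that mixes a $\Hom$-arrow with an $\Ext^1$-arrow. Suppose for contradiction that both $b\varphi = 0$ and $\psi a = 0$ in the Ext-algebra $\Ext^*_\Lambda(\Delta,\Delta)$. As in the previous lemma, these four elements give rise, in the directed normal bocs $\mathfrak{A}=(A,V)$ corresponding to $\Lambda$, to generators of direct summands of $\overline V$: since $\Lambda$ is curve-like, $a$ corresponds (up to scalar) to a solid arrow $a\colon\mathtt{i}\to\mathtt{l}$ of the biquiver and $b$ to a solid arrow $b\colon\mathtt{l}\to\mathtt{m}$, while $\varphi\colon\mathtt{i}\to\mathtt{l}$ and $\psi\colon\mathtt{l}\to\mathtt{m}$ correspond to dashed arrows, i.e. generators of $\ker\varepsilon$. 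The picture in $\mathfrak{A}$ is
\[
\begin{tikzcd}
\mathtt{i}\arrow[dashed, bend left]{r}{\varphi}\arrow{r}[swap]{a}&\mathtt{l}\arrow[dashed, bend left]{r}{\psi}\arrow{r}[swap]{b}&\mathtt{m}.
\end{tikzcd}
\]
In the Ringel dual bocs $\mathfrak{B}$, constructed via Theorem \ref{thm_Rigel_dual_bocs}, the solid arrows $a,b$ become dashed arrows $\widehat a,\widehat b$ (degree $1$) and the dashed arrows $\varphi,\psi$ become solid arrows $\widehat\varphi,\widehat\psi$ (degree $0$); moreover the compositions $b\varphi$ and $\psi a$, being degree-$(-1)$ elements of the bar construction $\mathcal{U}$ (they lie in $A\otimes_{\mathbb L}\overline V$ and $\overline V\otimes_{\mathbb L}A$ respectively), produce degree-$(-1)$ generators $\widehat{b\varphi}$ and $\widehat{\psi a}$ of $\mathcal{D}^!$.

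The key computation is the differential on these degree-$(-1)$ generators. As in Lemma \ref{lem_comp_of_homs}, because the original bocs $\mathfrak{A}$ may be assumed regular (by regularisation, which is an equivalence of module categories and does not change the associated quasi-hereditary algebra up to Morita equivalence), the arrows $\varphi,\psi,a,b$ are not superfluous, so $\partial_{\mathfrak{A}}$ has no ``decorated'' correction terms on them beyond what the comultiplication and multiplication dictate; dually this forces $\partial(\widehat\varphi)=\partial(\widehat\psi)=0$ and the only contributions to $\partial(\widehat{b\varphi})$, $\partial(\widehat{\psi a})$ come from dualising $m_r$, $m_l$ and $\partial_0$. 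Under the hypotheses $b\varphi=0$, $\psi a=0$ the would-be ``$\widehat{\tau}$'' correction terms vanish, so one gets
\[
\partial(\widehat{b\varphi})=\widehat b\otimes\widehat\varphi - \widehat\psi\otimes\widehat a,\qquad
\partial(\widehat{\psi a})=\widehat\psi\otimes\widehat a - \widehat b\otimes\widehat\varphi
\]
(up to signs and a possible relabelling of which composition hits which tensor; the precise signs come from the definition of $m_r,m_l,\partial_0$ in Section \ref{notation}). Adding these two relations, the quotient defining $B$ sees $\widehat b\otimes\widehat\varphi$ and $\widehat\psi\otimes\widehat a$; I will extract from this that the sum $\widehat b\,\widehat\varphi + \widehat\psi\,\widehat a$ is identified with $\partial$ of a degree $(-1)$ element, hence contributes a relation in $B=\mathbb L[\mathbb D\overline V]/J$, equivalently a nonzero class in $\Ext^2_{R(\Lambda)}(\nabla(\mathtt{i}),\nabla(\mathtt{m}))$. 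Since $\Lambda$ is curve-like, $R(\Lambda)$ is again left (and right) strongly quasi-hereditary, so $\Ext^2$ between costandard modules vanishes — a contradiction, using Theorem \ref{maintheoremKKO}\eqref{maintheoremKKO:iv} to transfer the $\Ext^2$ from the bocs-algebra $B$ to $R(\Lambda)$.

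The delicate point — and the main obstacle — is to be sure that the relation produced is genuinely nonzero in $B$, i.e. that $\widehat b\,\widehat\varphi+\widehat\psi\,\widehat a$ is not already forced to vanish for trivial reasons and that $\widehat\varphi,\widehat\psi$ (hence the products $\widehat b\widehat\varphi$, $\widehat\psi\widehat a$) are nonzero in $B$. This is exactly the analogue of the non-superfluousness check in Lemma \ref{lem_comp_of_homs}: one argues that $\widehat\varphi,\widehat\psi$ survive in $B$ because the only relations in $B$ come from the image of $\mathbb D(\overline V\otimes_A\overline V)$ under $\mathbb D\partial_1 + p_{\overline V,\overline V}\mathbb D m_{\overline V}$, and each such relation necessarily involves a length-two tensor term $p_{\overline V,\overline V}\mathbb D m_{\overline V}$, so it cannot kill the degree-one generators $\widehat\varphi,\widehat\psi$ individually; and that $\widehat b,\widehat a$ are not superfluous in $\mathfrak{B}$ — here one must rule out that $\partial_{\mathfrak B}(\widehat a)$ or $\partial_{\mathfrak B}(\widehat b)$ already contains a free generator of $\overline W$, which would allow a regularisation absorbing the relation. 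This last verification is where I expect to have to trace through the combinatorics of Section \ref{ringeldualbocs} carefully; the rest is a sign-bookkeeping exercise parallel to the proof of Lemma \ref{lem_comp_of_homs}.
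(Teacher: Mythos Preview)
Your degree bookkeeping is off, and this changes the entire shape of the argument. The elements $\psi a$ and $b\varphi$ are not tensors in $A\otimes_{\mathbb L}\overline V$ or $\overline V\otimes_{\mathbb L} A$; they are elements of $\overline V$ itself, obtained by the $A$-bimodule action $m_r,m_l$ on the generators $\psi,\varphi$. In $\mathcal U=\bigoplus_j\overline V^{\otimes_A j}$ they sit in degree~$1$, so after the shift in the bar construction their duals $\wh{\psi a},\wh{b\varphi}$ land in degree~$0$ of $\mathcal D^!$, not degree~$-1$. (Compare Example~\ref{example_dual_bocs}, where $\wh{\psi a},\wh{b\varphi}\in\mathcal D^!_0$.) Thus $\wh{\psi a}$ and $\wh{b\varphi}$ are \emph{solid} arrows of the Ringel dual bocs, i.e.\ candidate generators of the algebra $B$, not degree-$(-1)$ elements whose differential would yield a relation in $B$.

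Consequently your differential formula $\partial(\wh{b\varphi})=\wh b\otimes\wh\varphi-\wh\psi\otimes\wh a$ is wrong: the term $\wh\psi\otimes\wh a$ has no reason to appear there. One has $\partial(\wh{\psi a})=\wh\psi\otimes\wh a+(\text{contribution from }\mathbb D\partial_0)$ and $\partial(\wh{b\varphi})=\wh b\otimes\wh\varphi+(\text{contribution from }\mathbb D\partial_0)$, and the $\mathbb D\partial_0$-contribution is exactly a generator $\wh c$ of $\overline W$ whenever $\psi a$ (resp.\ $b\varphi$) equals some $c\in\Ext^1(\Delta(\mathtt i),\Delta(\mathtt m))$. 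Under the hypothesis $\psi a=0=b\varphi$ those contributions vanish, so $\partial(\wh{\psi a})$ and $\partial(\wh{b\varphi})$ contain no generator of $\overline W$: neither arrow is superfluous. The contradiction is therefore not an $\Ext^2$-contradiction at all, but a dimension count on $\Ext^1$: by Lemma~\ref{miemietz-lemma} the two non-superfluous solid arrows $\wh{\psi a},\wh{b\varphi}\colon\mathtt i\to\mathtt m$ force $\dim\Ext^1(\nabla(\mathtt i),\nabla(\mathtt m))\geq 2$, contradicting that the Ringel dual of a curve-like algebra is again curve-like. This is precisely the paper's argument; once you fix the degree, the ``delicate point'' you anticipate about non-superfluousness is the whole proof.
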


\begin{proof}
Note that $\psi a$ and $b \varphi$ are distinct elements of $\mathbb{L}^1[\mathbb{D}sE]$. Thus, if the  arrows $\wh{\psi a}$ and $\wh{b\varphi}$ in the Ringel dual bocs were not superfluous they would give two distinct arrows between vertices $\mathtt{i}$ and $\mathtt{m}$ in the algebra $B$ of the Ringel dual bocs, which by Lemma \ref{miemietz-lemma} would give a contradiction to the fact that $\Lambda$, whence its Ringel dual, is assumed to be curve-like since this would give a more than $2$-dimensional $\Ext^1$-space between costandard modules for $\Lambda$.

It is sufficient to prove that the arrows $\wh{\psi a}$ and $\wh{b\varphi}$ are not superfluous. By construction and the previous lemma, the Ringel dual bocs is again free (i.e. the algebra $B$ is hereditary). Moreover, note that $\partial(\wh{\psi a})$ has no term which is a generator of $\overline{W}$. Indeed, such a generator would come from the term $\mathbb{D}\partial_0$. Since $\partial_0$ is constructed as the dual of the $m_i$ on $\Ext^*(\Delta,\Delta)$, $\partial_0(c)=\psi a$ would mean that $\psi a=c$ in $\Ext^*(\Delta,\Delta)$. Thus, if $\psi a = 0 = b \varphi$, both $\wh{\psi a}$ and $\wh{b \varphi}$ are not superfluous which contradicts the assumption that the algebra, hence also its Ringel dual, is curve-like by Lemma \ref{miemietz-lemma}.
\end{proof}

%\begin{lem}
%	Let $A$ be a curve-like algebra and $\Delta_i$, $\Delta_j$, standard modules over $A$. Then the $A_\infty$ algebra of standard modules over $A$ is $A_\infty$ quasi-isomorphic to an algebra such that $m_3$ with values in $\Hom_A(\Delta_i, \Delta_j)$ vanishes.
%\end{lem}
%\begin{proof}
%	The higher multiplication $m_3$ is of degree -1, hence it gives an element of degree zero in the Ext-algebra of standard modules over $A$ when applied to one element of degree one and two elements of degree zero. Thus, we consider $m_3(\varphi_1, \varphi_2, \varphi_3)$ with either $\varphi_1$ or $\varphi_3$ of degree zero. We give the argument under the assumption that $\deg \varphi_1 =0$, the proof in the second case is similar.
%	
%	It follows from Lemma \ref{lem_comp_of_homs} that $\Hom_A(\Delta_i, \Delta_j)$ is spanned by $\varphi_1 \circ \psi$, for some non-zero morphism $\psi$. Consider an $A_\infty$ quasi-isomorphism $F_1 = \Id$, $F_2(\varphi_2,\varphi_3) = \psi$ and all the remaining values vanishing...
%\end{proof}

We continue by illustrating how this yields a classification of the curve-like quasi-hereditary algebras in small examples. 

Clearly there is only one biquiver of a curve-like algebra with two simple modules. The unique curve-like algebra with two simple modules  is the algebra $\xymatrix{\bullet \ar@<1ex>[r]^\alpha & \bullet \ar@<1ex>[l]^\beta}$ with $\alpha\beta =0$.

\subsection{Curve-like algebras with three simples}\label{ssec_3_simples}

In the case of three simples, the situation is restricted enough that we can classify not only the curve-like quasi-hereditary algebras, but all quasi-hereditary algebras with the same dimensions of $\Hom$- and $\Ext$-spaces between standard modules. 
%The following is
The corresponding biquiver is the biquiver (\ref{eqtn_quiver_with_3_vertices}) considered in the running example, see Example \ref{example}.

%\[\begin{tikzcd}\label{eqtn_quiver_with_3_vertices}
%\mathtt{1}\arrow[yshift=0.5ex]{r}{a}\arrow[yshift=0.5ex, bend left]{rr}{c}\arrow[yshift=-0.5ex, dashed]{r}[swap]{\varphi}\arrow[yshift=-0.5ex, bend right, dashed]{rr}[swap]{\chi} &\mathtt{2}\arrow[yshift=0.5ex]{r}{b}\arrow[yshift=-0.5ex, dashed]{r}[swap]{\psi}&\mathtt{3}
%\end{tikzcd}\]

Using possibly scaling of the arrows, there are the following $8$ possibilities for the differential of the bocs $(A,V)$:
\[\partial(\chi)=\begin{cases}0&\text{case 1}\\\psi\varphi&\text{case 2}\end{cases}\text{ and }\partial(c)=\begin{cases}\psi a&\text{case A}\\b\varphi&\text{case B}\\\psi a+b\varphi&\text{case C}\\0&\text{case D}\end{cases}\]

By Lemma \ref{lem_comp_of_homs}, the four algebras in case $1$ are not right strongly quasi-hereditary. The algebras in case $D$ will have different dimension of $\Ext^1_\Lambda(\nabla(\mathtt{1}),\nabla(\mathtt{3}))$. Hence, only three of the algebras in question are curve-like, namely cases $2A$, $2B$, and $2C$. The cases $2A$ and $2C$ actually arise from the geometry of surfaces, as explained in \cite{BodBon17}. The case $2B$ is Ringel dual to the case $2A$. The algebra $2B$ appears in a paper by V. Mazorchuk, see \cite[Example 23]{Maz10}, and also \cite[Example 4.59]{K16} for its category of filtered modules. A Morita representative of the corresponding quasi-hereditary algebras can be obtained by taking the right algebra of the corresponding bocs. This will usually not be basic. For convenience of the reader we instead list the corresponding basic algebras. To illustrate what the Ringel dual algebra might be if the assumption on being curve-like is omitted we provide also the Ringel dual algebras.

% and their respective Ringel duals.

\begin{description}
	\item[1A] Quiver
	\begin{center} 
		\includegraphics{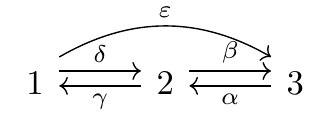}
	\end{center} 
	with relations $\gamma\delta=\beta\delta=\alpha\varepsilon=\alpha\beta=0$, which has Ringel dual given by
	\begin{center}
		\includegraphics{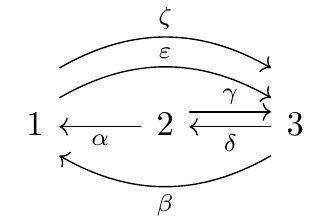}
	\end{center}
	with relations $\delta\zeta=\beta\varepsilon=\beta\zeta = \delta\gamma =\alpha\delta\varepsilon=0$, after removing the superfluous arrow $\wh{\psi a}$ (together with its counterpart $\hat{c}$), the bocs corresponding to the Ringel dual looks as follows:
	\begin{center}
		\includegraphics{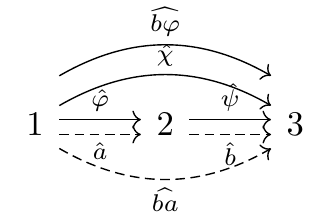}
	\end{center}
	\text{with relation $\hat{\psi}\hat{\varphi}$ and differential $\partial(\wh{b\varphi})=\hat{b}\otimes \hat{\varphi}$ and $\partial(\wh{ba})=\hat{b}\otimes \hat{a}$}
	\item[1B] Quiver
	\begin{center} 
		\includegraphics{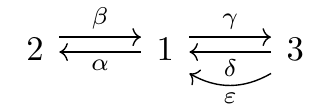}
	\end{center} 
	with relations $\alpha\delta, \gamma\delta, \gamma\varepsilon, \beta\alpha\varepsilon, \alpha\beta$, its Ringel dual is isomorphic to the opposite algebra of $\bf{1A}$. Its bocs looks like in case $\bf{1A}$ with $\wh{b\varphi}$ replaced by $\wh{\psi a}$ and $\partial(\wh{\psi a})=\hat{\psi}\otimes \hat{a}$. 
	\item[1C] Quiver 
	\begin{center}
		\includegraphics{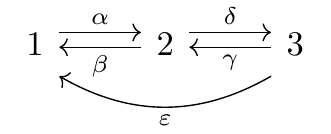}
	\end{center}
	with relations $\alpha\varepsilon=\delta\gamma=\beta\gamma=0, \alpha\beta=\gamma\delta$. Its Ringel dual is 
	\begin{center}
		\includegraphics{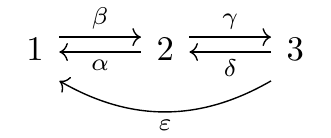}
	\end{center}
	with relations $\alpha\beta=\alpha\delta=\varepsilon\gamma=0, \beta\alpha=\delta\gamma$. The bocs corresponding to the Ringel dual has biquiver 
	\begin{center}
		\includegraphics{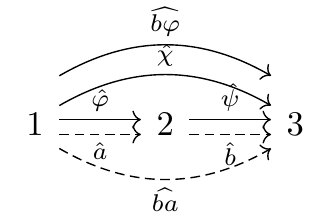}
	\end{center}
	with differential $\partial(\wh{b\varphi})=\hat{b}\otimes \hat{\varphi}-\hat{\psi}\otimes \hat{a}$ and $\partial(\wh{ba})=\hat{b}\otimes \hat{a}$ and relations $\hat{\psi}\hat{\varphi}=0$. 
	\item[1D] Quiver
	\begin{center}
		\includegraphics{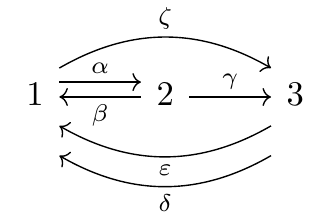}
	\end{center}
	with relations $\alpha\delta=\beta\alpha\varepsilon=\gamma\alpha\varepsilon=\alpha\beta=\zeta\beta=\zeta\delta=\zeta\varepsilon=0$. Its Ringel dual is isomorphic to its own opposite algebra. The $\Ext$-spaces between costandard modules are $\Ext^i_\Lambda(\nabla(1),\nabla(3))=\begin{cases}2&\text{for $i=0$}\\3&\text{for $i=1$},\\1&\text{for $i=2$},\\0&\text{otherwise.}\end{cases}$
	\item[2A] Quiver
	\begin{center}
		\includegraphics{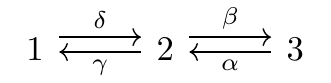}
	\end{center}
	with relations $\beta\alpha=\delta\gamma=0$, which is Ringel dual to $\bf{2B}$,
	\item[2B] Quiver
	\begin{center}
		\includegraphics{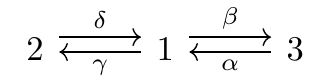}
	\end{center}
	with relations $\gamma\delta=\beta\alpha=\beta\delta\gamma\alpha=0$,
	\item[2C] Quiver
	\begin{center}
		\includegraphics{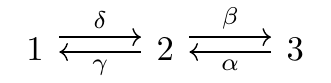}
	\end{center}
	with relations $\beta\alpha=0$ and $\delta\gamma=\alpha\beta$. This is in fact the Auslander algebra of $k[x]/(x^3)$ and is well-known to be Ringel self-dual.
	\item[2D] Quiver
	\begin{center}
		\includegraphics{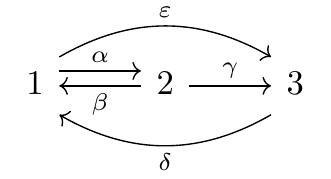}
	\end{center}
	with relations $\varepsilon\delta=\gamma\alpha\delta=\alpha\beta=\varepsilon\beta=0$. Its Ringel dual is isomorphic to its opposite algebra and has a bocs given by the biquiver
	\begin{center}
		\includegraphics{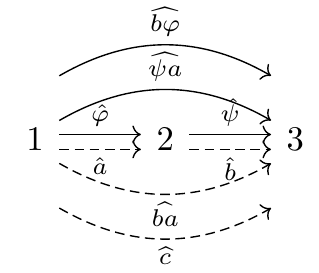}
	\end{center}
	without relations and differential $\partial(\wh{b\varphi})=\hat{b}\otimes \hat{\varphi}, \partial(\wh{\psi a})=\hat{\psi}\otimes \hat{a}$, and $\partial(\wh{ba})=\hat{b}\otimes \hat{a}$.
\end{description}

\subsection{Curve-like algebras with four simples}\label{ssec_four_simple}

Finally, we classify all possible $A_\infty$-structures on curve-like algebras with four simple modules. 

At first we do not give names to the composed maps and extensions, but only write down the irreducible maps which we denote as
\begin{equation}\label{eqtn_quiver}
\xymatrix{\mathtt{1} \ar@<1ex>[r]|a \ar@{.>}@<-1ex>[r]|{\varphi} & \mathtt{2} \ar@<1ex>[r]|b \ar@{.>}@<-1ex>[r]|{\psi} & \mathtt{3} \ar@<1ex>[r]|c \ar@{.>}@<-1ex>[r]|{\rho}& \mathtt{4}}
\end{equation} 
with $a$, $b$, $c$ representing non-trivial elements in $\Ext^1$-groups and $\varphi$, $\psi$, $\rho$ non-trivial homomorphisms between standard modules. (By Lemmas \ref{lem_comp_of_homs} and \ref{lem_comp_hom_ext}, all other homomorphisms and elements of $\Ext^1$ are composition.)

It follows from Lemma \ref{lem_comp_of_homs} that for any curve-like algebra $\rho \circ \psi \circ \varphi \neq 0$. Moreover, Lemma \ref{lem_comp_hom_ext} implies that there are the following possibilities on the composition morphisms with elements of the first Ext-groups:

\begin{center}	\begin{tabular}{c|c|c|c|c|c|c|c|}
		& $\psi a$ & $b\varphi$ & $\rho b$ & $c \psi$ & $\rho\psi a$ & $\rho b \varphi$ & $c\psi \varphi $\\
		\hline
		A & 0 & $\neq 0$ & 0 & $\neq 0$&0  &0 &$\neq 0$\\
		\hline 
		B & 0&$\neq 0$ & $\neq 0$&0 &0 &$\neq 0$ &0\\
		\hline 
		C &0 & $\neq 0$& $\neq 0$& $\neq 0$&0 &$\neq 0$ &$\neq 0$\\
		\hline 
		D &$\neq 0$ &0 &0 &$\neq 0$ &0 &0 &$\neq 0$\\
		\hline 
		E &$\neq 0$ &0 &0 &$\neq 0$ & $\neq 0$&0 &0\\
		\hline 
		F & $\neq 0$&0 &0 &$\neq 0$ &$\neq 0$ &0 &$\neq 0$\\
		\hline 
		G &$\neq 0$ &0 &$\neq 0$ &0 &$\neq 0$ &0 &0\\
		\hline 
		H &$\neq 0$ &0 &$\neq 0$ &$\neq 0$ &$\neq 0$ &0 &0\\
		\hline 
		I &$\neq 0$ &$\neq 0$ &0 &$\neq 0$ &0 &0 &$\neq 0$\\
		\hline 
		J &$\neq 0$ &$\neq 0$ &$\neq 0$ &0 &$\neq 0$ &$\neq 0$ &0\\
		\hline
		K &$\neq 0$ & $\neq 0$& $\neq 0$& $\neq 0$& $\neq 0$&$\neq 0$ &$\neq 0$\\
		\hline 
	\end{tabular} 
\end{center}

	Rescaling $a,b,c$, if necessary, we can always assume that two elements of $\Hom$ or $\Ext^1$ that differ by a non-zero scalar are in fact equal (and then choose this composition as the basis element of the corresponding $\Ext^1$-space).
	
	Some of the above algebras might have different $A_{\infty}$-structures. 
	%We shall return to this question later. 
	First, we show that, up to $A_\infty$-quasi-isomorphism, $m_3(c, \psi, \varphi)$, $m_3(\rho, b, \varphi)$ and $m_3(\rho, \psi, a)$ vanish.

	Recall, that an $A_\infty$\emphbf{-morphism} $F \colon A \to B$ of $A_\infty$-algebras is a family of morphism $F_n \colon A^{\otimes n} \to B$ of degree $1-n$ such that
	\begin{equation}\label{eqtn_A_morphism}
	\sum_{r+s+t=n} (-1)^{r+st}F_{r+1+t}(\id^{\otimes r}\otimes m_s\otimes \id^{\otimes t}) = \sum_{i_1+\ldots+i_r = n}(-1)^wm_r(F_{i_1}\otimes \dots \otimes F_{i_r}),
	\end{equation}
	where $w = (r-1)(i_1-1) + (r-2)(i_2-1) + \ldots + (i_{r-1}-1)$. We say that $F$ is a \emphbf{quasi-isomorphism} if $F_1\colon A \to B$ is. 
	
	\begin{rmk}\label{rem_vanishing_m_3}
		Consider an $A_\infty$-quasi-isomorphism from the $A_\infty$-algebra on the quiver (\ref{eqtn_quiver}) to itself
		with $F_1 = \textrm{Id}$, 
		\begin{align*} 
		&F_2(c,\psi\varphi) = m_3(c,\psi, \varphi),&  &F_2(\rho\psi, a)=-m_3(\rho,\psi,a),& &F_2(\rho,b)=\lambda \rho \psi,&
		\end{align*}
		for $\lambda$ such that $m_3(\rho, b,\varphi)= \lambda \,\rho\psi \varphi$. 
		We assume that $F_2$ vanishes on the remaining pairs of elements and that $F_n =0$ for $n>2$. It follows from (\ref{eqtn_A_morphism}) that we get an $A_\infty$-structure on (\ref{eqtn_quiver}) with
		$$
		m_3(c,\psi, \varphi) = m_3(\rho,\psi,a) =m_3(\rho, b,\varphi)=0.
		$$ 
		For the reader's convenience let us check that $m_3(\rho,b,\varphi )$ indeed vanishes. Equality (\ref{eqtn_A_morphism}) implies that
		$$
		\la \rho \psi \varphi = F_1m_3(\rho, b, \varphi) = \overline{m}_3(\rho,b, \varphi) + m_2(F_2(\rho,b)\otimes \varphi) = \overline{m}_3(\rho, b, \varphi) + \la \rho\psi \varphi.
		$$
	\end{rmk}
	
	Consider the quiver
\begin{center}
	\includegraphics{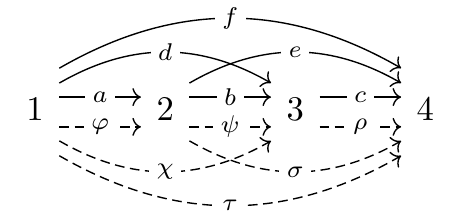}
\end{center}	

where the arrows correspond to basis elements of the $\Ext^1$-spaces (for the solid arrows) or $\Hom$-spaces (for the dashed arrows) as before.

	By Lemma \ref{lem_comp_of_homs} and Remark \ref{rem_vanishing_m_3} the differential on the dashed arrows of the biquiver is given by
	\begin{align*}
	&\partial(\chi) = \psi \f,& &\partial(\sigma) = \rho\psi,& &\partial(\tau) = \sigma \psi + \rho \chi.&
	\end{align*}
	%\marginpar{Shouldn't there also be different possibilities depending on whether $\tau$ appears as the result of a higher multiplication $m_3(\rho,\psi,a)$ for example?}
	The differentials on the solid arrows of the biquiver depend on the case A--K. 
	
	\begin{tabular}{c|c|c|c|c|c|c|}
		& A & B& C & D& E & F\\
		\hline 
		$\partial(d)$ & $b\f$ & $b\f$ & $b\f$ & $\psi a$& $\psi a$& $\psi a$\\
		\hline
		$\partial(e)$ &$c\psi$ &$\rho b$ &$c\psi+ \rho b$ &$c\psi$ &$c\psi$ &$c\psi$\\
		\hline
		$\partial(f)$ &$c\chi+ e\f$ & $\rho d + e \f$ & $\rho d+ e\f + c\chi$ & $c\chi+ e \f$ & $\rho d + \sigma a$ &$\rho d+ \sigma a + c\chi+ e \f$\\
		\hline 
	\end{tabular} 
	
	\vspace{0.5cm}
	
	\begin{tabular}{c|c|c|c|c|c|}
		& G& H& I& J& K\\
		\hline 
		$\partial(d)$ & $\psi a$ & $\psi a$& $b\f + \psi a$& $b\f + \psi a$ & $b\f + \psi a$\\
		\hline 
		$\partial(e)$ &$\rho b$ & $\rho b + c\psi$ & $c\psi$ &$\rho b$ &$\rho b + c \psi$ \\
		\hline
		$\partial(f)$ &$\rho d+ \sigma a$ &$\rho d+ \sigma a$ & $c\chi + e\f$ & $\rho d+ \sigma a+ e\f$ & $\rho d+ \sigma a+ c\chi+ e \f$\\
		\hline
	\end{tabular} 
	
	with possible further terms in $\partial(f)$ depending on a non-vanishing $A_\infty$-structure.
	 
	The spaces included in the construction of the Ringel dual bocs are
	\begin{align*}
	\mathbb{D}(\overline{V}\otimes_A \overline{V})&=\textrm{span}\{\wh{\psi \f},\, \wh{\rho\psi},\, \wh{\sigma\f},\, \wh{\rho\chi},\, \wh{c\psi\f},\, \wh{\rho b\f},\, \wh{\rho\psi a} \},\\
	\mathcal{D}\overline{V}&= \textrm{span}\{\wh{\f},\, \wh{\psi},\, \wh{\rho},\, \wh{\chi}, \, \wh{b\f},\, \wh{\psi a },\, \wh{\sigma},\, \wh{\rho b},\, \wh{c\psi},\, \wh{\tau},\, \wh{\rho d},\, \wh{c\chi},\, \wh{\sigma a},\, \wh{e\f},\, \wh{\rho b a},\, \wh{c\psi a},\, \wh{cb\f} \},\\
	\mathcal{D}A & =\textrm{span}\{\wh{a},\, \wh{b},\, \wh{c},\, \wh{d},\, \wh{ba},\, \wh{e},\, \wh{cb},\, \wh{f},\, \wh{cba},\, \wh{cd},\, \wh{ea} \}.
	\end{align*}
	
	Let us consider in details cases D and H. 
	
	In case D:
	\begin{align*} 
	&\partial(\wh{\psi\f}) = \wh{\psi} \otimes \wh{\f} + \wh{\chi},&
	&\partial(\wh{\rho \psi}) = \wh{\rho} \otimes \wh{\psi} + \wh{\sigma},&\\
	&\partial(\wh{\sigma \varphi}) = \wh{\sigma} \otimes \wh{\rho} + \wh{\tau},&
	&\partial(\wh{\rho \chi}) = \wh{\sigma}\otimes \wh{\chi} + \wh{\tau},&\\
	&\partial(\wh{c\psi \f}) = \wh{c}\otimes \wh{\psi\f} + \wh{c\psi} \otimes \wh{\f} + \wh{c\chi} + \wh{e\f},&
	&\partial(\wh{\rho\psi a}) = \wh{\rho \psi } \otimes \wh{a} + \wh{\rho} \otimes \wh{\psi a} + \wh{\sigma a}  + \wh{\rho d}, &\\
	&\partial(\wh{\rho b \f}) = \wh{\rho} \otimes \wh{b\f} + \wh{\rho b} \otimes \wh{\f}.& 
	\end{align*} 
	We use the above differentials to regularise $\widehat{\psi \varphi}$ with $\wh{\chi}$, $\wh{\rho \psi}$ with $\wh{\sigma}$, $\wh{\sigma \varphi}$ with $\wh{\tau}$, $\wh{c\psi \varphi}$ with $\wh{c\chi}$ and $\wh{\rho \psi a}$ with $\wh{\sigma a}$ in the dual quiver. The following arrows remain:
	\begin{align*}
	& \deg 0  = \{\wh{\f},\, \wh{\psi},\, \wh{\rho}, \, \wh{b\f},\, \wh{\psi a },\, \wh{\rho b},\, \wh{c\psi},\, \wh{\rho d},\, \wh{e\f},\, \wh{\rho b a},\, \wh{c\psi a},\, \wh{cb\f} \},&\\
	&\deg 1  = \{\wh{a},\, \wh{b},\, \wh{c},\, \wh{d},\, \wh{ba},\, \wh{e},\, \wh{cb},\, \wh{f},\, \wh{cba},\, \wh{cd},\, \wh{ea} \}.&
	\end{align*} 
	Then
	\begin{align*}
	&\partial(\wh{b\f})=\wh{b} \otimes \wh{\f},&  
	&\partial(\wh{\psi a})=\wh{\psi} \otimes \wh{a} + \wh{d},&\\
	&\partial(\wh{\rho b})=\wh{\rho} \otimes \wh{b},& 
	&\partial(\wh{c\psi})=\wh{c} \otimes \wh{\psi} + \wh{e},&\\
	&\partial(\wh{\rho d})=\wh{\rho} \otimes \wh{d},&
	&\partial(\wh{e \f})=\wh{e} \otimes \wh{\f} + \wh{f},&\\
	&\partial(\wh{\rho b a})=\wh{\rho} \otimes \wh{ba} + \wh{\rho b} \otimes \wh{a},&
	&\partial(\wh{cb\f})=\wh{c} \otimes \wh{b\f} + \wh{cb} \otimes \wh{\f},&\\
	&\partial(\wh{c\psi a})=\wh{c\psi} \otimes \wh{a} + \wh{c} \otimes \wh{\psi a}+ \wh{cd} + \wh{ea}.&
	\end{align*}
	The differentials of $\wh{\rho b a}$, $\wh{cb\f}$ and $\wh{c\psi a}$ can also depend on the $A_\infty$-structure. If $m_3(\rho, b, a)$, $m_3(c, b, \f)$ or $m_3(c, \psi, a)$ is non-zero, then it is equal to $\la f$ and in $\partial(\wh{\rho b a})$, $\partial(\wh{cb\f})$ and $\partial(\wh{c\psi a})$ a term $\la\wh{f}$ needs to be added. However, it does not affect the dimension of the regular quiver as $\partial(\wh{f})$ can be regularised with $\wh{e\f}$.
		
%	Finally, we compare the dimensions of degree zero and degree one part of the dual quiver to check whether they contradict the definition of a curve-like algebra by Lemma \ref{miemietz-lemma}.
%	
	We can regularise $\wh{\psi a}$ with $\wh{d}$, $\wh{c\psi}$ with $\wh{e}$, $\wh{e\f}$ with $\wh{f}$, and $\wh{c\psi a}$ with $\wh{cd}$. We are left with
	\begin{align*}
	&\deg 0  = \{\wh{\f},\, \wh{\psi},\, \wh{\rho}, \, \wh{b\f},\, \wh{\rho b},\, \wh{\rho d},\, \wh{\rho b a},\, \wh{cb\f} \},& &
	\deg 1  = \{\wh{a},\, \wh{b},\, \wh{c},\, \wh{ba},\, \wh{cb},\, \wh{cba},\, \wh{ea} \}.&
	\end{align*}
This shows that the dimensions of $A$ and $V$ do not agree with the dimensions for a curve-like algebra given by Lemma \ref{miemietz-lemma}, hence case D cannot be a bocs of a curve-like algebra.
	
	In the similar manner we can exclude cases E and F.
	
	To illustrate what happens in the ``good'' case we consider in detail the case H. The differentials are
	\begin{align*}
	&\partial(\wh{\psi\f}) = \wh{\psi} \otimes \wh{\f} + \wh{\chi},&
	&\partial(\wh{\rho \psi}) = \wh{\rho} \otimes \wh{\psi} + \wh{\sigma},&\\
	&\partial(\wh{\sigma \varphi}) = \wh{\sigma} \otimes \wh{\rho} + \wh{\tau},&
	&\partial(\wh{\rho \chi}) = \wh{\sigma}\otimes \wh{\chi} + \wh{\tau},&\\
	&\partial(\wh{c\psi \f}) = \wh{c}\otimes \wh{\psi\f} + \wh{c\psi} \otimes \wh{\f} + \wh{c\chi} + \wh{e\f},&
	&\partial(\wh{\rho\psi a}) = \wh{\rho \psi } \otimes \wh{a} + \wh{\rho} \otimes \wh{\psi a} + \wh{\sigma a}   + \wh{\rho d},&\\
	&\partial(\wh{\rho b \f}) = \wh{\rho} \otimes \wh{b\f} + \wh{\rho b} \otimes \wh{\f} +\wh{e\f}.& 
	\end{align*}
	As before we regularise the dual quiver to get
	\begin{align*}
	 &\deg 0  = \{\wh{\f},\, \wh{\psi},\, \wh{\rho}, \, \wh{b\f},\, \wh{\psi a },\, \wh{\rho b},\, \wh{c\psi},\, \wh{\rho d},\, \wh{\rho b a},\, \wh{c\psi a},\, \wh{cb\f} \},\\
	&\deg 1  = \{\wh{a},\, \wh{b},\, \wh{c},\, \wh{d},\, \wh{ba},\, \wh{e},\, \wh{cb},\, \wh{f},\, \wh{cba},\, \wh{cd},\, \wh{ea} \}.
	\end{align*}
	Then
	\begin{align*}
	&\partial(\wh{b\f})=\wh{b} \otimes \wh{\f},& 
	&\partial(\wh{\psi a})=\wh{\psi} \otimes \wh{a} + \wh{d},&\\
	&\partial(\wh{\rho b})=\wh{\rho} \otimes \wh{b} + \wh{e},&\\
	&\partial(\wh{c\psi})=\wh{c} \otimes \wh{\psi} + \wh{e},&
	&\partial(\wh{\rho d})=\wh{\rho} \otimes \wh{d} + \wh{f},&\\
	&\partial(\wh{\rho b a})=\wh{\rho} \otimes \wh{ba} + \wh{\rho b} \otimes \wh{a} + \wh{ea},&
	&\partial(\wh{cb\f})=\wh{c} \otimes \wh{b\f} + \wh{cb} \otimes \wh{\f},&\\
	&\partial(\wh{c\psi a})=\wh{c\psi} \otimes \wh{a} + \wh{c} \otimes \wh{\psi a}+ \wh{cd} + \wh{ea}.&
	\end{align*}
	Again, the differentials of $\wh{\rho b a}$, $\wh{cb\f}$ and $\wh{c\psi a}$ can also depend on the $A_\infty$-structure which does not affect the dimension of the regularised differential biquiver as $\partial(\wh{f})$ can be regularised with $\wh{\rho d}$.

	We can regularise $\wh{\psi a}$ with $\wh{d}$, $\wh{\rho b}$ with $\wh{e}$, $\wh{c\psi}$ with $\wh{e}$, $\wh{\rho d}$ with $\wh{f}$, $\wh{\rho ba}$ with $\wh{ea}$, and $\wh{c\psi a}$ with $\wh{cd}$. We are left with 
	\begin{align*}
	&\deg 0  = \{\wh{\f},\, \wh{\psi},\, \wh{\rho}, \, \wh{b\f},\, \wh{\rho b},\, \wh{cb\f} \},& &
	\deg 1  = \{\wh{a},\, \wh{b},\, \wh{c},\, \wh{ba},\, \wh{cb},\, \wh{cba} \},&
	\end{align*} 
	hence dimensions agree with the dimensions of degree zero and degree one part of the dual quiver of a curve-like algebra. 
	
	Similar calculations show that the dimensions agree in cases A--C, G and I--K.

	We have excluded cases D, E, F. It remains to check how many non-isomorphic $A_\infty$-structures algebras A--C and G--K can be endowed with. Below we write down a table which lists possible $A_\infty$-quasi-isomorphism that can be used to make given $m_3$ zero.

	To exclude possible $A_\infty$-structures, we proceed as in Remark \ref{rem_vanishing_m_3}. The non-trivial value of $F_2$ used to take the given $m_3$ to zero is listed in the table.
	
%	
%	 we use the following easy observation; if $m_3(\alpha_1, \alpha_2, \alpha_3) = \beta$ and $\alpha_1 \alpha_2 \neq 0$, then an $A_\infty$-quasi-isomorphism with $F_1 = \textrm{Id}$, $F_2(\alpha_1 \alpha_2, \alpha_3) = \beta$ and other values of $F_n$ vanishing yields a quasi-isomorphic $A_\infty$-structure with $\overline{m}_3(\alpha_1, \alpha_2, \alpha_3) = 0$. Similarly, if $m_3(\alpha_1, \alpha_2, \alpha_3) = \beta_1 \circ \alpha_3$, an $A_\infty$-quasi isomorphism with $F_1 = \textrm{Id}$, $F_2(\alpha_1, \alpha_2) = \beta_1$ and remaining $F_n$ vanishing, yields a quasi-isomorphic $A_\infty$-algebra with $\overline{m}_3(\alpha_1, \alpha_2, \alpha_3)=0$.
	\begin{center}
	\begin{tabular}{c|c|c|c|}
		&  $m_3(\rho,b,a)$ & $m_3(c,\psi,a)$ & $m_3(c,b,\f)$\\
		\hline 
		A & &  $F_2(c\psi, a)$ & $F_2(c, b\f)$\\
		\hline
		B &$F_2(\rho b,a)$& &$F_2(c, b\f)$ \\
		\hline
		C & $F_2(\rho b,a)$& $F_2(\psi, a)^*$&$F_2(c, b\f)$ \\
		\hline
		G & $F_2(\rho b,a)$& $F_2(c\psi, a)$& \\
		\hline
		H & $F_2(\rho b,a)$& $F_2(c\psi, a)$& \\
		\hline
		I & & $F_2(c\psi, a)$& $F_2(c, b\f)$\\
		\hline
		J & $F_2(\rho b,a)$& $F_2(c\psi, a)$& $F_2(c, b)^*$\\
		\hline
		K & $F_2(\rho b,a)$& $F_2(c\psi, a)$&$F_2(c, b)^*$ \\
		\hline
	\end{tabular} 
%	\begin{tabular}{c|c|c|c|c|c|c|}
%		& $m_3(\rho, \psi, a)$ & $m_3(\rho, b,\f)$ & $m_3(c, \psi, \f)$& $m_3(\rho,b,a)$ & $m_3(c,\psi,a)$ & $m_3(c,b,\f)$\\
%		\hline 
%		A & $F_2(\rho\psi, a)$& $F_2(b,\f)$ & $F_2(c,\psi\f)$ & & $F_2(c\psi, a)$ & $F_2(c, b\f)$\\
%		\hline
%		B &$F_2(\rho\psi, a)$ & $F_2(b,\f)$& $F_2(c,\psi\f)$& $F_2(\rho b,a)$& &$F_2(c, b\f)$ \\
%		\hline
%		C & $F_2(\rho\psi, a)$& $F_2(b,\f)$& $F_2(c,\psi\f)$& $F_2(\rho b,a)$& $F_2(\psi, a)^*$&$F_2(c, b\f)$ \\
%		\hline
%		G & $F_2(\rho\psi, a)$& $F_2(b,\f)$& $F_2(c,\psi\f)$& $F_2(\rho b,a)$& $F_2(c\psi, a)$& \\
%		\hline
%		H & $F_2(\rho\psi, a)$& $F_2(b,\f)$& $F_2(c,\psi\f)$& $F_2(\rho b,a)$& $F_2(c\psi, a)$& \\
%		\hline
%		I & $F_2(\rho\psi, a)$&$F_2(b,\f)$ & $F_2(c,\psi\f)$& & $F_2(c\psi, a)$& $F_2(c, b\f)$\\
%		\hline
%		J & $F_2(\rho\psi, a)$& $F_2(b,\f)$& $F_2(c,\psi\f)$& $F_2(\rho b,a)$& $F_2(c\psi, a)$& $F_2(c, b)^*$\\
%		\hline
%		K & $F_2(\rho\psi, a)$& $F_2(b,\f)$& $F_2(c,\psi\f)$& $F_2(\rho b,a)$& $F_2(c\psi, a)$&$F_2(c, b)^*$ \\
%		\hline
%	\end{tabular} 
	\end{center}
	In the cases marked with $^*$ we use the fact that the possible non-zero value of $m_3$ can be decomposed, i.e. we proceed as in Remark \ref{rem_vanishing_m_3} and $m_3(\rho,\psi, a)$.
	
%	. In case C, $m_3(c, \psi, a) = \la\, c\psi \f$ and putting $F_2(\psi, a) = -\la \psi \f$ allows to get an algebra with $m_3(c, \psi, a) =0$. Analogously, in cases J and K $m_3(c,b,\f) = \la \rho b\f$ and putting $F_2(c,b) = -\la \rho b$ allows to get a $A_\infty$-quasi-isomorphic algebra with $m_3(c,b,\f) =0$. 
	
	It follows that there are 13 possible $A_\infty$-algebra structures:
	
	\begin{itemize}
		\item[A1:] 
		\begin{align*}
		&\psi a=0,& &b\f\neq0,& &\rho b=0,& &c\psi\neq0,&
		&\rho \psi a=0,& &\rho b \f=0,& &c\psi\f\neq0,& &m_3(\rho, b, a)  =0& 
		\end{align*}
		
		\item[A2:] 
		\begin{align*}
		&\psi a=0,& &b\f\neq0,& &\rho b=0,& &c\psi\neq0,&
		&\rho \psi a=0,& &\rho b \f=0,& &c\psi\f\neq0,& &m_3(\rho, b, a)  =c\psi\f.& 
		\end{align*}
		
		\item[B1:] 
		\begin{align*}
		&\psi a=0,& &b\f\neq0,& &\rho b\neq0,& &c\psi = 0,&
		&\rho \psi a=0,& &\rho b \f\neq0,& &c\psi\f=0,& &m_3(c, \psi, a) = 0.& 
		\end{align*}
		
		\item[B2:] 
		\begin{align*}
		&\psi a=0,& &b\f\neq0,& &\rho b\neq0,& &c\psi = 0,&
		&\rho \psi a=0,& &\rho b \f\neq0,& &c\psi\f=0,& &m_3(c, \psi, a) = \rho b\f.& 
		\end{align*}
		
		\item[C] 
		\begin{align*}
		&\psi a = 0,& &b\f\neq 0,& &\rho b\neq0,& &c\psi\neq 0,&
		&\rho \psi a=0,& &\rho b \f\neq 0,& &c\psi\f\neq 0.& 
		\end{align*}
		
		\item[G1:]
		\begin{align*}
		&\psi a\neq0,& &b\f=0,& &\rho b\neq 0,& &c\psi = 0,&
		&\rho \psi a\neq 0,& &\rho b \f=0,& &c\psi\f=0,& &m_3(c, b, \f) = 0.&
		\end{align*}
		
		\item[G2:] 
		\begin{align*}
		&\psi a\neq0,& &b\f=0,& &\rho b\neq 0,& &c\psi=0,&
		&\rho \psi a\neq 0,& &\rho b \f=0,& &c\psi\f=0,& &m_3(c, b, \f) = \rho \psi a.&
		\end{align*}
		
		\item[H1:] 
		\begin{align*}
		&\psi a\neq 0,& &b\f=0,& &\rho b\neq0,& &c\psi\neq 0,&
		&\rho \psi a\neq 0,& &\rho b \f= 0,& &c\psi\f=0,& &m_3(c, b, \f) =0.& 
		\end{align*}
		
		\item[H2:] 
		\begin{align*}
		&\psi a\neq 0,& &b\f=0,& &\rho b\neq0,& &c\psi\neq 0,&
		&\rho \psi a\neq 0,& &\rho b \f=0,& &c\psi\f=0,& &m_3(c, b, \f) =\rho \psi a.& 
		\end{align*}
		
		\item[I1:] 
		\begin{align*}
		&\psi a\neq0,& &b\f\neq 0,& &\rho b=0,& &c\psi\neq0,&
		&\rho \psi a=0,& &\rho b \f=0,& &c\psi\f\neq0,& &m_3(\rho, b, a) = 0.&
		\end{align*}
		
		\item[I2:] 
		\begin{align*}
		&\psi a\neq0,& &b\f\neq 0,& &\rho b=0,& &c\psi\neq0,&
		&\rho \psi a=0,& &\rho b \f=0,& &c\psi\f\neq0,& &m_3(\rho, b, a) = c\psi \f.&
		\end{align*}
		
		\item[J:] 
		\begin{align*}
		&\psi a\neq0,& &b\f\neq 0,& &\rho b\neq 0,& &c\psi=0,&
		&\rho \psi a\neq0,& &\rho b \f\neq 0,& &c\psi\f=0.& 
		\end{align*}
		
		\item[K:] 
		\begin{align*}
		&\psi a\neq0,& &b\f\neq 0,& &\rho b\neq 0,& &c\psi\neq0,&
		&\rho \psi a\neq0,& &\rho b \f\neq 0,& &c\psi\f\neq0.& 
		\end{align*}
	\end{itemize}
We list the corresponding algebras to show that they can be equipped with a duality preserving simple modules:
\begin{itemize}
	\item[A1:] Quiver
	\begin{center}
		\includegraphics{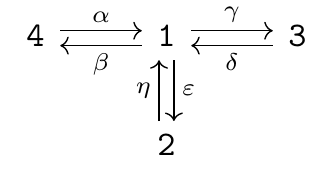}
	\end{center}
	with relations
	\begin{align*}
	&\beta \alpha=0,& &\gamma\alpha=0,& &\varepsilon\eta=0,& &\gamma\delta =0,& &\beta \delta =0,& &\beta \eta \varepsilon\alpha =0,& &\gamma \eta \varepsilon \delta =0,& &\beta \eta \varepsilon \delta \gamma \eta \varepsilon \alpha = 0.&
	\end{align*}
	\item[A2:] Quiver
	\begin{center}
		\includegraphics{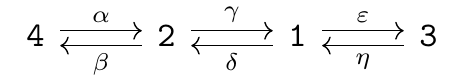}
	\end{center}	
	with relations
	\begin{align*}
	&\beta \alpha =0,& &\varepsilon\eta =0,& &\delta \gamma = \alpha \beta,& &\varepsilon\gamma \delta \eta =0,& &\beta \delta \eta \varepsilon \gamma \alpha =0.&
	\end{align*}
	\item[B1:] Quiver
	\begin{center}
		\includegraphics{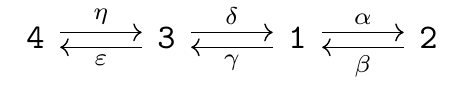}
	\end{center}	
	with relations
	\begin{align*}
	&\alpha \beta =0,&&\varepsilon\eta =0,& &\gamma \delta=0,& &\gamma \beta \alpha \delta =0.&
	\end{align*}
	\item[B2:] Quiver
	\begin{center}
		\includegraphics{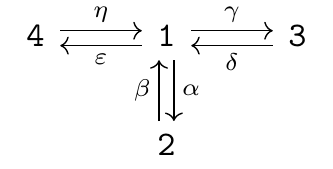}
	\end{center}
	with relations
	\begin{align*}
	&\varepsilon \beta =0,& &\gamma \delta =0,& &\alpha \beta =0,& &\varepsilon\eta =0,& &\alpha \eta =0,& &\gamma \beta \alpha \delta =0,& &\varepsilon\delta \gamma \eta =0.&
	\end{align*}
	\item[C:] Quiver
	\begin{center}
		\includegraphics{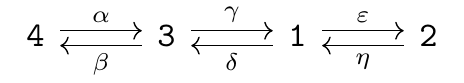}
	\end{center}	
	with relations
	\begin{align*}
	&\beta \alpha=0,& &\delta\gamma =0,& &\varepsilon\eta =0,& &\alpha \beta = \delta \eta \varepsilon\gamma.&
	\end{align*}
	\item[G1:] Quiver
	\begin{center}
		\includegraphics{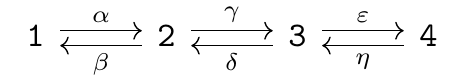}
	\end{center}
	with relations
	\begin{align*}
	&\alpha\beta =0,& &\gamma \delta =0,& &\varepsilon\eta =0.&
	\end{align*}	
	\item[G2:] Quiver
	\begin{center}
		\includegraphics{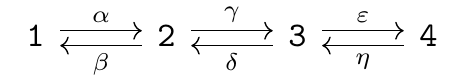}
	\end{center}	
	with relations
	\begin{align*}
	&\alpha\beta =\delta\eta \varepsilon\gamma,& &\gamma \delta =0,& &\varepsilon\eta =0.&
	\end{align*}	
	\item[H1:] Quiver
		\begin{center}
			\includegraphics{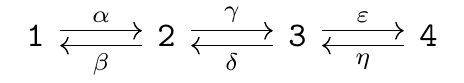}
		\end{center}
	with relations
	\begin{align*}
	&\alpha\beta =0,& &\gamma \delta =\eta\varepsilon,& &\varepsilon\eta =0.&
	\end{align*}	
	\item[H2:] Quiver
	\begin{center}
		\includegraphics{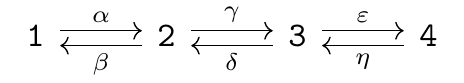}
	\end{center}
	with relations
	\begin{align*}
	&\alpha\beta =\delta \eta\varepsilon\gamma,& &\gamma \delta =\eta\varepsilon,& &\varepsilon\eta =0.&
	\end{align*}
	\item[I1:] Quiver
	\begin{center}
		\includegraphics{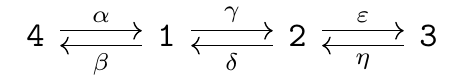}
	\end{center}
	with relations
	\begin{align*}
	&\varepsilon\eta=0,& &\beta\alpha=0,& &\gamma \delta= \eta \varepsilon,& &\beta \delta \gamma \alpha=0,& &\beta \delta \eta \varepsilon\gamma \alpha=0.&
	\end{align*}		
	\item[I2:] Quiver
	\begin{center}
		\includegraphics{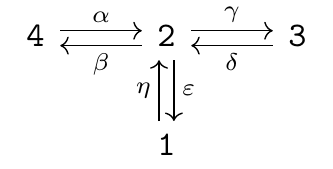}
	\end{center}	
	with relations
	\begin{align*}
	&\beta \alpha =0,& &\gamma \delta =0,& &\beta \delta \gamma \alpha=0,& &\eta \varepsilon= \delta \gamma + \alpha \beta.&
	\end{align*}
	\item[J:] Quiver
	\begin{center}
		\includegraphics{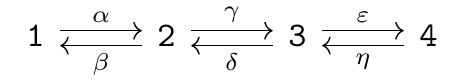}
	\end{center}
	with relations
	\begin{align*}
	&\alpha \beta= \delta \gamma,& &\gamma \delta =0,& &\varepsilon\eta =0.&
	\end{align*}		
	\item[K:] Quiver
	\begin{center}
		\includegraphics{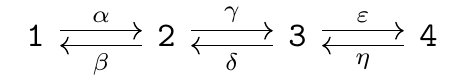}
	\end{center}	
	with relations
	\begin{align*}
	&\alpha \beta= \delta \gamma,& &\gamma \delta =\eta\varepsilon,& &\varepsilon\eta =0.&
	\end{align*}		
\end{itemize} 	
The algebras marked B1, B2 and K are Ringel self-dual. The algebra A1 is Ringel dual to G1, A2 is Ringel dual to G2, the algebra C is Ringel dual to J, the algebra H1 is Ringel dual to I1, and, finally, the algebra H2 is Ringel dual to I2.

In the remainder, we comment on the connection to the geometry of surfaces. For an introduction to the topic, see e.g. \cite[Chapter V]{Har77}.
Let $f\colon X\to Y$ be a birational morphism of smooth surfaces. It can be (non-uniquely) decomposed into a sequence of blow-ups of smooth points, see e.g. \cite[Corollary V.5.4]{Har77}. If for simplicity we assume that $f$ is an isomorphism on a complement to a closed point $y \in Y$ then the exceptional divisor $C$ of $f$, i.e. the curve $C \subset X$ contracted by $f$ to this point $y$, is a tree of rational curves. In other words, the irreducible components $C_\mathtt{i}$ of $C$ are smooth and isomorphic to $\mathbb{P}^1$. At every point at most two components meet and their intersection number is one, i.e.  $C$ is a divisor with normal crossings. Finally, the intersection graph, i.e. the graph whose vertices correspond to components of $C$ and whose edges to the intersection points of those, is a tree. The decomposition of $f$ into a sequence of blow-ups, $f = g_n \circ \ldots \circ g_1$, determines the self-intersection numbers of components. More precisely, $C_{\mathtt{i}}^2 =-1$ if $C_{\mathtt{i}}$ is the exceptional divisor of $g_1$. If, on the other hand, a component $C_{\mathtt{i}}$ is a strict transform of a component $C'_{\mathtt{i}}$ of the exceptional divisor of $g_{n} \circ \ldots \circ g_2$ (i.e. $C_\mathtt{i}$ is the closure of $C_\mathtt{i}\cap U\cong C'_\mathtt{i}\cap U$ in $X$ for the open set $U\subset X$ on which $g_1$ is an isomorphism) then $C_{\mathtt{i}}^2 = {C'_{\mathtt{i}}}^2$ if $C_{\mathtt{i}}\subset U$ and $C_{\mathtt{i}}^2 = {C'_{\mathtt{i}}}^2 -1$ otherwise. In the opposite direction, the intersection form on components of $C$ yields a decomposition of $f$ into a sequence of blow-ups of smooth points. Namely, any component of self-intersection $-1$ can be contracted by the first smooth contraction $g_1$.

Let now $f\colon X \to Y$ be a birational morphism which can be decomposed into 4 blow-ups of smooth points. 
Then the category 
$$
\mathscr{A}_f:=\{E\in \Coh(X)\,|\,Rf_*E =0\}
$$
is equivalent to the category of modules over a quasi-hereditary algebra $\Lambda_f$, see \cite{BodBon15, BodBon17}. If the decomposition of $f$ into blow-ups is unique, i.e. if the associated partial order on simple $\Lambda_f$-modules is a total order, then the algebra $\Lambda_f$ is Morita equivalent to one of the algebras A2, C, I1, I2 or K, see \cite{BodBon17}. 

Further properties are required to homologically characterise the curve-like quasi-hereditary algebras coming from geometry. 
One such property is that $\Ext^2(L(\mathtt{i}),L(\mathtt{l}))=0$ for $\mathtt{i}\neq \mathtt{l}$:

Simple objects in the category $\mathscr{A}_f$, i.e. simple modules over $\Lambda_f$, are $\mathcal{O}_{C_\mathtt{i}}(-1)$, \cite{BodBon15}. If $C_{\mathtt{i}} \cap C_\mathtt{l} = \emptyset$, the support $C_\mathtt{i}$ of  $\mathcal{O}_{C_\mathtt{i}}(-1)$ is disjoint from the support $C_{\mathtt{l}}$  of $\mathcal{O}_{C_\mathtt{l}}(-1)$, hence $\Ext^2( \mathcal{O}_{C_\mathtt{i}}(-1),  \mathcal{O}_{C_\mathtt{l}}(-1)) =0$. If, on the other hand, $C_{\mathtt{i}} \cap C_\mathtt{l} \neq \emptyset$ then $ C_{\mathtt{i}} . C_\mathtt{l} = 1$. In particular $\mathcal{O}_{C_\mathtt{l}}(-1) \cong \mathcal{O}_{C_\mathtt{l}}(-C_\mathtt{i})$ and $\mathcal{O}_{C_\mathtt{i}}(-1) \cong \mathcal{O}_{C_\mathtt{i}}(-C_\mathtt{l})$. In the long exact sequence obtained by applying $\Hom(-, \mathcal{O}_{C_\mathtt{l}}(-C_\mathtt{i}))$ to sequence
$$
0 \to \mathcal{O}_X(-C_\mathtt{l}- C_\mathtt{i}) \to \mathcal{O}_X(-C_\mathtt{l}) \to \mathcal{O}_{C_\mathtt{i}}(-C_\mathtt{l}) \to 0
$$
we have isomorphisms 
\begin{align*} 
&\Ext^j_X(\mathcal{O}_X(-C_\mathtt{l}-C_\mathtt{i}), \mathcal{O}_{C_\mathtt{l}}(-C_\mathtt{i})) \cong H^j(\mathbb{P}^1, \mathcal{O}_{\mathbb{P}^1}(C_\mathtt{l}^2) ),&\\ & \Ext^j_X(\mathcal{O}_X(-C_\mathtt{l}), \mathcal{O}_{C_\mathtt{l}}(-C_\mathtt{i})) \cong H^j(\mathbb{P}^1, \mathcal{O}_{\mathbb{P}^1}(C_\mathtt{l}^2-C_\mathtt{l}.C_\mathtt{i})  ).&
\end{align*} 
Since $H^2(\mathbb{P}^1, \mathcal{O}_{\mathbb{P}^1}(C_\mathtt{l}^2-C_\mathtt{l}.C_\mathtt{i}))=0$ and  the map $ H^1(\mathbb{P}^1, \mathcal{O}_{\mathbb{P}^1}(C_\mathtt{l}^2-C_\mathtt{l}.C_\mathtt{i})  ) \to  H^1(\mathbb{P}^1, \mathcal{O}_{\mathbb{P}^1}(C_\mathtt{l}^2) )$ is surjective, the space $\Ext^2_X( \mathcal{O}_{C_\mathtt{i}}(-1),  \mathcal{O}_{C_\mathtt{l}}(-1))$ is zero. Explicit calculations of the Ringel dual of an arbitrary $\Lambda_f$ in \cite{BodBon17} show that the vanishing of $\Ext^2$ between distinct simple modules also holds for the Ringel duals of ``geometric'' curve-like algebras. 

We note that the algebras A1 and B2 do not satisfy the above additional condition, hence there is no curve attached to them. There are no non-zero elements of $\Ext^2$ between distinct simple modules over the algebra B1, while it is not of geometric origin.

In the five geometric cases one can read off from the quiver of the algebra $\Lambda_f$ the intersection graph of the curve contracted by $f$. Namely, the quiver of $\Lambda_f$ is the double quiver of the intersection graph of the curve $C = \bigcup_{\mathtt{i}=\mathtt{1}}^\mathtt{4} C_{\mathtt{i}}$, i.e. $C_\mathtt{i} \cap C_\mathtt{l} =1$ if and only if there is an arrow $\mathtt{i} \to \mathtt{l}$ in the quiver of $\Lambda_f$. One can read the self-intersection $C_{\mathtt{i}}^2$ from the relations in the algebra. More precisely, the long exact sequence obtained by applying $\Hom_X(-, \mathcal{O}_{C_\mathtt{i}})$ to the short exact sequence
$$
0 \to \mathcal{O}_X(-C_\mathtt{i}) \to \mathcal{O}_X \to \mathcal{O}_{C_\mathtt{i}} \to 0%
$$
gives an isomorphism $\Ext^1_X(\mathcal{O}_{C_\mathtt{i}}(-C_\mathtt{i}), \mathcal{O}_{C_\mathtt{i}}) \xrightarrow{\cong} \Ext^2(\mathcal{O}_{C_\mathtt{i}}, \mathcal{O}_{C_\mathtt{i}})$. Since the latter space is isomorphic to $\Ext^2_X(\mathcal{O}_{C_\mathtt{i}}(-1), \mathcal{O}_{C_\mathtt{i}}(-1))$ and $\Ext^1_X(\mathcal{O}_{C_\mathtt{i}}(-C_\mathtt{i}), \mathcal{O}_{C_\mathtt{i}}) \cong H^1(X, \mathcal{O}_{C_\mathtt{i}}(C_\mathtt{i}))\cong H^1(\mathbb{P}^1, \mathcal{O}_{\mathbb{P}^1} (C_\mathtt{i}^2))$, the number $\dim \Ext^2_X(\mathcal{O}_{C_\mathtt{i}}(-1),\mathcal{O}_{C_\mathtt{i}}(-1))$ of relations at the given vertex $\mathtt{i}$ equals $ h^1(\mathbb{P}^1, \mathcal{O}_{\mathbb{P}^1}(C_\mathtt{i}^2)) = -C_\mathtt{i}^2 - 1$. 

In an analogous manner one can assign to 
%any curve-like algebra with 4 simple objects 
the algebra B1 an isomorphism class of a tree of rational curves together with an intersection matrix of the components. It is a curve $C$ with components $C_\mathtt{1}, \ldots, C_\mathtt{4}$ with intersection matrix
$$
\left(\begin{array}{cccc}-1 & 1 & 1 & 0\\
1&-2&0 & 0 \\
1&0&-3&1\\
0 & 0&1&-2 \end{array} \right)
$$
The curve $C$ is isomorphic to the curve in the geometric example labelled by C.

\bibliographystyle{alpha}
\bibliography{publication}

\end{document}